\documentclass[11pt]{amsart}
\usepackage{iftex}
\ifPDFTeX
  \pdftrailerid{}
\fi
\usepackage[T1]{fontenc}
\usepackage{lmodern,microtype,booktabs}
\usepackage{amsmath,amssymb,amsthm,mathtools,mathrsfs,aliascnt}
\usepackage[margin=1in]{geometry}
\usepackage[hidelinks]{hyperref}
\usepackage[nameinlink,noabbrev,capitalize]{cleveref}
\allowdisplaybreaks
\newtheorem{theorem}{Theorem}[section]
\newtheorem{maintheorem}{Theorem}

\crefname{maintheorem}{Theorem}{Theorems}
\Crefname{maintheorem}{Theorem}{Theorems}
\newaliascnt{proposition}{theorem}
\newtheorem{proposition}[proposition]{Proposition}
\aliascntresetthe{proposition}
\newaliascnt{lemma}{theorem}
\newtheorem{lemma}[lemma]{Lemma}
\aliascntresetthe{lemma}
\newaliascnt{corollary}{theorem}
\newtheorem{corollary}[corollary]{Corollary}
\aliascntresetthe{corollary}
\theoremstyle{definition}
\newaliascnt{definition}{theorem}
\newtheorem{definition}[definition]{Definition}
\aliascntresetthe{definition}
\newaliascnt{example}{theorem}
\newtheorem{example}[example]{Example}
\aliascntresetthe{example}
\theoremstyle{remark}
\newaliascnt{remark}{theorem}
\newtheorem{remark}[remark]{Remark}
\aliascntresetthe{remark}
\newcommand{\Z}{\mathbb Z}
\newcommand{\Q}{\mathbb Q}
\newcommand{\N}{\mathbb N}
\newcommand{\Fp}{\mathbb F_p}
\newcommand{\Zp}{\mathbb Z_p}
\newcommand{\Qp}{\mathbb Q_p}
\newcommand{\C}{\mathcal C}
\newcommand{\HH}{\mathcal H}

\newcommand{\cO}{\mathcal O}
\newcommand{\I}{\mathfrak I}
\newcommand{\br}[1]{\{#1\}}
\newcommand{\ord}{\operatorname{ord}}
\newcommand{\im}{\operatorname{im}}

\newcommand{\length}{\operatorname{length}}
\newcommand{\Frac}{\operatorname{Frac}}
\newcommand{\PP}{\mathsf P}
\newcommand{\NN}{\mathcal N}
\newcommand{\Comp}{\operatorname{Comp}}
\newcommand{\Tay}{\operatorname{T}}
\numberwithin{equation}{section}

\newcommand{\diag}{\operatorname{diag}}
\newcommand{\GL}{\operatorname{GL}}
\newcommand{\Mat}{\operatorname{Mat}}
\newcommand{\Nm}{\operatorname{Nm}}
\newcommand{\Pic}{\operatorname{Pic}}
\newcommand{\Spec}{\operatorname{Spec}}

\newcommand{\qbin}[2]{\genfrac{[}{]}{0pt}{}{#1}{#2}_q}
\newcommand{\gaussbin}[3]{\genfrac{[}{]}{0pt}{}{#1}{#2}_{#3}^{\mathrm{Gauss}}}

\title[Quantum color Newton rings]{Quantum Color Newton Rings}
\author{Honghuai Fang}
\thanks{Institute for Theoretical Sciences, Westlake University,
600 Dunyu Road, Xihu District, Hangzhou, Zhejiang 310030, China.
Email: \texttt{fanghonghuai@westlake.edu.cn}.}
\date{}
\subjclass[2020]{Primary 13J10, 57K14; Secondary 11B65, 11S80, 14C22, 39A13}
\keywords{Newton ring, symmetric-color quantum invariant, Habiro ring,
Alexander polynomial, cyclotomic interpolation, Frobenius lift, normal jet, spectral module}
\hypersetup{pdftitle={Quantum Color Newton Rings}}
\begin{document}
\begin{abstract}
We study integral Newton rings attached to symmetric quantum colors.
We show that their boundary values at a deleted node are exactly the
elements of Habiro's cyclotomic completion, and we describe their
quantum neighborhoods at $q=1$ uniformly in the rank. The
symmetric-color $\mathfrak{sl}_n$ invariants of a knot are the values
of an element of such a ring. In its expansion at $q=1$, the
coefficient of $(q-1)^m$ has denominator dividing the $(n-1+2m)$th
power of the Alexander polynomial, with integral numerator, and this
exponent is sharp. For $5_2$ and $6_1$ at $q=i$, the period matrices
of the rank-two coefficient recurrences have spectral modules of exact
order three in the Picard group of an affine elliptic curve.
\end{abstract}
\maketitle
\setcounter{tocdepth}{2}
\hypersetup{bookmarksdepth=2}
\begingroup\small\tableofcontents\endgroup

\clearpage
\section{Introduction}\label{sec:introduction}

\subsection{Overview}\label{sec:intro-overview}

Newton interpolation expands functions on $0,1,2,\ldots$ in the
products $X$, $X(X-1)$, $X(X-1)(X-2)$, and so on. Symmetric quantum
colors lead to the same construction on the nodes
\[
 x_r=q^{n+2r}+q^{-n-2r},\qquad r\ge0.
\]
For a knot $K$, the integral cyclotomic expansion \cite{FZ26} expresses
its reduced quantum $\mathfrak{sl}_n$ invariants as the values of
\[
 F_{K,n}(X;q)=c_0(q)+c_1(q)(X-x_0)
       +c_2(q)(X-x_0)(X-x_1)+\cdots,
\]
where every $c_k(q)$ is an integral Laurent polynomial. At the
$r$th color point all terms after the first $r+1$ vanish, so the
infinite series has a Laurent polynomial value at each color.

Unlike classical Newton interpolation, the nodes here depend on $q$
and collide at roots of unity. At $q=1$ they all equal $2$. For the
figure-eight knot in rank two, every Newton coefficient is $1$, so
\begin{equation}\label{eq:introduction-figure-eight}
 F_{4_1,2}(2+u;1)=\sum_{k\ge0}u^k=\frac{1}{1-u}.
\end{equation}
All color values at $q=1$ equal $1$, but as a function of $u$ the
series has denominator $1-u=\Delta_{4_1}(z)$, where $u=z+z^{-1}-2$.
The function is therefore not determined by its color values at
$q=1$, and we study it near the points where the colors collide.

Such series form Newton rings, and we first determine their
structure. Consider a function defined on all nodes except the first.
Its value at the missing node
lies in Habiro's cyclotomic completion, and the function extends
across that node exactly when this value is a Laurent polynomial.
Every finite Taylor expansion with Habiro coefficients occurs at this
node, and the complete expansion determines the function. The
cyclotomic and $p$-adic valuations of the node differences give
optimal finite sampling rules and analytic interpolation in a
$p$-adic color. When the rank varies, the coefficients near $q=1$ are
ordinary integer-valued polynomials in the rank.

For knot elements, these structures give finite rank--color
reconstruction and a precise description near $q=1$. If
$\Delta_K(z)=\delta_K(z+z^{-1}-2)$ is the symmetric Alexander
polynomial normalized by $\Delta_K(1)=1$, then
$F_{K,n}(2+u;1)=\delta_K(u)^{-(n-1)}$. At quantum order $m$, the
denominator divides $\delta_K(u)^{n-1+2m}$ and the numerator is an
integral polynomial. The bound holds for every knot and is attained at
every rank and order. The interpolating function and its color
sequence have the same annihilating difference operators, and
Frobenius power substitutions give integral comparisons.

At other roots of unity, coefficient recurrences give explicit
rational functions and their local expansions. Finitely many colored
invariants, known to finite quantum precision, determine any
prescribed Taylor coefficient at a missing node, even though the
specialized nodes collide. We give closed formulas for the
specializations of $3_1$, $4_1$, $5_2$, and $6_1$ at every root of
unity. For $5_2$ and $6_1$, we find period matrices that are not
conjugate over the Laurent ring to the companion matrices of their
characteristic polynomials. In rank two at $q=i$, the obstruction has
exact order three in the Picard group of an affine elliptic curve:
$m$ copies of the period matrix are conjugate to $m$ copies of the
companion matrix exactly when $3\mid m$.

\subsection{Newton rings and their Habiro boundaries}\label{sec:intro-boundaries}

Put $R=\mathbb Z[q^{\pm1}]$. For an integer $n\ge0$, define
\[
 x_r^{(n)}=q^{n+2r}+q^{-n-2r},\qquad
 B_k^{(n)}(X)=\prod_{i=0}^{k-1}(X-x_i^{(n)}),\qquad r,k\ge0.
\]
The associated Newton ring is
\begin{equation}\label{eq:introduction-ring}
 \mathcal C_n=\varprojlim_k R[X]/(B_k^{(n)}).
\end{equation}
Every element has a unique expansion
$F=\sum_{k\ge0}c_k(q)B_k^{(n)}(X)$ with $c_k\in R$.
Evaluation at $x_r^{(n)}$ uses only the terms $k\le r$.
The resulting color-value sequences are characterized by the
integrality of their divided differences
(\cref{prop:compatible,prop:divided-difference}).

The node differences factor as
\begin{equation}\label{eq:introduction-node-difference}
 x_r^{(n)}-x_s^{(n)}=\{r-s\}\{n+r+s\},
 \qquad \{j\}=q^j-q^{-j}.
\end{equation}
At a root of unity, two nodes coincide exactly when one of these
quantum integers vanishes. Values on any infinite set of colors
determine an element of $\mathcal C_n$. The ring is a domain with unit
group $\{\pm q^a:a\in\mathbb Z\}$, and its reduction modulo every
rational prime is again a domain (\cref{thm:identity,thm:units-primes}).

Deleting the first node $a_n=q^n+q^{-n}$ leaves the node set of
$\mathcal C_{n+2}$ and gives an injection
$\rho_n:\mathcal C_n\hookrightarrow\mathcal C_{n+2}$.
A function on this tail can be evaluated at the missing node because
\[
 B_k^{(n+2)}(a_n)=(-1)^k\prod_{j=1}^k\{j\}\{n+j\}.
\]
These products define the same completion of $R$ as the Habiro
factorials. The evaluation therefore takes values in Habiro's
cyclotomic completion \cite{Habiro02,Habiro04},
\[
 \mathcal H=\varprojlim_m R/((q;q)_m),\qquad
 (q;q)_m=\prod_{j=1}^m(1-q^j).
\]
Write $\beta_n:\mathcal C_{n+2}\to\mathcal H$ for this boundary map
and $u=X-a_n$ for the normal coordinate.

\begin{maintheorem}[Habiro boundary]\label{thm:introduction-boundary}
For every $n\ge0$, the map $\beta_n$ is surjective and
\begin{equation}\label{eq:introduction-pullback}
 \mathcal C_n\simeq R\times_{\mathcal H}\mathcal C_{n+2}.
\end{equation}
For every $h\ge1$, normal expansion induces an isomorphism
\begin{equation}\label{eq:introduction-boundary-neighborhood}
 \mathcal C_{n+2}/u^h\mathcal C_{n+2}
       \simeq\mathcal H[u]/(u^h).
\end{equation}
The full Taylor map
$\mathcal C_{n+2}\hookrightarrow\mathcal H[[u]]$ is injective,
and $\mathcal H[[u]]$ is the $u$-adic completion of
$\mathcal C_{n+2}$.
\end{maintheorem}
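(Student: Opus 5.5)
The plan is to work with coordinates $F=\sum_k c_k B_k^{(n+2)}(X)$ in $\mathcal C_{n+2}$ and reduce every claim to statements about the products $P_k:=B_k^{(n+2)}(a_n)=(-1)^k\prod_{j=1}^k\{j\}\{n+j\}$. The boundary map is $\beta_n(F)=\sum_k c_kP_k$. This series converges in $\mathcal H$ because the ideals $(P_k)$ and $((q;q)_m)$ define the same topology on $R$, as noted before the statement.

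\emph{Surjectivity of $\beta_n$.} Every element of $\mathcal H$ is a convergent sum $\sum_m d_m (q;q)_m$ with $d_m\in R$. First I will show that for each $m$ there is a $k(m)\to\infty$ with $P_{k(m)}\mid (q;q)_{N(m)}$, and conversely. Then I rewrite the element as $\sum_k c_k P_k$ by a successive approximation: at stage $k$, subtract $c_kP_k$ so that the remainder lies in the closure of $(P_{k+1})$. This requires the ideals $P_k\mathcal H$ to be decreasing, which holds since $P_k\mid P_{k+1}$, and cofinal with the Habiro filtration.

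\emph{Pullback.} Let $F\in\mathcal C_{n+2}$ with $\beta_n(F)=c\in R$. Write $B_{k+1}^{(n)}=(X-a_n)B_k^{(n+2)}$ and expand $F$ in the $B^{(n)}$ basis. The element $G=c+\sum_k e_k B_{k+1}^{(n)}$ restricts to $F$ on the tail exactly when the divided differences match, and I will use \cref{prop:divided-difference} to solve for the $e_k$. Concretely, $(F-c)$ vanishes at $a_n$ "formally", so I will show $F-c=u\,G'$ with $G'\in\mathcal C_{n+2}$. Injectivity of $\rho_n$ and of $R\to\mathcal H$ then gives the fiber-product isomorphism.

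\emph{Neighborhood isomorphism and Taylor map.} This is the heart of the argument. Define the Taylor map by expanding each $B_k^{(n+2)}(a_n+u)$ as a polynomial in $u$. Its $u^j$ coefficient is an elementary-symmetric-type sum of products of differences $a_n-x_i^{(n+2)}=-\{i+1\}\{n+i+1\}$ with $j$ factors omitted. By \eqref{eq:introduction-node-difference}, that coefficient is divisible by $P_{k}$ divided by at most $j$ node differences, and this quotient still tends to $0$ in $\mathcal H$ as $k\to\infty$. So the map is well defined into $\mathcal H[[u]]$.

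For surjectivity mod $u^h$, I will proceed by induction on $h$, using $u\mathcal C_{n+2}$ and the fact that multiplication by $u$ shifts the target. The key point is that $u$ is a nonzerodivisor. Since $u\cdot B_k^{(n+2)}=B_{k+1}^{(n+2)}+(x_k^{(n+2)}-a_n)B_k^{(n+2)}$, division by $u$ is well defined on the kernel of $\beta_n$; that kernel is exactly $u\mathcal C_{n+2}$, by the pullback step. This gives $\mathcal C_{n+2}/u\simeq\mathcal H$, and the graded pieces $u^j\mathcal C_{n+2}/u^{j+1}\simeq\mathcal H$ follow. Injectivity of the full Taylor map then amounts to $\bigcap_h u^h\mathcal C_{n+2}=0$. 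I will prove this with \cref{thm:identity}: if $F=u^hG_h$ for all $h$, evaluating at the colors forces the $B$-coefficients to vanish degree by degree. The completion statement follows from the graded isomorphisms together with injectivity. I expect the main obstacle to be the comparison of the $(P_k)$-adic and Habiro topologies, together with the divisibility estimates on the Taylor coefficients. My first attempt there would be to factor $\{j\}\{n+j\}$ into cyclotomic polynomials and count multiplicities of each $\Phi_d$.
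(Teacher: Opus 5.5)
Your outline follows the paper's architecture. That includes cofinality of $(P_k)$ with the Habiro ideals by counting $\Phi_d$-multiplicities, surjectivity of $\beta_n$ by successive approximation, and the bound $v_{\Phi_d}\ge v_{\Phi_d}(P_k)-2j$ on Taylor coefficients. It also includes induction modulo $u^h$ using $t_0=\beta_n$ and $\Tay_n(uG)=u\Tay_n(G)$, and injectivity because each $G(x_r^{(n+2)})$ is divisible by every power of the nonzero nonunit $x_r^{(n+2)}-a_n$.

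However, the crucial step, $\ker\beta_n\subseteq u\mathcal C_{n+2}$, is never proved. In the pullback paragraph you write ``I will show $F-c=uG'$''. In the Taylor paragraph you say the kernel is $u\mathcal C_{n+2}$ ``by the pullback step''. Each claim therefore rests on the other. Neither tool you name closes the loop. The divided-difference proposition only restates integrality of the lifted coefficients as $\Delta_k(f)\in V_kR$, with no link to the hypothesis $\beta_n(F)=c$. The identity $uB_k^{(n+2)}=B_{k+1}^{(n+2)}-d_kB_k^{(n+2)}$, with $d_k=a_n-x_k^{(n+2)}$, only gives a recursion involving divisions by $d_k$. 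Over $\mathbb Q(q)$, $u$ is invertible at every finite level, so the issue is not whether division by $u$ is ``well defined''. The issue is whether the quotient has coefficients in $R$.

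The missing computation is short. Write $G=\sum g_kB_k^{(n+2)}$ and $H=\sum h_kB_k^{(n+2)}$ with $uH=G$. The identity gives $g_0=-d_0h_0$ and $g_k=h_{k-1}-d_kh_k$. By induction, $h_k=-s_{k+1}/P_{k+1}$ with $s_{k+1}=\sum_{j\le k}g_jP_j$. Since $P_{k+1}\mid P_j$ for $j\ge k+1$, the condition $\beta_n(G)=0$ in $\varprojlim_mR/(P_m)$ says exactly that $P_{k+1}\mid s_{k+1}$ for all $k$. Hence $H\in\mathcal C_{n+2}$; this is the paper's proof of the exact sequence.

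Alternatively, you can prove the pullback independently, as the paper does.
\begin{itemize}
\item We have $(X-a_n)\cap(B_k^{(n+2)})=(B_{k+1}^{(n)})$, because $X-a_n$ is prime and $B_k^{(n+2)}(a_n)=P_k\ne0$.
\item We have $R[X]/(X-a_n,B_k^{(n+2)})\simeq R/(P_k)$.
\item So the two-ideal sequence gives $R[X]/(B_{k+1}^{(n)})\simeq R\times_{R/(P_k)}R[X]/(B_k^{(n+2)})$ at each level.
\item Pass to inverse limits and apply cofinality.
\end{itemize}
With the pullback established this way, your deduction of the kernel statement from it (via $\rho_n\mu_{n,1}(H)=uH$) becomes legitimate.

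Either route requires turning $\beta_n(F)=c$ into the finite congruences $c\equiv\sum_{k<m}c_kP_k\pmod{P_m}$ and actually using them, which your proposal does not do. Once that is in place, the remaining steps go through as you describe.
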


\Cref{thm:introduction-boundary} combines
\cref{prop:boundary-exact,thm:pullback,thm:normal-completion}. For
$n\ge2$, the same normal description holds for $\mathcal C_n$ at every
earlier node $a_{n-2s}=q^{n-2s}+q^{-n+2s}$, with
$1\le s\le\lfloor n/2\rfloor$ (\cref{thm:all-boundaries}).

Let $\mathfrak I_m^{(n)}$ be the ideal of functions vanishing on the
first $m$ colors. There is an $R$-module isomorphism
\[
 \mathfrak I_1^{(n)}/(X-a_n)\mathcal C_n\simeq\mathcal H/R,
\]
and for general $m$ the quotient
$\mathfrak I_m^{(n)}/B_m^{(n)}\mathcal C_n$ has a filtration with $m$
successive quotients $\mathcal H/R$. Since $\mathcal H$ is uncountable,
no $\mathfrak I_m^{(n)}$ with $m\ge1$ is finitely generated, and the
tail extensions are birational but not finite
(\cref{thm:defect,prop:birational-tails}). The Newton topology is not
the adic topology of any ideal (\cref{prop:nonadic}).

For $d\ge1$, the substitutions $q\mapsto q^d$ and $X\mapsto\Theta_d(X)$, where
$\Theta_d(z+z^{-1})=z^d+z^{-d}$, define commuting Adams operations
$\psi_d$ on the rings and their boundaries. For every prime $p$,
$\psi_p$ reduces to the $p$th-power map modulo $p$. On the degree-$j$
piece of the normal associated graded at $X=2$, the action is
$h(q)u^j\mapsto d^{2j}h(q^d)u^j$
(\cref{prop:adams,prop:normal-adams}).

\subsection{Cyclotomic sampling and \texorpdfstring{$p$}{p}-adic colors}
\label{sec:intro-sampling}

The identity theorem uses infinitely many exact values. At finite
cyclotomic precision finitely many samples suffice, and we compute the
minimal number from the collision multiplicities of the nodes.

Fix $d\ge1$, put $\pi=\Phi_d(q)$ and $\ell=d/\gcd(d,2)$,
and work in the discrete valuation ring $\mathcal O=R_{(\pi)}$.
Group nonnegative colors according to the orbits of their residues
under
\[
 r\longmapsto-n-r\qquad\text{on }\mathbb Z/\ell\mathbb Z.
\]
The difference between two distinct nodes in the same orbit has
$\pi$-valuation one for a two-element orbit and two for a fixed
orbit. Let $f_{n,d}$ be the number of solutions to
$2r\equiv-n\pmod\ell$; there are $(\ell-f_{n,d})/2$
two-element orbits.

\begin{maintheorem}[Optimal cyclotomic sampling]\label{thm:introduction-sampling}
Let $h\ge1$. To determine all color values of an element of
$\mathcal C_n$ modulo $\pi^h\mathcal O$, it suffices to know,
to that precision, the values at any $h$ distinct nonnegative colors
whose residues lie in each two-element orbit and at any
$\lceil h/2\rceil$ distinct nonnegative colors whose residues lie in
each fixed orbit. Every universally sufficient finite sample set
contains at least these numbers in the respective orbits.
The minimum total is
\begin{equation}\label{eq:introduction-sampling}
 M_{n,d}(h)=\frac{\ell-f_{n,d}}2\,h
                +f_{n,d}\left\lceil\frac h2\right\rceil.
\end{equation}
\end{maintheorem}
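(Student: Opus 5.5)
The plan is to prove both halves by a Lagrange-type interpolation carried out \emph{orbit by orbit}, with exact $\pi$-adic bookkeeping of node differences. The first step is the valuation computation behind the statement. By \eqref{eq:introduction-node-difference}, $x_r^{(n)}-x_s^{(n)}=\{r-s\}\{n+r+s\}$ and $\{j\}=q^{-j}(q^{2j}-1)$. Since $\Phi_d$ is a simple factor of $q^{2j}-1$ when $d\mid 2j$ and coprime to it otherwise, $v_\pi(\{j\})$ equals $1$ if $\ell\mid j$ and $0$ otherwise. Hence $v_\pi(x_r-x_s)=[r\equiv s]+[r\equiv -n-s]\pmod\ell$. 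For distinct nodes this is $0$ across different orbits, exactly $1$ inside a two-element orbit, and exactly $2$ inside a fixed orbit. Write $w\in\{1,2\}$ for the weight of an orbit; the required sample count in that orbit is then $k_w=\lceil h/w\rceil$.

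\emph{Sufficiency.} Fix $F\in\mathcal C_n$ and any finite set of colors that includes the chosen samples and the target color $r$. Choose $N$ larger than all of them. By \eqref{eq:introduction-ring}, $F\equiv G \pmod{B_N^{(n)}}$ for some $G\in R[X]$, so $F$ and $G$ agree at all colors $<N$. For a polynomial $G\in R[X]$, every divided difference on distinct nodes lies in $R$, because $(G(X)-G(Y))/(X-Y)\in R[X,Y]$ and one iterates. Let $s_1,\dots,s_k$ be the chosen samples in the orbit of $r$, with $r\notin\{s_i\}$. The Newton remainder formula gives
\[
G(x_r)=\sum_i G(x_{s_i})L_i(x_r)+[x_{s_1},\dots,x_{s_k},x_r]G\cdot\prod_i(x_r-x_{s_i}),
\qquad L_i(x_r)=\prod_{j\ne i}\frac{x_r-x_{s_j}}{x_{s_i}-x_{s_j}}.
\]
The remainder has valuation at least $kw\ge h$. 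Each $L_i(x_r)$ has numerator valuation $(k-1)w$ and denominator valuation $(k-1)w$, because all the nodes involved lie in a single orbit. So $L_i(x_r)\in\mathcal O$. Consequently $F(x_r)\bmod\pi^h\mathcal O$ is an explicit $\mathcal O$-linear combination of the sample values modulo $\pi^h$. The crucial design choice is to use only same-orbit samples: mixing orbits would put positive-valuation factors in the denominators of the $L_i$ and destroy integrality.

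\emph{Necessity.} Suppose a finite sample set $S$ contains only $k<\lceil h/w\rceil$ colors with residues in some orbit of weight $w$. Take $G=\prod_{s\in S}(X-x_s)\in R[X]\subset\mathcal C_n$. This element vanishes exactly at every sample, just like $0$ does. Pick a color $r\notin S$ in that orbit; such colors exist because each residue class contains infinitely many nonnegative colors. Then $v_\pi(G(x_r))=kw<h$, since only the $k$ same-orbit factors contribute valuation. So $G$ and $0$ agree on $S$ to precision $h$ but differ at $r$, and $S$ is not universally sufficient. Summing the per-orbit minima, $h$ for each of the $(\ell-f_{n,d})/2$ two-element orbits and $\lceil h/2\rceil$ for each of the $f_{n,d}$ fixed orbits, gives \eqref{eq:introduction-sampling}. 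Here the orbit count is immediate: the fixed points of $r\mapsto -n-r$ are exactly the solutions of $2r\equiv -n\pmod\ell$.

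The main obstacle I anticipate is the exact-valuation claim for $\{j\}$, which needs $\Phi_d$ to appear to multiplicity exactly one in $q^{2j}-1$, together with the edge cases $d=1,2$, where $\ell=1$ and there is a single orbit. A second point to check is that the sufficiency argument must not rely on any integrality of $F$'s divided differences beyond what the polynomial approximation $G$ provides. The reduction to $G$ handles this cleanly, since it needs only \cref{prop:divided-difference}-style integrality for honest polynomials.
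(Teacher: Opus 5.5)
Your proposal is correct and follows the paper's proof of \cref{thm:sampling} essentially step for step. For sufficiency you use same-orbit Lagrange interpolation, whose coefficients are units of $\cO$, with a remainder of valuation $e_\omega t\ge h$; your reduction to a polynomial representative plus the Newton remainder is exactly what \cref{prop:ambiguity} packages. For necessity you use the vanishing product $P_T=\prod_{s\in T}(X-x_s)$, which satisfies $v_\pi(P_T(x_r))=e_\omega t<h$ at an unsampled color of the deficient orbit.
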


A fixed orbit needs fewer samples because the map $z\mapsto z+z^{-1}$,
which sends $q^{n+2r}$ to $x_r^{(n)}$, is ramified at $z=\pm1$. At $q=1$, all colors belong to one fixed orbit,
so $\lceil h/2\rceil$ samples recover every color value modulo
$(q-1)^hR_{(q-1)}$. Both the reconstruction and its optimality follow
from the interpolation remainder (\cref{thm:sampling}).

Modulo $\pi^h$, the polynomial color-value sequences form a module to
which an orbit of collision order $e\in\{1,2\}$ contributes
$\bigoplus_{j=0}^{\lceil h/e\rceil-1}\mathcal O/(\pi^{h-ej})$
(\cref{thm:value-algebra}). The mixed cyclotomic--Newton completion
splits into formal disks indexed by the orbits, and the expansion in
any one disk determines the original Newton element
(\cref{prop:disks}).

The color variable also admits $p$-adic analytic interpolation. Let
$p$ be odd, $q_0\in1+p\mathbb Z_p$ with $q_0\ne1$, and
$a=v_p(q_0-1)$. For every $F\in\mathcal C_n$, the values
$F(x_r^{(n)};q_0)$ extend uniquely to a continuous function on
$\mathbb Z_p$. This extension is a restricted power series
\[
 f(t)=F(q_0^{n+2t}+q_0^{-n-2t};q_0)
             \in\mathbb Z_p\langle t\rangle,
 \qquad f(t)=f(-n-t).
\]
If $f_{<m}$ is the sum of the first $m$ Newton terms, then
\cref{thm:padic-color} gives the uniform bound
\begin{equation}\label{eq:introduction-padic-error}
 v_p\bigl(f(t)-f_{<m}(t)\bigr)
       \ge2am+2v_p(m!)\qquad(t\in\mathbb Z_p).
\end{equation}

Keeping the quantum parameter variable gives an injective
realization of $\mathcal C_n$ in $\mathbb Z_p\langle H,U\rangle$
under $q=1+p^aH$ and $X=2+p^{2a}U$, for every $a\ge1$
(\cref{thm:analytic-bidisc}).

\subsection{Rank-uniform rings and quantum neighborhoods}\label{sec:intro-rank-rings}

To vary the rank, we replace $q^n$ by an independent variable $A$.
The nodes become $Aq^{2r}+A^{-1}q^{-2r}$.
The coefficient ring consists of Laurent polynomials in $A$ over
$\mathbb Q(q)$ that take integral Laurent values at every rank:
\[
 \mathscr I_q=\{f\in\mathbb Q(q)[A^{\pm1}]:
              f(q^n)\in R\text{ for every }n\ge2\}.
\]
The rank-uniform Newton completion of \cite{FZ26} is
\begin{equation}\label{eq:introduction-universal-ring}
 \mathcal U=\varprojlim_k
 \mathscr I_q[X]\Big/
 \left(\prod_{i=0}^{k-1}(X-Aq^{2i}-A^{-1}q^{-2i})\right).
\end{equation}
Each specialization $A=q^n$ maps $\mathcal U$ onto $\mathcal C_n$,
and the family of all rank specializations is jointly injective.

Near $q=1$, the rank coefficient ring is described by ordinary
integer-valued polynomials. Put $\eta=q-1$, $u=X-2$, and
\[
 B=\operatorname{Int}(\mathbb Z)
   =\{P(N)\in\mathbb Q[N]:P(\mathbb Z)\subseteq\mathbb Z\}.
\]
The binomial polynomials $\binom{N-2}{j}$ form a $\mathbb Z$-basis
of $B$. They arise from the Gaussian basis of $\mathscr I_q$ at
$q=1$, in the setting of quantum integer-valued polynomials studied
by Harman and Hopkins \cite{HH17}. At every finite quantum order,
\[
 \mathscr I_q/\eta^h\mathscr I_q\simeq B[\eta]/(\eta^h)
 \qquad(h\ge1)
\]
(\cref{prop:rank-coefficient-jets}).

\begin{maintheorem}[Quantum neighborhoods]\label{thm:introduction-neighborhoods}
For $n\ge0$ and $h\ge1$, Taylor substitution gives compatible
isomorphisms
\begin{align}
 \mathcal C_n/\eta^h\mathcal C_n
     &\simeq(\mathbb Z[\eta]/(\eta^h))[[u]],
       \label{eq:introduction-fixed-neighborhood}\\
 \mathcal U/\eta^h\mathcal U
     &\simeq(B[\eta]/(\eta^h))[[u]].
       \label{eq:introduction-universal-neighborhood}
\end{align}
The universal substitution is $A=(1+\eta)^N$.
The maps into the full completions
$\mathbb Z[[u,\eta]]$ and $B[[u,\eta]]$ are injective.
The coefficient of $u^j\eta^m$ uses only Newton indices
$k\le j+\lfloor m/2\rfloor$.
\end{maintheorem}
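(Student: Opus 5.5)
The plan is to treat both isomorphisms with one unitriangularity argument. The input is that, in the coordinates $\eta=q-1$ and $u=X-2$, every shifted node is divisible by $\eta^2$.

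\emph{Step 1: the nodes in the new coordinates.} For the fixed rank, $x_r^{(n)}-2=(q^{m}-1)^2q^{-m}$ with $m=n+2r$. Under $q=1+\eta$ this lies in $\eta^2\mathbb Z[[\eta]]$. Hence
\[
B_k^{(n)}(2+u)=\sum_{j=0}^k(-1)^{k-j}e_{k-j}\bigl(x_0^{(n)}-2,\dots,x_{k-1}^{(n)}-2\bigr)\,u^j,
\]
and the coefficient of $u^j$ has $\eta$-valuation at least $2(k-j)$.

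For the universal ring, $A=(1+\eta)^N$ gives $Aq^{2i}=(1+\eta)^{N+2i}=\sum_t\binom{N+2i}{t}\eta^t$. The coefficients lie in $B$, and the constant term is $1$. Therefore $Aq^{2i}+A^{-1}q^{-2i}-2=(Aq^{2i}-1)^2(Aq^{2i})^{-1}\in\eta^2B[[\eta]]$, with the same valuation estimate.

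\emph{Step 2: the Taylor map.} Send $F=\sum_k c_kB_k$ to $\sum_k c_k(1+\eta)B_k(2+u)$. Since $q$ is a unit in $\mathbb Z[[\eta]]$, we have $c_k(1+\eta)\in\mathbb Z[[\eta]]$. In the universal case, $c_k\in\mathscr I_q$ is expanded in $B[[\eta]]$ via \cref{prop:rank-coefficient-jets}.

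By Step 1, the coefficient of $u^j\eta^m$ receives contributions only from $k\le j+\lfloor m/2\rfloor$. So the sum converges coefficientwise, and the last assertion of the theorem holds. The map is a ring homomorphism because it is the inverse limit of the obvious maps on $R[X]/(B_k)$, and modulo $\eta^h$ it is compatible in $h$.

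\emph{Step 3: bijectivity modulo $\eta^h$.} Write the map on coefficient sequences as $c\mapsto(I+N)c$. Here $N_{jk}=(-1)^{k-j}e_{k-j}$ for $k>j$, and every entry of $N$ lies in $\eta^2(\cdot)$. Modulo $\eta^h$, each row of $N$ has only finitely many nonzero entries, namely those with $k<j+h/2$, and $N^{\lceil h/2\rceil}\equiv0$. Hence $I+N$ is invertible with inverse $\sum_{i<\lceil h/2\rceil}(-N)^i$, whose entries are again finite sums.

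Since $R/(q-1)^h=\mathbb Z[\eta]/(\eta^h)$, any target series is attained: lift the solved coefficients to $R$. This gives surjectivity. If the image of $F$ vanishes modulo $\eta^h$, then every $c_k\in(q-1)^hR$, so $F\in\eta^h\mathcal C_n$; here one uses that the coefficientwise quotient is again a convergent Newton series. This gives injectivity of the induced map.

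Full injectivity follows because $\bigcap_h\eta^h\mathcal C_n=0$: a Laurent polynomial divisible by every power of $q-1$ is zero.

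\emph{Step 4: the universal case.} The universal case repeats Steps 2--3 with $R$ replaced by $\mathscr I_q$, using $\mathscr I_q/\eta^h\mathscr I_q\simeq B[\eta]/(\eta^h)$. The main obstacle is that the expansion $\mathscr I_q\to B[[\eta]]$ must be compatible with the substitution $A=(1+\eta)^N$ and must satisfy $\mathscr I_q\cap\eta^hB[[\eta]]=\eta^h\mathscr I_q$. Only then does the coefficientwise divisibility in the kernel argument, and the intersection $\bigcap_h\eta^h\mathcal U=0$, go through. I would extract both facts from the Gaussian-basis description underlying \cref{prop:rank-coefficient-jets}. Under $q=1+\eta$, the Gaussian basis reduces to the binomial basis $\binom{N-2}{j}$ unitriangularly modulo $\eta$, which yields the needed $\eta$-saturation.
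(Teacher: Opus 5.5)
Your proposal is correct and follows essentially the paper's proof. The $\eta^2$-divisibility of the shifted nodes gives the index bound $k\le j+\lfloor m/2\rfloor$, and bijectivity modulo $\eta^h$ comes from the coefficient isomorphisms $R/(q-1)^h\simeq\mathbb Z[\eta]/(\eta^h)$ and \cref{prop:rank-coefficient-jets} combined with triangularity; you package this as inverting the unipotent matrix $I+N$, whereas the paper proves the order-zero case and lifts by induction on $h$. The saturation $\mathscr I_q\cap\eta^hB[[\eta]]=\eta^h\mathscr I_q$ that you flag as the obstacle in Step 4 is exactly the kernel statement of \cref{prop:rank-coefficient-jets}, so you can cite it rather than re-derive it from the Gaussian basis.
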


Thus the fiber at $q=1$ is $\mathbb Z[[u]]$ at fixed rank and
$B[[u]]$ uniformly in rank, with specialization $N\mapsto n$.
The finite-index bound comes from the quadratic vanishing of each
node displacement $x_r^{(n)}-2$ at $q=1$
(\cref{thm:quantum-neighborhoods}). This is also why the analytic
realization uses the scale $X-2=p^{2a}U$.

\subsection{Knot elements and rank--color reconstruction}\label{sec:intro-knot-elements}

Let $K$ be a zero-framed knot, and let $J_r^{(n)}(K;q)$ be its
reduced quantum $\mathfrak{sl}_n$ invariant in
$\operatorname{Sym}^r(\mathbb C^n)$, normalized to be one on the
unknot. The integral cyclotomic expansion conjectured in
\cite{CLZ15} and proved in \cite[Theorem~1.1]{FZ26} gives
\begin{equation}\label{eq:introduction-knot-element}
 F_{K,n}(X;q)=\sum_{k\ge0}c_{K,n,k}(q)B_k^{(n)}(X)
           \in\mathcal C_n,\qquad
 F_{K,n}(x_r^{(n)};q)=J_r^{(n)}(K;q).
\end{equation}
The identity theorem makes this element unique. Connected sum
becomes multiplication, and mirroring replaces $q$ by $q^{-1}$.
The rank-uniform construction in \cite[Theorem~8.7]{FZ26} gives
$\mathbf F_K\in\mathcal U$ specializing to every $F_{K,n}$.

Integral interpolation imposes congruences on the quantum invariants:
\[
 J_r^{(n)}(K;q)-J_s^{(n)}(K;q)
       \in\{r-s\}\{n+r+s\}R,
\]
and all higher divided differences are integral. Agreement on any
infinite set of colors implies agreement on all colors. If the
values are Laurent units at infinitely many colors, then
$F_{K,n}=1$, so every $J_r^{(n)}(K;q)$ is one
(\cref{cor:knot-congruences}).

At an earlier node $a_{n-2s}$, with
$1\le s\le\lfloor n/2\rfloor$, write the Habiro-valued normal
expansion as
\begin{equation}\label{eq:introduction-normal-series}
 F_{K,n}(a_{n-2s}+u_s;q)
      =\sum_{j\ge0}\tau_{K,j}^{(n,s)}(q)u_s^j,
 \qquad \tau_{K,j}^{(n,s)}\in\mathcal H.
\end{equation}
As in \cref{thm:introduction-boundary}, any one of these expansions
determines $F_{K,n}$. If $\ell=\operatorname{ord}(\zeta^2)$, the first
boundary satisfies
\[
 \tau_{K,0}^{(n,1)}(\zeta)=J_{\ell-1}^{(n)}(K;\zeta).
\]
Thus the values $J_{\ell-1}^{(n)}(K;\zeta)$, whose color depends on
$\zeta$, are all evaluations of a single Habiro element
(\cref{prop:boundary-special-colors}).

The Newton expansion also gives the perturbative dependence on the
color. For $d\ge0$, there is a polynomial $P_{K,n,d}\in\mathbb Q[Y]$
such that
\[
 [h^d]J_r^{(n)}(K;e^h)=P_{K,n,d}\bigl((n+2r)^2\bigr),
 \qquad d!\,P_{K,n,d}\in\mathbb Z[Y],
 \qquad \deg P_{K,n,d}\le\lfloor d/2\rfloor.
\]
The dependence on $r$ is through $(n+2r)^2$ because the nodes are
invariant under $r\mapsto-n-r$, and for $d>0$ the polynomial
$P_{K,n,d}$ is divisible by $Y-n^2$ because $J_0^{(n)}=1$
(\cref{prop:all-rank-perturbative}).

For a fixed color, strong integrality of the reduced colored
HOMFLY--PT invariant \cite{Morton07,Zhu19} gives the dependence on
the rank:
\[
 J_r^{(n)}(K;q)=\mathsf P_{K,r}(Q^n,Q),\qquad
 \mathsf P_{K,r}(Z,Q)\in\mathbb Z[Z^{\pm1},Q^{\pm1}],
 \qquad Q=q^2.
\]
At $q=\zeta$, the rank values therefore depend only on
$n\bmod\operatorname{ord}(\zeta^2)$.

Combining rank and color interpolation gives the mixed congruence
\begin{equation}\label{eq:introduction-mixed-rank-color}
 J_r^{(n+2a)}(K;q)-J_{r+a}^{(n)}(K;q)
                    \in(q^{2a}-1)R\qquad(a\ge1).
\end{equation}
At a root $\zeta$ of $q^{2a}-1$, the difference vanishes to second
order exactly when its quotient by $q^{2a}-1$ vanishes at $\zeta$,
and \cref{thm:mixed-rank-color} computes this value. For a fixed
color, $D+1$ rank values reconstruct $\mathsf P_{K,r}$ when its
$Z$-exponents lie in a prescribed interval of width $D$; this number
is optimal for the space of Laurent polynomials supported in that
interval (\cref{thm:rank-reconstruction}).

At finite quantum precision, the Newton coefficients satisfy a degree
bound in the rank that does not depend on the knot.

\begin{maintheorem}[Finite-rank quantum reconstruction]\label{thm:introduction-rank-jets}
For $k\ge1$ and $m\ge0$, the function
\[
 n\longmapsto[\eta^m]c_{K,n,k}(1+\eta)
\]
is the restriction of a polynomial in $\operatorname{Int}(\mathbb Z)$
of degree at most $k+m$, vanishing at $n=1$.
For any $L,s\ge1$, the finite data
\begin{equation}\label{eq:introduction-rank-grid}
 J_r^{(n)}(K;1+\eta)\pmod{\eta^{s+2L}},
 \qquad 0\le r\le L,\quad 2\le n\le L+s,
\end{equation}
determine $c_{K,n,k}(1+\eta)\pmod{\eta^s}$ for every $n\ge2$
and $0\le k\le L$.
\end{maintheorem}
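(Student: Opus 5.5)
The plan has two parts: first the degree bound, then the reconstruction. Everything rests on the rank-uniform element $\mathbf F_K\in\mathcal U$ of \cite[Theorem~8.7]{FZ26}. Write it as $\mathbf F_K=\sum_k \mathbf c_k(A;q)\prod_{i<k}(X-Aq^{2i}-A^{-1}q^{-2i})$ with $\mathbf c_k\in\mathscr I_q$, so that $c_{K,n,k}(q)=\mathbf c_k(q^n;q)$.

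\textbf{Degree bound.} By \cref{prop:rank-coefficient-jets}, $\mathbf c_k$ reduces modulo $\eta^h$ to an element of $B[\eta]/(\eta^h)$ under $A=(1+\eta)^N$. Hence $[\eta^m]c_{K,n,k}(1+\eta)$ is the value at $N=n$ of an integer-valued polynomial $P_{k,m}(N)\in B$.

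To bound its degree, I would not rely on abstract properties of $\mathscr I_q$, since a general element can have arbitrary $A$-degree. Instead I would use the explicit shape of the coefficients from \cite{FZ26}. There $\mathbf c_k$ is a $\Z[q^{\pm1}]$-combination of Gaussian-type basis elements whose $A$-dependence enters through $k$ factors of the form $(Aq^{j};q)$-type or $\{N+j\}$-type quantities. Each such factor, expanded at $q=1+\eta$ with $A=(1+\eta)^N$, has the form $\eta\cdot(\text{linear in }N)+O(\eta^2)$. Moreover, the coefficient of $\eta^{i}$ in $(1+\eta)^{aN}$ has degree $\le i$ in $N$. Together these give that the $\eta^m$ coefficient of a product of $k$ such factors has $N$-degree at most $k+m$.

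If that explicit route is unavailable, I would fall back on a weight argument. Filter $B[\eta]$ by assigning $N$ weight $1$ and $\eta$ weight $-1$. I would then show that $\prod_{i<k}(X-\text{node}_i)$ expanded in $u,\eta$ has the same compatibility, using \cref{thm:quantum-neighborhoods}. This step is the main obstacle, and I would attack it first by checking $k=1$ directly.

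\textbf{Vanishing at $n=1$.} In rank $1$ every $J_r^{(1)}=1$, so $F_{K,1}=1$ by the identity theorem \cref{thm:identity}. Hence $c_{K,1,k}=0$ for $k\ge1$. The specialization $A=q$ of $\mathbf c_k$ lands in $R$ by the $\mathcal U\to\mathcal C_1$ map, so the polynomial $P_{k,m}$ vanishes at $N=1$.

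\textbf{Reconstruction.} Newton coefficients at rank $n$ are divided differences of the color values:
\[
 c_{K,n,k}=\sum_{r\le k}J_r^{(n)}\prod_{i\le k,\,i\ne r}(x_r^{(n)}-x_i^{(n)})^{-1},
\]
computed in the localized ring $R_{(q-1)}$. Each node difference $\{r-i\}\{n+r+i\}$ has $\eta$-valuation exactly $2$ when $r\ne i$ and $n\ge2$. So the $k$th divided difference loses at most $2k\le 2L$ powers of $\eta$. Data modulo $\eta^{s+2L}$ for $0\le r\le L$ therefore determine $c_{K,n,k}\bmod\eta^s$ for $k\le L$, at each rank $2\le n\le L+s$.

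This also handles the rank extrapolation. For fixed $k\le L$ and $m<s$, $P_{k,m}$ has degree $\le k+m\le L+s-1$ and vanishes at $1$. Writing $P_{k,m}=(N-1)Q$ with $\deg Q\le L+s-2$, the $L+s-1$ values at $n=2,\dots,L+s$ determine $Q$, and hence $P_{k,m}$ at every $n$. For $k=0$, $c_{K,n,0}=J_0^{(n)}=1$.
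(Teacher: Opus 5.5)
Your reconstruction half is sound and essentially the paper's. The divided-difference formula, with every node difference $\{r-i\}\{n+r+i\}$ of exact $\eta$-order two, recovers $c_{K,n,k}\bmod\eta^s$ from $J_0^{(n)},\dots,J_k^{(n)}$ modulo $\eta^{s+2k}$; the paper gets the same by inductively solving the triangular system. The extrapolation through the known zero at $N=1$ is identical to the paper's.

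The gap is the degree bound $\deg_N\le k+m$, which is the substance of the first assertion. This bound is not a property of $\mathcal U$ or of $\mathscr I_q$. For instance, the element of $\mathcal U$ whose only nonzero Newton coefficient is $\mathsf c_1=b_5(A)$ has $[\eta^0]\mathsf c_1=\binom{N-2}{5}$, of degree $5>1$. All elements of $\mathcal U$ share the same moving kernel, so a kernel-only weight argument through \cref{thm:quantum-neighborhoods} cannot produce the bound; knot-specific input is unavoidable.

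Your ``explicit route'' supplies the wrong input. The coefficients are not $R$-combinations of Gaussian-type elements with $k$ factors depending on $A$. Expanded in the basis $A^{-a}b_j$ they involve $b_j$ with $j>k$ (up to $j=2k$ already for the figure-eight), and $[\eta^0]b_j$ has degree $j$. Your mechanism also puts the $k$ in the wrong place. A product of factors of the form $\eta\cdot(\text{linear in }N)+O(\eta^2)$, whose $\eta^i$-coefficients have degree at most $i$, has $\eta^m$-coefficient of degree at most $m$. The extra $k$ can only come from dividing by something of exact $\eta$-order $k$.

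What is needed is the form \eqref{eq:rank-Laurent-coefficient}, which comes from the Laurent differential expansion \eqref{eq:Laurent-differential-source}:
$\mathsf c_{K,k}=(Aq^{-1}-A^{-1}q)G_{K,k}(A,q)/\{k\}!$, with $G_{K,k}\in\mathbb Z[A^{\pm1},q^{\pm1}]$ of arbitrary $A$-degree.
\begin{itemize}
\item Each numerator monomial $A^aq^b$ becomes $(1+\eta)^{aN+b}$, whose $\eta^j$-coefficient $\binom{aN+b}{j}$ has degree at most $j$ in $N$.
\item The denominator is the scalar $\{k\}!=\eta^kd_k(\eta)$, with $d_k(0)=2^kk!\ne0$ and $d_k$ independent of $N$.
\end{itemize}
Hence $[\eta^m]\mathsf c_{K,k}$ is a rational combination of numerator coefficients of orders at most $k+m$, which gives $\deg_N\le k+m$.

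The same factor gives the vanishing at $N=1$ directly, since $Aq^{-1}-A^{-1}q$ becomes $(1+\eta)^{N-1}-(1+\eta)^{1-N}$. Your rank-one argument instead invokes $J_r^{(1)}=1$ and a map $\mathcal U\to\mathcal C_1$. That needs $H_r(K;q,q)=1$, which is not part of the setup: the identity $J_r^{(n)}=H_r(K;q^n,q)$ and the specializations are stated only for $n\ge2$. It is exactly what the explicit factor supplies.
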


The additional $2L$ quantum orders compensate for the quadratic
vanishing of the color-interpolation factors. After inversion of the
finite color transform, the rank-degree bound and the known value
at $n=1$ recover the coefficient jets at every rank by polynomial
interpolation (\cref{thm:quantum-rank-degree,cor:finite-rank-color-jets}).

\subsection{Alexander denominators and the classical specialization}
\label{sec:intro-Alexander}

As in the figure-eight example \eqref{eq:introduction-figure-eight},
setting $q=1$ while keeping $u=X-2$ free produces the Alexander
polynomial as a denominator. For a general knot, the denominators at
higher quantum orders divide powers of the same polynomial.

Normalize $\Delta_K$ to be symmetric with $\Delta_K(1)=1$, and write
$\Delta_K(z)=\delta_K(z+z^{-1}-2)$, where
$\delta_K\in\mathbb Z[u]$. Consider the full-function expansion
and the Newton-coefficient generating functions:
\begin{align}
 F_{K,n}(2+u;1+\eta)
      &=\sum_{m\ge0}R_{K,n,m}(u)\eta^m,
          \label{eq:introduction-full-expansion}\\
 A_{K,n,m}(u)
      &=\sum_{k\ge0}[\eta^m]c_{K,n,k}(1+\eta)u^k.
          \label{eq:introduction-coefficient-expansion}
\end{align}
Their difference comes from the motion of the Newton nodes, which
begins at order $\eta^2$.

\begin{maintheorem}[Alexander denominators]\label{thm:introduction-Alexander}
For every knot $K$, $n\ge2$, and $m\ge0$,
\begin{equation}\label{eq:introduction-Alexander-bound}
 R_{K,n,m},\ A_{K,n,m}
       \in\delta_K^{-(n-1+2m)}\mathbb Z[u],\qquad
 R_{K,n,0}=A_{K,n,0}=\delta_K^{-(n-1)}.
\end{equation}
For $m\ge1$, the numerators at this denominator satisfy
\begin{equation}\label{eq:introduction-highest-class}
 \delta_K^{n-1+2m}R_{K,n,m}
       \equiv\delta_K^{n-1+2m}A_{K,n,m}
                         \pmod{\delta_K\mathbb Z[u]}.
\end{equation}
For every $n\ge2$ and $m\ge0$, there is a knot for which the
exponent $n-1+2m$ cannot be decreased in either expansion.
\end{maintheorem}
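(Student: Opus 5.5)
The plan is to combine two inputs: the colored-Jones/loop-expansion structure of the invariants near $q=1$ (the Melvin--Morton--Rozansky expansion for symmetric $\mathfrak{sl}_n$ colors), and the quantum-neighborhood isomorphism of \cref{thm:introduction-neighborhoods}, which identifies $F_{K,n}(2+u;1+\eta)$ modulo $\eta^h$ with an element of $(\mathbb Z[\eta]/(\eta^h))[[u]]$. The first step is the classical order $m=0$. Setting $\eta=0$, the nodes all collapse to $2$, so $B_k^{(n)}(2+u)|_{q=1}=u^k$ and $R_{K,n,0}=A_{K,n,0}$. To identify this series I would use the identity theorem together with the perturbative expansion of \cref{prop:all-rank-perturbative}. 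With $q=e^h$, the coefficient of $h^d$ in $J_r^{(n)}$ is a polynomial in $(n+2r)^2$ of degree at most $\lfloor d/2\rfloor$. The diagonal terms (degree exactly $d/2$) assemble into a function of $w=(n+2r)h$, and the MMR theorem for $\operatorname{Sym}^r$ of $\mathfrak{sl}_n$ identifies this diagonal function as $\Delta_K(e^{w})^{-(n-1)}$. Since $x_r-2=(e^{w/2}-e^{-w/2})^2$ at leading order, this gives exactly $\delta_K(u)^{-(n-1)}$ after the substitution $u=z+z^{-1}-2$, $z=e^{w}$. Comparing with the $\eta^0$ part of the Taylor substitution fixes $R_{K,n,0}=\delta_K^{-(n-1)}$. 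As a consistency check, the figure-eight case \eqref{eq:introduction-figure-eight} reproduces this.

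For $m\ge1$ I would use the higher loop expansion (Rozansky's rationality, in its $\mathfrak{sl}_n$ symmetric form): the $j$-loop term has the form $P_j(e^w)/\Delta_K(e^w)^{n-1+2j}$ with integral numerator. To convert loop order in $h$ into order in $\eta$ at fixed $u$, I would rewrite $h=\log(1+\eta)$ and re-expand. Each power of $h$ beyond the diagonal contributes at most one extra $\eta$ and, through the loop structure, at most two extra powers of $\delta_K$ in the denominator. This yields $R_{K,n,m}\in\delta_K^{-(n-1+2m)}\mathbb Q[u]$.

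Integrality of the numerator then comes from \cref{thm:introduction-neighborhoods}: $R_{K,n,m}\in\mathbb Z[[u]]$. Since $\delta_K(0)=1$, $\delta_K$ is a unit in $\mathbb Z[[u]]$, so a series in $\mathbb Z[[u]]\cap\delta_K^{-N}\mathbb Q[u]$ has numerator in $\mathbb Z[[u]]\cap\mathbb Q[u]=\mathbb Z[u]$. For $A_{K,n,m}$, I would write $B_k^{(n)}(2+u;1+\eta)=\prod_{i<k}(u-(x_i-2))$ with $x_i-2=O(\eta^2)$, quadratic in the index, and express $R_{K,n,m}-A_{K,n,m}$ through $u$-derivatives of $A_{K,n,m'}$ with $m'\le m-2$, weighted by integral polynomials. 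Each $u$-derivative raises the $\delta_K$ exponent by one, so a term at $m'=m-2$ differentiated twice has exponent at most $n-1+2(m-2)+2<n-1+2m$. This gives the bound for $A$, by induction on $m$, together with the congruence \eqref{eq:introduction-highest-class}, because the correction terms have strictly smaller denominator exponent.

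Sharpness is the hardest step. I would take the trefoil or $4_1$ and compute the leading class of $\delta_K^{n-1+2m}R_{K,n,m}$ modulo $\delta_K$. Using the recursion satisfied by the loop expansion, this class should be a nonzero multiple of $(\delta_K')^{2m}$ times a constant depending on $n$ and $m$. The main obstacle is proving that this constant is nonzero for all $n$ and $m$. My first attempt would use the explicit cyclotomic formula for $4_1$, where $\delta=1-u$, and extract the highest pole order in closed form as a product of nonzero binomial-type factors.
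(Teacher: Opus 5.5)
Your overall architecture matches the paper's: a graph expansion for $R_{K,n,m}$, integrality from the quantum neighborhood, a triangular transfer to $A_{K,n,m}$, and an explicit knot for sharpness. The gap is that the step carrying the theorem is assumed rather than proved. A ``$\mathfrak{sl}_n$ symmetric form'' of Rozansky's rationality with denominator $\Delta_K^{n-1+2j}$ is not a result you can quote. For $\mathfrak{sl}_2$ it is Rozansky's theorem, for symmetric $\mathfrak{sl}_n$ colors it was previously available only for positive and negative braid knots, and for general knots it is exactly the content of \cref{thm:rank-loop-denominators}.

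What the Kricker--Rozansky input (\cref{lem:normalized-graph-input}) actually gives is the wheel factor times the exponential of graph terms. A graph of Euler degree $d$ has $2d$ vertices and $3d$ edges, each labeled $P/\Delta_K$ at $q^{2(\mu,\alpha)}$. With $z=q^{n+2r}$ fixed these arguments are $z^{\pm1}q^a$ or $q^a$, and each further order in $h$ differentiates them. A naive count therefore gives exponent $(n-1)+3d+(m-d)$, i.e.\ up to $n-1+3m$. Your phrase ``through the loop structure, at most two extra powers of $\delta_K$'' is the right count, but its justification is the missing idea: charge conservation in the $\omega_1$ direction (\cref{lem:active-edge-count}).
\begin{itemize}
\item One has $(\omega_1,\alpha)\in\{-1,0,1\}$, and the charges at a nonzero trivalent vertex sum to zero, so at most two half-edges per vertex are active.
\item Hence at most $2d$ edges carry $z$-dependent denominators; the inactive ones become $\Delta_K(1)=1$ at $h=0$.
\item This gives $(n-1)+2d+(m-d)\le n-1+2m$ at order $h^m$.
\item You then identify these coefficients with $R_{K,n,m}$ via \cref{lem:diagonal-faithfulness} (your $m=0$ scaling argument is the same device) and use inversion symmetry to write them over $\delta_K(u)$. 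After that your integrality argument applies.
\end{itemize}

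Sharpness also fails as proposed, because the figure-eight never attains the bound for $m\ge1$, in either expansion. By \cref{thm:figure-eight-product}, $A_{4_1,n,1}=0$. For $n\ge3$, $A_{4_1,n,2j}$ and (for $j\ge1$) $A_{4_1,n,2j+1}$ have poles of exact order only $n-1+2j$. For $n=2$ all $A_{4_1,2,m}$ with $m\ge1$ vanish, and $R_{4_1,2,2}=-4(1+u)/(1-u)^4$ has exponent $4$, not $5$. The constant you hope to prove nonzero is zero there.

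The trefoil does work, by a direct computation from $c_{3_1,n,k}=(-1)^kq^{k(2n+k-1)}D_{n,k}$:
\begin{itemize}
\item The $\eta^a$-coefficient of $D_{n,k}(1+\eta)$ has degree at most $n-2+a$ in $k$.
\item The $\eta^{m-a}$-coefficient of the monomial is $\binom{k^2+(2n-1)k}{m-a}$, of degree $2(m-a)$.
\item So only $a=0$ reaches degree $n-2+2m$, with leading coefficient $1/((n-2)!\,m!)$.
\item Summing against $(-u)^k$ gives a pole of order $n-1+2m$ at $u=-1$ with top coefficient $(n+2m-2)!/((n-2)!\,m!)\ne0$.
\item The congruence \eqref{eq:introduction-highest-class} then transfers exactness to $R_{3_1,n,m}$.
\end{itemize}

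A smaller correction to your transfer step: the moving-node operators are not ``two derivatives with integral weights.'' Already $T_2^{(n)}f=-n^2f'-2(n+1)uf''-\tfrac43u^2f'''$, and in general $T_d^{(n)}$ is built from $u^{-s}P_{d,s}^{(n)}(\vartheta)$ with $\deg P_{d,s}^{(n)}\le d+s$. The exponent count survives because $d+s\le d+\lfloor d/2\rfloor<2d$. Integrality and the removal of $u^{-s}$, however, require that $P_{d,s}^{(n)}$ is integer-valued and vanishes at $k<s$.
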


At $n=2$, the leading term is the Alexander series of the
Melvin--Morton--Rozansky theorem \cite{BNG96}. The exponent
$n-1+2m$ also occurs in Gruen's symmetric large-color expansion for
positive and negative braid knots \cite[Theorem~1.4]{Gruen22}.
Here the bound holds for every knot, the numerators are integral, and
the two Newton expansions have the same highest numerator class. The
calculations for $3_1$, $5_2$, and $6_1$ in \cref{sec:intro-examples}
give sharpness at every rank and quantum order.

The denominator count starts from the rational graph expansion of
Rozansky and Kricker \cite{RozanskyRat,Kricker00}. In the
symmetric-power direction of $\mathfrak{sl}_n$ the root charges lie in
$\{-1,0,1\}$, so charge conservation allows at most two active
half-edges at a trivalent vertex. Hence a graph with $2d$ trivalent
vertices has at most $2d$ edges carrying color-dependent Alexander
denominators, while its contribution begins only at quantum order
$d$. The wheel term adds $n-1$ further factors, and the integral
quantum neighborhood gives the integral numerator
(\cref{thm:rank-loop-denominators}).

The two expansions are related by a triangular family of differential
operators in $u$ that express the motion of the Newton nodes. At
quantum order $m\ge1$, every correction of positive order has at least
one fewer Alexander denominator factor, so
$R_{K,n,m}-A_{K,n,m}\in\delta_K^{-(n-2+2m)}\mathbb Z[u]$, which gives
the numerator congruence \eqref{eq:introduction-highest-class}. At
first order the two coefficients agree exactly
(\cref{prop:rank-triangular,thm:rank-Alexander}).

Modulo $\eta$, the rank-uniform knot element is
$\delta_K(u)^{-(N-1)}\in\operatorname{Int}(\mathbb Z)[[u]]$.
For fixed $n,m$, the sequence
$([\eta^m]c_{K,n,k}(1+\eta))_{k\ge0}$ eventually satisfies the linear
recurrence whose coefficients are those of $\delta_K^{n-1+2m}$, and is
eventually periodic modulo any positive integer. When $\Delta_K=1$,
the coefficient generating functions are polynomials: for every $M$,
all sufficiently late Newton coefficients lie in $(q-1)^{M+1}R$
(\cref{cor:universal-classical-knot,cor:jet-recurrence}).

\subsection{Difference equations and Frobenius comparison}\label{sec:intro-difference}

Write $X=z+z^{-1}$. By \cref{thm:introduction-Alexander},
\[
 \mathcal L_{K,n}(z,q)
   =\sum_{m\ge0}R_{K,n,m}(z+z^{-1}-2)\eta^m
   \in\mathbb Z[z^{\pm1},\Delta_K(z)^{-1}][[\eta]],
\]
and in this variable the color shift is $Ef(z,q)=f(q^2z,q)$. For
$P=\sum_{j=0}^d a_j(z,q)E^j$ with
$a_j\in\mathbb Z[z^{\pm1},q^{\pm1}]$, one has
\begin{equation}\label{eq:introduction-annihilators}
 P\mathcal L_{K,n}=0
 \quad\Longleftrightarrow\quad
 \sum_{j=0}^d a_j(q^{n+2r},q)J_{r+j}^{(n)}(K;q)=0
       \ \text{for all sufficiently large }r.
\end{equation}
Thus discrete color recurrences and formal difference equations have
the same annihilators (\cref{thm:rank-annihilators}). Compatibility
with colored Jones recurrences also appears in Willetts's rank-two
unified invariant \cite{Willetts22}.

The first two quantum orders constrain the classical symbol of an
annihilator. Put $P_0(z,L)=\sum_j a_j(z,1)L^j$ and
\[
 A_P(z)=\partial_LP_0(z,1),\qquad
 B_P(z)=\left.\sum_j\partial_q a_j(z,q)\right|_{q=1}.
\]
Then
\begin{equation}\label{eq:introduction-subprincipal}
 P_0(z,1)=0,\qquad
 B_P\Delta_K=2(n-1)A_P\,z\Delta_K'.
\end{equation}
If $\Delta_K$ is squarefree over $\mathbb Q$ and $A_P\ne0$,
then $\Delta_K\mid A_P$ in $\mathbb Q[z^{\pm1}]$
(\cref{prop:rank-subprincipal}).

When $A_P\ne0$, each higher quantum order is obtained by integrating
an explicit rational differential determined by the preceding
orders. A rational solution exists precisely when all finite
residues of this differential vanish. Among solutions regular at
$z=1$, the trivial-color identity
$\mathcal L_{K,n}(q^n,q)=1$ fixes the integration constant at each
order (\cref{thm:rank-residue}).

We use two power substitutions:
\[
 \varphi_p:(z,q)\mapsto(z^p,q),\qquad
 \Psi_p:(z,q)\mapsto(z^p,q^p).
\]
The first acts only on the color coordinate; the second is a
simultaneous Frobenius lift. Both preserve the $p$-adically completed
Alexander localization because
$\Delta_K(z^p)\equiv\Delta_K(z)^p\pmod p$.
The substitution $\varphi_p$ carries recurrences of step $p$ to
recurrences of step one, whereas $\Psi_p$ commutes with the color
shift.
The quotient $\varphi_p(\mathcal L_{K,n})/\mathcal L_{K,n}$ is
integral in this completion and has constant term
$\Delta_K(z)^{n-1}/\Delta_K(z^p)^{n-1}$
(\cref{prop:rank-scalar-frobenius}).

After normalization by the classical Alexander factor, the
simultaneous Frobenius comparison admits an integral logarithm
(\cref{prop:rank-integral-log}). Put
\[
 \mathscr B_{K,p}=
 \widehat{\mathbb Z[z^{\pm1},\Delta_K(z)^{-1}]}_p[[\eta]],
 \qquad U=\Delta_K(z)^{n-1}\mathcal L_{K,n}\in1+\eta\mathscr B_{K,p}.
\]
For every odd prime $p$,
\begin{equation}\label{eq:introduction-Frobenius-log}
 \Lambda_{K,n,p}
    =\frac1p\log\!\left(\frac{\Psi_p(U)}{U^p}\right)
               \in\eta\mathscr B_{K,p}.
\end{equation}
For the trefoil, the first coefficient is
\[
 [\eta]\Lambda_{3_1,n,p}
  =n(n-1)\left(\Delta_{3_1}(z^p)^{-2}
                       -\Delta_{3_1}(z)^{-2}\right)
\]
(\cref{ex:trefoil-frobenius}).

\subsection{Root-of-unity expansions and recurrence methods}
\label{sec:intro-periods}

At $q=\zeta$, the Newton nodes are periodic with period
$\ell=\operatorname{ord}(\zeta^2)$. By the inverse Newton transform,
the color sequence is $q$-holonomic exactly when the
Newton-coefficient sequence is (\cref{prop:inverse}). Grouping a
coefficient recurrence into blocks of length $\ell$ produces a period
matrix, and its resolvent gives the generating functions of the
coefficient blocks.

Put $\sigma=\zeta^{n\ell}\in\{1,-1\}$ and define the complete
period polynomial
\begin{equation}\label{eq:introduction-period}
 W_{n,\zeta}(X)=\Theta_\ell(X)-2\sigma.
\end{equation}
Its roots are the specialized nodes of one period, with multiplicity,
and
\[
 B_{\ell a+b}^{(n)}(X;\zeta)
    =W_{n,\zeta}(X)^a B_b^{(n)}(X;\zeta),
       \qquad a\ge0,\quad0\le b<\ell.
\]
A generating function in the block index $a$ therefore becomes a
function of $X$ by substituting $v=W_{n,\zeta}(X)$.

For $F=\sum_k c_kB_k^{(n)}$, set
\[
 H_{b,m}(v)=\sum_{a\ge0}[\epsilon^m]
          c_{\ell a+b}(\zeta(1+\epsilon))v^a.
\]
When the coefficients satisfy a Laurent-polynomial transfer
recurrence, its specialized period matrix $M_0$ gives the block
resolvent $(I-vM_0)^{-1}$. At a nonzero eigenvalue $\lambda$ with
largest Jordan block of size $s_\lambda$, the order-$m$ series has
pole order at most $(s_\lambda+1)m+s_\lambda$ at
$v=\lambda^{-1}$ (\cref{thm:resolvent}). The full expansion
also includes the quantum variation of the Newton kernel.

\begin{maintheorem}[From coefficient blocks to the full expansion]
\label{thm:introduction-period-transfer}
Suppose $d(v)\in\mathbb Z[\zeta][v]$, $d(0)=1$, and $\beta\ge1$
satisfy
\[
 d(v)^{\beta+2m}H_{b,m}(v)\in\mathbb Z[\zeta][v]
                  \qquad(0\le b<\ell,\ m\ge0).
\]
Then
\begin{equation}\label{eq:introduction-period-transfer}
 [\epsilon^m]F(X;\zeta(1+\epsilon))
    \in d(W_{n,\zeta}(X))^{-(\beta+2m)}\mathbb Z[\zeta][X].
\end{equation}
Every positive-order contribution from the Newton kernel has
denominator exponent at most $\beta+2m-1$.
\end{maintheorem}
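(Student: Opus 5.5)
We expand $F(X;\zeta(1+\epsilon))=\sum_k c_k(\zeta(1+\epsilon))B_k^{(n)}(X;\zeta(1+\epsilon))$ in $\epsilon$ and separate the Newton-coefficient variation from the kernel variation. We write each kernel as
\[
 B_k^{(n)}(X;\zeta(1+\epsilon))=\sum_{t\ge0}\epsilon^t\,\mathcal D_{k,t}(X),
 \qquad \mathcal D_{k,0}=B_k^{(n)}(X;\zeta).
\]
Then
\[
 [\epsilon^m]F=\sum_{i+t=m}\sum_k[\epsilon^i]c_k\cdot\mathcal D_{k,t}(X).
\]
The $t=0$ part is direct. Grouping $k=\ell a+b$ and using $B_{\ell a+b}^{(n)}(X;\zeta)=W^aB_b^{(n)}(X;\zeta)$ with $W=W_{n,\zeta}(X)$, it equals $\sum_{b<\ell}B_b^{(n)}(X;\zeta)H_{b,m}(W)$. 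By hypothesis this lies in $d(W)^{-(\beta+2m)}\mathbb Z[\zeta][X]$, since $B_b$ has coefficients in $\mathbb Z[\zeta]$.

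For $t\ge1$ we need a structural formula for $\mathcal D_{k,t}$. The node $x_i(q)=q^{n+2i}+q^{-n-2i}$ at $q=\zeta(1+\epsilon)$ is $x_i(\zeta)+\sum_{s\ge1}\epsilon^s\,p_s(n+2i)$, where $p_s$ is a polynomial in the index of degree at most $s$ with $\mathbb Z[\zeta]$-coefficients (binomial expansion of $(1+\epsilon)^{\pm(n+2i)}$ times $\zeta^{\pm(n+2i)}$, which is periodic in $i$ mod $\ell$). So $\mathcal D_{k,t}$ is a sum over choices of at most $t$ factors $X-x_{i}$ removed from $B_k$, replaced by these corrections. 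Concretely, for $j$ removed factors, we get $B_k$ with $j$ factors deleted, times a polynomial weight in the deleted indices of total degree at most $t$. The key reorganization is in block form. Deleting factors from $\ell$-periodic products, the quantity $\sum_a c_{\ell a+b}\mathcal D_{\ell a+b,t}$ becomes a combination of $B_{b'}(X;\zeta)$-type finite products, quotients of $W$ by its linear factors (polynomials in $X$), and series $\sum_a a^{e}\binom{a}{j'}c_{\ell a+b}W^{a-j'}$ with $j'\le j\le t$ and $e+j'\lesssim t$. Each such series is obtained from $H_{b,i}(v)$ by applying $(v\partial_v)^e\partial_v^{j'}$ and substituting $v=W$. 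Equivalently, the deleted-factor sums are symmetric functions of the removed nodes and are expressible through $\partial_X$ acting on the $W^a$ structure.

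The denominator count is then the following. Differentiation $\partial_v$ or $v\partial_v$ raises the $d$-exponent by one per application: if $d^{N}H\in\mathbb Z[\zeta][v]$ then $d^{N+1}\partial_vH\in\mathbb Z[\zeta][v]$, with no new denominators since $d(0)=1$ keeps things integral. A contribution with $i+t=m$ and $t\ge1$ uses $H_{b,i}$, which has exponent $\beta+2i$, and at most $t$ differentiations. This gives exponent at most $\beta+2i+t=\beta+2m-t\le\beta+2m-1$. That is the claimed bound for kernel contributions, and it implies the main membership.

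\textbf{Main obstacle.} The hard step is proving that the $t$-th order kernel correction involves at most $t$ $v$-derivatives rather than something growing with the degree of $p_s$ in the index. Degree in $i\sim \ell a$ up to $t$ from the weights combined with $j$ deletions could a priori give $j+t$ derivatives. I would handle this by writing $q^{\pm(n+2i)}=\zeta^{\pm(n+2i)}e^{\pm(n+2i)\log(1+\epsilon)}$ and tracking jointly the order in $\epsilon$ and the polynomial degree in $a$. Each deleted factor contributes at least one power of $\epsilon$ together with degree at most that power. So the total degree in $a$ is at most $t$, and $a$-degree at most $t$ in a sum against $W^a$ is exactly $t$ applications of $v\partial_v$ (a falling-factorial basis). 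Deleting factors only divides $W^a$ by at most $t$ linear factors; I would absorb these via $W^{a}/\prod(X-x)$ expressed with $\partial_X$-free polynomial quotients, and account for them inside the same derivative budget by using falling factorials $\binom{a}{j}$. Integrality over $\mathbb Z[\zeta]$ of the binomial coefficients must be checked. Factorials from $\log(1+\epsilon)$ expansions cancel because $(1+\epsilon)^{N}$ has integral binomial coefficients in $N$, which we use in place of the exponential.
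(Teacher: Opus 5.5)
There is a genuine gap, in the degree of the kernel correction as a polynomial in the block index. Your decomposition does match the paper's: you handle the $t=0$ part through $B^{(n)}_{\ell a+b}(X;\zeta)=W^aB_b^{(n)}(X;\zeta)$ and the hypothesis. You treat each order-$t$ kernel correction as a polynomial in $a$ acting on $H_{b,m-t}$ as a differential operator in $v$, then set $v=W$.

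\textbf{The false step.} You claim this polynomial has degree at most $t$, so the correction has exponent at most $\beta+2m-t$. This fails for $t\ge2$.
\begin{itemize}
\item Each deleted factor carries a weight of degree at most $s_\nu$ in its position. But that position runs over roughly $\ell a$ values, and summing over it raises the $a$-degree by one.
\item A term with $j$ deleted factors therefore has $a$-degree up to $t+j\le 2t$.
\item No falling-factorial rewriting lowers this, because the order of the resulting operator in $v$ equals the $a$-degree.
\end{itemize}
Already at $\zeta=1$ one has $[\epsilon^2]B_a^{(n)}(2+u;1+\epsilon)=-u^{a-1}\sum_{i<a}(n+2i)^2$, which is cubic in $a$.

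\textbf{A concrete counterexample.} Take $4_1$ in rank two at $\zeta=1$. Then $\ell=1$, $W=u$, $H_{0,0}=1/(1-v)$, $H_{0,m}=0$ for $m\ge1$, $d(v)=1-v$, and $\beta=1$. The order-two kernel correction is $R_{4_1,2,2}-A_{4_1,2,2}=-4(1+u)/(1-u)^4$. Its exponent is $4=\beta+2m-1$, not at most $\beta+2m-t=3$.

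\textbf{What the honest count gives.} With the bound $t+j\le 2t$, every kernel term has exponent at most $\beta+2(m-t)+2t=\beta+2m$. That still yields the membership \eqref{eq:introduction-period-transfer} once integrality is settled. The paper settles integrality through $W$-adic convergence, \eqref{eq:period-kernel-adic-bound}, and Lemma~\ref{lem:monic-period-integrality}, which also removes apparent poles on $W=0$. It does not give the strict exponent $\beta+2m-1$.

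\textbf{The missing idea.} This is Lemma~\ref{lem:rank-kernel-degree}.
\begin{itemize}
\item Take the logarithm $\sum_{h\ge1}\Lambda_h(a;X)\epsilon^h$ of $B^{(n)}_{\ell a+b}(X;\zeta(1+\epsilon))/(W^aB_b^{(n)}(X;\zeta))$. Here $\deg_a\Lambda_h\le h+1$.
\item Put $\bar x_r=\zeta^{n+2r}+\zeta^{-n-2r}$ and $\bar y_r=\zeta^{n+2r}-\zeta^{-n-2r}$. At $h=1$ the $a^2$-coefficient is $-\ell\sum_{r<\ell}\bar y_r/(X-\bar x_r)$.
\item This vanishes because $r\mapsto-n-r$ fixes $\bar x_r$ and negates $\bar y_r$. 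Hence $\deg_a\Lambda_h\le 2h-1$ for all $h\ge1$.
\item A product of $s$ logarithmic coefficients of total order $h$ then has degree at most $2h-s\le 2h-1$.
\end{itemize}
This gives $\beta+2(m-h)+(2h-1)=\beta+2m-1$. Without the cancellation, the $\Lambda_1^h$ term would reach degree $2h$. Your degree bound happens to be true at $t=1$ only because of this cancellation, which your argument never uses.
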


The proof uses that, at positive quantum order $h$, the corrections
from the moving kernel are polynomials of degree at most $2h-1$ in the
block index (\cref{thm:rank-full-series}).

Expansions at different roots of unity are related by $p$-adic
recentering.
Let $\Delta\in\mathbb Z[z^{\pm1}]$ be symmetric with $\Delta(1)=1$,
and put $D_\zeta(z)=\Delta(\zeta^{n\ell}z^\ell)$.
Suppose an element of $\mathcal C_n$ has, at every root $\zeta$, a
full expansion
$f_\zeta(z,t)\in\mathbb Z[\zeta,z^{\pm1},D_\zeta^{-1}][[t]]$,
where $t=q-\zeta$ and $X=z+z^{-1}$.
If $\zeta,\zeta'$ lie in a common finite $p$-adic cyclotomic field
and $v_p(\zeta'-\zeta)>0$, then
\begin{equation}\label{eq:introduction-recentering}
 f_\zeta(z,t+\zeta'-\zeta)=f_{\zeta'}(z,t)
\end{equation}
in the common $p$-adically completed localization. At finite
$p$-adic precision this identity uses only finitely many Taylor
coefficients. The family of root expansions determines the original
Newton element (\cref{thm:rank-recentering}).

At a root of unity, each normal coefficient of an element of
$\mathcal C_n$ is given by a finite formula. At the boundary
$a_{n-2s}$, where $1\le s\le\lfloor n/2\rfloor$, the kernel is
\begin{equation}\label{eq:introduction-normal-kernel}
 B_k^{(n)}(a_{n-2s}+u_s;q)
    =\prod_{i=0}^{k-1}\bigl(u_s-\{s+i\}\{n-s+i\}\bigr).
\end{equation}
At $q=\zeta$, the constant term of the $i$th factor vanishes when
$i\equiv-s$ or $i\equiv s-n\pmod\ell$. Each such index contributes a
factor $u_s$, so a term with $j+1$ such indices does not contribute to
normal order $j$.

List the distinct residues of these indices as
\[
 \{\rho_0<\cdots<\rho_{\nu-1}\}
     =\{(-s)\bmod\ell,(s-n)\bmod\ell\},
\]
where $\nu\in\{1,2\}$. The $(j+1)$st such index is
$N_j=\ell\lfloor j/\nu\rfloor+\rho_{j\bmod\nu}$, and at each of them
the constant term vanishes to order $e_\partial=2/\nu$ in the quantum
parameter.

\begin{maintheorem}[Finite normal reconstruction]\label{thm:introduction-normal}
Let $F=\sum_k c_kB_k^{(n)}\in\mathcal C_n$, and let
$t_j(F)\in\mathcal H$ be its normal coefficients at $a_{n-2s}$.
Then
\begin{equation}\label{eq:introduction-normal-finite}
 t_j(F)(\zeta)=\sum_{k=0}^{N_j}c_k(\zeta)
       [u_s^j]B_k^{(n)}(a_{n-2s}(\zeta)+u_s;\zeta).
\end{equation}
The index $N_j$ is sharp for arbitrary integral Newton elements.
For mixed quantum order $m$, the coefficient
$[\epsilon^m]t_j(F)(\zeta(1+\epsilon))$ uses only indices
$k\le N_{j+\lfloor m/e_\partial\rfloor}$.
\end{maintheorem}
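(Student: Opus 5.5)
The approach is to compute the normal coefficients directly from the Newton expansion, using the factorization \eqref{eq:introduction-normal-kernel} of the kernel at the boundary $a_{n-2s}$. By \cref{thm:all-boundaries} (the analogue of \cref{thm:introduction-boundary} at earlier nodes), the normal expansion of $F$ at $a_{n-2s}$ is the $u_s$-adically convergent sum
\[
 \sum_k c_k(q)\prod_{i=0}^{k-1}\bigl(u_s-\{s+i\}\{n-s+i\}\bigr),
\]
taken with coefficients in $\mathcal H$. So
\[
 t_j(F)=\sum_k c_k\,[u_s^j]B_k^{(n)}(a_{n-2s}+u_s;q),
\]
and this sum converges in the Habiro topology. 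I would first check this convergence. The coefficient $[u_s^j]$ of the product is an elementary symmetric expression in the constant terms $\gamma_i=-\{s+i\}\{n-s+i\}$: it is a sum over subsets of $k-j$ indices of the product of the corresponding $\gamma_i$. Each such product is divisible by a growing Habiro-type factorial, since the $\gamma_i$ run over consecutive quantum integers.

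Evaluation at $\zeta$ is a ring homomorphism $\mathcal H\to\mathbb Z[\zeta]$ that is continuous for the Habiro topology, so it can be applied termwise. This gives $t_j(F)(\zeta)=\sum_k c_k(\zeta)[u_s^j]B_k(\cdots;\zeta)$, and the terms vanish identically for large $k$. The vanishing is the key step. The constant term $\gamma_i(\zeta)$ is zero exactly when $\zeta^{2(s+i)}=1$ or $\zeta^{2(n-s+i)}=1$, that is, when $i\equiv -s$ or $i\equiv s-n \pmod \ell$. If $k>N_j$, the index range $0\le i<k$ contains at least $j+1$ such vanishing indices, because $N_j$ is by definition the $(j+1)$st one. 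Every subset of size $k-j$ omits at most $j$ indices, so it must contain a vanishing one, and the coefficient $[u_s^j]$ is zero at $\zeta$. This proves \eqref{eq:introduction-normal-finite}. The one delicate point is to make sure that evaluation at $\zeta$ of the infinite Habiro sum equals the sum of evaluations. That holds because the partial sums converge in $\mathcal H$, and their images stabilize once the tails lie in $((q;q)_M)$ with $\Phi_{\operatorname{ord}\zeta}\mid(q;q)_M$.

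For sharpness I would take $F=B_{N_j}^{(n)}$, which is integral. Among the indices $i<N_j$ exactly $j$ have vanishing constant term, so $[u_s^j]B_{N_j}(\zeta)$ is the product of the nonvanishing $\gamma_i(\zeta)$, which is nonzero. If the sum could be truncated below $N_j$, then $t_j(F)(\zeta)$ would be computed as $0$, which is a contradiction.

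For the mixed order I would work in the ring $\mathbb Z[\zeta][[\epsilon]]$ with $q=\zeta(1+\epsilon)$. At a vanishing index, $\gamma_i$ has $\epsilon$-valuation $e_\partial$. When $\nu=1$ the two residues coincide and both quantum-integer factors vanish, giving valuation $2$; when $\nu=2$ only one factor vanishes, giving valuation $1$. Suppose $k>N_{j+r}$ with $r=\lfloor m/e_\partial\rfloor$. Then a subset of size $k-j$ omits at most $j$ indices, so it contains at least $r+1$ vanishing indices, and its product has valuation at least $(r+1)e_\partial>m$. Hence $[\epsilon^m]$ of each such term vanishes. The main obstacle I expect is the careful valuation count in the mixed case: one must verify that the valuation at a vanishing index is exactly $e_\partial$ and never smaller. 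This follows because $\{a\}$ at $\zeta(1+\epsilon)$ with $\zeta^{2a}=1$ equals $\zeta^{a}\cdot 2a\epsilon+O(\epsilon^2)$, up to sign.
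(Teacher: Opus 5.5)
Your proposal is correct and follows the paper's proof essentially step for step. Counting the collision indices $i\equiv -s$ or $i\equiv s-n \pmod{\ell}$ kills every $u_s^j$-coefficient with $k>N_j$, the element $B_{N_j}^{(n)}$ gives sharpness, and the $\epsilon$-adic order $e_\partial$ of each collision scalar, with at most $j$ factors omitted from a $u_s^j$-coefficient, gives the mixed cutoff $N_{j+\lfloor m/e_\partial\rfloor}$.

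Two small remarks. Your check that evaluation at $\zeta$ commutes with the Habiro-convergent sum is left implicit in the paper. The mixed statement needs the same check at $\zeta(1+\epsilon)$, which holds because $(q;q)_M$ has $\epsilon$-order at least $\lfloor M/\operatorname{ord}\zeta\rfloor$. Also, the cutoff uses only the lower bound $e_\partial$ on the valuation of a collision scalar, not its exactness.
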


At $n=2$, the boundary is $X=2$, and the cutoffs are
\begin{equation}\label{eq:introduction-normal-cutoff}
 N_j=(j+1)\ell-1,\qquad
 N_{j,m}=\bigl(j+\lfloor m/2\rfloor+1\bigr)\ell-1
\end{equation}
(\cref{thm:finite-normal-jets}). The exact Laurent polynomials of
colors $0,\ldots,(j+1)\ell-1$ therefore recover
$\tau_{K,j}^{(2,1)}(\zeta)$. The inverse Newton transform also gives
an explicit sufficient quantum precision for a finite collection
of colored jets to recover any prescribed mixed normal coefficient
(\cref{cor:finite-normal-color-data}).

If the specialized function is rational, its normal coefficients
satisfy an eventual homogeneous constant-coefficient recurrence.
Over $\mathbb Q(\zeta)$, the minimal recurrence order is the degree
of the reduced denominator in the normal coordinate
(\cref{prop:rational-normal-recurrence}).

Transfer recurrences that are equivalent over the function field
need not be equivalent over the Laurent ring. The sum of the divisor
orders of the transfer determinant along a shift orbit is invariant
under rational changes of basis, and a nonzero orbit sum rules out a
transfer matrix invertible over the Laurent ring
(\cref{prop:orbit}). For period matrices we also ask whether two
matrices with the same characteristic polynomial are conjugate over
that ring.

If the shift acts by $\xi\mapsto\omega\xi$ with $\omega$ of order
$\ell$, the coefficients of the characteristic polynomial of the
period matrix are Laurent polynomials in $\xi^\ell$
(\cref{prop:period}). To compare two matrices, put
$A_0=k[\xi^{\pm1}]$ for a field $k$, let $p(Y)\in A_0[Y]$ be monic of degree $r$,
and suppose $S=A_0[Y]/(p)$ is the coordinate ring of a smooth
connected affine curve. A matrix $M\in\operatorname{Mat}_r(A_0)$
with characteristic polynomial $p$ defines an invertible $S$-module
$\mathscr P_M$: its underlying $A_0$-module is $A_0^r$, and $Y$
acts through $M$. The Latimer--MacDuffee correspondence identifies
conjugacy over $A_0$ with isomorphism of these modules
\cite{LM33,KS24}.

For two such matrices set
$\mathscr D=\mathscr P_{M_1}\otimes_S\mathscr P_{M_2}^{-1}$.
Repeated direct sums satisfy
\begin{equation}\label{eq:introduction-stabilization}
 M_1^{\oplus m}\sim_{\mathrm{GL}_{rm}(A_0)}M_2^{\oplus m}
       \quad\Longleftrightarrow\quad
 \mathscr D^{\otimes m}\simeq S.
\end{equation}
Thus the order of $[\mathscr D]\in\operatorname{Pic}(S)$ is the
least number of identical summands needed for conjugacy. If
$[\mathscr D]$ has finite order $d$ invertible in $k$, a connected
finite \'etale cover of degree $d$ trivializes $\mathscr D$, and
taking norms shows that every finite locally free trivializing cover
has degree divisible by $d$ (\cref{thm:stabilization}).

\subsection{Explicit knot elements in every rank}
\label{sec:intro-examples}

\begin{samepage}
The knots $3_1,4_1,5_2$, and $6_1$ have explicit Newton coefficients
for every $n\ge2$, obtained from the double-twist formulas of
\cite{CLZ21,FZ26}. Write
$(c_{3_1},c_{4_1},c_{5_2},c_{6_1})=(-1,1,-2,2)$, so that
$\delta_K(u)=1-c_Ku$ for these four knots.
The figure-eight coefficient sequence has the product generating
function
\begin{equation}\label{eq:introduction-figure-product}
 \sum_{k\ge0}c_{4_1,n,k}(q)v^k
       =\prod_{j=0}^{n-2}(1-q^{n-2-2j}v)^{-1}.
\end{equation}
It gives the exact pole orders of the coefficient jets, which
improve the general Alexander bound
(\cref{thm:figure-eight-product}).
\end{samepage}

Each of $3_1,5_2$, and $6_1$ attains the general bound at every rank
and every quantum order. With
$(\kappa_{3_1},\kappa_{5_2},\kappa_{6_1})=(1,7/4,-3/4)$,
the highest numerator value is
\begin{equation}\label{eq:introduction-sharp-value}
 \left.(1-c_Ku)^{n-1+2m}A_{K,n,m}(u)\right|_{u=1/c_K}
   =\frac{(n+2m-2)!}{(n-2)!\,m!}\,\kappa_K^m\ne0.
\end{equation}
Together with the numerator congruence, this proves sharpness for
both expansions in \cref{thm:introduction-Alexander}
(\cref{thm:all-rank-sharp}).

The trefoil and figure-eight coefficient sequences satisfy companion
recurrences of size $n-1$ whose matrices are invertible over the
Laurent ring. For $5_2$ and $6_1$ at $n=2$, the monic $C$-recurrences
of \cite{GS06} have nonzero determinant orbit sums, so no rational
change of basis makes them invertible over the Laurent ring
(\cref{prop:twist-C-recurrences}).

Grouping the coefficient formulas into complete periods gives the
specialization of each of the four knots at a root of unity in terms
of its first block.

\begin{maintheorem}[Four knots at every root of unity]\label{thm:introduction-four-roots}
Let $K\in\{3_1,4_1,5_2,6_1\}$, $n\ge2$, and let $\zeta$ be any
root of unity. Set $\ell=\operatorname{ord}(\zeta^2)$,
$\sigma=\zeta^{n\ell}$, and $a_0=\lfloor(n-2)/\ell\rfloor$.
Then, in the specialized Newton completion,
\begin{equation}\label{eq:introduction-four-roots}
 F_{K,n}(X;\zeta)
 =\frac{\displaystyle\sum_{b=0}^{\ell-1}
            c_{K,n,b}(\zeta)B_b^{(n)}(X;\zeta)}
        {(1-c_K\sigma W_{n,\zeta}(X))^{a_0+1}}.
\end{equation}
With $X=z+z^{-1}$, the displayed denominator is
$\Delta_K(\sigma z^\ell)^{a_0+1}$.
\end{maintheorem}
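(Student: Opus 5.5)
The plan is to work entirely with the Newton coefficients at $q=\zeta$ and use the block factorization $B_{\ell a+b}^{(n)}(X;\zeta)=W_{n,\zeta}(X)^aB_b^{(n)}(X;\zeta)$ stated above. Writing $v=W_{n,\zeta}(X)$, the specialized element is
\[
 F_{K,n}(X;\zeta)=\sum_{b=0}^{\ell-1}B_b^{(n)}(X;\zeta)\sum_{a\ge0}c_{K,n,\ell a+b}(\zeta)v^a ,
\]
and this rearrangement is legitimate in the specialized Newton completion. The theorem then reduces to a statement about each block generating function $H_b(v)=\sum_a c_{K,n,\ell a+b}(\zeta)v^a$. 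Specifically, I would aim to show $H_b(v)=c_{K,n,b}(\zeta)(1-c_K\sigma v)^{-(a_0+1)}$ for every $0\le b<\ell$, i.e.
\[
 c_{K,n,\ell a+b}(\zeta)=\binom{a+a_0}{a}(c_K\sigma)^a\,c_{K,n,b}(\zeta).
\]
In fact one only needs the weaker statement that the rational function $(1-c_K\sigma v)^{a_0+1}H_b(v)$ equals $c_{K,n,b}(\zeta)$ after recombination. This is most naturally proved by showing $(1-c_K\sigma E_\ell)^{a_0+1}$ annihilates the shifted sequence, where $E_\ell$ is the shift by $\ell$, while checking that the initial terms reproduce exactly the first block.

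The input is the explicit double-twist coefficient formula of \cite{CLZ21,FZ26}. For these four knots, $c_{K,n,k}(q)$ is a finite sum of products of $q$-binomials of the shape $\qbin{k+n-2}{n-2}$ (or $\qbin{k}{j}\qbin{j+n-2}{n-2}$ for the twist knots) times powers of $q$ and $c_K$-signs. I would specialize at $q=\zeta$ using the $q$-Lucas theorem with respect to $\zeta^2$ of order $\ell$. This factorizes a Gaussian binomial into an ordinary binomial of quotients and a Gaussian binomial of remainders. With $k=\ell a+b$ and $n-2=\ell a_0+b_0$, the quotient part produces $\binom{a+a_0+\epsilon}{a_0}$, where $\epsilon\in\{0,1\}$ is a carry. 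The factors $q^{\ell(\cdot)}$ specialize to powers of $\sigma=\zeta^{n\ell}$, which produces the $\sigma$ in the denominator. For $4_1$ this can be cross-checked directly against the product formula \eqref{eq:introduction-figure-product}: at $q=\zeta$ its $n-1$ factors group into full orbits under $j\mapsto j+\ell$, and each full orbit gives $(1-\sigma v^{\ell}\cdots)^{-1}$ with $\prod_{i}(1-\zeta^{2i}w)=1-w^\ell$. This yields exactly the exponent $a_0+1$ after accounting for the partial orbit.

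The main obstacle is the carry $\epsilon$. When $b+b_0\ge\ell$, the $q$-Lucas remainder binomial vanishes or shifts and the quotient binomial becomes $\binom{a+a_0+1}{a_0}$ rather than $\binom{a+a_0}{a_0}$. A naive count then suggests exponent $a_0+2$ for those $b$. I would first check whether the remainder Gaussian binomial $\qbin{b+b_0}{b_0}$ at $\zeta$ vanishes precisely in the carry range. If it does, those blocks with $a\ge0$ carry the factor $\binom{a+a_0}{a_0}$ after reindexing, giving a uniform exponent $a_0+1$. If it does not, I would instead prove the annihilation statement above, since annihilation is insensitive to which $b$-dependent constant appears.

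For $3_1,5_2,6_1$, the extra inner sum over $j$ is handled by applying Lucas to each factor and summing the resulting geometric and binomial series in $v$. This should produce $(1-c_K\sigma v)^{-(a_0+1)}$ because the inner sum at $q=1$ is the classical identity giving $\delta_K(u)^{-(n-1)}$. This is consistent with \cref{thm:introduction-Alexander} at $m=0$, which serves as a check. Finally, $1-c_K\sigma W=1-c_K(\sigma z^\ell+\sigma z^{-\ell}-2)=\Delta_K(\sigma z^\ell)$ because $\sigma^2=1$, which gives the last sentence of the theorem.
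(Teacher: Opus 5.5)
Your reduction matches the paper's: prove the block identity $c_{K,n,\ell a+b}(\zeta)=c_{K,n,b}(\zeta)(c_K\sigma)^a\binom{a_0+a}{a_0}$, then sum blocks using $B^{(n)}_{\ell a+b}=W^aB^{(n)}_b$ and $\sum_a\binom{a_0+a}{a_0}v^a=(1-v)^{-a_0-1}$. The identification $1-c_K\sigma W=\Delta_K(\sigma z^\ell)$ is also the paper's.

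The carry you single out as the main obstacle is harmless. Write $n-2=\ell a_0+r_0$. If $b+r_0\ge\ell$, Lucas gives the remainder factor $\gaussbin{b+r_0-\ell}{r_0}{\zeta^2}=0$. Also $D_{n,b}(\zeta)=0$, since $\prod_{j=1}^{n-2}\{b+j\}/\{j\}$ has one more vanishing quantum integer upstairs than downstairs. So both sides of the block identity vanish, and no annihilation fallback is needed.

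Your figure-eight check is in fact a complete proof for $4_1$. Grouping the weights $\zeta^a$, $a\in\mathcal W_n$, into full $\zeta^2$-orbits gives $\sum_kc_{4_1,n,k}(\zeta)x^k=P(x)/(1-\sigma x^\ell)^{a_0+1}$ with $\deg P<\ell$, and the block identity can be read off. The trefoil then follows from $g_k=(-1)^kq^{k(2n+k-1)}$, since $g_{k+\ell}/g_k=(-1)^\ell\epsilon_0^{\ell-1}=-1$ at $\zeta$, where $\epsilon_0=\zeta^\ell$. This sign is independent of $\sigma$: the factor $\sigma^a$ comes only from $D_{n,k}$, not from the monomials $q^{\ell(\cdot)}$ in general as your proposal suggests.

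The genuine gap is $5_2$ and $6_1$. There the coefficient is an outer monomial times $D_{n,k}$ times $\sum_{j=0}^kq^{\mp(2nj+3kj-j(j+2))}\qbin kj$, with the upper sign for $5_2$. It is not a sum of products $\qbin kj\qbin{j+n-2}{n-2}$. You argue that the Lucas-reduced sum should give $(1-c_K\sigma v)^{-(a_0+1)}$ because $q=1$ recovers $\delta_K^{-(n-1)}$. That proves nothing at $\zeta$: $q=1$ is only the case $\ell=1$, and the issue is entirely the phases at other roots.

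What must be shown is the following. Write $k=\ell a+b$ and $j=\ell d+c$. The symmetric $q$-Lucas identity
\[
 \genfrac{[}{]}{0pt}{}{\ell a+b}{\ell d+c}_{\zeta}
 =\binom ad\genfrac{[}{]}{0pt}{}{b}{c}_{\zeta}\,\epsilon_0^{-\ell d(a-d)-db-ca}
\]
carries a balancing phase. Combined with the change of the inner monomial from $(b,c)$ to $(k,j)$, this phase must be an even power of $\epsilon_0$. The total exponents are $2(-2ac-2\ell ad-2bd+cd+\ell d^2-nd+d)$ for $5_2$ and $2(ac+\ell ad+bd-cd+nd-d)$ for $6_1$.

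Only then is each term exactly $\binom ad$ times the corresponding prefix term. The inner sum at $\ell a+b$ is then $\sum_{d=0}^a\binom ad=2^a$ times the inner sum at $b$, which is where $|c_K|=2$ comes from. A residual phase $\theta^d$ would give $(1+\theta)^a$ instead, which is zero for $\theta=-1$. A phase depending on $c$ would prevent the inner sum from factoring through the prefix sum at all. So this parity check is the heart of the argument, not a detail.

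Together with the outer monomials' block ratios, $(-1)^a$ for $5_2$ and $1$ for $6_1$, and the factor $\sigma^a$ from $D_{n,k}$, it yields $(c_K\sigma)^a$ and closes the proof.
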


The formula follows from the $q$-Lucas identity and cancellation of
phases in the double-twist coefficient sums
(\cref{thm:four-all-roots}). At $\zeta=1$, it reduces to the
classical specialization $\delta_K^{-(n-1)}$.
At $n=2$, the Alexander denominator is also related to the
ADO--Alexander factorization of Willetts's unified invariant
\cite{Willetts22}.

For the trefoil and figure-eight we also bound all higher quantum
orders at every root. The trefoil has full order-$m$ denominator
dividing $(1+\sigma W)^{a_0+2m+1}$, and its nonvanishing coefficient
blocks attain the corresponding pole order. For the figure-eight,
the full denominator divides $(1-\sigma W)^{a_0+2m}$ for $m\ge1$.
Both knot elements therefore satisfy the recentering theorem at all
roots (\cref{thm:trefoil-root-jets}).

At $q=i$, the four specializations are
\begin{align}
 F_{K,2d+1}(X;i)&=(1+c_KX^2)^{-d},\qquad d\ge1,
       \label{eq:introduction-odd-i}\\*
 F_{K,2d}(X;i)&=
 \frac{1+2c_K+(-1)^{d-1}c_KX}
      {(1-c_K(X^2-4))^d},\qquad d\ge1.
       \label{eq:introduction-even-i}
\end{align}
Their expansions give every normal coefficient at every earlier
boundary. Each normal sequence has minimal recurrence order
$2\lfloor n/2\rfloor$ (\cref{thm:four-at-i}).

At $n=3$, all four boundary values are one and all first normal
coefficients are zero, while the second normal coefficients are
$1,-1,2,-2$, respectively.

The rank-two $C$-recurrences of $5_2$ and $6_1$ also give nontrivial
spectral line modules at $q=i$. Put $Q=q^2$ and let $\xi$ be the
independent coefficient-index variable, with $\xi=Q^k$ along the
coefficient sequence. At $q=i$ the shift sends $\xi$ to $-\xi$. Let $B_K(\xi)$ be the one-step matrix of the monic
$C$-recurrence in the consecutive-coefficient basis, and put
$M_K(\xi)=B_K(-\xi)B_K(\xi)$. Then
\[
 p_K(L,\xi)=\det(LI-M_K(\xi))=C_K(L,\xi^2;1).
\]
The associated spectral module determines whether the period matrix
is conjugate over the Laurent ring to the companion matrix of $p_K$.

\begin{maintheorem}[Elliptic spectral modules]\label{thm:introduction-spectral}
For $K=5_2,6_1$, put $A_0=\mathbb C[\xi^{\pm1}]$ and
$S_K=A_0[L]/(p_K)$. Both spectral curves are isomorphic to
\[
 E^\circ=E\setminus E[2],\qquad
 E:\ y^2=x^3+x^2-x,
\]
where $E$ is the smooth projective curve with origin $O$ at infinity
and $E[2]$ is its two-torsion subgroup.
Under the coordinate identifications of \cref{thm:elliptic-spectral},
the modules $\mathscr P_K=A_0^2$, with $L$ acting by $M_K$, satisfy
\[
 \mathscr P_{5_2}\simeq\mathcal O_{E^\circ}(P),\quad P=(-1,-1),
 \qquad
 \mathscr P_{6_1}\simeq\mathcal O_{E^\circ}(R),\quad R=(1,1).
\]
Their classes have exact order three in $\operatorname{Pic}(E^\circ)$
and are inverse to one another. For every positive integer $m$,
\begin{equation}\label{eq:introduction-threefold}
 M_K^{\oplus m}\sim_{\mathrm{GL}_{2m}(A_0)}
             \operatorname{Comp}(p_K)^{\oplus m}
       \quad\Longleftrightarrow\quad3\mid m.
\end{equation}
\end{maintheorem}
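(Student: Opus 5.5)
The proof has five steps: compute the period matrices, identify the spectral curves with $E^\circ$, compute the line bundles, compute their orders in $\operatorname{Pic}(E^\circ)$, and conclude with the stabilization criterion \eqref{eq:introduction-stabilization}.

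\emph{Period matrices and spectral curves.} First I write down the rank-two monic $C$-recurrences of $5_2$ and $6_1$ from \cite{GS06}, specialized at $q=i$. They give explicit $2\times2$ one-step matrices $B_K(\xi)$ over $\mathbb C[\xi^{\pm1}]$, with possible denominators; \cref{prop:twist-C-recurrences} shows these cannot all be removed. I then compute $M_K(\xi)=B_K(-\xi)B_K(\xi)$. By \cref{prop:period}, its trace and determinant are Laurent polynomials in $\xi^2$, and I check that $p_K(L,\xi)=C_K(L,\xi^2;1)$.

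Next I exhibit an explicit birational change of variables $(\xi,L)\mapsto(x,y)$ taking the affine curve $p_K=0$ onto $E^\circ=E\setminus E[2]$. Since $p_K$ is quadratic in $L$, I complete the square, so the curve becomes $w^2=\operatorname{disc}(\xi)$. Because the discriminant involves $\xi$ with low degree, this should reduce to a genus-one quartic in $\xi$, which I then move to Weierstrass form. I must verify two things:
\begin{itemize}
\item the affine curve is smooth;
\item the removed points are exactly those over $\xi=0,\infty$, and they map to the four points of $E[2]$, namely $O$, $(0,0)$ and $(\tfrac{-1\pm\sqrt5}{2},0)$.
\end{itemize}
The second check confirms that $S_K\simeq\mathcal O(E^\circ)$.

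\emph{The line modules.} Next I identify the modules. For $M=\begin{pmatrix}a&b\\c&d\end{pmatrix}$ with characteristic polynomial $p$, the module $\mathscr P_M$ is isomorphic, via the eigenvector construction, to the $S$-ideal generated by $b$ and $L-a$ (equivalently, by $L-d$ and $c$). I choose whichever pair is coprime-friendly, compute the divisor of this ideal on $E^\circ$, and show it is $P=(-1,-1)$ for $5_2$ and $R=(1,1)$ for $6_1$. In practice this means checking that $b$ and $L-a$ have a unique common zero on $E^\circ$, located at that point, and transversal. By Latimer--MacDuffee, the companion matrix corresponds to $S$ itself. So $[\mathscr D]=[\mathscr P_K]$, up to the sign convention, which does not affect the order.

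\emph{Orders in the Picard group.} The Picard group is
\[
\operatorname{Pic}(E^\circ)=\operatorname{Pic}(E)/\langle O,T_1,T_2,T_3\rangle,
\]
where $T_1,T_2,T_3$ are the nonzero points of $E[2]$. Using degree, this is $E(\mathbb C)/E[2]$, with a point $Q$ corresponding to the class of $\mathcal O(Q)$. On $E$ I compute the chord-tangent law:
\begin{itemize}
\item the tangent at $P=(-1,-1)$ has slope $(3x^2+2x-1)/(2y)=0$, so the tangent line is $y=-1$;
\item substituting, $x^3+x^2-x-1=(x+1)^2(x-1)=0$, so the third intersection point is $(1,-1)$ and $2P=(1,1)=R$;
\item similarly $2R=P$;
\item hence $3P=P+R$; since $P$ and $R$ have different $x$-coordinates, this is not literally $O$, and I compute it as a chord sum.
\end{itemize}
The chord through $P$ and $R$ has slope $1$, so the line is $y=x$. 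Then $x^2=x^3+x^2-x$ gives $x(x^2-1)=0$, so the third point is $(0,0)$ and $P+R=-(0,0)=(0,0)\in E[2]$. Hence $3P\in E[2]$ and $R=2P\equiv-P$ modulo $E[2]$. The classes are therefore inverse to one another and have order dividing $3$. The class of $P$ is nontrivial, because $P\notin E[2]$: its $y$-coordinate is nonzero. So each class has exact order three.

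\emph{Conclusion and main obstacle.} Applying \eqref{eq:introduction-stabilization} and \cref{thm:stabilization} with $d=3$ gives $M_K^{\oplus m}\sim\operatorname{Comp}(p_K)^{\oplus m}$ if and only if $3\mid m$.

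I expect the main obstacle to be the explicit coordinate identification and the module-to-divisor computation. One must track the correct $\xi$-normalization so that both knots land on the same $E$ with compatible coordinates, with no twist, over $\mathbb C$. One must also confirm that the ideal $(b,L-a)$ is locally principal with a single simple zero rather than also vanishing at the deleted points. I would first try to choose coordinates in which the mirror-like relation between $5_2$ and $6_1$ ($c_K=-2,2$) becomes the inversion $y\mapsto-y$ combined with a sign change. That would explain why the two classes are inverse to each other and reduce the work to a single computation.
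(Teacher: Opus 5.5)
Your architecture matches the paper's proof: explicit period matrices, an explicit isomorphism $\Spec S_K\simeq E^\circ$ whose deleted points (those over $\xi=0,\infty$) are $E[2]$, a single simple zero over $\xi=1$ read off from $(M_K)_{12}$ and $(M_K)_{11}$, the chord--tangent law, $\Pic(E^\circ)\simeq E(\mathbb C)/E[2]$, and \cref{thm:stabilization}.

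However, your module step is wrong as stated. The vector $(b,L-a)^{\mathsf T}$, like $(L-d,c)^{\mathsf T}$, is a \emph{right} eigenvector, $Mv=Lv$. Pairing with it is $S$-linear on the dual $\operatorname{Hom}_{A_0}(\mathscr P_M,A_0)$, which is $\mathscr P_{M^{\mathsf T}}$. Since $\operatorname{Hom}_{A_0}(S,A_0)\simeq S$ for monogenic $S$, this dual is $\mathscr P_M^{-1}$. So $(b,L-a)\simeq\mathscr P_M^{-1}$, not $\mathscr P_M$.

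Concretely, for $5_2$ the ideal is $(b,L-a)=(\xi-1,L+2)$, the maximal ideal of $P$, i.e.\ $\mathcal O_{E^\circ}(-P)$. Since $3P\in E[2]$, one has $-[P]=[R]\neq[P]$. Read literally, your identification therefore gives $\mathscr P_{5_2}\simeq\mathcal O(R)$ and $\mathscr P_{6_1}\simeq\mathcal O(P)$, the reverse of the statement. You reach the stated bundles only through a second slip: reading the ideal that vanishes at $P$ as $\mathcal O(P)$. The order-three and conjugacy conclusions are unaffected, but the displayed identifications are not.

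There are three ways to repair this:
\begin{itemize}
\item use a left eigenvector, $(c,L-a)$ or $(L-d,b)$;
\item take the inverse of your ideal;
\item argue as the paper does.
\end{itemize}
In the paper's argument, the map $f\mapsto f(M_K)e_2$ is injective with cokernel of $A_0$-length one at $\xi=1$, on which $L$ acts by $(M_K)_{11}(1)=-2$ resp.\ $2$. So the section $e_2$ has exactly one simple zero, at $P$ resp.\ $R$, and $\mathscr P_K\simeq\mathcal O(P)$ resp.\ $\mathcal O(R)$.

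Two smaller points. First, your ``similarly $2R=P$'' is false. The tangent at $R$ is $y=2x-1$, which meets $E$ in $(x-1)^3=0$. So $R$ is a flex, $3R=O$, and $2R=-R=(1,-1)$; the relation holds only modulo $E[2]$. You never use it, and $3P=P+R=(0,0)$ is correct.

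Second, your coordinate step is still only a search: both discriminant curves become $w^2=\eta^4-2\eta^2+5$ with $\eta=\xi^{\pm1}$. The named points $P,R$ only have meaning relative to the paper's explicit maps. These are $x=L\xi^{-4}+\xi^2-1+\xi^{-2}$, $y=\xi x$ for $5_2$, and $x=\xi^2(L-1)$, $y=\xi(L-1)$ for $6_1$. Under them $x,y$ are units, and $(\xi,L)=(1,-2)$ and $(1,2)$ go to $P$ and $R$.
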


The companion matrix $\operatorname{Comp}(p_K)$ has free spectral
module and is conjugate to $M_K$ over $\mathbb C(\xi)$, so the
obstruction in \eqref{eq:introduction-threefold} appears only over the
Laurent ring.

The orders come from the group law, $2P=R$, $3P=(0,0)$, and
$3R=O$, together with the identification
$\operatorname{Pic}(E^\circ)\simeq E(\mathbb C)/E[2]$. A connected
finite \'etale cover of
degree three trivializes each spectral module, and every finite
locally free trivializing cover has degree divisible by three
(\cref{thm:elliptic-spectral,cor:threefold-conjugacy}).

\subsection{Organization}\label{sec:intro-organization}

Part~I concerns the Newton rings themselves.
Section~\ref{sec:general} proves the identity theorem and the Habiro
boundary theorems, Section~\ref{sec:sampling} treats cyclotomic
sampling and $p$-adic color interpolation, and
Section~\ref{sec:universal} constructs the rank-uniform quantum
neighborhoods.

Part~II concerns knot elements. Section~\ref{sec:jones} contains the
rank--color relations and finite-rank reconstruction.
Section~\ref{sec:alexander} proves the Alexander denominator bounds,
and Section~\ref{sec:recurrences} treats difference equations and
Frobenius comparisons. Section~\ref{sec:transport} contains the
root-of-unity expansions, finite normal reconstruction, and the
comparison of Laurent recurrence modules. Section~\ref{sec:examples}
gives the explicit computations for $3_1$, $4_1$, $5_2$, and $6_1$.

\part{General Theory}

\section{Newton rings and Habiro boundaries}\label{sec:general}

\subsection{Interpolation on a node set}

Let $D$ be an integral domain with fraction field $L$, and let $S=(s_0,s_1,\ldots)$ be a sequence of distinct elements of $D$. Set
\[
 P_0=1,\qquad P_m(X)=\prod_{i<m}(X-s_i),\qquad
 \NN_D(S)=\varprojlim_mD[X]/(P_m).
\]
All Newton inverse limits carry the topology defined by their finite quotient maps; the coefficient ring is discrete.

\begin{proposition}\label{prop:coordinates}
The residue classes of $P_0,\ldots,P_{m-1}$ form a $D$-basis of $D[X]/(P_m)$. Every element of $\NN_D(S)$ has a unique expansion
\begin{equation}\label{eq:newton-series}
 F=\sum_{k\ge0}c_kP_k,\qquad c_k\in D.
\end{equation}
As a topological $D$-module, $\NN_D(S)$ is $\prod_{k\ge0}D$.
\end{proposition}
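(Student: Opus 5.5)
The plan is to prove the three assertions in order, using only that each $P_m$ is monic of degree $m$ and that $P_m \mid P_{m'}$ for $m\le m'$.

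\emph{Finite basis.} Since $P_m$ is monic of degree $m$, division with remainder in $D[X]$ is available. Hence $D[X]/(P_m)$ is a free $D$-module with basis $1,X,\ldots,X^{m-1}$. The family $P_0,\ldots,P_{m-1}$ is obtained from this monomial basis by a unitriangular change of basis, because $P_k$ is monic of degree $k$. It is therefore also a $D$-basis of $D[X]/(P_m)$.

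\emph{Unique expansion.} I would next check compatibility with the transition maps $D[X]/(P_{m+1})\to D[X]/(P_m)$. Since $P_m$ divides $P_{m+1}$, these maps are well defined. They send $P_k$ to $P_k$ for $k<m$, and send $P_m$ to $0$. So in the bases above, the transition map is the coordinate projection $D^{m+1}\to D^m$ that forgets the last coordinate.

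The inverse limit of this system of projections is therefore $\prod_{k\ge0}D$, with the $k$th coordinate being $c_k$. This gives existence and uniqueness of the coefficients $c_k$. The series $\sum_k c_kP_k$ makes sense in $\NN_D(S)$ because its image in each quotient $D[X]/(P_m)$ is the finite sum $\sum_{k<m}c_kP_k$.

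\emph{Topology.} The inverse-limit topology is the coarsest one making every map to a discrete $D[X]/(P_m)\cong D^m$ continuous. Under the identification above, these maps are the projections of $\prod_k D$ onto its first $m$ coordinates. The topology generated by these finite projections, with $D$ discrete, is exactly the product topology. So the identification $\NN_D(S)\cong\prod_{k\ge0}D$ is a homeomorphism as well as a $D$-module isomorphism.

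There is no real obstacle here. The only point needing care is the observation that the transition maps are coordinate projections in the Newton basis, and that observation is exactly where the divisibility $P_m\mid P_{m+1}$ is used.
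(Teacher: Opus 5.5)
Your proposal is correct and follows essentially the same route as the paper: a unitriangular change from the monomial basis to the Newton basis, the observation that the transition maps discard the last Newton coefficient, and hence the identification of the inverse limit with $\prod_{k\ge0}D$. Your explicit check that, with $D$ discrete, the inverse-limit topology coincides with the product topology spells out a point the paper leaves implicit.
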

\begin{proof}
The polynomial $P_k$ is monic of degree $k$, so the change from the monomial basis to the Newton basis is triangular with diagonal entries one. Reduction from level $m+1$ to level $m$ discards the coefficient of $P_m$ and leaves all earlier coefficients unchanged. Compatible finite residues are therefore precisely arbitrary sequences $(c_k)_{k\ge0}$.
\end{proof}

\begin{definition}
A function $f:S\to D$ is polynomially compatible if, for every finite subset $T\subset S$, there is a polynomial $Q_T\in D[X]$ whose restriction to $T$ is $f|_T$. Write $\Comp_D(S)$ for the ring of these functions with pointwise operations and the topology of agreement on finite sets.
\end{definition}

\begin{proposition}\label{prop:compatible}
Evaluation is a topological ring isomorphism
\[
 \NN_D(S)\xrightarrow{\sim}\Comp_D(S).
\]
In particular, the ring and its topology depend on the node set, not its enumeration.
\end{proposition}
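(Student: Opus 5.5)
The plan is to define the evaluation map level by level and then check bijectivity, multiplicativity and continuity.

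\emph{Well-definedness and ring structure.} For $F\in\NN_D(S)$ and $s_r\in S$, the polynomial $P_m$ vanishes at $s_r$ whenever $m>r$. So evaluation at $s_r$ factors through $D[X]/(P_{r+1})$ and gives a well-defined value $F(s_r)=\sum_{k\le r}c_kP_k(s_r)$. For a finite $T\subset S$, choose $m$ with $T\subset\{s_0,\dots,s_{m-1}\}$. Any lift $Q\in D[X]$ of the level-$m$ residue of $F$ restricts to $F|_T$, so $F|_T$ is polynomially compatible. Evaluation at each node is a ring homomorphism at every finite level, hence so is the induced map $\NN_D(S)\to\Comp_D(S)$.

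\emph{Injectivity.} Suppose $F(s_r)=0$ for all $r$. We show $c_r=0$ by induction on $r$. If $c_0=\dots=c_{r-1}=0$, then
\[
0=F(s_r)=c_rP_r(s_r)=c_r\prod_{i<r}(s_r-s_i).
\]
The nodes are distinct and $D$ is a domain, so this product is nonzero and $c_r=0$.

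\emph{Surjectivity.} This is the main point, and the one place where polynomial compatibility is really used. Let $f\in\Comp_D(S)$. For each $m$, take $T_m=\{s_0,\dots,s_{m-1}\}$ and a polynomial $Q_m\in D[X]$ agreeing with $f$ on $T_m$. Let $\bar Q_m$ be its class in $D[X]/(P_m)$. I would first show that $\bar Q_m$ does not depend on the choice of $Q_m$. The difference $G$ of two choices vanishes on $T_m$. Divide by the monic polynomial $P_m$ to write $G=P_mH+\sum_{k<m}d_kP_k$ with $d_k\in D$, using \cref{prop:coordinates}. Evaluating at $s_0,\dots,s_{m-1}$ gives a triangular system with nonzero diagonal entries $P_k(s_k)$ in a domain, so all $d_k=0$ and $G\in(P_m)$. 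The same argument shows that the residue of $Q_{m+1}$ at level $m$ equals $\bar Q_m$, since $Q_{m+1}$ also agrees with $f$ on $T_m$. The classes $\bar Q_m$ are therefore compatible and define $F\in\NN_D(S)$ with $F(s_r)=f(s_r)$ for all $r$.

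\emph{Topology and independence of enumeration.} Basic neighborhoods of $0$ in $\NN_D(S)$ are the kernels of the maps to level $m$. By the vanishing argument above, these are exactly the elements vanishing on $T_m$. Every finite $T\subset S$ lies in some $T_m$, and the sets $T_m$ are themselves finite. So these kernels form a cofinal system for the topology of agreement on finite sets, and the map is a homeomorphism.

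Finally, $\Comp_D(S)$ and its topology are defined using only the set $S$ and its finite subsets. Hence $\NN_D(S)$ and its topology depend only on the node set, not on the order in which the nodes are enumerated.
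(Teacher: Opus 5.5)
Your proof is correct and follows essentially the paper's route. Like the paper, you identify each finite level $D[X]/(P_m)$ with the polynomially representable functions on $s_0,\dots,s_{m-1}$, where the key point is that a polynomial vanishing there lies in $(P_m)$; you then pass to the inverse limit and use cofinality of initial segments among finite subsets for the topology and enumeration independence. The only difference is cosmetic: you prove the divisibility with the triangular evaluation matrix in the Newton basis, whereas the paper divides successively by the monic factors $X-s_i$.
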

\begin{proof}
A polynomial vanishing on $s_0,\ldots,s_{m-1}$ is divisible by $P_m$. Indeed, after dividing by $X-s_0$, evaluation at each other node still vanishes because $D$ is a domain and $s_i-s_0\ne0$. Repeating gives the assertion. Thus $D[X]/(P_m)$ identifies with the polynomially representable functions on the first $m$ nodes. Taking inverse limits gives the isomorphism. Finite initial sets form a cofinal subsystem of all finite subsets, proving enumeration independence.
\end{proof}

For $k\ge1$, distinct nodes $s_0,\ldots,s_k$, and values $f_0,\ldots,f_k$, define
\[
 V_k=\prod_{0\le i<j\le k}(s_j-s_i),\qquad
 \Delta_k(f)=\det\begin{pmatrix}
 1&s_0&\cdots&s_0^{k-1}&f_0\\
 1&s_1&\cdots&s_1^{k-1}&f_1\\
 \vdots&\vdots&&\vdots&\vdots\\
 1&s_k&\cdots&s_k^{k-1}&f_k
 \end{pmatrix},
\]
where $V_0=1$ and $\Delta_0(f)=f_0$.

\begin{proposition}\label{prop:divided-difference}
Every sequence in $L$ has a unique triangular Newton interpolation, with coefficients
\begin{equation}\label{eq:inversion}
 c_k=\frac{f_k-\sum_{j<k}P_j(s_k)c_j}{P_k(s_k)}
     =\frac{\Delta_k(f)}{V_k}.
\end{equation}
A sequence in $D$ is polynomially compatible if and only if $\Delta_k(f)\in V_kD$ for all $k$. For such a sequence,
\begin{equation}\label{eq:pairwise}
 f_r-f_s\in(s_r-s_s)D.
\end{equation}
\end{proposition}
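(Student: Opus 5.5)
Over the fraction field $L$ the interpolation problem is triangular: evaluating $\sum_j c_jP_j$ at $s_k$ involves only $j\le k$, and $P_k(s_k)=\prod_{i<k}(s_k-s_i)\ne0$. Solving for $c_k$ gives the first expression in \eqref{eq:inversion} and shows the coefficients exist and are unique.

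For the determinant form, I will induct on $k$ and use the Newton basis inside the Vandermonde-type matrix. In the matrix defining $\Delta_k(f)$, column operations with coefficients in $D$ replace the columns $1,X,\dots,X^{k-1}$ by $P_0,\dots,P_{k-1}$ evaluated at the nodes, since each $P_j$ is monic of degree $j$. This does not change the determinant. Next, subtract $\sum_{j<k}c_jP_j(s_i)$ from the last column, where the $c_j$ come from the triangular solution for the first $k$ values. The last column then becomes $(0,\dots,0,\,f_k-\sum_{j<k}c_jP_j(s_k))^T=(0,\dots,0,\,c_kP_k(s_k))^T$. The remaining $k\times k$ block is lower triangular with diagonal entries $P_i(s_i)$, so
\[
\Delta_k(f)=c_kP_k(s_k)\prod_{i<k}P_i(s_i)=c_k\prod_{i\le k}P_i(s_i)=c_kV_k.
\]
This proves \eqref{eq:inversion}.

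For the integrality criterion, I will use \cref{prop:compatible} together with \cref{prop:coordinates}. A sequence is polynomially compatible exactly when it is the value sequence of some element $\sum c_kP_k$ with all $c_k\in D$. Suppose it is. Then uniqueness of the triangular interpolation over $L$ identifies these $c_k$ with $\Delta_k(f)/V_k$, so $\Delta_k(f)\in V_kD$. Conversely, if every $\Delta_k(f)/V_k$ lies in $D$, the series $\sum c_kP_k$ is an element of $\NN_D(S)$. Its evaluation at $s_r$ uses only $k\le r$ and returns $f_r$, so the sequence is compatible. The one point needing care is that compatibility is defined for arbitrary finite subsets $T$, not just initial segments. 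Cofinality of initial segments, as already noted in \cref{prop:compatible}, handles this.

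For \eqref{eq:pairwise}, take a polynomial $Q\in D[X]$ that agrees with $f$ on $\{s_r,s_s\}$. Then $f_r-f_s=Q(s_r)-Q(s_s)$. This lies in $(s_r-s_s)D$ because $X-Y$ divides $Q(X)-Q(Y)$ in $D[X,Y]$.

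I do not expect a serious obstacle. The only delicate step is the determinant bookkeeping: one must check that the column reductions use only earlier columns and coefficients available at that stage. Since the $c_j$ lie in $L$, the determinant identity should be stated over $L$ and then read as a statement in $D$.
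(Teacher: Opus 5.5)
Your proposal is correct and follows the paper's proof. You use triangularity over $L$ for existence and uniqueness, \cref{prop:compatible,prop:coordinates} to identify integral Newton coefficients with compatibility, and one integral interpolant evaluated at two nodes for \eqref{eq:pairwise}. The only cosmetic difference is that you get $\Delta_k(f)=c_kV_k$ by column-reducing the matrix to triangular form, whereas the paper applies Cramer's rule to the leading coefficient of the interpolant of degree at most $k$; this is the same linear algebra.
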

\begin{proof}
The evaluation matrix of the Newton basis is lower triangular with nonzero diagonal, which gives the recursion and uniqueness. The $k$th Newton coefficient is the leading coefficient of the interpolant of degree at most $k$, so Cramer's rule in the monomial basis gives the determinant expression. Integrality at every finite level is equivalent to compatibility. Finally, evaluating one integral interpolating polynomial at two nodes gives \eqref{eq:pairwise}.
\end{proof}

\begin{example}
The data $(s_0,s_1,s_2)=(0,2,4)$ and $(f_0,f_1,f_2)=(0,0,4)$ satisfy all pairwise congruences over $\Z$, but $c_2=1/2$. Thus the pairwise congruences \eqref{eq:pairwise} do not imply integrality of the higher divided differences.
\end{example}

\begin{proposition}\label{prop:finite-order}
Assume $D$ is normal, let $m\ge1$, and put $A_m=D[X]/(P_m)$. Its normalization in $L^m$ is $D^m$, its discriminant is $V_{m-1}^2$, and the conductor of $A_m\subset D^m$ is
\[
 \bigoplus_{i=0}^{m-1} d_iD e_i,
 \qquad d_i=\prod_{\substack{0\le j<m\\j\ne i}}(s_i-s_j).
\]
Here $e_i$ is the $i$th coordinate idempotent of $D^m$.
For $F\in A_m$, its norm is $\prod_{i<m}F(s_i)$; it is a unit if and only if all these values are units of $D$.
\end{proposition}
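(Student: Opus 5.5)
The natural tool is the evaluation map
\[
 \varepsilon:A_m\longrightarrow D^m,\qquad
 F\longmapsto\bigl(F(s_0),\ldots,F(s_{m-1})\bigr).
\]
First, $\varepsilon$ is injective by the divisibility argument in the proof of \cref{prop:compatible}. After tensoring with $L$ it becomes the Chinese remainder isomorphism $L[X]/(P_m)\simeq L^m$, because the nodes are distinct. Hence $A_m$ is an order in $L^m$. Each element of $A_m$ is integral over $D$, since $X$ satisfies the monic polynomial $P_m$, so the normalization of $A_m$ in $L^m$ is contained in the integral closure of $D$ in $L^m$. That integral closure is $D^m$, because $D$ is normal and integrality is checked coordinatewise. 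Conversely, $D^m$ is integral over $D\subset A_m$, since the idempotents satisfy $e^2=e$. Thus the normalization of $A_m$ is exactly $D^m$.

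For the discriminant, I will use the monomial basis $1,X,\ldots,X^{m-1}$. By \cref{prop:coordinates} this is a $D$-basis of $A_m$, via the unitriangular change of basis from the Newton basis. The trace form of $A_m$ is computed in $L^m$, where $\operatorname{Tr}(F)=\sum_i F(s_i)$. So the Gram matrix is $\mathcal V^{T}\mathcal V$, where $\mathcal V=(s_i^{\,j})$ is the Vandermonde matrix. Its determinant is $\det(\mathcal V)^2=V_{m-1}^2$.

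For the conductor $\mathfrak f=\{y\in D^m: yD^m\subseteq A_m\}$, note that $\mathfrak f$ is an ideal of $D^m$, so it has the form $\bigoplus_i \mathfrak a_i e_i$. Moreover $\mathfrak a_i e_i\subseteq A_m$ consists precisely of the elements of $A_m$ whose values vanish at every $s_j$ with $j\ne i$. Such an element is $G\cdot\prod_{j\ne i}(X-s_j)$ modulo $P_m$, by the divisibility argument. Its value at $s_i$ is $G(s_i)d_i$, and $G(s_i)$ ranges over all of $D$ as $G$ ranges over the constants. Hence $\mathfrak a_i=d_iD$, which gives the claimed conductor. The step that needs the most care is the claim that the conductor splits as a sum over coordinates. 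This follows because $\mathfrak f$ is a $D^m$-ideal, hence stable under multiplication by the idempotents $e_i$.

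For the norm, multiplication by $F$ on $A_m\otimes L\simeq L^m$ is diagonal with entries $F(s_i)$, so its determinant is $\prod_i F(s_i)$. If $F$ is a unit in $A_m$, its norm is a unit of $D$, so each value $F(s_i)$ is a unit. Conversely, suppose the norm $N$ is a unit. By Cayley--Hamilton, $F$ satisfies its characteristic polynomial over $D$, whose constant term is $\pm N$. This expresses $F^{-1}$ as a polynomial in $F$ with coefficients in $D$ divided by $\pm N$, so $F^{-1}\in A_m$.
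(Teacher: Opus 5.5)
Your proposal is correct and follows essentially the same route as the paper: evaluation into $D^m$ with transitivity of integrality for the normalization, the Vandermonde Gram matrix for the discriminant, a coordinatewise analysis of the conductor as a $D^m$-ideal, and the diagonal determinant with Cayley--Hamilton for the norm. The one cosmetic difference is in the conductor step, where you use monic divisibility by $\prod_{j\ne i}(X-s_j)$ while the paper reads off the leading coefficient $c/d_i$ of the Lagrange representative $cL_i$. You leave unstated that $ce_i\in A_m$ already puts $ce_i$ in the conductor, but this is immediate from $ce_i\cdot y=y_i(ce_i)$.
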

\begin{proof}
Evaluation gives $A_m\otimes_D L\simeq L^m$.
Since $A_m$ is finite free over $D$, an element integral over $A_m$ is
integral over $D$. If it lies in $L^m$, normality puts every coordinate
in $D$. Conversely, $D^m$ is finite over $D$ and hence integral over
$A_m$. Thus $D^m$ is the normalization.

The trace matrix in the monomial basis is the product of a Vandermonde
matrix and its transpose. Its determinant is $V_{m-1}^2$.
The unique polynomial representative of $e_i$ over $L$ of degree less than $m$ is
\[
 L_i(X)=\frac{\prod_{j\ne i}(X-s_j)}{d_i}.
\]
For $c\in D$, membership $ce_i\in A_m$ is equivalent to
$cL_i\in D[X]$, which holds precisely when $c/d_i\in D$.
The conductor is an ideal of $D^m$, so these conditions give
$\bigoplus_i d_iD e_i$.

Multiplication by $F$ is diagonal over $L$ with entries $F(s_i)$.
Its determinant is therefore $\prod_iF(s_i)$.
It is a unit exactly when multiplication by $F$ is invertible on the
finite free $D$-module $A_m$, equivalently when $F$ is a unit of $A_m$.
\end{proof}

Finitely many values determine an element up to the following ambiguity.

\begin{proposition}\label{prop:ambiguity}
Let $T\subset S$ be finite and nonempty, put $P_T(X)=\prod_{t\in T}(X-t)$, and let $s\in S\setminus T$. Among elements of $\NN_D(S)$ vanishing on $T$, the possible values at $s$ form exactly the ideal $P_T(s)D$. More generally, if $Q_T$ is the unique polynomial of degree less than $|T|$ interpolating an element $F$ on $T$, then $Q_T\in D[X]$ and
\begin{equation}\label{eq:arbitrary-remainder}
 F(s)-Q_T(s)\in P_T(s)D.
\end{equation}
\end{proposition}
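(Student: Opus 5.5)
The plan is to use enumeration independence (\cref{prop:compatible}) to reorder $S$ so that the nodes of $T$ come first, say $T=\{s_0,\ldots,s_{t-1}\}$ with $t=|T|$, and then to place $s$ as the next node $s_t$. This reordering changes neither the ring $\NN_D(S)$ nor the evaluation maps, so we may work with the Newton basis $P_k$ attached to this enumeration. In particular $P_t=P_T$.

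First I show that $Q_T\in D[X]$. Write $F=\sum_k c_kP_k$ with $c_k\in D$ (\cref{prop:coordinates}). On $T$ every $P_k$ with $k\ge t$ vanishes, so the truncation $\sum_{k<t}c_kP_k$ is a polynomial in $D[X]$ of degree less than $t$ that agrees with $F$ on $T$. Interpolants of degree less than $t$ on $t$ distinct points are unique over $L$. Hence $Q_T=\sum_{k<t}c_kP_k\in D[X]$.

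Next comes the remainder. Evaluating at $s=s_t$ gives
\[
 F(s)-Q_T(s)=\sum_{k\ge t}c_kP_k(s_t)=c_tP_t(s_t),
\]
because $P_k(s_t)=0$ for $k>t$. This proves $F(s)-Q_T(s)\in P_T(s)D$.

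It remains to identify the values at $s$ among elements vanishing on $T$. If $F$ vanishes on $T$, then $Q_T=0$, so $F(s)\in P_T(s)D$. Conversely, for any $c\in D$ the element $cP_T=cP_t\in\NN_D(S)$ vanishes on $T$ and takes the value $cP_T(s)$ at $s$. So the possible values form exactly the ideal $P_T(s)D$.

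There is no real obstacle here. The only point needing care is that the reordering is legitimate, and that is exactly the enumeration independence of \cref{prop:compatible}. An alternative that avoids reordering is to note that $F$ restricted to $T\cup\{s\}$ is represented by some $G\in D[X]$, and then to divide $G-Q_T$ by $P_T$ over $D$, which is possible because $P_T$ is monic.
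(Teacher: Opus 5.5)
Your proof is correct, but it takes a different route from the paper's. The paper never re-enumerates $S$. It takes an integral polynomial representative of $F$ on $T\cup\{s\}$, supplied by polynomial compatibility. In the vanishing case, repeated division by the monic linear factors gives $G=P_TH$ with $H\in D[X]$, so $G(s)=P_T(s)H(s)$. For general $F$, it divides an integral representative by the monic $P_T$, identifies the remainder with $Q_T$ (which gives integrality), and applies the vanishing case to $F-Q_T$.

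You instead use the enumeration independence of \cref{prop:compatible} to list $T$ first and $s$ next, and then read everything off Newton coordinates. Here $Q_T$ is the truncation $\sum_{k<t}c_kP_k$, and the remainder is exactly $c_tP_T(s)$.

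Your route identifies the multiplier explicitly as the $t$th Newton coefficient, which is the divided difference on $T\cup\{s\}$ and coincides with the paper's $H(s)$. It also makes both directions one-line computations. The paper's route is more elementary and self-contained: it needs only integral representatives and monic division. The enumeration independence you invoke is itself proved in \cref{prop:compatible} by the same representative-and-division argument. Your closing alternative is essentially the paper's proof.
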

\begin{proof}
Compatibility on $T\cup\{s\}$ supplies an integral polynomial representative. If the values on $T$ vanish, repeated division by its monic linear factors shows that its value at $s$ is divisible by $P_T(s)$. Conversely, $aP_T$ realizes every value $aP_T(s)$ with $a\in D$. For a general $F$, divide an integral representative on $T$ by $P_T$; its remainder is the unique interpolant $Q_T$. Apply the first assertion to $F-Q_T$.
\end{proof}

\subsection{The quantum nodes and their arithmetic}\label{sec:arithmetic}

Put $R=\Z[q^{\pm1}]$ and, for every $n\ge0$, set
\begin{equation}\label{eq:quantum-ring}
 x_r^{(n)}=q^{n+2r}+q^{-n-2r},\qquad
 B_k^{(n)}(X)=\prod_{i<k}(X-x_i^{(n)}),\qquad
 \C_n=\varprojlim_kR[X]/(B_k^{(n)}).
\end{equation}
We use $a_n=x_0^{(n)}$ and $\{j\}=q^j-q^{-j}$. The nodes satisfy
\begin{equation}\label{eq:node-difference}
 x_r^{(n)}-x_s^{(n)}=\{r-s\}\{n+r+s\}.
\end{equation}
If $r>s\ge0$, the integers $r-s$ and $n+r+s$
are positive, so the nodes are distinct.

\begin{theorem}\label{thm:identity}
Two elements of $\C_n$ that agree on any infinite set of colors are equal. In particular, $\C_n$ is a domain.
\end{theorem}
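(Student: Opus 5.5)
We reduce to showing that an element $F\in\C_n$ vanishing on an infinite set of colors $T\subset\mathbb Z_{\ge0}$ is zero. Applying this to $F-G$ gives the first assertion. The domain property then follows: if $FG=0$, then at every color $r$ we have $F(x_r^{(n)})G(x_r^{(n)})=0$ in the domain $R$. So one of $F$, $G$ vanishes at infinitely many colors, and that factor is zero.

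For the vanishing statement, write $F=\sum_k c_kB_k^{(n)}$ and argue by contradiction. Let $k_0$ be the least index with $c_{k_0}\ne0$. Since $B_{k_0}^{(n)}$ divides all later terms, we may factor $F=B_{k_0}^{(n)}G$ with $G=\sum_{j\ge0}c_{k_0+j}\prod_{i=k_0}^{k_0+j-1}(X-x_i^{(n)})$. Here $G$ is an element of the Newton ring on the shifted node set $(x_{k_0},x_{k_0+1},\dots)$, whose constant coefficient is $c_{k_0}\ne0$. At colors $r\ge k_0$ the factor $B_{k_0}^{(n)}(x_r^{(n)})$ is a nonzero product of node differences, so $G(x_r^{(n)})=0$ for all $r\in T$ with $r\ge k_0$. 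We may therefore assume $k_0=0$, i.e.\ $c_0\ne0$, with $F$ vanishing on infinitely many colors $r\in T$.

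The key step is a valuation argument. Fix an irreducible element $\pi$ of $R$ not dividing $c_0$, chosen so that infinitely many $r\in T$ satisfy $\pi\mid x_r^{(n)}-x_0^{(n)}=\{r\}\{n+r\}$. For $\pi$ we take a cyclotomic polynomial $\Phi_d(q)$ with $d$ large, where $d\mid 2r$ for infinitely many $r\in T$. By \eqref{eq:pairwise}, for such $r$ we get $F(x_r)-F(x_0)\in\{r\}\{n+r\}R$. With $F(x_r)=0$ this forces $\pi\mid F(x_0)=c_0$, a contradiction.

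The main obstacle is choosing $d$ so that $\Phi_d\nmid c_0$ while infinitely many $r\in T$ have $d\mid 2r$; for an arbitrary infinite $T$ this may fail (e.g.\ $T$ the primes). The fix is to replace the pairwise congruence by Proposition~\ref{prop:ambiguity} with many nodes. Take $T'=\{r_1,\dots,r_m\}\subset T$ and $s=0$; vanishing on $T'$ gives $c_0=F(x_0)\in\prod_i(x_0-x_{r_i})R=\pm\prod_i\{r_i\}\{n+r_i\}R$. This product has $q$-degree tending to infinity with $m$, and divisibility in $R$ forces its degree (the span of Laurent exponents) to be at most that of $c_0$. Hence $c_0=0$, the desired contradiction, and no cyclotomic choice is needed.
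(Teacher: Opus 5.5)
Your final argument is correct, and it is essentially the paper's: both proofs rest on the fact that the Laurent breadth of the node differences grows without bound, so a fixed nonzero Laurent polynomial cannot be divisible by all of them. The paper is more economical, since for each $s$ and each $r>s$ in the zero set it uses the single pairwise congruence $F(x_s^{(n)})\in(x_r^{(n)}-x_s^{(n)})R$, whose divisor has breadth $2n+4r\to\infty$, and then concludes with \cref{prop:compatible}; so neither your shift to the leading Newton coefficient nor the many-node product from \cref{prop:ambiguity} is needed (for the latter, take all $r_i\ge1$, or note that $0\in T$ gives $c_0=0$ directly).
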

\begin{proof}
For a nonzero Laurent polynomial, its breadth is the difference between
its largest and smallest exponents. Breadth is additive under
multiplication over a domain.
Write $f_r=F(x_r^{(n)})$ and suppose $f_r=0$ on an infinite set $E$. Fix $s$. By \eqref{eq:pairwise}, $x_r^{(n)}-x_s^{(n)}$ divides $f_s$ for every $r\in E$. For $r>s$, the Laurent breadth of this divisor is $2n+4r$, since its extreme terms have exponents $\pm(n+2r)$. Thus a nonzero fixed $f_s$ cannot be divisible by these elements for unbounded $r$. Hence $f_s=0$ for every $s$, and \cref{prop:compatible} implies $F=0$. Apply this to a difference for the first assertion. If $FG=0$, the zero sets of their color values cover $\N$. One zero set is infinite, so one factor is zero.
\end{proof}

\begin{theorem}\label{thm:units-primes}
For every $n\ge0$,
\begin{equation}\label{eq:units}
 \C_n^\times=R^\times=\{\varepsilon q^a:\varepsilon\in\{1,-1\},\ a\in\Z\}.
\end{equation}
For every rational prime $p$, coefficient reduction induces an isomorphism
\begin{equation}\label{eq:prime-reduction}
 \C_n/p\C_n\simeq
 \varprojlim_m\Fp[q^{\pm1}][X]/\bigl(\overline{B_m^{(n)}}\bigr),
\end{equation}
and this quotient is a domain.
\end{theorem}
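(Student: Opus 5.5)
For the unit group, I would work through color values. If $F\in\C_n^\times$ with inverse $G$, then $F(x_r^{(n)})G(x_r^{(n)})=1$ in $R$ for every $r$, so every color value $f_r$ is a unit $\varepsilon_r q^{a_r}$. By \eqref{eq:pairwise}, $f_r-f_0\in\{r\}\{n+r\}R$ for all $r\ge1$. A difference of two units $\varepsilon_rq^{a_r}-\varepsilon_0q^{a_0}$ has breadth $|a_r-a_0|$ when it is nonzero. The divisor has breadth $2n+4r-2$, which eventually exceeds this unless the difference vanishes. I do not see an a priori bound on $a_r$, so I would use a different test point. Fix $s$ and consider the pairs $(s,r)$ with $r\to\infty$. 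The relation $f_r-f_s\in\{r-s\}\{n+r+s\}R$ says that at any primitive $d$th root of unity $\zeta$ with $\zeta^{2(r-s)}=1$ we get $f_r(\zeta)=f_s(\zeta)$.

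A cleaner route is to work modulo $\Phi_d$. Reducing at $q=\zeta$ for a fixed $d$, the congruences force $\pm\zeta^{a_r}=\pm\zeta^{a_s}$ whenever $r\equiv s$ modulo $\ell$. Doing this for all $d$ gives little directly, so I would instead apply \cref{thm:identity}. The sequence $f_r$ takes values in units. Among pairs $r>s$, I would show that $f_r=f_s$ on an infinite set by pigeonhole on the sign and on $a_r$. If the $a_r$ are unbounded, take $s=0$: the element $f_r-f_0$ is a difference of two monomials, hence of the form $\pm q^{c}(q^{e}\mp1)$ with $e=|a_r-a_0|$. It is divisible by $\{r\}\{n+r\}$, whose roots include primitive roots of order $2r$ and $2(n+r)$, up to sign conventions. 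Then $q^e\mp1$ vanishes at a primitive $2(n+r)$th root of unity, which forces $(n+r)\mid e$ roughly. That alone is not a contradiction, so this must be combined with the condition at $s=1$ as well. This is the main obstacle. I would try the higher divided differences of \cref{prop:divided-difference}: the second divided difference of three monomial values must be integral, which controls the growth of $a_r$.

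A more conceptual alternative, which I would try first, is to use the Habiro boundary. Consider the Newton coefficients: $c_0=f_0$ is a unit, and $F\cdot G=1$ gives triangular relations. Under the specialization $q=1$ (\cref{thm:introduction-neighborhoods}), the ring becomes $\Z[[u]]$ and $F$ maps to a unit power series. To obtain $F=c_0$ constant, I would apply \cref{prop:finite-order}. The norm of $F$ in $A_m$ is $\prod_{i<m}f_i$, a unit, and this holds for every $m$. It gives nothing beyond unit values, so I return to the breadth argument and aim to prove $a_r$ constant via the $\Phi_d$-adic congruence $f_r\equiv f_s$ modulo $\{r-s\}$ for all $s<r$. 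Then $f_r-f_s$, being $\pm q^c(q^e\mp1)$ or a sum of two monomials, must vanish at all $2(r-s)$th roots of unity and all $2(n+r+s)$th roots of unity. Choosing $s$ with $n+r+s$ coprime to $e$ and larger than $e$ forces $e=0$ and equal signs. Hence $f_r=f_s$ for all $r,s$, and $F=f_0$ is constant by \cref{prop:compatible}. The reverse inclusion $R^\times\subseteq\C_n^\times$ is clear.

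For the reduction modulo $p$, I would apply \cref{prop:coordinates}. The ring $\C_n$ is $\prod_k R$ in Newton coordinates, so $\C_n/p\C_n=\prod_k\Fp[q^{\pm1}]$. Since $\overline{B_m^{(n)}}$ is monic, the right-hand side also has Newton coordinates $\prod_k\Fp[q^{\pm1}]$, and multiplication is compatible at each finite level. For the domain property, I would rerun the proof of \cref{thm:identity} over $D=\Fp[q^{\pm1}]$, which is a domain. There the nodes are still distinct, because $\{r-s\}\{n+r+s\}$ has breadth $2n+4r$, which is nonzero modulo $p$ since the extreme coefficients are $\pm1$. \Cref{prop:compatible,prop:divided-difference} apply verbatim, and the breadth argument gives the identity theorem and hence the domain property.
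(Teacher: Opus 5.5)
\textbf{Reduction modulo $p$.} This part is fine and follows the paper. Newton coordinates identify both sides with $\prod_k\Fp[q^{\pm1}]$, compatibly with multiplication at each finite level. The breadth argument of \cref{thm:identity} then runs over $\Fp[q^{\pm1}]$, because the extreme coefficients of $\{r-s\}\{n+r+s\}$ are $\pm1$.

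\textbf{The unit group: a genuine gap.} Your final step uses only that $f_r-f_s=\pm q^c(q^e\mp1)$ vanishes at every root of unity of order dividing $2(r-s)$ or $2(n+r+s)$. You then propose to choose $s$ with $n+r+s$ coprime to $e$ and larger than $e$. But $e=|a_r-a_s|$ itself depends on $s$ and is not controlled. For $s<r$ the quantity $n+r+s$ is bounded, and for $s>r$ nothing is known about $a_s$. So such an $s$ need not exist.

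More seriously, no argument built only on these vanishing conditions can succeed. Take the nonconstant unit sequence $f_r=q^{(n+2r)^2}$. Then $f_r-f_s=q^{(n+2s)^2}\bigl(q^{4(r-s)(n+r+s)}-1\bigr)$, which vanishes, with equal signs, at every root of $\{r-s\}\{n+r+s\}$ for every pair $r>s$. This sequence therefore satisfies every constraint your argument uses.

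What this sequence violates is multiplicity. The point $q=1$ is a root of both factors $\{r-s\}$ and $\{n+r+s\}$, so $(q-1)^2$ divides $f_r-f_s$. By contrast, $q^M-1$ with $M\ne0$ has only a simple zero at $q=1$. Once you keep this double root, the proof is immediate, as in the paper:
\begin{itemize}
\item Evaluating $\varepsilon_rq^{a_r}-\varepsilon_sq^{a_s}$ at $q=1$ gives $\varepsilon_r=\varepsilon_s$.
\item The derivative at $q=1$ is then $\varepsilon_r(a_r-a_s)$, which must vanish, so $a_r=a_s$.
\item All color values therefore equal $f_0$, and \cref{thm:identity} gives $F=f_0$.
\end{itemize}
With this observation, your detours through higher divided differences, the $q=1$ fiber, and norms are not needed.
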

\begin{proof}
An element is a unit in $\C_n$ if and only if all its color values are units in $R$: sufficiency follows by taking the compatible inverses in the finite free quotients, using \cref{prop:finite-order}. Write those values as $f_r=\varepsilon_rq^{a_r}$. For $r\ne s$, \eqref{eq:node-difference} shows that $(q-1)^2$ divides $f_r-f_s$. Evaluation at $q=1$ first gives $\varepsilon_r=\varepsilon_s$. Differentiation at $q=1$ then gives $a_r=a_s$. All color values are therefore the same Laurent unit, and \cref{thm:identity} makes the element constant. The converse is immediate.

Reduction of every Newton coefficient defines the map in \eqref{eq:prime-reduction}. It is surjective by coefficientwise lifting. Its kernel consists of series whose coefficients all belong to $pR$, and dividing every coefficient by $p$ identifies that kernel with $p\C_n$. The map is multiplicative because multiplication is defined at finite levels. Over $\Fp[q^{\pm1}]$ the quantum nodes remain distinct. For $r>s$, the extreme coefficients in their difference remain nonzero, including when $p=2$ and $n=0$. The breadth argument of \cref{thm:identity} therefore applies in the same way and proves that the quotient is a domain.
\end{proof}

\begin{corollary}\label{cor:sparse-units}
If $F\in\C_n$ has Laurent-unit values on infinitely many colors, then $F$ is a constant Laurent unit.
\end{corollary}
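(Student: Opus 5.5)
The plan is to reduce the corollary to the two ingredients already used in the proof of \cref{thm:units-primes}: the pairwise congruence \eqref{eq:pairwise} and the identity theorem \cref{thm:identity}. Let $E\subset\N$ be the infinite set of colors on which $F$ takes unit values, and write $f_r=F(x_r^{(n)})=\varepsilon_rq^{a_r}$ for $r\in E$. By \eqref{eq:pairwise} and \eqref{eq:node-difference}, for distinct $r,s\in E$ we have $f_r-f_s\in\{r-s\}\{n+r+s\}R$. Each quantum integer $\{j\}$ with $j\ne0$ vanishes at $q=1$, so $(q-1)^2$ divides $f_r-f_s$. This holds even when $n=0$, because $r\ne s$ forces both $r-s$ and $n+r+s$ to be nonzero.

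Next I would read off the unit data. Evaluating at $q=1$ gives $\varepsilon_r=\varepsilon_s$. Differentiating $\varepsilon q^{a_r}-\varepsilon q^{a_s}$ at $q=1$ gives $\varepsilon(a_r-a_s)=0$, so $a_r=a_s$. Hence there is a single unit $u=\varepsilon q^a$ with $f_r=u$ for every $r\in E$.

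Finally, the constant element $u\in R\subset\C_n$ agrees with $F$ on the infinite set $E$. By \cref{thm:identity}, $F=u$, which is a constant Laurent unit.

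There is no real obstacle. The only point needing care is that the square $(q-1)^2$ divides the difference. A single factor of $(q-1)$ would fix only the sign, not the exponent, so the derivative step needs both quantum-integer factors, and for this we need $n+r+s\neq0$, which holds as noted above.
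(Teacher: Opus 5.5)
Your proposal is correct and follows the paper's proof. The paper also reuses the value-and-derivative argument from \cref{thm:units-primes}, where $(q-1)^2$ divides $f_r-f_s$ by \eqref{eq:pairwise} and \eqref{eq:node-difference}, to show that the unit values on the infinite set coincide, and then applies \cref{thm:identity} to $F$ minus that common value.
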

\begin{proof}
The value-and-derivative argument in the preceding proof shows that all the unit values on that infinite set are equal. Subtract their common value and apply \cref{thm:identity}.
\end{proof}

For $m\ge1$, put
\begin{equation}\label{eq:newton-open-ideals}
 \I_m^{(n)}=\ker\bigl(\C_n\longrightarrow R[X]/(B_m^{(n)})\bigr)
 =\{F\in\C_n:F(x_r^{(n)})=0\text{ for }0\le r<m\}.
\end{equation}
These kernels form a neighborhood basis of zero.

\begin{proposition}\label{prop:nonadic}
The Newton topology on $\C_n$ is not the $J$-adic topology for any ideal $J\subset\C_n$.
\end{proposition}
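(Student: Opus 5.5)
We argue by contradiction: suppose $J\subset\C_n$ is an ideal whose powers $J^k$ form a neighborhood basis of zero for the Newton topology. Two consequences follow. First, some $J^k$ is contained in the open ideal $\I_1^{(n)}$. Second, $J$ is itself open, so it contains some $\I_m^{(n)}$. The plan is to play these against each other using the constant elements $R\subset\C_n$ and the evaluation map at the first node.

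First I show $J\subseteq\I_1^{(n)}$. The ideal $\I_1^{(n)}$ is the kernel of evaluation $F\mapsto F(a_n)\in R$. This evaluation is surjective, since constants map to themselves, so $\C_n/\I_1^{(n)}\simeq R$ is a domain and $\I_1^{(n)}$ is prime. From $J^k\subseteq\I_1^{(n)}$ with $\I_1^{(n)}$ prime, we get $J\subseteq\I_1^{(n)}$. The same argument works at every node $x_r^{(n)}$: evaluation there is a surjection onto the domain $R$, so its kernel is prime. Since $J^k$ lies in the open ideal $\I_{r+1}^{(n)}$ for $k$ large, and $\I_{r+1}^{(n)}$ is contained in the kernel of evaluation at $x_r^{(n)}$, we get $J\subseteq\ker(\mathrm{ev}_{x_r})$ for every $r$. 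Intersecting over all $r$, every element of $J$ vanishes at every color. By \cref{thm:identity}, $J=0$.

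But $J$ is open, so $J\supseteq\I_m^{(n)}\ni B_m^{(n)}\neq0$. This contradicts $J=0$ and proves the proposition.

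The only point needing care is openness of $J$ together with the claim that every $\I_{r+1}^{(n)}$ contains some power $J^k$; both are immediate from the assumption that the $J^k$ form a neighborhood basis and the $\I_m^{(n)}$ do too. The key step is primality of the node kernels. If one prefers not to use primality, an alternative is to take $F\in J$ and look at its values $F(x_r)$: a power $F^k$ must vanish at $x_r$ for $k$ large, and since $R$ is reduced, $F(x_r)=0$.
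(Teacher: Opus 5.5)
Your proof is correct and is essentially the paper's argument: the paper also takes $k_r$ with $J^{k_r}\subset\I_{r+1}^{(n)}$, uses that $R$ is a domain to get $a(x_r)=0$ for every $a\in J$ and every $r$ (your primality of the node kernels is the same fact), and concludes $J=0$. It then gets the contradiction because the zero-adic topology is discrete while $B_m^{(n)}\ne0$ lies in every $\I_m^{(n)}$, which is exactly your openness argument.
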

\begin{proof}
If these topologies agreed, for each $r$ there would be $k_r$ such that $J^{k_r}\subset\I_{r+1}^{(n)}$. For $a\in J$, evaluation gives $a(x_r)^{k_r}=0$ in the domain $R$, hence $a(x_r)=0$ for all $r$. Therefore $a=0$ and $J=0$. The zero-adic topology is discrete, whereas the Newton topology is not: $B_m^{(n)}$ is a nonzero element of every corresponding open ideal $\I_m^{(n)}$.
\end{proof}

\subsection{Tail restriction and the Habiro boundary}\label{sec:boundary}

The identity $x_{m+r}^{(n)}=x_r^{(n+2m)}$ defines
\[
 \rho_{n,m}:\C_n\longrightarrow\C_{n+2m},\qquad
 (\rho_{n,m}F)(x_r^{(n+2m)})=F(x_{m+r}^{(n)}).
\]
Restriction preserves polynomial compatibility. These maps are continuous and injective by \cref{prop:compatible,thm:identity},
and satisfy $\rho_{n+2m,k}\rho_{n,m}=\rho_{n,m+k}$. We write $\rho_n=\rho_{n,1}$.

For $m\ge1$, there is also a coefficient-shift isomorphism of topological $R$-modules
\begin{equation}\label{eq:mu}
 \mu_{n,m}:\C_{n+2m}\xrightarrow{\sim}\I_m^{(n)},\qquad
 \sum_{k\ge0}c_kB_k^{(n+2m)}\longmapsto\sum_{k\ge0}c_kB_{m+k}^{(n)}.
\end{equation}
Its image consists of the series whose first $m$ coefficients vanish. The factorization $B_{m+k}^{(n)}=B_m^{(n)}B_k^{(n+2m)}$ gives, first for polynomials and then by continuity,
\begin{equation}\label{eq:mu-linearity}
 F\mu_{n,m}(G)=\mu_{n,m}(\rho_{n,m}(F)G).
\end{equation}
In particular, the Newton divided difference
\begin{equation}\label{eq:partial}
 \partial_nF=\mu_{n,1}^{-1}(F-F(a_n))
          =\frac{\rho_nF-F(a_n)}{X-a_n}\quad\hbox{in }\C_{n+2}
\end{equation}
shifts the Newton coefficients by one and satisfies
\[
 \partial_n(FG)=\partial_n(F)\rho_n(G)+F(a_n)\partial_n(G).
\]

Write
\[
 \HH=\varprojlim_m R/((q;q)_m),\qquad
 (q;q)_m=\prod_{j=1}^m(1-q^j)
\]
for the Habiro ring. Set $u=X-a_n$ in the tail ring, and put
\begin{equation}\label{eq:delta}
 \delta_0^{(n)}=1,\qquad
 \delta_k^{(n)}=B_k^{(n+2)}(a_n)
    =(-1)^k\prod_{j=1}^k\br{j}\br{n+j}.
\end{equation}

\begin{lemma}\label{lem:cofinality}
The ideals $(\delta_k^{(n)})$ and $((q;q)_m)$ are cofinal. Thus
$\varprojlim_kR/(\delta_k^{(n)})=\HH$ canonically.
\end{lemma}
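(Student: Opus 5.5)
We must show two things: each $\delta_k^{(n)}$ divides some $(q;q)_m$, and each $(q;q)_m$ divides some $\delta_k^{(n)}$, up to units of $R$. Two ideal systems that are mutually cofinal define the same inverse limit, which gives the canonical identification with $\HH$.

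First I rewrite everything in cyclotomic terms. For $j\ge1$,
\[
 \{j\}=q^{-j}(q^{2j}-1)=-q^{-j}(1-q^{2j}).
\]
Hence $\delta_k^{(n)}$ equals a unit times
\[
 \prod_{j=1}^k(1-q^{2j})(1-q^{2(n+j)}).
\]
Each factor $1-q^{N}$ is, up to sign, $\prod_{d\mid N}\Phi_d(q)$, and distinct cyclotomic polynomials are coprime in $\Q[q]$. Since all polynomials involved are monic (up to sign) products of cyclotomics, divisibility in $R$ is equivalent to comparing multiplicities of each $\Phi_d$. I will use Gauss's lemma, or simply the fact that a quotient of products of cyclotomics with nonnegative exponents is again such a product, hence lies in $\Z[q]$.

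Next I count multiplicities. The multiplicity of $\Phi_d$ in $(q;q)_m$ is $\lfloor m/d\rfloor$. Its multiplicity in $\delta_k^{(n)}$ is
\[
 \#\{1\le j\le k: d\mid 2j\}+\#\{1\le j\le k: d\mid 2(n+j)\}.
\]

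For $\delta_k^{(n)}\mid(q;q)_m$: every $\Phi_d$ occurring in $\delta_k^{(n)}$ has $d\le 2(n+k)$, and its multiplicity there is at most $2k$. So it suffices to take $m$ with $\lfloor m/d\rfloor\ge 2k$ for all $d\le 2(n+k)$, for instance $m=4k(n+k)$. A cruder alternative is the observation that $\prod_{j}(1-q^{N_j})$ divides $(q;q)_{\sum N_j}$; this already holds because $\prod_{i=1}^s(q;q)_{N_i}$ divides $(q;q)_{N_1+\cdots+N_s}$, since $q$-multinomial coefficients are polynomials. That gives $m=\sum_j(2j+2n+2j)$ directly.

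For $(q;q)_m\mid\delta_k^{(n)}$: I need, for each $d\le m$, the inequality $\#\{j\le k: d\mid 2j\}\ge\lfloor m/d\rfloor$. Since $d\mid 2j$ whenever $d\mid j$, the count is at least $\lfloor k/d\rfloor$, so $k=m$ suffices. Indeed $(q;q)_m$ divides $\prod_{j\le m}(1-q^{2j})$ factor by factor, because $1-q^j\mid 1-q^{2j}$. This direction is immediate.

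The only mild obstacle is justifying that divisibility of these products in $R$ follows from multiplicity comparison; the $q$-multinomial route avoids it entirely for the first direction. Combining the two directions, the inverse systems $R/(\delta_k^{(n)})$ and $R/((q;q)_m)$ are mutually cofinal. Their inverse limits therefore coincide canonically, compatibly with the maps from $R$.
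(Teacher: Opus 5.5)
Your proposal is correct and is essentially the paper's argument. The paper also counts the multiplicity of each $\Phi_d(q)$ in $\delta_k^{(n)}$ (your count $\#\{j\le k: d\mid 2j\}$ is its $\lfloor k/\ell\rfloor$ with $\ell=d/\gcd(d,2)$), compares it with $\lfloor m/d\rfloor$ in $(q;q)_m$, and concludes by unique factorization in $R$; your explicit divisibilities $(q;q)_k\mid\delta_k^{(n)}$ and $\delta_k^{(n)}\mid(q;q)_{2k(n+k+1)}$ (via $q$-multinomial integrality) are a clean sharpening that makes the unique-factorization step unnecessary.
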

\begin{proof}
Every nonunit irreducible factor in \eqref{eq:delta} is cyclotomic. For $\pi=\Phi_d(q)$ and $\ell=d/\gcd(d,2)$, the factor $\br{j}$ has $\pi$-valuation one when $\ell\mid j$, and zero otherwise. This gives
\begin{equation}\label{eq:delta-valuation}
 v_\pi(\delta_k^{(n)})=
 \left\lfloor\frac k\ell\right\rfloor+
 \left\lfloor\frac{n+k}\ell\right\rfloor-
 \left\lfloor\frac n\ell\right\rfloor.
\end{equation}
For every fixed $d$ this tends to infinity. Consequently every fixed Habiro factorial divides all sufficiently late $\delta_k^{(n)}$. Conversely, a fixed $\delta_k^{(n)}$ has finitely many cyclotomic factors, each of finite multiplicity, so it divides a sufficiently late Habiro factorial. Unique factorization in $R$ proves cofinality.
\end{proof}

Evaluation at $a_n$ on finite quotients now gives a homomorphism
\begin{equation}\label{eq:beta}
 \beta_n:\C_{n+2}\longrightarrow\HH,\qquad
 \beta_n\left(\sum_{k\ge0}c_kB_k^{(n+2)}\right)=\sum_{k\ge0}c_k\delta_k^{(n)}.
\end{equation}

\begin{proposition}\label{prop:boundary-exact}
The map $\beta_n$ is surjective, and
\begin{equation}\label{eq:boundary-exact}
 0\longrightarrow\C_{n+2}\xrightarrow{\ u\ }\C_{n+2}
   \xrightarrow{\ \beta_n\ }\HH\longrightarrow0
\end{equation}
is an exact sequence of $R$-modules.
\end{proposition}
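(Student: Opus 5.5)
Throughout I work in Newton coordinates in $\C_{n+2}$ with $u=X-a_n$. The plan is to prove the three assertions of the proposition (surjectivity of $\beta_n$, exactness in the middle, injectivity of $u$) separately.

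\textbf{Surjectivity.} Given $h\in\HH$, represent it as a compatible system of residues modulo $\delta_k^{(n)}$, using \cref{lem:cofinality}. I will build coefficients $c_k\in R$ inductively so that $\sum_{k<K}c_k\delta_k^{(n)}\equiv h\pmod{\delta_K^{(n)}}$. This works because $\delta_{k+1}^{(n)}=\delta_k^{(n)}\cdot(-\br{k+1}\br{n+k+1})$. Suppose the partial sum up to $k$ matches $h$ modulo $\delta_{k}^{(n)}$. Then the discrepancy modulo $\delta_{k+1}^{(n)}$ is $\delta_k^{(n)}$ times an element of $R$, and that element serves as $c_k$. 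Hence $F=\sum c_kB_k^{(n+2)}$ satisfies $\beta_n(F)=h$.

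\textbf{Injectivity of $u$.} The element $u=X-a_n$ evaluates to $x_r^{(n+2)}-a_n=\br{r+1}\br{n+r+1}\neq0$ at every node. Hence $uF=0$ forces all color values of $F$ to vanish, and \cref{thm:identity} (or \cref{prop:compatible}) gives $F=0$.

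\textbf{Exactness in the middle.} First, $\beta_n(uG)=0$. This holds because $\beta_n$ is a ring homomorphism, being evaluation at $a_n$ on finite quotients (each $R[X]/(B_k^{(n+2)})\to R/(\delta_k^{(n)})$ is well defined), and $\beta_n(u)=0$.

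Conversely, suppose $\beta_n(F)=0$. I use the Newton recursion. Write $X-a_n=(X-x_k^{(n+2)})+(x_k^{(n+2)}-a_n)$, so that
\[
uB_k^{(n+2)}=B_{k+1}^{(n+2)}+e_kB_k^{(n+2)},\qquad e_k=x_k^{(n+2)}-a_n=\br{k+1}\br{n+k+1}.
\]
Solving $uG=F$ with $G=\sum g_kB_k^{(n+2)}$ gives the triangular system
\[
c_0=e_0g_0,\qquad c_k=g_{k-1}+e_kg_k .
\]
The solution in $L=\Q(q)$ is unique and given by an alternating sum. Note that $\delta_{k+1}^{(n)}=(-1)^{k+1}\prod_{i\le k}e_i$. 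Unwinding the system, $g_k$ equals $\pm(\delta_{k+1}^{(n)})^{-1}\sum_{j\le k}c_j\delta_j^{(n)}$, up to sign conventions. Integrality of $g_k$ is then exactly the condition that the partial sum $\sum_{j\le k}c_j\delta_j^{(n)}$ lies in $\delta_{k+1}^{(n)}R$, i.e. that $\beta_n(F)\equiv0$ modulo $\delta_{k+1}^{(n)}$ for every $k$. This is precisely $\beta_n(F)=0$ in $\HH$.

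Alternatively, this step can be phrased via compatibility. The values $F(x_r^{(n+2)})/e_r$ must form a polynomially compatible sequence, and vanishing of $\beta_n F$ modulo $\delta_k^{(n)}$ supplies the required divided-difference integrality.

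The main obstacle is carrying out this integrality check cleanly, in particular verifying the sign and indexing in the closed form for $g_k$. I would first check the formula by direct induction: substitute the claimed $g_{k-1}$ and $g_k$ into $c_k=g_{k-1}+e_kg_k$, using $\delta_{k+1}^{(n)}=-e_k\delta_k^{(n)}$.
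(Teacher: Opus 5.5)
Your proposal is correct and follows essentially the same route as the paper: surjectivity by recursively choosing Newton coefficients, and middle exactness via the explicit preimage $g_k=-s_{k+1}/\delta_{k+1}^{(n)}$ with $s_{k+1}=\sum_{j\le k}c_j\delta_j^{(n)}$. The paper checks this preimage directly from $uB_k^{(n+2)}=B_{k+1}^{(n+2)}+e_kB_k^{(n+2)}$ and $\delta_{k+1}^{(n)}=-e_k\delta_k^{(n)}$, which confirms that the sign you were unsure of is indeed minus; your argument that $u$ is injective (nonvanishing node values plus the identity theorem) is the same as the paper's appeal to $\C_{n+2}$ being a domain.
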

\begin{proof}
For $h\in\varprojlim_kR/(\delta_k^{(n)})$, choose $c_0,c_1,\ldots$ recursively so that
$h\equiv\sum_{j=0}^kc_j\delta_j^{(n)}\pmod{\delta_{k+1}^{(n)}}$.
The recursion is possible because the kernel of
$R/(\delta_{k+1})\to R/(\delta_k)$ consists of multiples of $\delta_k$.
This proves surjectivity. Multiplication by $u$ is injective because $\C_{n+2}$ is a domain, and its image lies in the boundary kernel.

For the converse, write $G=\sum g_kB_k^{(n+2)}$ with $\beta_nG=0$ and
$s_{k+1}=\sum_{j=0}^kg_j\delta_j^{(n)}$. Then $s_{k+1}\in\delta_{k+1}^{(n)}R$. Put
\[
 h_k=-s_{k+1}/\delta_{k+1}^{(n)},\qquad H=\sum_{k\ge0}h_kB_k^{(n+2)}.
\]
If $d_k=a_n-x_k^{(n+2)}$, then
$\delta_{k+1}=d_k\delta_k$ and
$uB_k^{(n+2)}=B_{k+1}^{(n+2)}-d_kB_k^{(n+2)}$.
The $B_0$-coefficient of $uH$ is $-d_0h_0=g_0$, and its $k$th coefficient for $k\ge1$ is
$h_{k-1}-d_kh_k=(s_{k+1}-s_k)/\delta_k=g_k$.
Thus $G=uH$.
\end{proof}

\begin{theorem}\label{thm:pullback}
The map
\begin{equation}\label{eq:pullback}
 F\longmapsto(F(a_n),\rho_nF)
 \quad\hbox{identifies}\quad
 \C_n\simeq R\times_{\HH}\C_{n+2}
\end{equation}
as topological $R$-algebras. In particular,
$\rho_n(\C_n)=\beta_n^{-1}(R)$.
\end{theorem}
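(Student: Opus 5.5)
The map $F\mapsto(F(a_n),\rho_nF)$ lands in the fibre product because $\beta_n(\rho_nF)=F(a_n)$. To see this, write $F=\sum_k c_kB_k^{(n)}$. We have
\[
 B_{k}^{(n)}=(X-a_n)B_{k-1}^{(n+2)}\quad(k\ge1).
\]
Hence $\rho_nF=c_0+u\,\partial_nF$, where by \eqref{eq:partial} the element $\partial_nF=\sum_{k\ge1}c_kB_{k-1}^{(n+2)}$ lies in $\C_{n+2}$. The exact sequence \eqref{eq:boundary-exact} says $\beta_n(uG)=0$ for every $G\in\C_{n+2}$. Therefore $\beta_n(\rho_nF)=\beta_n(c_0)=c_0=F(a_n)$. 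The same computation shows the map is a ring homomorphism, since $\rho_n$ and evaluation at $a_n$ both are. Its continuity follows from that of $\rho_n$.

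For injectivity, suppose $F(a_n)=0$ and $\rho_nF=0$. Then every color value of $F$ vanishes, so $F=0$ by \cref{prop:compatible}. Alternatively, $\rho_n$ is already injective by \cref{thm:identity}.

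For surjectivity, take a pair $(c,G)$ with $c\in R$, $G\in\C_{n+2}$, and $\beta_nG=c$. Then $\beta_n(G-c)=0$, so exactness of \eqref{eq:boundary-exact} gives $H\in\C_{n+2}$ with $G-c=uH$. Put $F=c+\mu_{n,1}(H)$. By \eqref{eq:mu}, $\mu_{n,1}(H)=\sum h_kB_{k+1}^{(n)}$. Then $F(a_n)=c$ and $\rho_nF=c+uH=G$. The second identity uses $\rho_n\mu_{n,1}(H)=uH$, which is \eqref{eq:mu-linearity} applied with $F=X-a_n$, or directly the factorization $B_{k+1}^{(n)}=(X-a_n)B_k^{(n+2)}$ read on the tail nodes. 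The consequence $\rho_n(\C_n)=\beta_n^{-1}(R)$ is then just the projection onto the second factor.

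The main obstacle is topological: we must check that the inverse map $(c,G)\mapsto c+\mu_{n,1}(H)$ is continuous for the subspace topology of $R\times\C_{n+2}$. Here $R$ is discrete and $\HH$ is not used topologically. Since $\mu_{n,1}$ is a topological isomorphism onto $\I_1^{(n)}$, it suffices to show that $G\mapsto H=u^{-1}(G-\beta_nG)$ is continuous on $\beta_n^{-1}(R)$. The explicit formula in the proof of \cref{prop:boundary-exact} settles this. There, $h_k=-s_{k+1}/\delta_{k+1}$, and $s_{k+1}$ depends only on $g_0,\dots,g_k$. So the first $m$ Newton coefficients of $H$ depend only on the first $m$ coefficients of $G$. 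This coefficientwise triangularity gives continuity in the product topology of \cref{prop:coordinates}, and the homeomorphism follows.
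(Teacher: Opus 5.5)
Your proof is correct, but it takes a different route from the paper's.

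\textbf{The paper's route.} The paper never uses the completed exact sequence. For each $k$ it applies the two-ideal sequence in $R[X]$ to $I=(X-a_n)$ and $J=(B_k^{(n+2)})$. Since $X-a_n$ is prime and does not divide $B_k^{(n+2)}$, one has $I\cap J=IJ=(B_{k+1}^{(n)})$ and $R[X]/(I+J)\simeq R/(\delta_k^{(n)})$. This gives a finite-level fiber product
$R[X]/(B_{k+1}^{(n)})\simeq R\times_{R/(\delta_k^{(n)})}R[X]/(B_k^{(n+2)})$.
The paper then passes to inverse limits, which commute with fiber products, and uses \cref{lem:cofinality} to identify $\varprojlim_kR/(\delta_k^{(n)})$ with $\HH$. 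Because everything is an inverse limit of discrete fiber products, the topological claim is automatic. The same template is reused for the geometric nodes in \cref{ex:geometric-boundary}.

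\textbf{Your route.} You work directly in the completion. You combine the exactness in \cref{prop:boundary-exact} with the coefficient shift $\mu_{n,1}$ to write the inverse explicitly as $(c,G)\mapsto c+\mu_{n,1}\bigl(u^{-1}(G-c)\bigr)$. This gives the Newton coefficients of the preimage: $c$, followed by the $h_k=-s_{k+1}/\delta_{k+1}^{(n)}$ of \cref{prop:boundary-exact} with $g_0$ replaced by $g_0-c$. The cost is that you must check continuity of the inverse by hand. Your triangularity argument does this correctly: $R$ is discrete, and $h_0,\dots,h_{m-1}$ depend only on $c,g_0,\dots,g_{m-1}$.

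\textbf{Two small corrections.}

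First, the identity $\rho_n\mu_{n,1}(H)=uH$ does not follow from \eqref{eq:mu-linearity} with $F=X-a_n$. That substitution gives $(X-a_n)\mu_{n,1}(H)=\mu_{n,1}(uH)$ in $\C_n$, which is a different statement. Your second justification is the right one: the factorization $B_{k+1}^{(n)}=(X-a_n)B_k^{(n+2)}$ on polynomial partial sums, followed by continuity of $\rho_n$.

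Second, for the final clause the paper also records that $R\to\HH$ is injective, so that $\beta_n^{-1}(R)$ makes sense with $R\subset\HH$. Your bijection yields this as well. If $c\in R$ maps to $0$ in $\HH$, then $(c,0)$ lies in the fiber product. Its preimage $F$ has $\rho_nF=0$, which forces $F=0$ and hence $c=F(a_n)=0$.
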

\begin{proof}
In $R[X]$, set $I=(X-a_n)$ and $J=(B_k^{(n+2)})$.
The element $X-a_n$ is prime and does not divide $B_k^{(n+2)}$,
since $B_k^{(n+2)}(a_n)=\delta_k^{(n)}\ne0$. Hence
\[
 I\cap J=IJ=(B_{k+1}^{(n)}),\qquad
 R[X]/(I+J)\simeq R/(\delta_k^{(n)}).
\]
The two-ideal exact sequence therefore identifies
\[
 R[X]/(B_{k+1}^{(n)})\simeq
 R\times_{R/(\delta_k^{(n)})}R[X]/(B_k^{(n+2)}).
\]
The two coordinates are evaluation at $a_n$ and restriction to the
remaining nodes. These identifications commute with the transition
maps. Inverse limits commute with fiber products, and
\cref{lem:cofinality} identifies the inverse limit of the common
quotients with $\HH$. This proves \eqref{eq:pullback}, including
its topology.

The map $R\to\HH$ is injective: a nonzero Laurent polynomial cannot
be divisible by Habiro factorials of unbounded breadth. Thus an element
of $\C_{n+2}$ has a preimage precisely when its boundary value belongs
to $R$, proving the assertion about $\rho_n(\C_n)$.
\end{proof}

\begin{corollary}\label{cor:several-colors}
Let $N=n+2m$ and $1\le s\le m$. Evaluation at $x_{-s}^{(N)}$ defines a Habiro boundary map $\beta_{N,s}:\C_N\to\HH$. An element $G\in\C_N$ belongs to $\rho_{n,m}(\C_n)$ if and only if every $\beta_{N,s}(G)$ lies in $R$. For $G=\rho_{n,m}F$,
\[
 \beta_{N,s}(G)=F(x_{m-s}^{(n)}).
\]
\end{corollary}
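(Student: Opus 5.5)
The plan is to reduce everything to the single-step pullback, \cref{thm:pullback}, applied $m$ times along the chain of tail restrictions
\[
 \C_n\xrightarrow{\rho_n}\C_{n+2}\xrightarrow{\rho_{n+2}}\cdots\xrightarrow{\rho_{N-2}}\C_N,
\]
together with a description of each boundary map $\beta_{N,s}$ that does not depend on knowing $G$ comes from $\C_n$.

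\textbf{Step 1: defining $\beta_{N,s}$.} Note that $x_{-s}^{(N)}=q^{N-2s}+q^{-N+2s}=a_{N-2s}$. Write $n'=N-2s$, so $N=n'+2s$. Then
\[
 B_k^{(N)}(a_{n'})=\prod_{i<k}\bigl(x_0^{(n')}-x_{s+i}^{(n')}\bigr)
 =\prod_{i<k}\{-s-i\}\{n'+s+i\}
\]
by \eqref{eq:node-difference}. This is a product of quantum integers with indices tending to infinity. The valuation count of \cref{lem:cofinality} then shows that these elements are cofinal with the Habiro factorials: for each $\Phi_d$ the valuation grows without bound, and each fixed product divides a late $(q;q)_M$. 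Hence the map $\sum c_kB_k^{(N)}\mapsto\sum c_kB_k^{(N)}(a_{n'})$ converges in $\HH$ and defines a ring homomorphism $\beta_{N,s}$, by evaluation on finite quotients exactly as in \eqref{eq:beta}. For $s=1$ this is $\beta_{N-2}$. Equivalently, one can use the fact that $R[X]/(B_k^{(N)})\to R/(B_k^{(N)}(a_{n'}))$ is evaluation at $a_{n'}$.

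\textbf{Step 2: the value formula.} For $G=\rho_{n,m}F$, the element $\rho_{n,m}F$ is, at each finite level, represented by an integral polynomial $Q$ interpolating $F$ on the nodes $x_0^{(n)},\ldots,x_{m+k-1}^{(n)}$. Then $\beta_{N,s}(G)\equiv Q(a_{n'})\pmod{B_k^{(N)}(a_{n'})}$. The node $a_{n'}=x_{m-s}^{(n)}$ is among those interpolated, so $Q(a_{n'})=F(x_{m-s}^{(n)})$. Letting $k\to\infty$ and using the injectivity $R\to\HH$ from the proof of \cref{thm:pullback} gives $\beta_{N,s}(G)=F(x_{m-s}^{(n)})\in R$. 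This proves necessity.

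\textbf{Step 3: sufficiency.} Argue by induction on $m$. Suppose all $\beta_{N,s}(G)\in R$.
\begin{itemize}
\item Since $\beta_{N,1}=\beta_{N-2}$, \cref{thm:pullback} gives $G=\rho_{N-2}G'$ with $G'\in\C_{N-2}$.
\item For $2\le s\le m$, apply Step 2 with $m=1$ in the more general form, i.e.\ compute $\beta_{N,s}(\rho_{N-2}G')$ by the same finite-level argument with $G'\in\C_{N-2}$. Here the node $a_{N-2s}$ equals $x_{-(s-1)}^{(N-2)}$, which is not a node of $\C_{N-2}$. Both sides are evaluations at $a_{N-2s}$ of compatible finite representatives, so $\beta_{N,s}(\rho_{N-2}G')=\beta_{N-2,s-1}(G')$. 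This requires checking that the ideals $(B_k^{(N)}(a_{N-2s}))$ and $(B_{k+1}^{(N-2)}(a_{N-2s}))$ define the same completion, which follows from the cofinality in Step 1.
\item Hence $\beta_{N-2,s-1}(G')\in R$ for $1\le s-1\le m-1$, and induction gives $G'\in\rho_{n,m-1}(\C_n)$. Therefore $G\in\rho_{n,m}(\C_n)$, using $\rho_{N-2}\rho_{n,m-1}=\rho_{n,m}$.
\end{itemize}

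\textbf{Main obstacle.} The delicate point is the compatibility $\beta_{N,s}\circ\rho_{N-2}=\beta_{N-2,s-1}$ as maps into $\HH$. This needs the finite-level evaluations to be compatible modulo the two different families of Habiro-cofinal ideals. I would handle it by passing to the common inverse limit $\HH$ via the cofinality lemma and checking the identity on polynomials, which are dense, then extending by continuity.
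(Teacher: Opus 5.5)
Your proposal is correct and follows the paper's proof: you define $\beta_{N,s}$ by the cofinality argument of \cref{lem:cofinality}, then induct on $m$ using \cref{thm:pullback} for $s=1$ and the compatibility $\beta_{N,s}\circ\rho_{N-2}=\beta_{N-2,s-1}$, checked on finite-level representatives. The only cosmetic difference is that you derive the value formula directly from a finite interpolating representative of $F$, whereas the paper obtains it by iterating that compatibility identity.
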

\begin{proof}
At the negative node in question,
\[
 B_k^{(N)}(x_{-s}^{(N)})
 =(-1)^k\prod_{i<k}\{s+i\}\{N-s+i\}.
\]
Both index strings are positive. Every irreducible factor is
cyclotomic, and every fixed cyclotomic multiplicity tends to infinity
with $k$. The argument of \cref{lem:cofinality} therefore defines
the continuous boundary map $\beta_{N,s}$.

By \cref{thm:pullback}, the condition $\beta_{N,1}(G)\in R$ is
equivalent to a unique lift $G_1\in\C_{N-2}$ with
$\rho_{N-2}(G_1)=G$. For $1\le t<m$, the boundary maps satisfy
\[
 \beta_{N-2,t}(G_1)=\beta_{N,t+1}(G).
\]
This identity holds on polynomials because both sides evaluate at
$q^{N-2-2t}+q^{-N+2+2t}$, and it extends by continuity.
Induction on $m$ now gives the stated criterion. For
$G=\rho_{n,m}F$, the same evaluation identity gives
$\beta_{N,s}(G)=F(x_{m-s}^{(n)})$.
\end{proof}

\subsection{Open ideals and non-Noetherianity}

\begin{theorem}\label{thm:defect}
There is a canonical isomorphism of $\C_n$-modules
\[
 \I_1^{(n)}/(X-a_n)\C_n\simeq\HH/R,
\]
where $\C_n$ acts through evaluation at $a_n$. For $m\ge1$, the module
$\I_m^{(n)}/B_m^{(n)}\C_n$ has a filtration of length $m$ with successive quotients $\HH/R$, acted on through the $m$ deleted color evaluations. No $\I_m^{(n)}$ with $m\ge1$ is finitely generated.
\end{theorem}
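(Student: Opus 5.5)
The plan is to identify $\I_1^{(n)}$ with $\C_{n+2}$ via $\mu_{n,1}$ and then read the quotient off the boundary sequence. By \eqref{eq:mu}, $\mu_{n,1}:\C_{n+2}\to\I_1^{(n)}$ is an isomorphism of $R$-modules. By \eqref{eq:mu-linearity} with $G=1$, it sends the subring $\rho_n(\C_n)\subset\C_{n+2}$ onto $(X-a_n)\C_n$, because $\mu_{n,1}(\rho_nF)=F\,\mu_{n,1}(1)=F\,(X-a_n)$. Hence
\[
 \I_1^{(n)}/(X-a_n)\C_n\simeq\C_{n+2}/\rho_n(\C_n).
\]
By \cref{thm:pullback}, $\rho_n(\C_n)=\beta_n^{-1}(R)$. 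Since $\beta_n$ is surjective (\cref{prop:boundary-exact}), $\beta_n$ induces an isomorphism $\C_{n+2}/\beta_n^{-1}(R)\simeq\HH/R$.

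For the module structure, I would take $F\in\C_n$ and $G\in\C_{n+2}$. By \eqref{eq:mu-linearity}, $F\mu_{n,1}(G)=\mu_{n,1}(\rho_n(F)G)$. Applying $\beta_n$ gives $\beta_n(\rho_nF)\beta_n(G)=F(a_n)\beta_n(G)$, using that $\beta_n\rho_n$ is evaluation at $a_n$. So $\C_n$ acts on the quotient through $F(a_n)\in R$. Canonicity is automatic, since every map involved is canonical.

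For general $m$, I would filter by
\[
 B_m^{(n)}\C_n\subset\cdots\subset\I_j^{(n)}\cap B_{m}\text{-stuff}\subset\cdots\subset\I_m^{(n)}.
\]
Concretely, set $M_j=B_j^{(n)}\,\mu_{n+2j,m-j}$-images. The cleaner choice is
\[
 N_j=\mu_{n,j}\bigl(\I_{m-j}^{(n+2j)}\bigr)=B_j^{(n)}\cdot(\text{elements of }\I_m\text{ divisible by }B_j)\qquad (0\le j\le m).
\]
Under $\mu_{n,j}$, $N_j\simeq\I_{m-j}^{(n+2j)}$, with $N_0=\I_m^{(n)}$ and $N_m=\mu_{n,m}(\C_{n+2m})\supset B_m^{(n)}\C_n$. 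Instead, I would define $F_j=\mu_{n,j}\bigl(B_{m-j}^{(n+2j)}\C_{n+2j}\cap\ldots\bigr)$, so that consecutive quotients reduce, after applying $\mu_{n,j}$, to the case $m=1$ for the ring $\C_{n+2j}$ with ideal $\I_1^{(n+2j)}$ modulo $(X-a_{n+2j})\C_{n+2j}$. The precise definition is
\[
 F_j=\{B_j^{(n)}H : H\in\C_{n+2j}\text{ via }\mu_{n,j},\ \rho\text{-descends through the remaining }m-j\text{ nodes}\}.
\]
Equivalently, $F_j=\mu_{n,j}(\rho_{n+2j,\,m-j}$-preimage$)$, and by \cref{cor:several-colors} each step kills one Habiro boundary condition. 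Each successive quotient is then $\HH/R$, via the boundary map $\beta_{N,s}$ of \cref{cor:several-colors}, and the action goes through evaluation at the corresponding deleted node. Getting the indexing of this filtration right, and checking that it terminates exactly at $B_m^{(n)}\C_n$ and $\I_m^{(n)}$, is the main bookkeeping obstacle. I would verify it with the descent criterion of \cref{cor:several-colors}: $G\in\C_{n+2m}$ lies in $\rho_{n,m}(\C_n)$ if and only if all $m$ boundary values lie in $R$. This gives $\I_m/B_m\C_n\simeq\C_{n+2m}/\rho_{n,m}\C_n$, filtered by how many boundary values are integral.

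For non-finite generation, suppose $\I_m^{(n)}$ were generated by $g_1,\dots,g_t$. Then $\I_m/B_m\C_n$ would be a finitely generated $\C_n$-module. Since $\C_n$ acts through the evaluations, which take values in $R$, that module would be a finitely generated $R$-module, hence countable. But it surjects onto $\HH/R$ (the first graded piece), and $\HH/R$ is uncountable. It surjects because $\HH\supset\prod$-type data: by \cref{prop:boundary-exact}, $\HH$ receives $\C_{n+2}\simeq\prod_kR$ surjectively, and $\HH$ is uncountable since distinct truncations give distinct elements. Therefore $\HH/R$ is uncountable, a contradiction.
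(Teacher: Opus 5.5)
Your $m=1$ argument, including the module structure, is exactly the paper's. Your closing idea for general $m$ is also the paper's route: pass to $\C_{n+2m}/\rho_{n,m}(\C_n)$ via $\mu_{n,m}$ and \eqref{eq:mu-linearity}, then filter by integrality of boundary values.

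The gap is that the filtration is never actually defined. Your explicit candidate $N_j=\mu_{n,j}(\I_{m-j}^{(n+2j)})$ is constant. Indeed, $\I_{m-j}^{(n+2j)}$ consists of the series whose first $m-j$ Newton coefficients vanish, so after the shift by $j$ every $N_j$ equals $\I_m^{(n)}$. The later candidates are not definitions. The phrase ``how many boundary values are integral'' has to mean integrality of $\beta_{N,s}$ for $1\le s\le k$, in that order, with $N=n+2m$. The filtration that works is the paper's chain
\[
 \rho_{n,m}(\C_n)\subset\rho_{n+2,m-1}(\C_{n+2})\subset\cdots
 \subset\rho_{n+2m-2,1}(\C_{n+2m-2})\subset\C_{n+2m}.
\]
By \cref{cor:several-colors}, its term $\rho_{n+2m-2k,k}(\C_{n+2m-2k})$ is exactly that integrality locus. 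Because $\rho_{n+2j,m-j}=\rho_{n+2j+2,m-j-1}\rho_{n+2j}$ and $\rho_{n+2j+2,m-j-1}$ is injective, each step quotient is $\C_{n+2j+2}/\rho_{n+2j}(\C_{n+2j})\simeq\HH/R$ by your case $m=1$. This identification also supplies the surjectivity of the relevant boundary map on each filtration step, which you did not check. Pulled back to $\I_m^{(n)}$ by $\mu_{n,m}$, the chain is your second candidate $\mu_{n,j}(B_{m-j}^{(n+2j)}\C_{n+2j})$ with the intersection deleted (and $\mu_{n,0}$ the identity).

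In the non-finite-generation step, you assert that $\C_n$ acts on the whole module $\I_m^{(n)}/B_m^{(n)}\C_n$ through evaluations, so that it is a finitely generated $R$-module. For $m>1$ this is only established for the graded pieces. The claim is true. For $F\in\I_m^{(n)}$, all boundary values $\beta_{N,s}(\rho_{n,m}F)=F(x^{(n)}_{m-s})$ vanish, so $\rho_{n,m}(F)\C_{n+2m}\subset\rho_{n,m}(\C_n)$ by \cref{cor:several-colors}. Hence the action factors through the finite free $R$-module $R[X]/(B_m^{(n)})$. The paper's route is shorter: finite generation would pass to the top quotient $\C_{n+2m}/\rho_{n+2m-2}(\C_{n+2m-2})\simeq\HH/R$, on which $\C_n$ acts through the single evaluation at $x^{(n)}_{m-1}$.

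Your argument that $\HH$ is uncountable also needs repair. A surjection $\prod_{k}R\to\HH$ gives no lower bound on the cardinality of $\HH$. The phrase ``distinct truncations give distinct elements'' presupposes uncountably many compatible systems of truncations. The paper exhibits them as the binary sums $\sum_k\epsilon_k(q;q)_k$ with $\epsilon_k\in\{0,1\}$. Two sums first differing at $k$ already differ modulo $(q;q)_{k+1}$, since $1-q^{k+1}$ is not a unit of $R$; the paper instead compares Taylor coefficients at $q=1$. Then $\HH/R$ is uncountable because $R$ is countable.
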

\begin{proof}
Under \eqref{eq:mu}, the principal submodule $B_m^{(n)}\C_n$ corresponds to $\rho_{n,m}(\C_n)$. For $m=1$, \cref{thm:pullback,prop:boundary-exact} identify the quotient with $\HH/R$. For $F\in\C_n$ and $G\in\C_{n+2}$,
$\beta_n(\rho_n(F)G)=F(a_n)\beta_n(G)$, which proves the module assertion.

For $m>1$, use the chain of submodules of $\C_{n+2m}$
\[
 \rho_{n,m}(\C_n)\subset
 \rho_{n+2,m-1}(\C_{n+2})\subset\cdots\subset\C_{n+2m}.
\]
Injectivity and composition of restrictions reduce each quotient to the one-step case. The final quotient in this chain is $\HH/R$ with the action through the last deleted color.

The ring $R$ is countable, whereas $\HH$ is uncountable. Indeed, the binary sums
$\sum_{k\ge0}\epsilon_k(q;q)_k$, $\epsilon_k\in\{0,1\}$, are distinct: if two binary sequences first differ at $k$, their Taylor expansions at $q=1$ first differ in degree $k$, with coefficient $\pm(-1)^kk!$. Thus $\HH/R$ is uncountable and cannot be finitely generated over the countable ring $R$. If $\I_m^{(n)}$ were finitely generated over $\C_n$, so would be its quotient and the final quotient $\HH/R$ above. Since the action on the latter factors through $R$, this is impossible.
\end{proof}

In particular, $\C_n$ is non-Noetherian.

\begin{proposition}\label{prop:birational-tails}
Via $\rho_n$, regard $A=\C_n$ as a subring of $B=\C_{n+2}$ and put $u=X-a_n$.
Write $(A:B)=\{b\in B:bB\subseteq A\}$ for the conductor. Then
\[
 A[1/u]=B[1/u],\qquad \operatorname{Frac}(A)=\operatorname{Frac}(B),
 \qquad (A:B)=uB=\rho_n(\I_1^{(n)}).
\]
The extension $A\subset B$ is not finite. More generally,
\[
 \C_n[1/B_m^{(n)}]\simeq
 \C_{n+2m}[1/B_m^{(n)}].
\]
Thus all offsets of the same parity have canonically identified fraction fields.
\end{proposition}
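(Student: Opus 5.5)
The plan is to work with $A=\rho_n(\C_n)\subset B=\C_{n+2}$, using the pullback description of \cref{thm:pullback} and the exact sequence of \cref{prop:boundary-exact}.

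\emph{Conductor.} By \cref{thm:pullback}, $A=\beta_n^{-1}(R)$. Since $\beta_n$ is a ring homomorphism with $\beta_n(u)=0$, every element of $uB$ lies in $\ker\beta_n\subset A$. Hence $uB\cdot B=uB\subset A$, so $uB\subseteq(A:B)$.

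For the reverse inclusion, take $b\in(A:B)$. Then $\beta_n(b)\beta_n(c)\in R$ for every $c\in B$. Since $\beta_n$ is surjective, $\beta_n(b)\HH\subseteq R$.
\begin{itemize}
\item If $h=\beta_n(b)\ne0$, then $h\HH$ is uncountable, because $\HH$ is a domain: it embeds in $\Z[[q-1]]$, and multiplication by a nonzero element is injective. But $R$ is countable, a contradiction.
\item Hence $\beta_n(b)=0$, and \cref{prop:boundary-exact} gives $b\in uB$.
\end{itemize}
Finally, $uB=\mu$-image corresponds to $\rho_n(\I_1^{(n)})$ via \eqref{eq:mu-linearity}. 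Indeed, $uG=\rho_n(\mu_{n,1}(G))$, since $(X-a_n)\cdot$ restricted to the tail nodes is exactly $\rho_n$ of $\mu_{n,1}(G)=(X-a_n)G$.

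\emph{Localizations.} Since $u\in A$ and $uB\subset A$, every $b\in B$ satisfies $b=(ub)/u\in A[1/u]$. This gives $A[1/u]=B[1/u]$. Taking fraction fields of these domains (by \cref{thm:identity}) shows $\Frac(A)=\Frac(B)$.

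For general $m$, the plan is to iterate. Observe that $B_m^{(n)}$, restricted to $\C_{n+2m}$, is $\prod_{i<m}(X-x_i^{(n)})$. Each factor $X-x_i^{(n)}=X-a_{n+2i}$ is the normal coordinate for the step $\C_{n+2i}\subset\C_{n+2i+2}$ and is an element of every ring in the chain. Inverting the product inverts each factor. The one-step identity applied successively to $\C_{n+2i}[1/B_m]\subset\C_{n+2i+2}[1/B_m]$ (localization of $A[1/u]=B[1/u]$ at further elements) then gives the chain of equalities. Parity statement follows.

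\emph{Non-finiteness.} This is the main point needing an argument. Suppose $B$ were a finite $A$-module. Then $B/A$ would be finitely generated over $A$. By \cref{thm:pullback}, $\beta_n$ induces $B/A\simeq\HH/R$, and the $A$-action factors through evaluation at $a_n$ (as in \cref{thm:defect}), that is, through $R$. A finitely generated $R$-module is countable, while $\HH/R$ is uncountable (shown in the proof of \cref{thm:defect}). This contradiction shows $A\subset B$ is not finite.

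The only delicate step is the domain property of $\HH$ used in the conductor argument. If one prefers to avoid it, the surjection $\beta_n(b)\HH\subseteq R$ with $\beta_n(b)$ a nonzero element can instead be contradicted by exhibiting a suitable element of $\HH\setminus R$ times $\beta_n(b)$. The cleanest route, though, is the injectivity of $\HH\to\Z[[q-1]]$ (Habiro), which gives the domain property directly.
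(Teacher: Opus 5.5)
Your proposal is correct and follows essentially the same route as the paper: the coefficient shift $uG=\rho_n(\mu_{n,1}G)$ gives $uB\subset A$ and the localization statements, surjectivity of $\beta_n$ together with the domain property of $\HH$ and a countability comparison identifies the conductor, and $B/A\simeq\HH/R$ with $A$ acting through $R$ rules out finiteness. The only difference is cosmetic. For general $m$ you iterate the one-step localization along the chain $\C_{n+2i}\subset\C_{n+2i+2}$, whereas the paper applies the $m$-factor identity $B_m^{(n)}G=\rho_{n,m}(\mu_{n,m}G)$ directly.
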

\begin{proof}
For $G\in B$, the coefficient shift gives $uG=\rho_n(\mu_{n,1}G)\in A$. This proves the localization and fraction-field assertions. The same identity with $m$ factors proves the general localization statement.

The inclusion $uB\subset(A:B)$ follows from $A=\beta_n^{-1}(R)$. Conversely, if $bB\subset A$, surjectivity of $\beta_n$ gives $\beta_n(b)\HH\subset R$. In particular $\beta_n(b)\in R$. If this element were nonzero, multiplication by it would inject the uncountable ring $\HH$ into the countable ring $R$, a contradiction; here we use the domain property of $\HH$, which follows from its injective Taylor map \cite[Theorem~5.4]{Habiro04}. Therefore $\beta_n(b)=0$ and $b\in uB$. Finally, $B/A\simeq\HH/R$, with $A$ acting through $R$. If $B$ were finite over $A$, this quotient would be finitely generated over $R$, contradicting the cardinality argument in \cref{thm:defect}.
\end{proof}

\subsection{Taylor expansion at a deleted node}

For a polynomial $P$, its $j$th Hasse derivative at $a$ is
$P^{[j]}(a)=[u^j]P(a+u)$. Sums of Taylor coefficients are taken in
the Habiro topology on $\HH$. Completions by powers of a normal
coordinate carry the adic topology, with $\HH$ discrete.

\begin{lemma}\label{lem:hasse-convergence}
For each $j\ge0$ and $G=\sum_{k\ge0}g_kB_k^{(n+2)}$, the series
\begin{equation}\label{eq:normal-coefficient}
 t_j(G)=\sum_{k\ge0}g_k\bigl(B_k^{(n+2)}\bigr)^{[j]}(a_n)
\end{equation}
converges in $\HH$. The map
\begin{equation}\label{eq:normal-taylor}
 \Tay_n:G\longmapsto\sum_{j\ge0}t_j(G)u^j
\end{equation}
is a ring homomorphism $\C_{n+2}\to\HH[[u]]$, with $t_0=\beta_n$ and $\Tay_n((X-a_n)G)=u\Tay_n(G)$.
\end{lemma}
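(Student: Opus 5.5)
The plan is to compute the Hasse derivatives of the Newton kernel at $a_n$ explicitly, show they become divisible by arbitrarily high Habiro factorials, and then obtain the homomorphism property by density of finite Newton sums.

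\textbf{Explicit kernel.} Put $d_i=a_n-x_i^{(n+2)}=-\{i+1\}\{n+i+1\}$, by \eqref{eq:node-difference}. Then
\[
 B_k^{(n+2)}(a_n+u)=\prod_{i<k}(u+d_i).
\]
Hence
\[
 \bigl(B_k^{(n+2)}\bigr)^{[j]}(a_n)=e_{k-j}(d_0,\ldots,d_{k-1}),
\]
the elementary symmetric polynomial, which vanishes for $j>k$. Each monomial of $e_{k-j}$ is $\delta_k^{(n)}$ divided by a product of $j$ distinct factors $d_i$. Fix a cyclotomic prime $\pi=\Phi_d(q)$. Each $\{j\}$ has $\pi$-valuation at most one, as in the proof of \cref{lem:cofinality}, so each $d_i$ has $\pi$-valuation at most two. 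Every monomial, and hence the Hasse derivative, therefore has
\[
 v_\pi\ \ge\ v_\pi(\delta_k^{(n)})-2j .
\]
By \eqref{eq:delta-valuation} this lower bound tends to infinity with $k$, for fixed $j$ and $d$.

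\textbf{Convergence.} Fix $m$. The Habiro factorial $(q;q)_m$ involves only finitely many cyclotomic primes, each with finite multiplicity. Unique factorization in $R$ then shows that $(q;q)_m$ divides $\bigl(B_k^{(n+2)}\bigr)^{[j]}(a_n)$ for all sufficiently large $k$, with the threshold depending on $j$ and $m$. This is the cofinality argument of \cref{lem:cofinality}, applied with the weakened valuation bound. Consequently the series \eqref{eq:normal-coefficient} converges in $\HH$ for arbitrary coefficients $g_k\in R$. The same bound shows that each $t_j$ is continuous from the Newton topology to the Habiro topology. Indeed, if the first $K$ Newton coefficients of $G$ vanish, then $t_j(G)\in(q;q)_m\HH$ once $K$ is large relative to $j$ and $m$.

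\textbf{Homomorphism.} For a polynomial $P\in R[X]$, that is, a finite Newton sum, $\Tay_n(P)=P(a_n+u)$ is visibly a ring homomorphism $R[X]\to R[u]\subset\HH[[u]]$. Moreover, it is compatible with multiplication by $X-a_n\mapsto u$. For general $G,H\in\C_{n+2}$, take the truncations $G_K,H_K$. Then $G_KH_K-GH$ and $(X-a_n)G_K-(X-a_n)G$ lie in the open ideal $\I_K^{(n+2)}$. Equivalently, their first $K$ Newton coefficients vanish, since multiplication is defined levelwise. Coefficientwise continuity of each $t_j$ then lets us pass to the limit in every $u^j$-coefficient separately. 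This gives $\Tay_n(GH)=\Tay_n(G)\Tay_n(H)$ and $\Tay_n((X-a_n)G)=u\Tay_n(G)$. Finally, the $j=0$ term is $e_k(d_0,\ldots,d_{k-1})=\delta_k^{(n)}$, so $t_0=\beta_n$ by \eqref{eq:beta}.

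The only real obstacle is the uniform valuation estimate in the first step. The key point is that removing $j$ factors from $\delta_k^{(n)}$ costs at most $2j$ in each cyclotomic valuation, independently of $k$. Everything else is formal density and continuity.
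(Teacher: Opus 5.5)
Your proposal is correct and follows the paper's proof essentially step for step. It uses the same factorization $B_k^{(n+2)}(a_n+u)=\prod_{i<k}(u+d_i)$, the same estimate $v_{\Phi_d}\ge v_{\Phi_d}(\delta_k^{(n)})-2j$ combined with unique factorization against a fixed Habiro factorial, and the same passage from polynomials to the limit. Your coefficientwise continuity argument with the truncations $G_K,H_K$ simply spells out what the paper compresses into ``compatible homomorphisms into every $\HH[u]/(u^h)$.''
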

\begin{proof}
Write $d_i=a_n-x_i^{(n+2)}$, so
$B_k^{(n+2)}(a_n+u)=\prod_{i<k}(d_i+u)$.
Every coefficient of $u^j$ is a sum of products obtained from $\delta_k^{(n)}$ by omitting $j$ factors $d_i$. Each $d_i$ has valuation at most two at every cyclotomic prime. Therefore
\begin{equation}\label{eq:hasse-bound}
 v_{\Phi_d}\bigl((B_k^{(n+2)})^{[j]}(a_n)\bigr)
 \ge v_{\Phi_d}(\delta_k^{(n)})-2j.
\end{equation}
Fix a Habiro factorial $(q;q)_N$. For each of its finitely many
cyclotomic factors, the right side of \eqref{eq:hasse-bound}
eventually exceeds the multiplicity of that factor in $(q;q)_N$, so
by unique factorization all sufficiently late terms lie in
$(q;q)_NR$. The same argument shows that, for fixed $h$ and any fixed
Habiro ideal, all sufficiently late Newton basis terms have their
first $h$ Taylor coefficients in that ideal. Polynomial Taylor
expansion therefore extends to compatible homomorphisms into every
$\HH[u]/(u^h)$, and the remaining identities pass from polynomials to
the limit.
\end{proof}

\begin{theorem}\label{thm:normal-completion}
For every $h\ge1$, the Taylor map induces a ring isomorphism
\begin{equation}\label{eq:normal-quotient}
 \C_{n+2}/(X-a_n)^h\C_{n+2}\xrightarrow{\sim}\HH[u]/(u^h).
\end{equation}
The full map $\Tay_n$ is injective, and
\begin{equation}\label{eq:normal-completion}
 \varprojlim_h\C_{n+2}/(X-a_n)^h\C_{n+2}\simeq\HH[[u]].
\end{equation}
\end{theorem}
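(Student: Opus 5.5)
We prove the finite-level isomorphism \eqref{eq:normal-quotient} by induction on $h$, using the exact sequence of \cref{prop:boundary-exact} as the base case and at each step. For $h=1$, \cref{lem:hasse-convergence} gives $t_0=\beta_n$. By \cref{prop:boundary-exact}, $\beta_n$ is surjective with kernel $u\C_{n+2}$, so $\C_{n+2}/u\C_{n+2}\simeq\HH$.

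For the inductive step, put $u=X-a_n$ in $\C_{n+2}$ and consider the commutative diagram of short exact sequences
\[
 0\to\C_{n+2}/u\C_{n+2}\xrightarrow{\ u^{h}\ }\C_{n+2}/u^{h+1}\C_{n+2}\to\C_{n+2}/u^{h}\C_{n+2}\to0
\]
over
\[
 0\to\HH\xrightarrow{\ u^h\ }\HH[u]/(u^{h+1})\to\HH[u]/(u^h)\to0 .
\]
The top row is exact because multiplication by $u^h$ is injective on the domain $\C_{n+2}$ (\cref{thm:identity}). Hence $u^hG\in u^{h+1}\C_{n+2}$ forces $G\in u\C_{n+2}$. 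The vertical maps are induced by $\Tay_n$. They commute with the horizontal maps because $\Tay_n(uG)=u\Tay_n(G)$ (\cref{lem:hasse-convergence}). Consequently the left vertical map is the $h=1$ isomorphism composed with the shift by $u^h$. The five lemma then gives the isomorphism at level $h+1$. Since $\Tay_n$ is a ring homomorphism, so is the induced map.

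Passing to the inverse limit gives $\varprojlim_h\C_{n+2}/u^h\C_{n+2}\simeq\varprojlim_h\HH[u]/(u^h)=\HH[[u]]$, which is \eqref{eq:normal-completion}. The remaining claim, injectivity of the full $\Tay_n$, is the main obstacle. It is equivalent to $\bigcap_h u^h\C_{n+2}=0$, which does not follow from the finite-level statements.

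To prove it, suppose $G=u^hG_h$ for every $h$. We use the coefficient-shift identity \eqref{eq:mu}: $uG'=\rho_n(\mu_{n,1}G')$, so $u\C_{n+2}\subset\rho_n(\C_n)$. Iterating, $u^h\C_{n+2}$ consists of restrictions of elements of $\C_{n-2h+2}$. This only makes sense while the offset stays nonnegative, so we evaluate instead. Divisibility by $u^h$ means $G(x_r^{(n+2)})$ is divisible by $(x_r^{(n+2)}-a_n)^h=(\{r+1\}\{n+r+1\})^h$ in $R$ for every $h$. This follows by evaluating $G=u^hG_h$ at the node, and $G_h(x_r^{(n+2)})\in R$. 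A nonzero Laurent polynomial cannot be divisible by arbitrarily high powers of a nonunit: compare breadths, the breadth of $\{r+1\}\{n+r+1\}$ being $2n+4r+4>0$. Hence every color value of $G$ vanishes, and $G=0$ by \cref{prop:compatible}. This evaluation argument avoids any Krull-intersection theorem, which is unavailable since $\C_{n+2}$ is non-Noetherian by \cref{thm:defect}.
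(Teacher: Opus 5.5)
Your proposal is correct and follows essentially the same route as the paper. The five-lemma induction is a packaged form of the paper's explicit argument, which lifts successively through $\beta_n$ for surjectivity and divides by $X-a_n$, using $\Tay_n((X-a_n)H)=u\Tay_n(H)$, for the kernel. Your proof of injectivity of the full $\Tay_n$, via divisibility of the tail-node values by every power of $\{r+1\}\{n+r+1\}$ and a breadth comparison, is exactly the paper's argument.
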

\begin{proof}
Surjectivity modulo $u$ is \cref{prop:boundary-exact}. Suppose a polynomial $\alpha_0+\cdots+\alpha_{h-1}u^{h-1}$ with coefficients in $\HH$ has been lifted through normal order $i-1$. Its error in order $i$ is some $e_i\in\HH$. Choose $G_i\in\C_{n+2}$ with $\beta_n(G_i)=e_i$, and add $(X-a_n)^iG_i$ to the lift. Earlier coefficients do not change, and the $i$th one becomes correct. Finite induction proves surjectivity modulo $u^h$.

For the kernel, the case $h=1$ is again \cref{prop:boundary-exact}. If the first $h$ normal coefficients of $G$ vanish, write $G=(X-a_n)H$ by that case. The identity in \cref{lem:hasse-convergence} says that the first $h-1$ normal coefficients of $H$ vanish. Induction gives $G\in(X-a_n)^h\C_{n+2}$.

Finally, if $G$ belongs to every such power, its value at a fixed tail node $x_r^{(n+2)}$ is divisible in $R$ by every power of $x_r^{(n+2)}-a_n$. This is a nonzero nonunit Laurent polynomial of positive breadth. Hence every color value of $G$ is zero, and $G=0$. Taking inverse limits of \eqref{eq:normal-quotient} proves the stated completion.
\end{proof}

Define the trace Chebyshev polynomials by
\begin{equation}\label{eq:chebyshev}
 \Theta_0=2,\qquad \Theta_1=X,\qquad
 \Theta_{m+1}=X\Theta_m-\Theta_{m-1}.
\end{equation}
They satisfy $\Theta_m(z+z^{-1})=z^m+z^{-m}$.

\begin{proposition}\label{prop:adams}
For $d\ge1$, the substitutions
\[
 \psi_d(q)=q^d,\qquad \psi_d(X)=\Theta_d(X)
\]
extend to commuting endomorphisms of $\C_n$, with
$\psi_a\psi_b=\psi_{ab}$ and
$\psi_p(F)\equiv F^p\pmod{p\C_n}$ for every prime $p$.
They commute with tail restriction and with the Habiro boundary maps.
\end{proposition}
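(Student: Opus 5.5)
The cleanest route is through \cref{prop:compatible}: I will work with color-value functions rather than Newton coefficients. The key observation is that $\psi_d$ permutes nothing but maps nodes to nodes after substituting $q\mapsto q^d$. Indeed, $\Theta_d(x_r^{(n)}(q))=q^{d(n+2r)}+q^{-d(n+2r)}=x_r^{(n)}(q^d)$, using $\Theta_d(z+z^{-1})=z^d+z^{-d}$ with $z=q^{n+2r}$. So for a polynomial $P\in R[X]$, the polynomial $\psi_dP=P^{\sigma_d}(\Theta_d(X))$ takes the value $\sigma_d\bigl(P(x_r^{(n)})\bigr)$ at $x_r^{(n)}$. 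Here $\sigma_d$ is the ring endomorphism $q\mapsto q^d$ of $R$.

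The plan is to define $\psi_d$ on $\Comp_R$ of the node set by $(\psi_dF)(x_r^{(n)})=\sigma_d(F(x_r^{(n)}))$. This sequence is polynomially compatible: on any finite set of nodes, $F$ is represented by some $P\in R[X]$, and then $\psi_dP$ is an integral polynomial representing the new values. So $\psi_d$ is a well-defined map $\C_n\to\C_n$. Since it is pointwise $\sigma_d$, it is a ring endomorphism, and it is continuous because agreement on a finite set is preserved. It also extends the polynomial substitution. The relations $\psi_a\psi_b=\psi_{ab}$, and hence commutativity, follow pointwise from $\sigma_a\sigma_b=\sigma_{ab}$. Compatibility with tail restriction $\rho_{n,m}$ is immediate, since both maps are defined on values and $x_{m+r}^{(n)}=x_r^{(n+2m)}$.

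For the Habiro boundaries, I first note that $\sigma_d$ induces an endomorphism of $\HH$. The ideals $\sigma_d((q;q)_N)R=(q^d;q^d)_N R$ are contained in $(q;q)_NR$, because $1-q^j\mid 1-q^{dj}$. I then need $\beta_n\psi_d=\sigma_d\beta_n$. On polynomials this holds because $\Theta_d(a_n)=a_n(q^d)$, the same node computation with $r=0$. It extends to all of $\C_{n+2}$ by continuity: $\beta_n$ is continuous from the Newton topology to the Habiro topology, since late Newton terms have late $\delta_k$ values by \cref{lem:cofinality}. Polynomials are dense, and $\psi_d$ and $\sigma_d$ are continuous. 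The same argument handles $\beta_{N,s}$ of \cref{cor:several-colors}.

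The main point to check is the Frobenius congruence. On values, we have $\sigma_p(f)\equiv f^p\pmod{pR}$ for every $f\in R$, since $f(q)^p\equiv f(q^p)$ mod $p$. Thus $\psi_pF-F^p$ has all color values in $pR$. It remains to show that such an element lies in $p\C_n$. I will use \cref{thm:units-primes}: the quotient $\C_n/p\C_n$ is the Newton ring over $\Fp[q^{\pm1}]$, with nodes that remain distinct there. By \cref{prop:compatible} over that domain, an element of that ring is determined by its values. The image of $\psi_pF-F^p$ has all reduced values zero, so it vanishes. This is the only step that needs more than a pointwise check.
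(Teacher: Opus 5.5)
Your proof is correct, but it is organized differently from the paper's. Both proofs rest on the identity $\Theta_d(x_r^{(n)}(q))=x_r^{(n)}(q^d)$. The paper uses it on the polynomial side. Vanishing of $\psi_d(B_k^{(n)})$ at the first $k$ nodes gives $B_k^{(n)}\mid\psi_d(B_k^{(n)})$ in $R[X]$ by monic division, so the substitution descends to each finite quotient. The paper then gets the composition law from $\Theta_a\circ\Theta_b=\Theta_{ab}$. It gets the Frobenius congruence from $\Theta_p(X)\equiv X^p$ and $q\mapsto q^p$ modulo $p$, passed to the completion through the coefficientwise description of $\C_n/p\C_n$ in \cref{thm:units-primes}. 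It gets continuity on $\HH$ from the count $v_{\Phi_e}((q^d;q^d)_k)=\lfloor k\gcd(e,d)/e\rfloor\ge\lfloor k/e\rfloor$. Your definition of $\psi_d$ as pointwise $\sigma_d$ via \cref{prop:compatible} makes several properties automatic: multiplicativity, $\psi_a\psi_b=\psi_{ab}$, commutativity, and compatibility with $\rho_{n,m}$. You need no Chebyshev composition law, and the divisibility $1-q^j\mid1-q^{dj}$ is a shorter proof that $\sigma_d$ is continuous on $\HH$. In the Frobenius step you trade $\Theta_p(X)\equiv X^p$ for a different fact: an element whose color values all lie in $pR$ lies in $p\C_n$. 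This uses the distinctness of the reduced nodes from the second half of \cref{thm:units-primes}. It also uses that reduction commutes with evaluation, which is easy but worth stating: $F(x_r^{(n)})$ involves only the first $r+1$ Newton coefficients. The paper's density argument needs only the coefficientwise description of the quotient.
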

\begin{proof}
At each of the first $k$ nodes, $\psi_d(B_k^{(n)})$ vanishes,
since its value is $B_k^{(n)}(x_r^{(n)}(q^d);q^d)$.
Repeated monic division gives
$B_k^{(n)}\mid\psi_d(B_k^{(n)})$ in $R[X]$.
The substitutions are therefore continuous. The Chebyshev identity
proves the composition law. Modulo $p$,
$\Theta_p(X)\equiv X^p$ and $q^p$ is the Frobenius image of $q$;
\cref{thm:units-primes} then gives the completed congruence.
The map $q\mapsto q^d$ is continuous on $\HH$, since the
$\Phi_e(q)$-valuation of $(q^d;q^d)_k$ is
$\lfloor k\gcd(e,d)/e\rfloor\ge\lfloor k/e\rfloor$.
Finally $\Theta_d(a_n(q))=a_n(q^d)$ proves boundary compatibility.
\end{proof}

\subsection{Normal expansions at the earlier nodes}
For $n\ge2$ and $1\le s\le\lfloor n/2\rfloor$, put
\begin{equation}\label{eq:earlier-node}
 b=n-2s,\qquad a_b=q^b+q^{-b},\qquad u_s=X-a_b.
\end{equation}
The Newton products at this point are
\begin{equation}\label{eq:earlier-kernel}
 B_k^{(n)}(a_b+u_s)
 =\prod_{i=0}^{k-1}\bigl(u_s-\{s+i\}\{n-s+i\}\bigr).
\end{equation}

\begin{theorem}\label{thm:all-boundaries}
Evaluation at $a_{n-2s}$ defines a surjective homomorphism
$\beta_{n,s}:\C_n\to\HH$, with kernel $u_s\C_n$.
For every $h\ge1$, normal expansion induces
\begin{equation}\label{eq:all-boundaries}
 \C_n/u_s^h\C_n\simeq\HH[u_s]/(u_s^h),\qquad
 \C_n\hookrightarrow\HH[[u_s]].
\end{equation}
If $F=\sum_{k\ge0}c_kB_k^{(n)}$, its normal coefficients are
\begin{equation}\label{eq:all-normal-coefficients}
 t_{n,s,j}(F)=
 \sum_{k\ge j}(-1)^{k-j}c_k\,
 e_{k-j}\bigl(\{s+i\}\{n-s+i\}:0\le i<k\bigr)\in\HH.
\end{equation}
The $u_s$-adic completion of $\C_n$ is $\HH[[u_s]]$.
\end{theorem}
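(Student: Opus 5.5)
The plan is to reduce \cref{thm:all-boundaries} to the case already treated, namely \cref{prop:boundary-exact}, \cref{lem:hasse-convergence} and \cref{thm:normal-completion} for $s=1$ applied to $\C_{b}$ with $b=n-2s$ replaced appropriately. The key observation is that $a_{n-2s}=x_{-s}^{(n)}$ is the node $x_0^{(b)}$ of the ring $\C_b$, and $\C_n$ is the tail $\rho_{b,s}(\C_b)$ image after deleting $s$ nodes. However, the boundary theorem is stated for deleting exactly one node, $\C_{b+2}\to\HH$. So I will instead argue directly, copying the $s=1$ proofs with the kernel \eqref{eq:earlier-kernel}.

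\emph{Step 1: the boundary map.} Writing $B_k^{(n)}(a_b+u_s)=\prod_{i<k}(u_s+d_i)$ with $d_i=-\{s+i\}\{n-s+i\}$, the constant term is $\epsilon_k=\prod_{i<k}d_i$. Since $s\ge1$ and $n-s\ge s\ge1$, all indices are positive, and exactly as in \cref{cor:several-colors} every cyclotomic valuation of $\epsilon_k$ tends to infinity; conversely each $\epsilon_k$ divides a late Habiro factorial. So $(\epsilon_k)$ is cofinal with $((q;q)_m)$, and $\beta_{n,s}(\sum c_kB_k^{(n)})=\sum c_k\epsilon_k$ converges in $\HH$. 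Surjectivity and the identification of the kernel with $u_s\C_n$ are the same recursive constructions as in \cref{prop:boundary-exact}: surjectivity by choosing $c_k$ digit by digit using $\epsilon_{k+1}=d_k\epsilon_k$, and for the kernel set $h_k=-s_{k+1}/\epsilon_{k+1}$ with $s_{k+1}=\sum_{j\le k}g_j\epsilon_j$, using $u_sB_k^{(n)}=B_{k+1}^{(n)}-d_kB_k^{(n)}$. Injectivity of $u_s$ comes from $\C_n$ being a domain (\cref{thm:identity}), since $u_s\neq0$.

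\emph{Step 2: Taylor map and formula.} Expanding $\prod_{i<k}(u_s+d_i)$, the coefficient of $u_s^j$ is the elementary symmetric function $e_{k-j}(d_0,\dots,d_{k-1})=(-1)^{k-j}e_{k-j}(\{s+i\}\{n-s+i\})$, which gives \eqref{eq:all-normal-coefficients} once convergence is known. Convergence follows as in \eqref{eq:hasse-bound}: each $d_i$ has cyclotomic valuation at most two, so the $u_s^j$-coefficient has valuation at least $v_{\Phi_d}(\epsilon_k)-2j\to\infty$. This gives a ring homomorphism $\C_n\to\HH[[u_s]]$ that is compatible with multiplication by $u_s$.

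\emph{Step 3: isomorphisms and completion.} The argument of \cref{thm:normal-completion} transfers verbatim. Surjectivity modulo $u_s^h$ follows by inductive correction with $u_s^iG_i$. For the kernel, induct using $\ker\beta_{n,s}=u_s\C_n$ and cancellation of $u_s$. Injectivity of the full map holds because an element in $\bigcap_h u_s^h\C_n$ has each color value divisible by all powers of $x_r^{(n)}-a_b=\{r+s\}\{n+r-s\}$, which is a nonunit of positive breadth, so all values vanish. Taking the inverse limit gives $\HH[[u_s]]$.

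The main obstacle is Step 1's cofinality, specifically the positivity of both index strings and the bookkeeping of the valuation formula analogous to \eqref{eq:delta-valuation}. Everything else is a formal repetition of the $s=1$ arguments.
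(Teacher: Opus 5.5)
Your proposal is correct and follows the paper's proof in all essentials. The steps match: cofinality of the constant terms $B_k^{(n)}(a_{n-2s})$ with the Habiro factorials, the bound $v_{\Phi_e}\ge v_{\Phi_e}(\epsilon_k)-2j$ for the $u_s^j$-coefficients, successive lifting and repeated division by $u_s$, and injectivity via divisibility of every color value by all powers of $\{r+s\}\{n+r-s\}$. The only difference is cosmetic: for surjectivity and the kernel of $\beta_{n,s}$ you rerun the explicit recursion of \cref{prop:boundary-exact}, whereas the paper uses the finite-level exact sequence $0\to A_k\xrightarrow{X-a_b}A_k\to R/(d_k)\to0$ and passes to the limit using its surjective transition maps.
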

\begin{proof}
Set $d_k=B_k^{(n)}(a_b)$. By \eqref{eq:earlier-kernel},
$d_k$ is a signed product of two positive strings of quantum integers.
Every irreducible factor is cyclotomic, and the multiplicity of each
fixed cyclotomic factor tends to infinity. The ideals $(d_k)$ are
therefore cofinal with the Habiro ideals, by the proof of
\cref{lem:cofinality}.

For $A_k=R[X]/(B_k^{(n)})$, multiplication by $X-a_b$ gives an exact
sequence
\[
 0\longrightarrow A_k\xrightarrow{X-a_b} A_k
 \longrightarrow R/(d_k)\longrightarrow0.
\]
To see injectivity, tensor with $\Q(q)$: all eigenvalues
$x_i^{(n)}-a_b$ are nonzero. The module $A_k$ is free over $R$,
so the original map is injective as well. The inverse system of
left-hand modules has surjective transition maps. Taking inverse
limits proves surjectivity of $\beta_{n,s}$ and the kernel statement.

A coefficient of $u_s^j$ in \eqref{eq:earlier-kernel} omits $j$ of
the factors defining $d_k$. Each omitted factor has valuation at most
two at a cyclotomic prime. Hence its valuation is at least
$v_{\Phi_e}(d_k)-2j$. For fixed $j$, these coefficients tend to zero
in the Habiro topology. Expanding the elementary symmetric functions
gives \eqref{eq:all-normal-coefficients} and a multiplicative Taylor
map. Successive lifts through $\beta_{n,s}$ prove surjectivity on
each finite normal quotient. Repeated division by $u_s$ identifies
its kernel with $u_s^h\C_n$.

If an element lies in every $u_s^h\C_n$, its value at each color is
divisible by every power of the nonzero nonunit
$x_r^{(n)}-a_b=\{r+s\}\{n+r-s\}$. Such a Laurent polynomial is zero.
The color identity theorem proves injectivity of the full map.
\end{proof}

\begin{proposition}\label{prop:normal-adams}
At the boundary \eqref{eq:earlier-node}, the action of $\psi_d$ on
normal degree $j$ is
\begin{equation}\label{eq:normal-adams}
 h(q)u_s^j\longmapsto w_{d,b}(q)^j h(q^d)u_s^j,
 \qquad
 w_{d,b}(q)=
 \begin{cases}
 d\displaystyle\frac{q^{db}-q^{-db}}{q^b-q^{-b}},&b>0,\\[2mm]
 d^2,&b=0.
 \end{cases}
\end{equation}
For $b>0$ the displayed quotient is a Laurent polynomial.
\end{proposition}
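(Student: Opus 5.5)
The plan is to compute how $\psi_d$ acts on the normal coordinate itself. Everything else follows from multiplicativity of $\psi_d$ and its compatibility with the boundary map $\beta_{n,s}$ (\cref{prop:adams}), together with the Taylor isomorphisms of \cref{thm:all-boundaries}.

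First, $\psi_d$ preserves the $u_s$-adic filtration. Indeed, $\psi_d(u_s)=\Theta_d(X)-a_b(q^d)$. Write $X=a_b+u_s$ and use $\Theta_d(a_b(q))=a_b(q^d)$. Then
\[
 \psi_d(u_s)=\Theta_d(a_b+u_s)-\Theta_d(a_b)=\Theta_d'(a_b)\,u_s+O(u_s^2)
\]
in $R[u_s]$. Hence $\psi_d(u_s^j\C_n)\subseteq u_s^j\C_n$, and $\psi_d$ induces a map on each graded piece $u_s^j\C_n/u_s^{j+1}\C_n\simeq\HH\,u_s^j$. For $h\in\HH$, choose $G\in\C_n$ with $\beta_{n,s}(G)=h$. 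Multiplicativity gives $\psi_d(Gu_s^j)=\psi_d(G)\,\psi_d(u_s)^j$. The boundary compatibility gives $\beta_{n,s}(\psi_d G)=h(q^d)$. This compatibility holds on polynomials because evaluating $\psi_d F$ at $a_b(q)$ is evaluating $F$ at $\Theta_d(a_b(q))=a_b(q^d)$ with $q\mapsto q^d$, and it passes to the limit by the continuity of $q\mapsto q^d$ on $\HH$ shown in \cref{prop:adams}. So the degree-$j$ piece is sent to $\Theta_d'(a_b)^j\,h(q^d)\,u_s^j$. It remains to identify $w_{d,b}=\Theta_d'(a_b)$.

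Next compute $\Theta_d'$. Differentiate $\Theta_d(z+z^{-1})=z^d+z^{-d}$ in $z$ to get
\[
 \Theta_d'(z+z^{-1})\,(1-z^{-2})=d(z^{d-1}-z^{-d-1}),
\]
that is,
\[
 \Theta_d'(z+z^{-1})=d\,\frac{z^d-z^{-d}}{z-z^{-1}}\qquad(z\ne\pm1).
\]
Substituting $z=q^b$ with $b>0$ gives the displayed quotient. This quotient equals $d\sum_{i=0}^{d-1}q^{b(d-1-2i)}$, which is a Laurent polynomial, proving the last assertion.

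For $b=0$ we need $\Theta_d'(2)$. Since $\Theta_d'$ is a polynomial, take the limit $z\to1$ in the identity above, or equivalently $q\to1$ in the Laurent polynomial $d\sum_i q^{b(d-1-2i)}$. This yields $d\cdot d=d^2$.

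No step is a serious obstacle. The only point that needs care is that the graded action is well defined on all of $\HH$ rather than only on the image of $R$. That is handled by the surjectivity of $\beta_{n,s}$ and by checking that the answer does not depend on the lift $G$: two lifts differ by an element of $u_s\C_n$, whose image under $\psi_d$ lies in $u_s\C_n$ and so vanishes in degree $j$ after multiplication by $\psi_d(u_s)^j$.
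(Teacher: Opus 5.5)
Your proposal is correct and is essentially the paper's argument. Both proofs read off the linear coefficient $\Theta_d'(a_b)$ of $\psi_d(u_s)=\Theta_d(a_b+u_s)-a_b(q^d)$, compute it by differentiating $\Theta_d(z+z^{-1})=z^d+z^{-d}$, and pass to the associated graded. The only difference is packaging: you justify the graded action by lifting through the surjective boundary map $\beta_{n,s}$ and using multiplicativity, while the paper uses its full Taylor substitution formula.
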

\begin{proof}
The full Taylor substitution is
\[
 \Tay_{n,s}(\psi_dF)(u_s,q)
 =\bigl(\Tay_{n,s}F\bigr)
 \bigl(\Theta_d(a_b(q)+u_s)-a_b(q^d),q^d\bigr).
\]
Differentiating $\Theta_d(z+z^{-1})=z^d+z^{-d}$ gives
$\Theta_d'(a_b)=w_{d,b}$. At $b=0$ its value is $d^2$.
Taking the associated graded proves the formula.
\end{proof}

The earlier nodes \eqref{eq:earlier-node} have the parity of $n$: for
even $n$ the last one is $2$, and for odd $n$ it is $q+q^{-1}$. At
odd $n$, evaluation at $X=2$ is not continuous for the Newton and
Habiro topologies: at $q=-1$ one has $B_k^{(n)}(2)=4^k$. The joint
formal expansion at $(q,X)=(1,2)$ exists for every $n$ by
\cref{thm:quantum-neighborhoods}.

\begin{example}[Geometric nodes]\label{ex:geometric-boundary}
Let $S=\mathbb Z[Q^{\pm1}]$, let
$\mathcal H_Q=\varprojlim_m S/((Q;Q)_m)$, and put
\[
 \mathcal G_b=\varprojlim_m S[Z^{\pm1}]/(P_m^{(b)}),\qquad
 P_m^{(b)}(Z)=\prod_{j=0}^{m-1}(Z-Q^{b+j}),\qquad b\ge0.
\]
The constant coefficient of $P_m^{(b)}$ is a unit, so the same
finite quotients can be formed in $S[Z]$. At the deleted node,
\[
 P_k^{(b+1)}(Q^b)=Q^{bk}(Q;Q)_k.
\]
Thus the boundary factorial is exactly the Habiro factorial, up to
an $S$-unit. The two-ideal argument of \cref{thm:pullback} gives
an injective tail restriction and a fiber product
\[
 \mathcal G_b\simeq S\times_{\mathcal H_Q}\mathcal G_{b+1}.
\]
Evaluation $\gamma_b$ at $Q^b$ is surjective, with
$\ker\gamma_b=v\mathcal G_{b+1}$, where $v=Z-Q^b$.

The coefficient of $v^j$ in $P_k^{(b+1)}(Q^b+v)$ has
$\Phi_d(Q)$-valuation at least $\lfloor k/d\rfloor-j$.
It therefore tends to zero in the Habiro topology for fixed $j$.
Successive lifting through $\gamma_b$, followed by division by $v$,
gives
\[
 \mathcal G_{b+1}/v^h\mathcal G_{b+1}
       \simeq\mathcal H_Q[v]/(v^h),\qquad
 \widehat{\mathcal G_{b+1}}^{\,(v)}\simeq\mathcal H_Q[[v]].
\]
The completion map is injective: an element in every power of $v$
has each node value divisible by every power of its nonzero
Laurent difference from $Q^b$. All its values vanish, and the
Laurent-breadth proof of \cref{thm:identity} applies.
\end{example}

\section{Cyclotomic and \texorpdfstring{$p$}{p}-adic interpolation}\label{sec:sampling}

Fix $n\ge0$ and $d\ge1$, set $\pi=\Phi_d(q)$ and
$\ell=d/\gcd(d,2)$, and let $\cO=R_{(\pi)}$ with completion
$\widehat\cO$. Define
\[
 \Omega_{n,d}=(\Z/\ell\Z)/\langle r\mapsto-n-r\rangle.
\]
The ring $\cO$ is a discrete valuation ring with residue field
$\Q(\zeta_d)$, where $\zeta_d$ is a primitive $d$th root of unity,
and every nonzero integer is a unit in $\cO$.

For distinct nonnegative colors,
\begin{equation}\label{eq:pair-valuation}
 v_\pi(x_r^{(n)}-x_s^{(n)})
   =\boldsymbol 1_{\ell\mid r-s}+\boldsymbol 1_{\ell\mid n+r+s}.
\end{equation}
Indeed, each nonzero quantum integer has a simple zero at a cyclotomic prime which divides it. Thus two nodes have the same residue precisely when their colors lie in the same orbit $\omega\in\Omega_{n,d}$. For distinct colors in that orbit, the valuation in \eqref{eq:pair-valuation} is a constant $e_\omega$, equal to two for a fixed orbit and one otherwise. The number of fixed orbits is
\begin{equation}\label{eq:fixed-count}
 f_{n,d}=\begin{cases}
 1&\ell\text{ odd},\\
 2&\ell\text{ even and }n\text{ even},\\
 0&\ell\text{ even and }n\text{ odd}.
 \end{cases}
\end{equation}
There are $(\ell-f_{n,d})/2$ nonfixed orbits.

\subsection{Local completions}

\begin{proposition}\label{prop:quadratic-cover}
Set $z_r=q^{n+2r}$ and
$\widetilde B_m(z)=\prod_{r<m}(z-z_r)(z-z_r^{-1})$.
Then
\[
 \varprojlim_m R[z^{\pm1}]/(\widetilde B_m)
 \simeq \C_n[z]/(z^2-Xz+1).
\]
The invariant subring for $z\mapsto z^{-1}$ is $\C_n$.
\end{proposition}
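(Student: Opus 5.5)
We prove the isomorphism at each finite level and then pass to the limit. For each $m$, set $A_m=R[X]/(B_m^{(n)})$ and $\widetilde A_m=R[z^{\pm1}]/(\widetilde B_m)$. The key identity is
\[
 (z-z_r)(z-z_r^{-1})=z\bigl(z+z^{-1}-x_r^{(n)}\bigr),
\]
so that $\widetilde B_m(z)=z^mB_m^{(n)}(z+z^{-1})$. Since $z$ is a unit in $R[z^{\pm1}]$, this gives $(\widetilde B_m)=(B_m^{(n)}(z+z^{-1}))$.

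Next, note that $R[z^{\pm1}]\simeq R[X][z]/(z^2-Xz+1)$ via $X\mapsto z+z^{-1}$. Indeed, $z$ is invertible in the right-hand side with inverse $X-z$, and that ring is free over $R[X]$ with basis $1,z$. Quotienting both sides by $B_m^{(n)}(X)$ yields
\[
 \widetilde A_m\simeq A_m[z]/(z^2-Xz+1)=A_m\oplus A_m z .
\]
These finite-level isomorphisms commute with the transition maps, because $\widetilde B_{m+1}$ is divisible by $\widetilde B_m$ in a manner compatible with the divisibility of $B_{m+1}^{(n)}$ by $B_m^{(n)}$.

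Passing to the inverse limit, the left side becomes $\varprojlim_m\widetilde A_m$. The right side becomes $\varprojlim_m(A_m\oplus A_mz)=\C_n\oplus\C_n z$, since the limit of free rank-two modules with basis $1,z$ is $\C_n\oplus\C_nz$. This is $\C_n[z]/(z^2-Xz+1)$, because the quotient is already free of rank two over $\C_n$ and no completion issue arises.

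For the invariants, the involution $z\mapsto z^{-1}$ acts on $\widetilde A_m$ and preserves $(\widetilde B_m)$, since $\widetilde B_m(z^{-1})=z^{-2m}\widetilde B_m(z)$. On $A_m\oplus A_mz$ it fixes $X$ and sends $z$ to $X-z$, so $a+bz\mapsto (a+bX)-bz$. Thus $a+bz$ is invariant precisely when $2b=0$, hence $b=0$, because $A_m$ is free over $R$ and so has no $2$-torsion. The invariants at each level are therefore $A_m$, and taking limits (invariants commute with inverse limits) gives $\C_n$.

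The only genuinely delicate points are checking that the identification of $R[z^{\pm1}]$ with the free quadratic extension of $R[X]$ is an isomorphism, and the torsion-freeness needed for the invariants. Both reduce to the freeness of $1,z$ and of $A_m$ over $R$.
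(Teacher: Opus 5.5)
Your proof is correct and follows the paper's argument. You use the identity $\widetilde B_m(z)=z^mB_m^{(n)}(z+z^{-1})$ and the presentation $R[z^{\pm1}]=R[X,z]/(z^2-Xz+1)$ to identify the finite quotients as free of rank two with basis $1,z$, and then invariance of $a+bz$ forces $2b=0$, hence $b=0$ by torsion-freeness over $\Z$. The only cosmetic difference is that you take invariants at each finite level and pass to the limit (invariants are a kernel, so they commute with inverse limits), whereas the paper compares coefficients directly in $\C_n\oplus\C_n z$.
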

\begin{proof}
The identity
$\widetilde B_m(z)=z^mB_m^{(n)}(z+z^{-1})$
and the presentation
$R[z^{\pm1}]=R[X,z]/(z^2-Xz+1)$
identify the finite quotients. They are free of rank two over the corresponding unoriented quotient, with basis $1,z$, so the presentation passes to the inverse limit. The involution sends $z$ to $X-z$. If $a+bz$ is fixed, comparison of the coefficients of $z$ gives $2b=0$. The module $\C_n$ is torsion-free over $\Z$, so $b=0$.
\end{proof}

\begin{proposition}\label{prop:disks}
Choose a color $s_\omega$ in each orbit, and put $u_\omega=X-x_{s_\omega}^{(n)}$. The mixed completion has a decomposition
\begin{equation}\label{eq:disks}
 \varprojlim_{h,m}\cO[X]/(\pi^h,B_m^{(n)})
 \simeq\prod_{\omega\in\Omega_{n,d}}\widehat\cO[[u_\omega]].
\end{equation}
The map from $\C_n$ into any one factor is injective.
\end{proposition}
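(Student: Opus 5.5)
The plan is to prove the decomposition at each finite level and then pass to the limit. Fix $h$ and $m$ and consider $A_{h,m}=\cO[X]/(\pi^h,B_m^{(n)})$. Over $\cO/\pi^h$, the polynomial $B_m^{(n)}$ factors as $\prod_{\omega}B_{m,\omega}$, where $B_{m,\omega}=\prod_{r<m,\,r\in\omega}(X-x_r^{(n)})$ collects the nodes whose colors lie in the orbit $\omega$.

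\textbf{Step 1 (coprimality and splitting).} By \eqref{eq:pair-valuation}, nodes from different orbits have differences that are units in $\cO$. So for $\omega\ne\omega'$ the resultant of $B_{m,\omega}$ and $B_{m,\omega'}$ is a unit, and these factors are comaximal in $\cO[X]$. The Chinese remainder theorem then gives
\[
 A_{h,m}\simeq\prod_\omega\cO[X]/(\pi^h,B_{m,\omega}),
\]
compatibly in $h$ and $m$.

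\textbf{Step 2 (each factor is a disk).} Fix $\omega$ and write $u=u_\omega$. Each node $x_r^{(n)}$ with $r\in\omega$ satisfies $x_r^{(n)}-x_{s_\omega}^{(n)}\in\pi\cO$, so $B_{m,\omega}(x_{s_\omega}+u)=\prod(u-\epsilon_r)$ with every $\epsilon_r\in\pi\cO$. This polynomial is monic in $u$, and modulo $\pi$ it is $u^{m_\omega}$, where $m_\omega=\#\{r<m:r\in\omega\}$. Hence
\[
 \cO[u]/(\pi^h,B_{m,\omega})\simeq(\cO/\pi^h)[u]/(\textstyle\prod(u-\epsilon_r)).
\]
The ideal $(\pi^h,\prod(u-\epsilon_r))$ contains $(\pi,u)^{N}$ for $N$ large, since $u^{m_\omega}\in(\pi,\prod(u-\epsilon_r))$. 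Conversely, as $m_\omega\to\infty$ with $h$ fixed, the product $\prod(u-\epsilon_r)$ lies in $(\pi,u)^{m_\omega}$, because every factor lies in $(\pi,u)$. So the system of ideals $(\pi^h,B_{m,\omega})$ is cofinal with $(\pi,u)^N$ in $\cO[u]$. Its inverse limit is therefore the $(\pi,u)$-adic completion $\widehat\cO[[u]]$. Each orbit is infinite among nonnegative colors, so $m_\omega\to\infty$ and cofinality holds. Combining Steps 1 and 2 and taking limits yields \eqref{eq:disks}.

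\textbf{Step 3 (injectivity into one factor).} This step is the main subtlety. The map from $\C_n$ into the completion factors through $\C_n\to\varprojlim_m\cO[X]/(B_m)$, and then reduces modulo $\pi^h$. Suppose $F\in\C_n$ maps to zero in the $\omega$-factor. The projection to that factor is evaluation at the nodes of $\omega$ in the limit over $m$ and $h$. For each color $r\in\omega$, the image of $F(x_r^{(n)})$ in $\widehat\cO$ is then the value of a zero power series at $u=x_r-x_{s_\omega}\in\pi\widehat\cO$, hence zero. To justify this, check that evaluation at $x_r$ factors through $\cO[X]/(\pi^h,B_{m,\omega})$ once $r<m$, using the CRT idempotent. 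So $F(x_r^{(n)})\in\pi^h\cO$ for all $h$, and therefore $F(x_r^{(n)})=0$ in $R$, because $R\to\widehat\cO$ is injective ($\cO$ is a DVR). This holds for the infinitely many colors $r\in\omega$, so \cref{thm:identity} gives $F=0$.

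The expected obstacle is the bookkeeping in Step 2: making the cofinality precise while the nodes depend on $q$, and confirming that $\C_n\to\varprojlim\cO[X]/(\pi^h,B_m)$ is well defined. The latter follows from the finite quotients, since $R[X]/(B_m)\to\cO[X]/(B_m)$ is compatible with the transition maps.
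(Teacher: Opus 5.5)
Your proposal is correct and follows the paper's own argument: Chinese remainder splitting by orbits via the unit resultants from \eqref{eq:pair-valuation}, cofinality of $(\pi^h,B_{m,\omega})$ with powers of $(\pi,u_\omega)$ in each orbit, and injectivity by evaluating at the infinitely many colors of one orbit and applying \cref{thm:identity}. The one compressed point is in Step 2. From $u^{m_\omega}\in(\pi,g)$ you still need to raise to the $h$th power to get $u^{m_\omega h}\in(\pi^h,g)$, and hence $(\pi,u)^{(m_\omega+1)h}\subseteq(\pi^h,g)$; this is exactly the step the paper writes out.
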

\begin{proof}
Group the roots of each $B_m^{(n)}$ by their residues. The resulting factors belonging to different orbits have unit resultants and are comaximal. Finite Chinese remainder therefore separates the orbits.

Within one orbit, translate by $x_{s_\omega}^{(n)}$. The translated
roots $a_i$ lie in $\pi\widehat\cO$. Put
$g_c(u)=\prod_{i<c}(u-a_i)$ and $\mathfrak m=(\pi,u)$.
If $h,c\ge N$, then $(\pi^h,g_c)\subseteq\mathfrak m^N$.
Conversely, the monic relation modulo $g_c$ gives $u^c\in\pi(\widehat\cO[u]/(g_c))$,
so $u^{ch}\in(\pi^h,g_c)$ and
\[
 \mathfrak m^{(c+1)h}\subseteq(\pi^h,g_c).
\]
The two systems of ideals are therefore cofinal.
Every orbit contains infinitely many colors, so its limit is
$\widehat\cO[[u_\omega]]$ with the $(\pi,u_\omega)$-adic topology.

If the image of $F\in\C_n$ in one disk is zero, evaluate that disk series at $u_\omega=x_r^{(n)}-x_{s_\omega}^{(n)}\in\pi\widehat\cO$ for every color in that orbit. Each value $F(x_r)$ vanishes in $\widehat\cO$, and hence in $R$. The orbit is infinite, so \cref{thm:identity} gives $F=0$.
\end{proof}

At a fixed orbit, $q^{n+2r}$ reduces to $z=\pm1$, where the
derivative of $z+z^{-1}$ vanishes; this is the source of the
valuation two in \eqref{eq:pair-valuation}.

\subsection{Values modulo a cyclotomic power}

After reduction modulo a node difference, the values at the nodes no
longer determine an element; the two-node algebra already shows this.

\begin{proposition}
\label{prop:collision}
Let $A$ be a domain, $a,b\in A$, and $\delta=b-a$ a nonzero nonunit.
For $V=A[X]/((X-a)(X-b))$, evaluation fits into the exact sequence
\[
 0\longrightarrow V\longrightarrow A^2
 \xrightarrow{(x,y)\mapsto y-x\bmod\delta}A/(\delta)
 \longrightarrow0.
\]
The evaluation matrix in the basis $1,X-a$ is
$\left(\begin{smallmatrix}1&0\\1&\delta\end{smallmatrix}\right)$.
After reduction modulo $\delta$, the class of $X-a$ is nonzero,
square-zero, and has value zero at both nodes. It is precisely the
image of $\operatorname{Tor}_1^A(A/(\delta),A/(\delta))\simeq A/(\delta)$
in the specialized exact sequence.
\end{proposition}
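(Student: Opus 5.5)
The plan is to work throughout in the $A$-basis $1,\,X-a$ of the free module $V$. Evaluation at the two nodes sends $c_0+c_1(X-a)$ to $(c_0,\,c_0+c_1\delta)$, so its matrix is $\left(\begin{smallmatrix}1&0\\1&\delta\end{smallmatrix}\right)$. Since $A$ is a domain and $\delta\neq0$, this matrix has nonzero determinant $\delta$, so evaluation is injective. Its image consists of the pairs $(x,y)$ with $y-x\in\delta A$: given such a pair, take $c_0=x$ and $c_1=(y-x)/\delta$. Hence the image is exactly the kernel of $(x,y)\mapsto y-x\bmod\delta$. That map is visibly surjective, using pairs of the form $(0,y)$. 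This proves the exactness of the displayed sequence. It is the two-node case of \cref{prop:divided-difference}, written out explicitly.

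For the reduction, I would tensor with $A/(\delta)$. Then $V/\delta V$ is free over $A/(\delta)$ on $1$ and $\overline{X-a}$. Because $\delta$ is a nonunit, $A/(\delta)\neq0$, so $\overline{X-a}$ is a nonzero class. Its square is $(X-a)^2=(X-a)\bigl((X-b)+\delta\bigr)\equiv\delta(X-a)$ in $V$, and this vanishes modulo $\delta$. Its values at the two nodes are $0$ and $\delta$, and both become zero modulo $\delta$.

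For the Tor statement, I would take the short exact sequence $0\to V\to A^2\to A/(\delta)\to0$ and apply $-\otimes_AA/(\delta)$. Since $A^2$ is free, $\operatorname{Tor}_1^A(A^2,A/(\delta))=0$. The long exact sequence therefore gives
\[
0\to\operatorname{Tor}_1^A(A/(\delta),A/(\delta))\to V/\delta V\to (A/(\delta))^2\to A/(\delta)\to0 .
\]
Here the resolution $0\to A\xrightarrow{\delta}A\to A/(\delta)\to0$, which is exact because $A$ is a domain, computes $\operatorname{Tor}_1\simeq A/(\delta)$. So the Tor term is exactly the kernel of the reduced evaluation map. Reducing the evaluation matrix modulo $\delta$ gives $\left(\begin{smallmatrix}1&0\\1&0\end{smallmatrix}\right)$, whose kernel is the free rank-one $A/(\delta)$-module generated by $\overline{X-a}$.

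To confirm that the connecting map sends the generator of Tor to $\overline{X-a}$, I would chase the diagram. The generator corresponds to $\delta$-torsion in $A/(\delta)$, lifted to $(0,1)\in A^2$, and $\delta\cdot(0,1)=(0,\delta)$ is the image of $X-a\in V$. This diagram chase is the only point needing care. The rest consists of the routine computations above.
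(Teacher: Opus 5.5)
Your proposal is correct and follows the same route as the paper. The values $(c_0,c_0+c_1\delta)$ of $c_0+c_1(X-a)$ give the exact sequence and the matrix, the reduction is $(A/(\delta))[u]/(u^2)$ with $u$ evaluating to zero, and tensoring the evaluation sequence with $A/(\delta)$ identifies the $\operatorname{Tor}_1$ term with the kernel; your connecting-map chase through the lift $(0,1)$ only spells out a step the paper leaves implicit.
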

\begin{proof}
The two values of $s+t(X-a)$ are $(s,s+t\delta)$. This proves the
exact sequence and the matrix formula. The specialized algebra is
$(A/(\delta))[u]/(u^2)$, and evaluation sends $u$ to zero. The
resolution $0\to A\xrightarrow{\delta}A\to A/(\delta)\to0$
computes the stated Tor module; tensoring the displayed evaluation
sequence identifies its image with this kernel.
\end{proof}

For the quantum nodes, \eqref{eq:pair-valuation} gives the order of
these collisions in each residue class, and with it the exact number
of samples needed at a prescribed cyclotomic precision.

\begin{theorem}\label{thm:sampling}
Let $h\ge1$ and $t_\omega=\lceil h/e_\omega\rceil$. Choose any $t_\omega$ distinct nonnegative colors in each orbit. Their values modulo $\pi^h\cO$ determine all color values modulo $\pi^h\cO$ of every element of $\C_n$.

Conversely, any finite set of exact color samples that universally determines all color values modulo $\pi^h\cO$ must contain at least $t_\omega$ colors in every orbit. Thus the minimum sample count is
\begin{equation}\label{eq:sample-number}
 M_{n,d}(h)=\frac{\ell-f_{n,d}}2\,h
                  +f_{n,d}\left\lceil\frac h2\right\rceil.
\end{equation}
\end{theorem}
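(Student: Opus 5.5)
The plan is to argue through the interpolation remainder of \cref{prop:ambiguity}, applied over $\cO$ and orbit by orbit. The valuation formula \eqref{eq:pair-valuation} is what makes this work.

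\emph{Sufficiency.} Let $T$ be the chosen sample set, with $t_\omega$ colors in each orbit, and let $s$ be any other color. By \cref{prop:ambiguity} there is an interpolant $Q_T\in R[X]$, and
\[
F(x_s)-Q_T(x_s)\in P_T(x_s)R .
\]
It therefore suffices to prove that $v_\pi(P_T(x_s))\ge h$.

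The factors of $P_T(x_s)$ coming from colors in orbits other than that of $s$ are $\pi$-units. The factors coming from the $t_\omega$ colors in the orbit $\omega$ of $s$ each have valuation exactly $e_\omega$. Hence
\[
v_\pi\bigl(P_T(x_s)\bigr)=t_\omega e_\omega\ge h .
\]
It remains to see that the samples known modulo $\pi^h$ determine $Q_T(x_s)$ modulo $\pi^h$. Perturbing the sample values by elements of $\pi^h\cO$ changes $Q_T$ by the interpolant of data divisible by $\pi^h$. This is again an element of $\C_n$ (scaled), vanishing on $T$ modulo $\pi^h$. More cleanly, write the perturbation as $\pi^h G$ restricted to $T$, using Lagrange interpolation over $\cO$. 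The orbits separate after localizing, so the comaximality in \cref{prop:disks} lets us treat each orbit by itself. Within one orbit, Newton interpolation on the orbit's nodes has divided-difference denominators of valuation $e_\omega j$. I would carry out this bookkeeping directly, by writing $Q_T$ in the Newton basis on the orbit's samples ordered inside the orbit, and checking that each term $c_jP_j(x_s)$ is determined modulo $\pi^h$. Indeed $c_j$ is known modulo $\pi^{h-e_\omega j}$, and $P_j(x_s)$ has valuation $e_\omega j$. This step is the main technical point, and it is where the factorization of the remainder orbit by orbit must be justified.

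\emph{Necessity.} Suppose $T$ has fewer than $t_\omega$ colors in some orbit $\omega$. Take $F=P_T$, which lies in $\C_n$ and vanishes on all of $T$. Then $F$ and $0$ agree exactly on the samples. For any color $s\in\omega\setminus T$, which exists because the orbit is infinite, the same valuation count gives
\[
v_\pi\bigl(F(x_s)\bigr)=|T\cap\omega|\,e_\omega<h ,
\]
where the inequality uses $|T\cap\omega|\le t_\omega-1$ and the definition of $t_\omega=\lceil h/e_\omega\rceil$. So $F$ and $0$ differ at $s$ modulo $\pi^h$, and $T$ does not universally determine the color values.

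\emph{Count.} There are $(\ell-f_{n,d})/2$ orbits with $e_\omega=1$ and $f_{n,d}$ orbits with $e_\omega=2$. Summing $t_\omega$ over the orbits gives \eqref{eq:sample-number}.
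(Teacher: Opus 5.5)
Your necessity argument (compare $P_T$ with $0$) and the orbit count are exactly the paper's. Your sufficiency argument uses the same two ingredients, \cref{prop:ambiguity} and \eqref{eq:pair-valuation}. The step you flagged, however, does not work as written. $Q_T$ interpolates on all of $T$, so it is not the Newton interpolant on the samples of the single orbit $\omega$. Your bookkeeping ($c_j$ known modulo $\pi^{h-e_\omega j}$, $v_\pi(P_j(x_s))=e_\omega j$) therefore describes a different polynomial, and the appeal to the comaximality in \cref{prop:disks} does not identify the two.

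The paper avoids the issue by applying \cref{prop:ambiguity} to $T_\omega=T\cap\omega$ only. Then $F(x_s)-Q_{T_\omega}(x_s)$ is divisible by $\prod_{r'\in T_\omega}(x_s-x_{r'})$, which has valuation $t_\omega e_\omega\ge h$. Each Lagrange coefficient $\prod_{i\ne j}(x_s-x_{r_i})/(x_{r_j}-x_{r_i})$ is a unit of $\cO$, because every numerator and denominator factor has valuation exactly $e_\omega$. So the values modulo $\pi^h$ on $T_\omega$ already suffice, and no orbit separation is needed.

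Your global $Q_T$ can also be rescued. Its Lagrange coefficients at $x_s$ are units for samples in the orbit of $s$. For a sample in another orbit $\omega'$, they have valuation $t_\omega e_\omega-(t_{\omega'}-1)e_{\omega'}\ge1$, so they too lie in $\cO$.
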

\begin{proof}
Fix an orbit, with chosen colors $r_0,\ldots,r_{t-1}$, where $t=\lceil h/e_\omega\rceil$. For a target color $r$ in that orbit, interpolation gives
\begin{equation}\label{eq:stable-reconstruction}
 F(x_r)\equiv\sum_{j=0}^{t-1}F(x_{r_j})
 \prod_{\substack{0\le i<t\\i\ne j}}
 \frac{x_r-x_{r_i}}{x_{r_j}-x_{r_i}}
 \pmod{\pi^h\cO}.
\end{equation}
When the target is sampled this is exact. Otherwise, every numerator and denominator factor in a Lagrange coefficient has valuation $e_\omega$. Hence each coefficient lies in $\cO$, in fact is a unit. By \cref{prop:ambiguity}, the error is divisible by $\prod_{i<t}(x_r-x_{r_i})$, whose valuation is $e_\omega t\ge h$. This proves \eqref{eq:stable-reconstruction} and shows that no extra precision in the sample values is required.

For necessity, suppose a sample set $T$ contains $t<t_\omega$ colors in an orbit. The zero polynomial and
$P_T(X)=\prod_{s\in T}(X-x_s)$
have identical exact samples. At an unsampled color $r$ in the deficient orbit, \eqref{eq:pair-valuation} gives
$v_\pi(P_T(x_r))=e_\omega t<h$:
factors from other orbits are units. Their target values therefore differ modulo $\pi^h$. This proves the lower bound for arbitrary, not necessarily linear, recovery rules.
\end{proof}

Evaluation modulo $\pi^h$ has a larger kernel than $\pi^h\C_n$. To
describe its image, define the algebra of polynomial value functions
by
\[
 \mathscr V_{n,d,h}=\im\left(\cO[X]\longrightarrow
                  \prod_{r\ge0}\cO/(\pi^h)\right).
\]
For one orbit $\omega$, choose $t=\lceil h/e_\omega\rceil$ sample nodes
$s_0,\ldots,s_{t-1}$ and write
$P_j=\prod_{i<j}(X-s_i)$.

\begin{theorem}\label{thm:value-algebra}
There is an isomorphism of $\cO$-modules
\begin{equation}\label{eq:value-module}
 \mathscr V_{n,d,h}\simeq
 \bigoplus_{\omega\in\Omega_{n,d}}
 \ \bigoplus_{j=0}^{\lceil h/e_\omega\rceil-1}
 \cO/(\pi^{h-e_\omega j}).
\end{equation}
Its length is
\begin{equation}\label{eq:value-length}
 \length_\cO\mathscr V_{n,d,h}
 =\frac{\ell-f_{n,d}}2\frac{h(h+1)}2
  +f_{n,d}\left\lfloor\frac{(h+1)^2}{4}\right\rfloor.
\end{equation}
For one orbit, with $e=e_\omega$, the kernel of evaluation modulo $\pi^h$ on all its colors is
\begin{equation}\label{eq:value-ideal}
 \bigl(P_t,\ \pi^{h-ej}P_j\;:\;0\le j<t\bigr)\subset\cO[X].
\end{equation}
\end{theorem}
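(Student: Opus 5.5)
First decompose by orbits, then treat one orbit at a time. The image $\mathscr V_{n,d,h}$ is the image of $\cO[X]$ in $\prod_{r\ge0}\cO/(\pi^h)$. Colors in different orbits have node differences that are units in $\cO$, by \eqref{eq:pair-valuation}. A Chinese remainder argument for finitely many colors at a time, as in the proof of \cref{prop:disks}, then shows that the image is the product over $\omega\in\Omega_{n,d}$ of the images in $\prod_{r\in\omega}\cO/(\pi^h)$. Concretely, for any finite set of colors the polynomials $\prod_{s}(X-x_s)$ attached to different orbits are comaximal modulo $\pi^h$, so idempotents exist. By \cref{thm:sampling}, an element of the image is determined by its values at the $t=\lceil h/e\rceil$ chosen nodes $s_0,\dots,s_{t-1}$ of each orbit. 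Hence the per-orbit image embeds into the image of $\cO[X]$ in $\prod_{j<t}\cO/(\pi^h)$ evaluated at those nodes, and this reduces everything to finite Newton interpolation on $t$ nodes.

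For one orbit I would use the Newton basis $P_0,\dots,P_{t-1}$ on the sample nodes. Every $Q\in\cO[X]$ is congruent modulo $P_t$ to $\sum_{j<t}c_jP_j$ with $c_j\in\cO$. The term $P_t$ vanishes modulo $\pi^h$ at every color of the orbit, since by \cref{prop:ambiguity} and the valuation count its value there has valuation at least $et\ge h$. The term $c_jP_j$ vanishes at all colors of the orbit modulo $\pi^h$ exactly when $\pi^{h-ej}\mid c_j$. For sufficiency, note that $P_j(x_r)$ has valuation $\ge ej$ for every $r$ in the orbit. For necessity, evaluate at $s_j$, where the valuation is exactly $ej$.

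This gives the kernel \eqref{eq:value-ideal}, provided the conditions decouple across $j$, which I expect to be the main point. Suppose $\sum_j c_jP_j$ vanishes modulo $\pi^h$ on the whole orbit. I would argue by triangularity. Evaluating at $s_0$ gives $\pi^h\mid c_0$. Subtracting that term, evaluation at $s_1$ gives $\pi^{h-e}\mid c_1$, and so on. The key input is that $P_j(s_j)$ has valuation exactly $ej$, while the earlier terms already vanish at $s_j$. The quotient $\cO[X]/(\text{kernel})$ is then free on the $P_j$ modulo the stated powers, which gives $\bigoplus_{j<t}\cO/(\pi^{h-ej})$ and proves \eqref{eq:value-module}.

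The length formula is a sum over orbits. A nonfixed orbit ($e=1$) contributes $\sum_{j<h}(h-j)=h(h+1)/2$. A fixed orbit ($e=2$) contributes $\sum_{j<\lceil h/2\rceil}(h-2j)$. Checking both parities of $h$, this sum equals $\lfloor (h+1)^2/4\rfloor$: for $h=2k$ it is $k(k+1)$, and for $h=2k-1$ it is $k^2$. Multiplying by the orbit counts $(\ell-f_{n,d})/2$ and $f_{n,d}$ gives \eqref{eq:value-length}.
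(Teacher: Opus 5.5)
Your proposal is correct and follows essentially the same route as the paper. You split by orbits using the unit resultants between sample factors from different orbits, pass to the Newton basis $P_0,\dots,P_{t-1}$ on one orbit's sample nodes, and use $v_\pi(P_j(s_j))=ej$ together with triangularity to obtain the kernel $(P_t,\pi^{h-ej}P_j)$ and the summands $\cO/(\pi^{h-ej})$. Your successive elimination of the Newton coefficients is the paper's factorization of the evaluation matrix as $U\operatorname{diag}(1,\pi^e,\dots,\pi^{e(t-1)})$ with $U\in\operatorname{GL}_t(\cO)$, carried out one row at a time.
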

\begin{proof}
The interpolation argument of \cref{thm:sampling} works over $\cO$ as
well as over $R$. Thus a polynomial vanishes modulo $\pi^h$ on every
color if and only if it does so on the complete sample set.
Let $P_T$ be the product of the corresponding linear factors.
It vanishes modulo $\pi^h$ at every color, so division by the monic
polynomial $P_T$ reduces the computation to $\cO[X]/(P_T)$.

Write $P_T=\prod_\omega P_\omega$, where $P_\omega$ is the product
for the samples in orbit $\omega$. Factors from different orbits have
unit resultants. Hence
\[
 \cO[X]/(P_T)\simeq\prod_\omega\cO[X]/(P_\omega).
\]
Under this isomorphism, the selected evaluations in an orbit depend
only on its factor. The value algebra therefore splits into the
images of these orbit factors.

Fix an orbit and put $e=e_\omega$. In its Newton basis, the evaluation
matrix is $M_{ij}=P_j(s_i)$, for $0\le i,j<t$.
It is lower triangular. For $i\ge j$, each of the $j$ factors of
$P_j(s_i)$ has valuation $e$, so
\[
 M=U\operatorname{diag}(1,\pi^e,\ldots,\pi^{e(t-1)}),
 \qquad U\in\operatorname{GL}_t(\cO).
\]
It follows that the image modulo $\pi^h$ is, as a module,
\[
 \bigoplus_{j=0}^{t-1}\pi^{ej}\cO/\pi^h\cO
 \simeq\bigoplus_{j=0}^{t-1}\cO/(\pi^{h-ej}).
\]
This proves \eqref{eq:value-module}.

A remainder $\sum_{j<t}a_jP_j$ has zero values modulo $\pi^h$ if and
only if $a_j\in\pi^{h-ej}\cO$ for every $j$.
Moreover, $P_t$ has zero values on the whole orbit modulo $\pi^h$.
Dividing an arbitrary polynomial by $P_t$ gives
\eqref{eq:value-ideal}.
Finally,
\[
 \sum_{j=0}^{h-1}(h-j)=\frac{h(h+1)}2,
 \qquad
 \sum_{j=0}^{\lceil h/2\rceil-1}(h-2j)
       =\left\lfloor\frac{(h+1)^2}{4}\right\rfloor.
\]
Summing over the orbits proves \eqref{eq:value-length}.
\end{proof}

The same algebra is the image of $(R\setminus(\pi))^{-1}\C_n$ under
evaluation modulo $\pi^h$. Indeed, an element of $\C_n$ agrees with
its integral interpolant on the complete sample set, hence at every
color modulo $\pi^h$. Division by an element of $R\setminus(\pi)$
preserves this agreement. The reverse inclusion follows from
$\cO[X]\subseteq(R\setminus(\pi))^{-1}\C_n$.

\begin{example}\label{ex:cubic-sampling}
For $n=3$ and $d=3$ or $6$, one has $\ell=3$. The orbits of colors are $\{1,2\}$ and $\{0\}$ modulo three, with valuations one and two. Hence
\[
 M_{3,d}(h)=h+\lceil h/2\rceil.
\]
At precision $\pi^2$, the colors $r=0,1,2$ suffice for every color value, and the evaluated algebra has module
$\cO/(\pi^2)\oplus\cO/(\pi)\oplus\cO/(\pi^2)$.
At precision $\pi^3$, one may use $r=1,2,4$ from the nonfixed orbit and $r=0,3$ from the fixed orbit. The sample count is five and the module length is ten.
\end{example}

\subsection{\texorpdfstring{$p$}{p}-adic interpolation}\label{sec:padic}

Let $p$ be a rational prime. We write $\Zp\langle H,U\rangle$ for
the ring of restricted power series: the coefficients lie in $\Zp$
and tend to zero as total degree tends to infinity.
Its Gauss norm is the supremum of the $p$-adic absolute values of the
coefficients. Binomial polynomials are integer-valued on $\Zp$,
as in Mahler interpolation \cite{Mahler58}.

\begin{theorem}\label{thm:analytic-bidisc}
For every $n\ge0$, the substitutions $q=1+\eta$ and $X=2+u$ define an injective ring homomorphism
\begin{equation}\label{eq:integral-taylor}
 \C_n\longrightarrow\Z[[\eta,u]].
\end{equation}
For every rational prime $p$ and integer $a\ge1$, the substitutions
\begin{equation}\label{eq:analytic-bidisc}
 q=1+p^aH,\qquad X=2+p^{2a}U
\end{equation}
define an injective ring homomorphism
$\C_n\longrightarrow\Zp\langle H,U\rangle$.
The image of the $k$th Newton summand $c_k(q)B_k^{(n)}(X)$ has Gauss norm at most $p^{-2ak}$.
\end{theorem}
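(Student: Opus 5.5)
The whole proof rests on the displacement identity
\[
 x_r^{(n)}-2=q^{-n-2r}\bigl(q^{n+2r}-1\bigr)^2,
\]
which says that every node lies in $2+(q-1)^2R$. With it, both homomorphisms are defined termwise on Newton series and shown to converge. Injectivity then follows by substituting each color back in, which reduces the claim to \cref{thm:identity}.

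\emph{Formal version.} Put $q=1+\eta$ and $X=2+u$. Each factor $X-x_i^{(n)}$ becomes $u-\eta^2g_i(\eta)$ with $g_i\in\Z[[\eta]]$, because $(1+\eta)^{-1}\in\Z[[\eta]]$ and $(1+\eta)^m-1\in\eta\Z[[\eta]]$. Hence $B_k^{(n)}(2+u)\in(u,\eta^2)^k\subseteq(u,\eta)^k$. Every summand $c_k(q)B_k^{(n)}$ therefore lands in $(u,\eta)^k$, so $\sum_kc_kB_k^{(n)}$ converges in $\Z[[\eta,u]]$.

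The resulting map is a ring homomorphism for the following reason. Polynomial substitution is multiplicative on $R[X]$, and the kernel $B_m^{(n)}R[X]$ of the $m$th truncation maps into $(u,\eta)^m$. So the substitution induces compatible ring maps $R[X]/(B_m^{(n)})\to\Z[[\eta,u]]/(u,\eta)^m$, and passing to the inverse limit gives the homomorphism on $\C_n$.

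\emph{Analytic version.} Put $q=1+p^aH$ and $X=2+p^{2a}U$. First, $q^{-1}=\sum_j(-p^aH)^j$ is a restricted series of Gauss norm one, so $R$ maps into $\Zp\langle H\rangle$ with all images of norm at most $1$. Second, $(1+p^aH)^m-1\in p^aH\,\Zp\langle H\rangle$ for every $m\in\Z$, so the displacement identity gives $x_i^{(n)}-2\in p^{2a}\Zp\langle H\rangle$. Therefore
\[
 X-x_i^{(n)}=p^{2a}\bigl(U-p^{-2a}(x_i^{(n)}-2)\bigr)
\]
has Gauss norm at most $p^{-2a}$. The Gauss norm is multiplicative, in particular submultiplicative, so $\|c_k(q)B_k^{(n)}(X)\|\le p^{-2ak}$, which is the stated bound. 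The terms tend to zero and the series converges in the complete ring $\Zp\langle H,U\rangle$. The homomorphism property follows exactly as in the formal case, with the ideal $(u,\eta)^m$ replaced by the ball of norm at most $p^{-2am}$.

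\emph{Injectivity.} For a fixed color $r$, substitute
\[
 U=U_r:=p^{-2a}\bigl(x_r^{(n)}(1+p^aH)-2\bigr)\in\Zp\langle H\rangle.
\]
Since $\|U_r\|\le1$, this substitution is a continuous ring map $\Zp\langle H,U\rangle\to\Zp\langle H\rangle$. It kills $B_k^{(n)}$ for $k>r$, so the image of $F$ becomes the finite sum $F(x_r^{(n)})$ evaluated at $q=1+p^aH$.

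If the image of $F$ vanishes, then every color value $F(x_r^{(n)})\in R$ maps to zero under $q\mapsto1+p^aH$. This map is injective on $R$: after multiplying by a power of $q$, which is a unit, a nonzero Laurent polynomial becomes a nonzero polynomial in $q$, and composing it with the nonconstant affine map $H\mapsto1+p^aH$ gives a nonzero polynomial in $H$. Hence every color value vanishes, and $F=0$ by \cref{thm:identity}. The formal case is identical, using $u=x_r^{(n)}(1+\eta)-2\in\eta^2\Z[[\eta]]$ and the injectivity of $R\to\Z[[\eta]]$.

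The main obstacle is checking that these evaluations commute with the infinite sums, that is, that substituting $U_r$ into the limit of the series equals the limit of the substituted partial sums. This holds because the substitution is continuous for the relevant topology, the Gauss norm in the analytic case and the $(u,\eta)$-adic topology in the formal case, and because the series becomes a finite sum after substitution.
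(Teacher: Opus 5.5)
Your proof is correct and follows the paper's route. The quadratic vanishing $x_i^{(n)}-2\in(q-1)^2R$ gives termwise convergence in both targets, and injectivity comes from substituting each color diagonal and invoking \cref{thm:identity}. The one difference is in the analytic injectivity: you rerun the color evaluation inside $\Zp\langle H\rangle$ using continuity of $U\mapsto U_r$, whereas the paper observes that the analytic image is the formal image under the rescaling $\eta\mapsto p^aH$, $u\mapsto p^{2a}U$, which is injective on coefficients. Your explicit inverse-limit construction of the homomorphism also spells out what the paper calls polynomial density.
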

\begin{proof}
For every nonnegative integer $b$,
\[
 (1+\eta)^b+(1+\eta)^{-b}-2
 =(1+\eta)^{-b}\bigl((1+\eta)^b-1\bigr)^2\in \eta^2\Z[[\eta]].
\]
Give $\eta$ weight one and $u$ weight two. Each factor of
$B_k^{(n)}(2+u;1+\eta)$ has weight at least two, so the $k$th summand has weight at least $2k$. Its coefficient $c_k(1+\eta)$ lies in $\Z[[\eta]]$. The series therefore converges in $\Z[[\eta,u]]$, and polynomial density gives multiplicativity.

Substitute $u=x_r^{(n)}(1+\eta)-2$ in its image. This is a well-defined formal substitution since the substituted series has zero constant term, and the resulting series is the Taylor expansion of the Laurent polynomial $F(x_r^{(n)})$. If the image of $F$ is zero, all these Laurent polynomials have zero Taylor expansion, hence are zero. The color identity theorem gives $F=0$.

Under \eqref{eq:analytic-bidisc}, every integral Laurent polynomial in $q$ has Gauss norm at most one. Furthermore,
\[
 x_i^{(n)}(1+p^aH)-2\in p^{2a}\Zp\langle H\rangle.
\]
Thus every Newton factor has norm at most $p^{-2a}$. Completeness of the restricted power-series algebra proves convergence and the asserted norm bound. The resulting formal series is obtained from \eqref{eq:integral-taylor} by $\eta\mapsto p^aH$, $u\mapsto p^{2a}U$ and the injection $\Z\hookrightarrow\Zp$. This rescaling is injective on formal coefficients, so the analytic map is injective.
\end{proof}

Fix now an odd prime $p$ and an element $q_0\in1+p\Zp$, $q_0\ne1$. Put $a=v_p(q_0-1)$ and define
\[
 q_0^t=\exp(t\log q_0),\qquad
 x_n(t)=q_0^{n+2t}+q_0^{-n-2t}\quad(t\in\Zp).
\]
Since $v_p(\log q_0)=a$, the coefficient of $t^j$ in $q_0^t$ has
valuation $aj-v_p(j!)$, which tends to infinity. Thus $q_0^t$ is a
restricted power series. For $z\ne0$ in $\Zp$,
\begin{equation}\label{eq:padic-brace}
 v_p(q_0^z-q_0^{-z})=a+v_p(z).
\end{equation}
Indeed, $v_p(\log q_0)=a$ and $v_p(\exp(w)-1)=v_p(w)$ for $w\in p\Zp$; apply this to $w=2z\log q_0$. 

\begin{theorem}\label{thm:padic-color}
Let $F=\sum_{k\ge0}c_k(q)B_k^{(n)}(X)\in\C_n$. Then
\begin{equation}\label{eq:padic-function}
 f(t)=\sum_{k\ge0}c_k(q_0)B_k^{(n)}(x_n(t);q_0)
\end{equation}
belongs to $\Zp\langle t\rangle$ and is the unique continuous extension of the color sequence $F(x_r^{(n)};q_0)$ from $\N$ to $\Zp$. It satisfies
\begin{align}
 f(t)&=f(-n-t),\label{eq:padic-reflection}\\
 v_p(f(t)-f(s))&\ge2a+v_p(t-s)+v_p(n+t+s).\label{eq:padic-lipschitz}
\end{align}
For every $m\ge0$, the first $m$ Newton summands give the uniform value estimate
\begin{equation}\label{eq:factorial-error}
 v_p\left(f(t)-\sum_{k=0}^{m-1}c_k(q_0)B_k^{(n)}(x_n(t);q_0)\right)
 \ge 2am+2v_p(m!)\qquad(t\in\Zp).
\end{equation}
In particular, every odd derivative of $f$ vanishes at $-n/2$.
\end{theorem}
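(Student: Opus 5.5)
The plan is to bound each Newton summand $p$-adically, uniformly in $t\in\Zp$, and then deduce convergence, uniqueness, the symmetry, the Lipschitz bound and the tail estimate from that single bound together with the nodal factorization.

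\emph{Factor bound.} Each factor of $B_k^{(n)}(x_n(t);q_0)$ is
\[
 x_n(t)-x_i^{(n)}(q_0)=\{t-i\}\{n+t+i\}
\]
evaluated at $q=q_0$, where the identity \eqref{eq:node-difference} continues to hold with $q_0^t$ in place of $q^r$. This is an identity of restricted power series in $t$, since $q_0^t\in\Zp\langle t\rangle$ and $q_0^{-t}$ is one as well. Each factor $q_0^z-q_0^{-z}$, as a function of $t$, lies in $p^a\Zp\langle t\rangle$, because $\exp(w)-1$ has coefficients of valuation at least $v_p(w)$ when $w=2z\log q_0$ and $z$ is linear in $t$. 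So every summand has Gauss norm at most $p^{-2ak}$. Hence the series \eqref{eq:padic-function} converges in $\Zp\langle t\rangle$, and for $t=r\in\N$ it terminates and equals $F(x_r^{(n)};q_0)$.

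\emph{Uniqueness, symmetry and Lipschitz bound.} Uniqueness of the continuous extension holds because $\N$ is dense in $\Zp$. The reflection \eqref{eq:padic-reflection} holds because $x_n(t)=x_n(-n-t)$ termwise. For \eqref{eq:padic-lipschitz}, I use density and continuity to reduce to $t,s\in\N$. There \eqref{eq:pairwise} over $R$ gives $F(x_t)-F(x_s)\in\{t-s\}\{n+t+s\}R$. Evaluating at $q_0$ and applying \eqref{eq:padic-brace} gives valuation at least $2a+v_p(t-s)+v_p(n+t+s)$. Both sides are continuous in $(t,s)$, and the right-hand bound is lower semicontinuous in the appropriate sense away from the diagonal, so approximating by integers with the same valuations of $t-s$ and $n+t+s$ extends the bound. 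The cases $t=s$ or $n+t+s=0$ are trivial or follow from \eqref{eq:padic-reflection}.

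\emph{Tail estimate.} This is the main step. Write the tail as $\sum_{k\ge m}c_kB_k^{(n)}(x_n(t))$. For $k\ge m$, the summand contains $B_m^{(n)}(x_n(t);q_0)=\prod_{i<m}\{t-i\}\{n+t+i\}$ as a factor, and the remaining factors have valuation $\ge0$ at each point. Using \eqref{eq:padic-brace}, the valuation of this product at a point $t\in\Zp$ is
\[
 2am+\sum_{i<m}v_p(t-i)+\sum_{i<m}v_p(n+t+i).
\]
The classical fact $\sum_{i<m}v_p(t-i)=v_p\bigl(m!\binom tm\bigr)\ge v_p(m!)$, valid for $t\in\Zp$ by density from integers where $\binom tm\in\Z$, bounds the first sum. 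The same fact applied to $-n-t$ in place of $t$ bounds the second sum, since $\{n+t+i\}$ vanishes at $t=-n-i$. Together these give \eqref{eq:factorial-error} pointwise. When $t$ hits one of these zeros, the valuation is infinite and the bound is immediate.

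\emph{Odd derivatives.} Symmetry about $-n/2$ makes $g(s)=f(-n/2+s)$ even. Here $-n/2\in\Zp$ since $p$ is odd, and $g$ is again a restricted power series, so its odd Taylor coefficients vanish.

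I expect the main subtlety to be the pointwise versus Gauss-norm distinction. The factorial gain $2v_p(m!)$ holds only pointwise, not for the Gauss norm of the tail, which is why the statement is a value estimate.
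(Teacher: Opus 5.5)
Your proof is correct and follows essentially the paper's route. The factorization $x_n(t)-x_i^{(n)}(q_0)=\{t-i\}\{n+t+i\}$ at $q_0$ gives the Gauss-norm bound $p^{-2ak}$ for each summand, density gives uniqueness, and factoring out $B_m^{(n)}$ together with the integrality of $\binom{t}{m}$ and $\binom{n+t+m-1}{m}$ on $\Zp$ gives \eqref{eq:factorial-error}.

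The one small divergence is the Lipschitz bound. You prove it at integer colors via \eqref{eq:pairwise} over $R$ and then extend by density, choosing approximants that keep the valuations of $t-s$ and $n+t+s$. The paper instead divides $F_M(x_n(t))-F_M(x_n(s))$ by $x_n(t)-x_n(s)$ directly in $\Zp$ for finite partial sums $F_M$ and passes to the limit, which avoids the approximation step. Both arguments are valid.
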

\begin{proof}
The exponentials $q_0^{t-i}-q_0^{-t+i}$ and
$q_0^{n+t+i}-q_0^{-n-t-i}$ belong to $p^a\Zp\langle t\rangle$. Their product is $x_n(t)-x_i^{(n)}(q_0)$. Since $c_k(q_0)\in\Zp$, the $k$th summand of \eqref{eq:padic-function} has Gauss norm at most $p^{-2ak}$; hence the series converges in the restricted algebra. At $t=r\in\N$, all terms with $k>r$ vanish. Density of $\N$ in $\Zp$ proves uniqueness, and $x_n(-n-t)=x_n(t)$ proves reflection.

Let $F_M$ be a finite partial sum. Polynomial divisibility gives
$F_M(x_n(t))-F_M(x_n(s))\in(x_n(t)-x_n(s))\Zp$.
When $x_n(t)\ne x_n(s)$, divide by this fixed nonzero scalar and
pass to the limit. Since $\Zp$ is closed in $\Qp$, the limiting
quotient remains in $\Zp$. By \eqref{eq:padic-brace},
\[
 v_p(x_n(t)-x_n(s))=2a+v_p(t-s)+v_p(n+t+s).
\]
If $x_n(t)=x_n(s)$, then $t=s$ or $t=-n-s$, and the full difference is zero by reflection. This proves \eqref{eq:padic-lipschitz} in all cases.

Factor the first $m$ Newton factors out of the remainder. The remaining series is bounded by one, because its terms are products of the factors with indices $i\ge m$, with integral coefficients, and its Gauss norms tend to zero. The valuation of the first product is
\[
 2am+v_p\left(\prod_{i=0}^{m-1}(t-i)\right)
       +v_p\left(\prod_{i=0}^{m-1}(n+t+i)\right).
\]
Each of the two products divided by $m!$ is integer-valued on $\Zp$: this follows for the first from the binomial polynomial $\binom{t}{m}$, and for the second from $\binom{n+t+m-1}{m}$, by continuity and density of integer arguments. This gives \eqref{eq:factorial-error}. Differentiating \eqref{eq:padic-reflection} an odd number of times at its fixed point proves the last assertion.
\end{proof}

\begin{remark}
The error bound \eqref{eq:factorial-error} concerns values on $\Zp$;
for the Gauss norm of the tail one only has the bound $p^{-2am}$.
Unlike the map of \cref{thm:analytic-bidisc}, the specialization
$q=q_0$ need not be injective on $\C_n$: for $q_0=1+p$ it kills
$q-(1+p)$.
\end{remark}

\section{Rank-uniform Newton rings and quantum neighborhoods}\label{sec:universal}

We now let the rank vary by replacing $q^n$ with an independent
variable $A$.

\subsection{Integer-valued rank coefficients}

The nodes $Aq^{2r}+A^{-1}q^{-2r}$ specialize to $x_r^{(n)}$ at
$A=q^n$.  The coefficient ring of the rank-uniform construction
\cite{FZ26} is the integer-valued Laurent ring
\begin{equation}\label{eq:rank-integer-values}
 \mathscr I_q=\{f(A)\in\Q(q)[A^{\pm1}]: f(q^n)\in R\text{ for every }n\ge2\}.
\end{equation}
For integers $a\ge k\ge0$, write
\[
 \gaussbin{a}{k}{q}
 =\frac{(q;q)_a}{(q;q)_k(q;q)_{a-k}}
\]
for the ordinary Gaussian polynomial, and set it equal to zero when
$k>a$. Put $\mathscr I_q^+=\mathscr I_q\cap\Q(q)[A]$ and define
\[
 b_j(A)=\frac{\prod_{i=0}^{j-1}(1-Aq^{-2-i})}{(q;q)_j},\qquad b_0=1.
\]

\begin{proposition}[{\cite[Theorem~8.6]{FZ26}}]\label{prop:integer-valued-rank}
One has
\begin{equation}\label{eq:rank-basis}
 \mathscr I_q^+=\bigoplus_{j\ge0}R b_j(A),\qquad \mathscr I_q=\mathscr I_q^+[A^{-1}].
\end{equation}
For every $m\ge1$, evaluation at the consecutive ranks $2,3,\ldots,m+1$ gives a surjection $\mathscr I_q^+\to R^m$.
\end{proposition}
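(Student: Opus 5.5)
The plan is to identify the values of $b_j$ at the ranks with Gaussian binomials, and then to use the resulting unitriangular evaluation matrix to get integrality of coordinates and surjectivity. Put $N=n-2\ge0$. Then
\[
 b_j(q^n)=\frac{\prod_{i=0}^{j-1}(1-q^{N-i})}{(q;q)_j}
 =\gaussbin{N}{j}{q}.
\]
This holds for $N\ge j$ by the definition of the Gaussian polynomial. For $N<j$, the numerator contains the factor $i=N$, which vanishes, so both sides are zero. Hence every $b_j$ lies in $\mathscr I_q^+$, and so does the $R$-span $\sum_jRb_j$.

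The evaluation matrix $\bigl(b_j(q^{k+2})\bigr)_{k,j}=\bigl(\gaussbin{k}{j}{q}\bigr)$ is lower unitriangular, with entries in $R$. Since $b_j$ has exact degree $j$ in $A$, the $b_j$ form a $\Q(q)$-basis of $\Q(q)[A]$. In particular they are linearly independent, so the sum is direct.

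Now let $f\in\mathscr I_q^+$ and write $f=\sum_{j\le d}a_jb_j$ with $a_j\in\Q(q)$. Evaluating at $n=k+2$ gives
\[
 f(q^{k+2})=a_k+\sum_{j<k}a_j\gaussbin{k}{j}{q}.
\]
By induction on $k$, each $a_k$ equals $f(q^{k+2})$ minus an $R$-combination of earlier $a_j$, so $a_k\in R$. This proves $\mathscr I_q^+=\bigoplus_jRb_j$.

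For the Laurent statement, take $f\in\mathscr I_q$ and choose $M$ with $A^Mf\in\Q(q)[A]$. Its value at $A=q^n$ is $q^{nM}f(q^n)$, which lies in $R$, so $A^Mf\in\mathscr I_q^+$. Conversely, multiplying by $A^{\pm1}$ changes the values at $A=q^n$ only by the units $q^{\pm n}$, so $\mathscr I_q^+[A^{-1}]\subseteq\mathscr I_q$. Together these give $\mathscr I_q=\mathscr I_q^+[A^{-1}]$.

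For surjectivity, let $(y_0,\dots,y_{m-1})\in R^m$ be the target values at the ranks $2,\dots,m+1$. Because the system above is unitriangular over $R$, we can solve recursively for $a_0,\dots,a_{m-1}\in R$ with
\[
 \sum_{j\le k}a_j\gaussbin{k}{j}{q}=y_k\qquad(0\le k<m).
\]
Then $\sum_{j<m}a_jb_j\in\mathscr I_q^+$ takes exactly the values $y_k$ at these ranks. No step is a real obstacle; the only point needing care is the vanishing of $b_j(q^n)$ for $n-2<j$, which is what makes the evaluation matrix triangular.
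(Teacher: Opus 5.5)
Your proof is correct and follows essentially the same route as the paper. Both arguments identify $b_j(q^{N+2})$ with $\gaussbin{N}{j}{q}$, use the resulting lower unitriangular evaluation matrix over $R$ to get integral $b_j$-coordinates and the surjection onto $R^m$, and handle the Laurent case by multiplying by a power of $A$. You additionally spell out the directness of the sum (via exact $A$-degree) and the inclusion $\mathscr I_q^+[A^{-1}]\subseteq\mathscr I_q$, which the paper leaves implicit.
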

\begin{proof}
For $n\ge0$,
\[
 b_j(q^{n+2})=\gaussbin{n}{j}{q},
\]
with value zero for $j>n$.
Since $\deg_A b_j=j$, the polynomials $b_0,\ldots,b_d$ form a
$\Q(q)$-basis in degree at most $d$.
Their evaluation matrix at $A=q^2,\ldots,q^{d+2}$ is lower
unitriangular over $R$.
If a polynomial $f$ of degree $d$ is integral at these nodes,
solving this triangular system gives its $b_j$-coefficients in $R$.
Conversely, every $b_j$ is integral at every allowed node.
This proves the first equality.
The same triangular matrix for $b_0,\ldots,b_{m-1}$ proves
surjectivity onto $R^m$.
Multiplication by a sufficiently large power of $A$ reduces a Laurent
polynomial to this case; all $q^n$ are units. This proves the second equality.
\end{proof}

In the variable $(Aq^{-2}-1)/(q-1)$, this is the quantum
integer-valued basis of Harman--Hopkins \cite{HH17}. Integrality at
every rank does not make differences of rank values divisible by
$q-1$: for example,
\[
 b_1(q^2)=0,\qquad b_1(q^3)=1.
\]

For expansions in the rank near $q=1$, let $N$ be an indeterminate
and put
\[
 B=\operatorname{Int}(\mathbb Z)
   =\{P(N)\in\mathbb Q[N]:P(\mathbb Z)\subseteq\mathbb Z\}.
\]
The polynomials $\binom{N-2}{j}$, $j\ge0$, form a $\mathbb Z$-basis
of $B$. Put $\eta=q-1$.

\begin{proposition}\label{prop:rank-coefficient-jets}
The substitution $A=(1+\eta)^N$, $q=1+\eta$ defines an injective
ring homomorphism
\[
 \mathscr I_q\longrightarrow B[[\eta]].
\]
For every $s\ge1$, it induces a compatible isomorphism
\begin{equation}\label{eq:rank-coefficient-jets}
 \mathscr I_q/\eta^s\mathscr I_q
       \simeq B[\eta]/(\eta^s).
\end{equation}
In particular, $\widehat{\mathscr I_q}^{\,\eta}\simeq B[[\eta]]$,
and the image of $b_j(A)$ modulo $\eta$ is $\binom{N-2}{j}$.
\end{proposition}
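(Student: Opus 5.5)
The plan is to work in the basis $b_j(A)$ of \cref{prop:integer-valued-rank}. By \cref{prop:integer-valued-rank}, $\mathscr I_q^+=\bigoplus_j Rb_j$ and $\mathscr I_q=\mathscr I_q^+[A^{-1}]$.

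\textbf{Step 1: the map lands in $B[[\eta]]$.} Under $A=(1+\eta)^N$, $q=1+\eta$, each numerator factor becomes
\[
 1-(1+\eta)^{N-2-i}=-\sum_{m\ge1}\binom{N-2-i}{m}\eta^m .
\]
This factor lies in $\eta B[[\eta]]$ with leading term $-(N-2-i)\eta$. The denominator $(q;q)_j$ equals $(-\eta)^j j!\cdot(1+O(\eta))$, with the unit factor in $\Z[[\eta]]^\times$. So $b_j$ maps to
\[
 \frac{\prod_{i<j}(N-2-i)}{j!}\cdot(\text{unit})+O(\eta),
\]
and modulo $\eta$ this is $\binom{N-2}{j}$. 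What remains is to see that the higher coefficients are integer valued, not merely rational. For that I would use density. For every integer $n\ge2$, specializing $N=n$ gives $b_j(q^n)=\gaussbin{n-2}{j}{q}\in R\subset\Z[[\eta]]$. Hence each $\eta^m$-coefficient, a polynomial in $N$ with rational coefficients, takes integer values at all $n\ge2$. A rational polynomial that is integral on a shifted copy of $\N$ lies in $\operatorname{Int}(\Z)$ (write it in the basis $\binom{N-2}{j}$ via finite differences). The same argument applies to $A^{-1}=(1+\eta)^{-N}$. So the map is a ring homomorphism $\mathscr I_q\to B[[\eta]]$.

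\textbf{Step 2: injectivity.} If $f$ maps to $0$, then specializing $N=n$ shows that the Taylor expansion of $f(q^n)\in R$ vanishes for every $n\ge2$. Hence $f(q^n)=0$ for infinitely many $n$, and a Laurent polynomial in $A$ over $\Q(q)$ with infinitely many roots $q^n$ is zero.

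\textbf{Step 3: the jet isomorphism.} Surjectivity onto $B[\eta]/(\eta^s)$ goes by induction on $\eta$-order. Modulo $\eta$ the images $\binom{N-2}{j}$ of the $b_j$ form a $\Z$-basis of $B$. Given a target, subtract an $R$-combination of the $b_j$ matching it mod $\eta$; the difference is $\eta g$ in the image. I would like to divide by $\eta$ inside $\mathscr I_q$. Since $\eta$ is a unit in $\Q(q)$, $f/\eta$ still lies in $\Q(q)[A^{\pm1}]$, but it need not be integral at the ranks.

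This division is the main obstacle. Rather than dividing the source, I would simply keep lifting targets of the form $\eta^m\binom{N-2}{j}$ by $\eta^m b_j$. This yields surjectivity, since $\eta^m b_j\mapsto\eta^m\binom{N-2}{j}+O(\eta^{m+1})$, and a triangular induction over finitely many $m<s$ applies.

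For the kernel, suppose $f\in\mathscr I_q$ has image in $\eta^sB[[\eta]]$; we must show $f/\eta^s\in\mathscr I_q$. After multiplying by a power of $A$ (a unit), assume $f\in\mathscr I_q^+$. At each rank, $f(q^n)\in R$ has Taylor expansion divisible by $\eta^s$, so $(q-1)^s\mid f(q^n)$ in $R$, because $R\to\Z[[\eta]]$ detects divisibility by the monic linear $q-1$. Then $f/\eta^s$ is integral at every rank, i.e.\ it lies in $\mathscr I_q$. Thus the kernel is $\eta^s\mathscr I_q$, giving \eqref{eq:rank-coefficient-jets}.

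\textbf{Step 4: completion.} Compatibility in $s$ is clear. Passing to the inverse limit gives $\widehat{\mathscr I_q}^{\,\eta}\simeq B[[\eta]]$, using $\varprojlim_s B[\eta]/(\eta^s)=B[[\eta]]$.
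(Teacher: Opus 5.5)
Your proof is correct and is essentially the paper's argument: integer values at the ranks $N=n\ge2$ give integrality of the coefficients, infinitely many roots $q^n$ give injectivity, triangular lifting by $\eta^m b_j$ gives surjectivity, and $(q-1)^s\mid f(q^n)$ in $R$ gives $f/\eta^s\in\mathscr I_q$ for the kernel. The differences are minor: you check integrality on the generators $b_j$, $A^{-1}$ rather than on an arbitrary $f$, you divide by $\eta^s$ all at once, and you say the unit $(q;q)_j/((-\eta)^j j!)$ lies in $\mathbb Z[[\eta]]^\times$ when it only lies in $\mathbb Q[[\eta]]^\times$ (already $(1-q^2)/(-2\eta)=1+\eta/2$), which is harmless because your density argument supplies the integrality anyway.
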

\begin{proof}
Write $f\in\mathscr I_q$ as a finite Laurent polynomial in $A$
with coefficients in $\mathbb Q(q)$. Substitution gives a Laurent
series in $\eta$ with coefficients in $\mathbb Q[N]$ and with a
finite lower bound on its powers of $\eta$. For each integer
$n\ge2$, its specialization at $N=n$ is the Taylor expansion of
$f(q^n,q)\in R$. Every negative-order coefficient therefore
vanishes at all $n\ge2$, so it is the zero polynomial. Each
nonnegative-order coefficient $P(N)$ takes integer values at
these same integers. Its finite-difference expansion at $2$,
\[
 P(N)=\sum_{j=0}^{\deg P}(\Delta_N^jP)(2)\binom{N-2}{j},
 \qquad \Delta_NP(N)=P(N+1)-P(N),
\]
has integer coefficients, proving $P\in B$.

At order zero, the product defining $b_j$ gives
\[
 b_j((1+\eta)^N,1+\eta)
        =\binom{N-2}{j}+O(\eta).
\]
Hence every element of $B$ lifts to $\mathscr I_q$. If the
constant Taylor coefficient of $f$ vanishes, each Laurent
polynomial $f(q^n,q)$ is divisible by $q-1$ in $R$.
It follows directly from the definition of $\mathscr I_q$ that
$f/(q-1)\in\mathscr I_q$. Repeating this argument identifies the
kernel modulo $\eta^s$ with $\eta^s\mathscr I_q$.

To lift a prescribed polynomial in $\eta$ with coefficients in
$B$, first lift its constant coefficient using the $b_j$.
Subtract the Taylor image of this lift and divide the target
remainder by $\eta$. Repeating through order $s-1$ proves
surjectivity in \eqref{eq:rank-coefficient-jets}. Finally, a zero
full Taylor image forces every $f(q^n,q)$ to vanish. After multiplication by a power
of $A$, the element $f$ is a polynomial over $\mathbb Q(q)$ with
infinitely many distinct roots $q^n$, and is therefore zero.
Taking inverse limits gives the completion statement.
\end{proof}

Under the substitution $A=(1+\eta)^N$, the divided-difference
coordinate satisfies
\begin{equation}\label{eq:classical-rank-coordinate}
 \frac{Aq^{-2}-1}{q-1}\equiv N-2\pmod{\eta},
\end{equation}
so at $q=1$ the rank enters the coefficients through the variable $N$
of $\operatorname{Int}(\mathbb Z)$.

\subsection{The rank-uniform Newton ring}

The rank-uniform color completion of \cite[Section~8.4]{FZ26} is
\begin{equation}\label{eq:universal-ring}
 \mathcal U=\varprojlim_m\mathscr I_q[X]\Big/
 \prod_{r=0}^{m-1}\bigl(X-Aq^{2r}-A^{-1}q^{-2r}\bigr).
\end{equation}

\begin{proposition}\label{prop:rank-specialization}
For every $n\ge2$, specialization $A=q^n$ gives a continuous surjection $\operatorname{sp}_n:\mathcal U\to\C_n$, and
\[
 \mathcal U\longrightarrow\prod_{n\ge2}\C_n
\]
is injective. If $n,n'\ge2$ are distinct, there is no ring homomorphism $T:\C_n\to\C_{n'}$ such that $\operatorname{sp}_{n'}=T\circ\operatorname{sp}_n$ on all of $\mathcal U$.
\end{proposition}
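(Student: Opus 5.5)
The plan has three parts, in order: define the specialization and check it is continuous and surjective, then prove joint injectivity, then rule out a factoring homomorphism $T$.

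\textbf{Specialization and surjectivity.} Substituting $A=q^n$ sends $\mathscr I_q$ into $R$ by the definition \eqref{eq:rank-integer-values}. It also sends the finite modulus $\prod_{r<m}(X-Aq^{2r}-A^{-1}q^{-2r})$ to $B_m^{(n)}$. So the substitution induces compatible maps $\mathscr I_q[X]/(\cdots)\to R[X]/(B_m^{(n)})$ at every finite level, and hence a continuous map $\operatorname{sp}_n$. For surjectivity I use the Newton coordinates: every element of $\mathcal U$ is $\sum_k c_k(A)\prod_{r<k}(X-Aq^{2r}-A^{-1}q^{-2r})$ with $c_k\in\mathscr I_q$. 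This follows from \cref{prop:coordinates} applied over the domain $\mathscr I_q$, since the modulus is monic in $X$. Its image under $\operatorname{sp}_n$ is $\sum_k c_k(q^n)B_k^{(n)}$. Given $\sum_k g_kB_k^{(n)}\in\C_n$, I lift each $g_k\in R$ to the constant $g_k\in\mathscr I_q$; in fact $R\subset\mathscr I_q$ already gives a section.

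\textbf{Joint injectivity.} Suppose every $\operatorname{sp}_n F$ vanishes. By uniqueness of Newton coefficients in $\C_n$ (\cref{prop:coordinates}), $c_k(q^n)=0$ for all $n\ge2$ and all $k$. As in the last step of \cref{prop:rank-coefficient-jets}, multiply $c_k$ by a power of $A$. It becomes a polynomial over $\Q(q)$ with infinitely many distinct roots $q^n$, so $c_k=0$ and therefore $F=0$.

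\textbf{Nonexistence of $T$.} This is the main obstacle. The idea is to use a rank-dependent element that is a constant in one rank and not in another. Take $f=b_j(A)$ with $j$ chosen so that $b_j(q^n)=\gaussbin{n-2}{j}{q}$ is $0$ at one rank and nonzero at the other. Assume by symmetry $n<n'$ and take $j=n'-2$. Then $b_j(q^n)=0$, because $n-2<j$, while $b_j(q^{n'})=1$.

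Viewing $b_j\in\mathscr I_q\subset\mathcal U$ as a constant element, $\operatorname{sp}_n(b_j)=0$, so $T(0)=0$ would have to equal $\operatorname{sp}_{n'}(b_j)=1$. That is impossible in the nonzero ring $\C_{n'}$.

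If the lower rank is larger, i.e.\ $n>n'$, the same choice $j=n-2$ fails to give $0$ at rank $n$. In that case I instead use $f=b_j(A)-1$ with $j=n-2$. This gives $\operatorname{sp}_n(f)=0$, while $\operatorname{sp}_{n'}(f)=-1\neq0$, which handles both orderings uniformly.

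The only point to check is that any ring homomorphism, additive in particular, sends $0$ to $0$. So no continuity or unitality hypothesis on $T$ is needed beyond additivity.
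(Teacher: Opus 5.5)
Your proposal is correct and follows the paper's proof in all three parts: the same finite-level substitution and constant lift give continuity and surjectivity, and joint injectivity again comes from each $c_k$ vanishing at the infinitely many points $A=q^n$. The only difference is the witness in the last step: the paper takes $A-q^n\in\mathscr I_q$, which $\operatorname{sp}_n$ kills and $\operatorname{sp}_{n'}$ sends to $q^{n'}-q^n\ne0$, so one element handles both orderings of $n,n'$. Your witnesses $b_{n'-2}$ and $b_{n-2}-1$ also work but need the case split; your initial ``by symmetry'' is not justified because $T$ goes from $\C_n$ to $\C_{n'}$, but your separate treatment of $n>n'$ covers that case.
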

\begin{proof}
Specialization sends the universal nodes to $x_r^{(n)}$. Any coefficient sequence in $R$ lifts as the same constant sequence in $\mathscr I_q$, proving surjectivity. If all specializations of an element vanish, uniqueness of Newton coordinates implies that each coefficient $c_k(A)\in \mathscr I_q$ vanishes at all $A=q^n$. A Laurent polynomial over $\Q(q)$ with infinitely many distinct zeros is zero, so all $c_k$ vanish. For the last assertion, the element $A-q^n$ is killed by $\operatorname{sp}_n$, but its image under $\operatorname{sp}_{n'}$ is $q^{n'}-q^n\ne0$.
\end{proof}

Differences between rank specializations satisfy the following
divisibility. Let $f\in\mathscr I_q$, let $b(q)\in R\setminus\{0\}$
satisfy $b(q)f(A,q)\in R[A^{\pm1}]$, and let $n\ne M$ be at least two.
Then, in the UFD $R$,
\[
 \frac{q^{|n-M|}-1}{\gcd(q^{|n-M|}-1,b(q))}
 \ \text{divides}\ f(q^n,q)-f(q^M,q).
\]
This follows by multiplying the difference by $b$ and comparing
irreducible valuations.

\subsection{Formal quantum neighborhoods}

Put $u=X-2$. In both the fixed-rank and the rank-uniform rings, the
Newton nodes have constant term $2$ and differ from $2$ by terms of
order at least two in $\eta$.

\begin{theorem}\label{thm:quantum-neighborhoods}
For every $n\ge0$ and $s\ge1$, Taylor substitution induces
compatible ring isomorphisms
\begin{align}
 \mathcal C_n/\eta^s\mathcal C_n
   &\simeq(\mathbb Z[\eta]/(\eta^s))[[u]],
             \label{eq:fixed-quantum-neighborhood}\\
 \mathcal U/\eta^s\mathcal U
   &\simeq(B[\eta]/(\eta^s))[[u]],
             \label{eq:universal-quantum-neighborhood}
\end{align}
where the universal substitution is
$X=2+u$, $A=(1+\eta)^N$, $q=1+\eta$.
The original rings embed in their $\eta$-adic completions, and
\begin{equation}\label{eq:quantum-completions}
 \widehat{\mathcal C_n}^{\,\eta}\simeq\mathbb Z[[u,\eta]],
 \qquad
 \widehat{\mathcal U}^{\,\eta}\simeq B[[u,\eta]].
\end{equation}
For $n\ge2$, rank specialization in these coordinates is
$N\mapsto n$. The coefficient $[u^j\eta^m]$ depends only on
Newton indices $k\le j+\lfloor m/2\rfloor$.
\end{theorem}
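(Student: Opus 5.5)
The plan is to work throughout in Newton coordinates. Write $F=\sum_k c_kB_k$ and use the weight argument of \cref{thm:analytic-bidisc}, giving $\eta$ weight one and $u$ weight two. Each factor of $B_k^{(n)}(2+u;1+\eta)$ has the form $u-\epsilon_i(\eta)$ with $\epsilon_i\in\eta^2\Z[[\eta]]$. In the universal case one has $\epsilon_i=A q^{2i}+A^{-1}q^{-2i}-2$ with $A=(1+\eta)^N$, and the identity $(1+\eta)^{N+2i}+(1+\eta)^{-N-2i}-2=(1+\eta)^{-N-2i}((1+\eta)^{N+2i}-1)^2$ shows $\epsilon_i\in\eta^2B[[\eta]]$, because $((1+\eta)^{N+2i}-1)/\eta\in B[[\eta]]$ via binomial coefficients in $N$. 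The product therefore has all monomials $u^j\eta^m$ with $j+m/2\ge k$. This gives the cutoff $k\le j+\lfloor m/2\rfloor$, convergence in $\Z[[u,\eta]]$ and $B[[u,\eta]]$, and, using \cref{prop:rank-coefficient-jets} for the coefficients $c_k\in\mathscr I_q\to B[[\eta]]$, well-defined ring maps. Multiplicativity follows from polynomial density, as before. Compatibility with $N\mapsto n$ is immediate from the substitutions.

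Next I would show that each map modulo $\eta^s$ is an isomorphism. Since the map sends $\eta\mapsto\eta$, it descends to $\mathcal C_n/\eta^s\to(\Z[\eta]/\eta^s)[[u]]$. For surjectivity, reduce by induction on $s$ to the case $s=1$, using the standard filtration argument: a target divisible by $\eta$ is $\eta$ times something, and $\eta$ is a nonzerodivisor on both sides. Modulo $\eta$ the Newton basis becomes $B_k\mapsto u^k$, so $\sum c_kB_k\mapsto\sum c_k(1)u^k$. This is surjective onto $\Z[[u]]$ since $c_k$ is arbitrary. In the universal case the coefficients mod $\eta$ range over all of $B$ by \cref{prop:rank-coefficient-jets}.

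For injectivity, also reduce to $s=1$, together with the claim that the kernel modulo $\eta$ is $\eta\mathcal C_n$. If $\sum c_k(1)u^k=0$, then every $c_k(1)=0$, so $c_k=\eta c_k'$ with $c_k'\in R$, and hence $F=\eta\sum c_k'B_k$. In the universal case, \cref{prop:rank-coefficient-jets} gives the same conclusion: vanishing of $c_k$ modulo $\eta$ means $c_k\in\eta\mathscr I_q$. An induction then shows that an element mapping into $\eta^s(\cdot)$ lies in $\eta^s\mathcal C_n$. This uses that $\eta$ is a nonzerodivisor on $\mathcal C_n$ (coefficientwise, since $R$ is a domain) and on $\mathcal U$. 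Taking inverse limits gives the completion identification \eqref{eq:quantum-completions}.

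For the embedding into the $\eta$-adic completions, I would cite \cref{thm:analytic-bidisc} for $\mathcal C_n$. For $\mathcal U$, the plan is to compose with rank specializations and use the joint injectivity of \cref{prop:rank-specialization}, since $N\mapsto n$ commutes with the maps. Equivalently, $\bigcap_s\eta^s\mathcal U=0$ coefficientwise, because $\bigcap_s\eta^s\mathscr I_q=0$ by the injectivity in \cref{prop:rank-coefficient-jets}. The main subtle step is the universal modulo-$\eta$ kernel identification, that is, showing that the divisibility $c_k\in\eta\mathscr I_q$ genuinely holds in $\mathscr I_q$ rather than merely after specialization. This is exactly the kernel statement of \cref{prop:rank-coefficient-jets}, so I would rely on that.
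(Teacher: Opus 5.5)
Your proposal is correct and follows the paper's proof essentially step for step. The quadratic vanishing of the shifted nodes gives the cutoff $k\le j+\lfloor m/2\rfloor$ and the Taylor maps; the order-zero case is coefficientwise reduction, using \cref{prop:rank-coefficient-jets} for $\mathscr I_q$; and successive lifting and division by $\eta$ handle general $s$. The only cosmetic difference is that for the embedding of $\mathcal C_n$ you cite \cref{thm:analytic-bidisc}, whereas the paper uses the coefficientwise argument $\bigcap_s\eta^s\mathcal C_n=0$ that you also give for $\mathcal U$; both are valid.
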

\begin{proof}
The shifted fixed-rank nodes satisfy
\[
 b_i^{(n)}(\eta)=(1+\eta)^{n+2i}+(1+\eta)^{-n-2i}-2
                 \in\eta^2\mathbb Z[[\eta]].
\]
For the universal nodes, the same expression with $n$ replaced
by $N$ lies in $\eta^2B[[\eta]]$, by
\cref{prop:rank-coefficient-jets}. In the product
$\prod_{i<k}(u-b_i)$, a term of degree $j$ in $u$ contains
$k-j$ shifted nodes and therefore has $\eta$-order at least
$2(k-j)$. This proves the finite-dependence assertion and
constructs both Taylor maps. Finite polynomial approximation
with this bound shows that they preserve multiplication.

Modulo $\eta$, the fixed-rank map is
\[
 \sum_{k\ge0}c_kB_k^{(n)}\longmapsto
                       \sum_{k\ge0}c_k(1)u^k.
\]
It is surjective, and its kernel is $\eta\mathcal C_n$, by
coefficientwise division in $R$. For $\mathcal U$, the
constant Taylor map on each coefficient has image $B$ and
kernel $\eta\mathscr I_q$ by
\cref{prop:rank-coefficient-jets}. Arbitrary sequences of
coefficients are allowed in a Newton completion. Thus the
universal map is also surjective modulo $\eta$, with kernel
$\eta\mathcal U$.

These two order-zero assertions lift to every $s$. Given a
power series in $u$ with coefficients modulo $\eta^s$, lift
its constant $\eta$-coefficient as a Newton series. Subtract
that Taylor image, divide the remaining target by $\eta$, and
repeat. After $s$ steps the finite sum of lifts has the
prescribed image. For the kernel, an element with zero image
modulo $\eta^s$ is first divisible by $\eta$ using the
order-zero case. Its quotient has zero image modulo
$\eta^{s-1}$, so induction gives divisibility by $\eta^s$.
This proves \eqref{eq:fixed-quantum-neighborhood} and
\eqref{eq:universal-quantum-neighborhood}.

Taking the compatible inverse limit over $s$ proves
\eqref{eq:quantum-completions}. An element in every power of
$\eta$ has each Newton coefficient in every such power of its
coefficient ring. The intersection is zero in $R$, and also
in $\mathscr I_q$ by its injective Taylor map. Hence both
original rings embed in these completions. All the
constructions commute with $N\mapsto n$.
\end{proof}

\part{Knot Elements}

\section{Knot elements and rank--color relations}\label{sec:jones}

\subsection{Symmetric-color knot elements}
Let $J_r^{(n)}(K;q)$ denote the reduced, zero-framed quantum
$\mathfrak{sl}_n$ invariant of a knot $K$, colored by
$\operatorname{Sym}^r(\mathbb C^n)$, with $J_r^{(n)}(U;q)=1$.
The rank parameter is $n\ge2$, and $r\ge0$ is the symmetric-power
index. Unless another local coordinate is specified, Taylor expansions
in $q$ are taken in $\eta=q-1$.

The integral cyclotomic expansion of \cite[Theorem~1.1]{FZ26} provides
unique coefficients $c_{K,n,k}\in R$, with $c_{K,n,0}=1$, such that
\begin{equation}\label{eq:all-rank-cyclotomic}
 J_r^{(n)}(K;q)=\sum_{k=0}^r c_{K,n,k}(q)
       \prod_{i=0}^{k-1}\{r-i\}\{n+r+i\}.
\end{equation}
By \eqref{eq:node-difference}, the product in
\eqref{eq:all-rank-cyclotomic} is $B_k^{(n)}(x_r^{(n)})$.

\begin{proposition}\label{prop:knot-element}
For every $n\ge2$, the series
\begin{equation}\label{eq:all-rank-element}
 F_{K,n}(X;q)=\sum_{k\ge0}c_{K,n,k}(q)B_k^{(n)}(X)
\end{equation}
is the unique element of $\C_n$ with
$F_{K,n}(x_r^{(n)})=J_r^{(n)}(K;q)$ for every $r\ge0$.
It satisfies
\begin{equation}\label{eq:knot-operations}
 F_{U,n}=1,\qquad F_{K\#L,n}=F_{K,n}F_{L,n},\qquad
 F_{\overline K,n}(X;q)=F_{K,n}(X;q^{-1}).
\end{equation}
\end{proposition}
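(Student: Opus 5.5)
The proof rests on the integral cyclotomic expansion \eqref{eq:all-rank-cyclotomic} together with the coordinate description of $\C_n$ in \cref{prop:coordinates}.

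\textbf{Existence.} Since the coefficients $c_{K,n,k}$ lie in $R$, the sequence $(c_{K,n,k})_{k\ge0}$ defines an element $F_{K,n}\in\C_n$ by \cref{prop:coordinates}. Evaluation at $x_r^{(n)}$ only sees the terms with $k\le r$, because $B_k^{(n)}(x_r^{(n)})=0$ for $k>r$. By \eqref{eq:node-difference}, each surviving factor satisfies $x_r^{(n)}-x_i^{(n)}=\{r-i\}\{n+r+i\}$. Hence $F_{K,n}(x_r^{(n)})$ is exactly the right side of \eqref{eq:all-rank-cyclotomic}, namely $J_r^{(n)}(K;q)$.

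\textbf{Uniqueness.} Any two elements of $\C_n$ with these color values agree on all colors, so they coincide by \cref{thm:identity}. Alternatively, one can argue directly: the triangular inversion \eqref{eq:inversion} recovers the coefficients from the values, since each $B_k^{(n)}(x_k^{(n)})$ is nonzero.

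\textbf{Operations.} Each identity is checked on color values, and then \cref{thm:identity} is applied.
\begin{itemize}
\item For the unknot, $J_r^{(n)}(U)=1$ for all $r$, so $F_{U,n}=1$.
\item For connected sums, the reduced invariant is multiplicative: $J_r^{(n)}(K\#L)=J_r^{(n)}(K)J_r^{(n)}(L)$. Evaluation at each node is a ring homomorphism on $\C_n$ (\cref{prop:compatible}), so $F_{K,n}F_{L,n}$ takes the values $J_r^{(n)}(K\#L)$. Uniqueness then gives $F_{K\#L,n}=F_{K,n}F_{L,n}$.
\item For mirroring, the substitution $q\mapsto q^{-1}$ acts on $R$ and fixes every node $x_r^{(n)}$, since the nodes are symmetric under $q\mapsto q^{-1}$. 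Therefore $B_k^{(n)}(X;q^{-1})=B_k^{(n)}(X;q)$, and the coefficientwise substitution is a well-defined continuous automorphism of $\C_n$ that commutes with evaluation at every node. Its image of $F_{K,n}$ has color values $J_r^{(n)}(K;q^{-1})$. By the standard mirror property of zero-framed reduced quantum invariants, these equal $J_r^{(n)}(\overline K;q)$. Uniqueness gives $F_{\overline K,n}(X;q)=F_{K,n}(X;q^{-1})$.
\end{itemize}

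\textbf{Where care is needed.} There is no real obstacle here; the argument is bookkeeping. The only inputs that go beyond earlier results of the paper are two standard facts about the invariants, multiplicativity under connected sum and the mirror relation. In the writeup these will simply be cited as properties of the reduced zero-framed invariants. One detail to check explicitly is that the automorphism $q\mapsto q^{-1}$ preserves the Newton basis $B_k^{(n)}$, which holds because each node is invariant under $q\mapsto q^{-1}$.
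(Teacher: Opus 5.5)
Your proposal is correct and follows the paper's proof. Existence comes from \cref{prop:coordinates} and the cyclotomic expansion, and uniqueness from \cref{thm:identity}; each of the three identities is then checked on color values, using multiplicativity of reduced invariants under connected sum (which the paper justifies by noting that a zero-framed $(1,1)$-tangle acts by a scalar on each simple color) and invariance of the nodes and kernel polynomials under $q\mapsto q^{-1}$, and the identity theorem finishes.
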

\begin{proof}
The expansion and \cref{prop:coordinates} give existence. Uniqueness
is \cref{thm:identity}. A zero-framed $(1,1)$-tangle acts by a scalar
on each simple color. The reduced invariants therefore multiply
under connected sum. Mirror reflection inverts $q$, and the node
and kernel polynomials are unchanged by this inversion. Applying
the color identity theorem proves \eqref{eq:knot-operations}.
\end{proof}

\begin{corollary}\label{cor:knot-congruences}
For all $r,s\ge0$,
\begin{equation}\label{eq:all-rank-color-congruence}
 J_r^{(n)}(K;q)-J_s^{(n)}(K;q)
 \in\{r-s\}\{n+r+s\}R.
\end{equation}
Every finite color divided difference is integral. Agreement at
infinitely many colors implies agreement at every color. If the
invariant is a Laurent unit at infinitely many colors, then
$F_{K,n}=1$.
\end{corollary}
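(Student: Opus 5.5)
The plan is to obtain every assertion by applying the general results of \cref{sec:general} to the knot element $F_{K,n}$ from \cref{prop:knot-element}.

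\emph{The congruence.} By \cref{prop:knot-element}, $F_{K,n}$ lies in $\C_n$ and $F_{K,n}(x_r^{(n)})=J_r^{(n)}(K;q)$. Its color-value sequence is therefore polynomially compatible, by \cref{prop:compatible}. The pairwise congruence \eqref{eq:pairwise} of \cref{prop:divided-difference} then gives
\[
 J_r^{(n)}(K;q)-J_s^{(n)}(K;q)\in\bigl(x_r^{(n)}-x_s^{(n)}\bigr)R .
\]
The node-difference factorization \eqref{eq:node-difference} rewrites the divisor as $\{r-s\}\{n+r+s\}$, which is \eqref{eq:all-rank-color-congruence}. When $r=s$ the statement is trivial, since both sides vanish.

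\emph{Integrality of divided differences.} The same proposition shows that the divided differences on the initial nodes are exactly the Newton coefficients $c_{K,n,k}\in R$. For an arbitrary finite set of colors, I would use \cref{prop:compatible}: on any finite set $T$ the values are interpolated by an integral polynomial $Q_T\in R[X]$. Dividing $Q_T$ by the monic product of linear factors over $T$ is carried out over $R$. The divided differences on $T$ are the Newton coefficients of $Q_T$ with respect to $T$, so they lie in $R$.

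\emph{Identity and unit statements.} Suppose two knot elements $F_{K,n}$ and $F_{L,n}$ (or any two elements of $\C_n$) agree at infinitely many colors. Then \cref{thm:identity} makes them equal, so they agree at every color. For the unit statement, \cref{cor:sparse-units} shows that $F_{K,n}$ is a constant Laurent unit $\varepsilon q^a$. Evaluating at $r=0$ and using $c_{K,n,0}=1$ gives $J_0^{(n)}=F_{K,n}(x_0^{(n)})=1$. Hence $\varepsilon q^a=1$ and $F_{K,n}=1$.

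Every step is a direct application of earlier results. The only point needing care is making precise what ``every finite color divided difference'' means for non-initial, unordered node sets. Enumeration independence in \cref{prop:compatible} handles this.
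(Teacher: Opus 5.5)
Your proposal is correct and follows essentially the same route as the paper, whose proof simply applies \cref{prop:divided-difference,thm:identity,cor:sparse-units} to $F_{K,n}$ and uses $J_0^{(n)}=1$. Your extra argument for divided differences on arbitrary finite, non-initial color sets, which uses the integral interpolant from \cref{prop:compatible}, is a faithful expansion of what the paper leaves implicit.
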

\begin{proof}
Apply \cref{prop:divided-difference,thm:identity,cor:sparse-units} to
$F_{K,n}$ and use $J_0^{(n)}=1$.
\end{proof}

\subsection{Normal coefficients and special colors}
At a point \eqref{eq:earlier-node}, write
\[
 \Tay_{n,s}(F_{K,n})=\sum_{j\ge0}\tau_{K,j}^{(n,s)}(q)u_s^j.
\]
By \cref{thm:all-boundaries}, these coefficients lie in $\HH$ and are
given by
\begin{equation}\label{eq:knot-normal}
 \tau_{K,j}^{(n,s)}=
 \sum_{k\ge j}(-1)^{k-j}c_{K,n,k}\,
 e_{k-j}\bigl(\{s+i\}\{n-s+i\}:0\le i<k\bigr).
\end{equation}

\begin{proposition}\label{prop:boundary-special-colors}
Let $\zeta$ be a root of unity and $\ell=\ord(\zeta^2)$.
If $r\ge0$ satisfies $r\equiv-s$ or $r\equiv s-n\pmod\ell$,
then
\begin{equation}\label{eq:boundary-special-color}
 \tau_{K,0}^{(n,s)}(\zeta)=J_r^{(n)}(K;\zeta).
\end{equation}
In particular,
$\tau_{K,0}^{(n,1)}(\zeta)=J_{\ell-1}^{(n)}(K;\zeta)$.
\end{proposition}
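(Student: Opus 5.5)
The plan is to show that specializing the Habiro boundary value at $q=\zeta$ amounts to evaluating the color sequence at a color whose node coincides with $a_{n-2s}$ at $\zeta$. Write $\tau_{K,0}^{(n,s)}=\beta_{n,s}(F_{K,n})$. By \eqref{eq:knot-normal} with $j=0$ this is $\sum_{k\ge0}c_{K,n,k}\,d_k$, where
\[
 d_k=B_k^{(n)}(a_{n-2s})=(-1)^k\prod_{i<k}\{s+i\}\{n-s+i\},
\]
and the sum converges in $\HH$. Evaluation of a Habiro element at a root of unity $\zeta$ is well defined: modulo $(q;q)_m$ with $m\ge\operatorname{ord}(\zeta)$, every element of $(q;q)_mR$ vanishes at $\zeta$. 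Hence $\tau_{K,0}^{(n,s)}(\zeta)=\sum_k c_{K,n,k}(\zeta)d_k(\zeta)$, and this is a finite sum because $d_k(\zeta)=0$ for all large $k$.

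The key step is to identify exactly where $d_k(\zeta)$ first vanishes. Since $\zeta^{2j}=1$ iff $\ell\mid j$, the factor $\{j\}$ vanishes at $\zeta$ precisely when $\ell\mid j$. So $d_k(\zeta)=0$ as soon as the index range $0\le i<k$ contains some $i$ with $i\equiv -s$ or $i\equiv s-n\pmod\ell$. Let $r$ be as in the hypothesis.

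Next I compare with the color sum. By \eqref{eq:all-rank-cyclotomic},
\[
 J_r^{(n)}(K;\zeta)=\sum_{k\le r}c_{K,n,k}(\zeta)\,B_k^{(n)}(x_r^{(n)};\zeta).
\]
The point is that $x_r^{(n)}(\zeta)=a_{n-2s}(\zeta)$. Indeed,
\[
 x_r-a_{n-2s}=\{r+s\}\{n+r-s\},
\]
by the same factorization as \eqref{eq:node-difference}, and one of $r+s$, $n+r-s$ is divisible by $\ell$ by hypothesis. Therefore $B_k^{(n)}(x_r;\zeta)=B_k^{(n)}(a_{n-2s};\zeta)=d_k(\zeta)$ for every $k$, since both are the same polynomial evaluated at the same point.

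It remains to match the ranges of summation: the Habiro sum runs over all $k$, while the color sum stops at $k\le r$. For $k>r$ the index $i=r$ occurs in the product defining $d_k$. Since $r\equiv -s$ or $r\equiv s-n\pmod\ell$, the corresponding factor $\{s+r\}\{n-s+r\}$ vanishes at $\zeta$, so $d_k(\zeta)=0$ for all $k>r$. The two sums therefore agree term by term, which proves \eqref{eq:boundary-special-color}.

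For the special case $s=1$, take $r=\ell-1\equiv-1\pmod\ell$, which satisfies the first congruence. The main subtlety is justifying termwise evaluation of the convergent Habiro series at $\zeta$. I will handle it by truncating modulo $(q;q)_m$ with $m$ a multiple of $\operatorname{ord}(\zeta)$: the tail terms are then divisible by $(q;q)_m$, using the cofinality established in the proof of \cref{thm:all-boundaries}, and so vanish at $\zeta$.
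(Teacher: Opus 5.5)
Your proposal is correct and follows the paper's proof. Both rest on the identity $x_r^{(n)}(\zeta)=a_{n-2s}(\zeta)$, which comes from the factor $\{r+s\}\{n+r-s\}$, and on the vanishing at that point of every specialized kernel with $k>r$, so the boundary series truncates to the finite color sum. Your extra justification that the Habiro series can be evaluated at $\zeta$ term by term is sound; the paper leaves this step implicit.
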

\begin{proof}
Either congruence gives
$x_r^{(n)}(\zeta)=a_{n-2s}(\zeta)$.
For $k>r$, the specialized Newton kernel contains
$X-x_r^{(n)}(\zeta)$ and therefore vanishes at this boundary.
The boundary series and the finite color sum consequently agree.
\end{proof}

For the higher normal coefficients, finite reconstruction from
colored invariants, including mixed quantum--normal precision, is
given in \cref{thm:finite-normal-jets,cor:finite-normal-color-data},
and \cref{sec:explicit-normal} computes them for four knots.

\subsection{Perturbative color dependence}
\begin{proposition}\label{prop:all-rank-perturbative}
For every $d\ge0$ there is a polynomial $P_{K,n,d}(Y)\in\mathbb Q[Y]$
such that
\begin{equation}\label{eq:all-rank-perturbative}
 J_r^{(n)}(K;e^h)=\sum_{d\ge0}P_{K,n,d}((n+2r)^2)h^d,
 \qquad
 \deg P_{K,n,d}\le\lfloor d/2\rfloor,
 \qquad d!P_{K,n,d}\in\mathbb Z[Y].
\end{equation}
One has $P_{K,n,0}=1$, $P_{K,n,1}=0$, and
$Y-n^2\mid P_{K,n,d}(Y)$ for every $d>0$.
\end{proposition}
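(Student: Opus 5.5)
We substitute $q=e^h$ into the cyclotomic expansion \eqref{eq:all-rank-cyclotomic} and expand each Newton summand in $h$. Write $w=n+2r$. By \eqref{eq:node-difference}, the $k$th kernel factor is
\[
 x_r^{(n)}-x_i^{(n)}=\{r-i\}\{n+r+i\}=(e^{wh}+e^{-wh})-(e^{(n+2i)h}+e^{-(n+2i)h}).
\]
The function $e^{wh}+e^{-wh}=2\cosh(wh)=\sum_{j\ge0}2w^{2j}h^{2j}/(2j)!$ is a power series in $h$ whose $h^{2j}$ coefficient is $2w^{2j}/(2j)!$, a polynomial in $Y=w^2$ of degree $j$. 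Each kernel factor therefore has the form $\sum_{j\ge1}\tfrac{2}{(2j)!}(Y^j-(n+2i)^{2j})h^{2j}$. This factor has $h$-order at least two. Its $h^{2j}$ coefficient is a polynomial in $Y$ of degree $j$ and is divisible by $Y-(n+2i)^2$. Taking the product over $i<k$, the $k$th kernel has $h$-order at least $2k$, and its $h^{e}$ coefficient has $Y$-degree at most $e/2$. The coefficient $c_{K,n,k}(e^h)$ is a power series in $h$ with constant $Y$-dependence. So its contribution to $h^d$ comes only from kernel orders $e\le d$, and that contribution has $Y$-degree at most $\lfloor d/2\rfloor$.

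Only $k\le d/2$ contribute to $[h^d]$, so $P_{K,n,d}$ is a finite sum. It is a polynomial in $Y$ that reproduces $[h^d]J_r^{(n)}(K;e^h)$ at every $r\ge0$. Since $c_{K,n,0}=1$ and all other terms have $h$-order at least two, we get $P_{K,n,0}=1$ and $P_{K,n,1}=0$. For $k\ge1$, every term contains the factor with $i=0$, which is divisible by $Y-n^2$. The $k=0$ term contributes $[h^d]c_{K,n,0}=0$ for $d>0$. This gives $Y-n^2\mid P_{K,n,d}$. Equivalently, $J_0^{(n)}=1$ forces vanishing at $Y=n^2$. This alternative argument is valid, but the structural argument above already suffices.

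The main obstacle is the integrality $d!\,P_{K,n,d}\in\mathbb Z[Y]$. My plan is to use the exponential-type structure of the expansion: the algebra $\mathcal E$ of power series $\sum a_d h^d$ with $d!\,a_d\in\mathbb Z[Y]$ (Hurwitz series over $\mathbb Z[Y]$) is closed under multiplication, because the coefficients of a product are $\sum\binom{d}{e}(e!a_e)((d-e)!b_{d-e})/d!$. The argument then needs each ingredient to lie in $\mathcal E$. First, $e^{\pm mh}$ lies in $\mathcal E$ for integers $m$, so each $c_{K,n,k}(e^h)$, a Laurent polynomial in $e^h$ with integer coefficients, is Hurwitz. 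Second, the kernel factor $2\cosh(wh)-2\cosh((n+2i)h)$ has $(2j)!$ times its $h^{2j}$ coefficient equal to $2(Y^j-(n+2i)^{2j})\in\mathbb Z[Y]$, so it is Hurwitz in $Y$. Products of Hurwitz series are Hurwitz, so every summand lies in $\mathcal E$. The finite sum contributing to each fixed $d$ then also lies in $\mathcal E$, which gives $d!\,P_{K,n,d}\in\mathbb Z[Y]$.

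Finally, uniqueness of $P_{K,n,d}$ follows because a polynomial in $Y$ is determined by its values at the infinitely many distinct points $(n+2r)^2$. This also justifies identifying the $P_{K,n,d}$ constructed above with the asserted one.
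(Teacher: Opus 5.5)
Your proof is correct and follows essentially the same route as the paper. You expand each kernel factor $2\cosh((n+2r)h)-2\cosh((n+2i)h)$ in even powers of $h$, use its order-two vanishing to get the cutoff $k\le\lfloor d/2\rfloor$ and the degree bound, take the factor $Y-n^2$ from $i=0$, and get factorial integrality from closure of Hurwitz series over $\mathbb Z[Y]$ under products, which is a cleaner packaging of the paper's appeal to the multinomial formula.
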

\begin{proof}
At $q=e^h$, a kernel factor is
\[
 2\cosh((n+2r)h)-2\cosh((n+2i)h)
 =2\sum_{a\ge1}\frac{Y^a-(n+2i)^{2a}}{(2a)!}h^{2a}.
\]
At order $d$, only $k\le\lfloor d/2\rfloor$ contributes, and the
Laurent coefficients expand as integral combinations of $e^{jh}$,
$j\in\mathbb Z$. The multinomial formula gives factorial integrality
and the degree bound, and each term with $k\ge1$ contains the factor
$Y-n^2$ coming from $i=0$.
\end{proof}

\subsection{HOMFLY--PT rank specialization}\label{sec:rank}

Let $H_r(K;A,q)$ be the HOMFLY--PT invariant colored by the $r$th
symmetric power, divided by its value on the unknot, with zero framing
\cite[Section~2.1 and equation~(2.2)]{CLZ15}.
With these normalizations $J_r^{(n)}(K;q)=H_r(K;q^n,q)$. Put $Q=q^2$
and $S=\Z[Q^{\pm1}]$. Strong integrality gives
\begin{equation}\label{eq:true-rank-polynomial}
 \PP_{K,r}(Z,Q)\in S[Z^{\pm1}],\qquad
 H_r(K;A,q)=\PP_{K,r}(A^2,Q).
\end{equation}
Here Laurent integrality follows from \cite{Morton07}, and the even
powers of both variables follow from \cite{Zhu19}.
Hence
\begin{equation}\label{eq:true-rank-values}
 J_r^{(n)}(K;q)=\PP_{K,r}(Q^n,Q),\qquad n\ge2,
\end{equation}
and rank variation is interpolation on the geometric nodes $Q^n$.

For the rank-uniform knot element, put
$\{a;A\}=Aq^a-A^{-1}q^{-a}$ and
$\{k\}!=\prod_{j=1}^k\{j\}$. The Laurent differential expansion
of \cite[Theorem~8.4]{FZ26}, in the reduced zero-framing
normalization of \eqref{eq:true-rank-values}, is
\begin{equation}\label{eq:Laurent-differential-source}
 H_r(K;A,q)
 =1+\sum_{k=1}^{r}
   \frac{\prod_{i=0}^{k-1}\{r-i\}}{\{k\}!}
   \{-1;A\}\prod_{i=0}^{k-1}\{r+i;A\}\,G_{K,k}(A,q),
\end{equation}
where $G_{K,k}\in\mathbb Z[A^{\pm1},q^{\pm1}]$.
The universal node difference is
\[
 (Aq^{2r}+A^{-1}q^{-2r})
 -(Aq^{2i}+A^{-1}q^{-2i})=\{r-i\}\{r+i;A\}.
\]
Consequently the universal Newton coefficients are exactly
\begin{equation}\label{eq:rank-Laurent-coefficient}
 \mathsf c_{K,0}=1,\qquad
 \mathsf c_{K,k}(A,q)=
       \frac{(Aq^{-1}-A^{-1}q)G_{K,k}(A,q)}{\{k\}!}
       \quad(k\ge1).
\end{equation}
Specializing $A=q^n$ and using uniqueness in
\eqref{eq:all-rank-cyclotomic} gives
$\mathsf c_{K,k}(q^n,q)=c_{K,n,k}(q)$. In particular,
$\mathsf c_{K,k}\in\mathscr I_q$, as in
\cite[Corollary~8.5]{FZ26}.

By \cite[Theorem~8.7]{FZ26}, the series
\[
 \mathbf F_K(X,A,q)=\sum_{k\ge0}\mathsf c_{K,k}(A,q)
       \prod_{i=0}^{k-1}(X-Aq^{2i}-A^{-1}q^{-2i})
\]
defines an element of $\mathcal U$ with
\[
 \mathbf F_K(Aq^{2r}+A^{-1}q^{-2r},A,q)=H_r(K;A,q).
\]
Its rank-$n$ specialization is the fixed-rank element $F_{K,n}$
by \cref{prop:rank-specialization} and uniqueness of the Newton
expansion.

The tail restriction of $F_{K,n}$ is in general different from
$F_{K,n+2m}$:
\begin{equation}\label{eq:tail-rank-comparison}
 \bigl(\rho_{n,m}(F_{K,n})-F_{K,n+2m}\bigr)(x_0^{(n+2m)})
 =J_m^{(n)}(K;q)-1.
\end{equation}
Here $x_0^{(n+2m)}=x_m^{(n)}$, while the trivial-color value of
$F_{K,n+2m}$ is one. For $m=1$ and $K=4_1$, the right side is
$\{n-1\}\{n+1\}\ne0$, by \cref{prop:seed-coefficients}.

\subsection{Rank congruences and mixed color comparisons}
For fixed $r$, Laurent polynomial interpolation in $Z=A^2$ gives
\begin{equation}\label{eq:rank-congruence-general}
 J_r^{(n)}-J_r^{(N)}\in(Q^{|n-N|}-1)\mathbb Z[Q^{\pm1}].
\end{equation}
For distinct ranks $n_0,\ldots,n_k$, the determinant with rows
\[
 (1,Q^{n_i},\ldots,Q^{(k-1)n_i},J_r^{(n_i)})
\]
is divisible by $\prod_{i<j}(Q^{n_j}-Q^{n_i})$. Indeed, division by
the monic polynomial $\prod_i(Z-Q^{n_i})$ reduces
$\PP_{K,r}(Z,Q)$ to an integral polynomial of degree at most $k$;
the quotient of the determinant by the Vandermonde is its leading
coefficient. At $q=\zeta$ the invariant is periodic in rank modulo
$\ell=\ord(\zeta^2)$.

\begin{theorem}\label{thm:mixed-rank-color}
For $n\ge2$, $r\ge0$ and $a\ge1$,
\begin{equation}\label{eq:mixed-rank-color}
 J_r^{(n+2a)}-J_{r+a}^{(n)}\in(Q^a-1)R.
\end{equation}
Put
\[
 L_{K,n,r,a}(q)=
 \frac{J_{r+a}^{(n)}(K;q)-J_r^{(n)}(K;q)}
      {x_{r+a}^{(n)}(q)-x_r^{(n)}(q)}\in R.
\]
If $\zeta^{2a}=1$ and $\omega=\zeta^2$, the canceled quotient satisfies
\begin{align}\label{eq:mixed-rank-color-jet}
 \left.\frac{J_r^{(n+2a)}-J_{r+a}^{(n)}}{Q^a-1}\right|_{q=\zeta}
 &=2\omega^n\partial_Z\PP_{K,r}(\omega^n,\omega)\notag\\
 &\quad-(\zeta^{n+2r}-\zeta^{-n-2r})L_{K,n,r,a}(\zeta).
\end{align}
For $d=\ord(\zeta)$, the numerator of
\eqref{eq:mixed-rank-color} is divisible by $\Phi_d(q)^2$ exactly
when \eqref{eq:mixed-rank-color-jet} is zero.
\end{theorem}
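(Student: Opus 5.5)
The plan is to compare the two sides through the common node they share. The key observation is that
\[
 x_{r+a}^{(n)}=q^{n+2r+2a}+q^{-n-2r-2a}=x_r^{(n+2a)},
\]
so the two invariants in \eqref{eq:mixed-rank-color} sit at the same color node but at different ranks. I would split the difference into a pure rank change and a pure color change at rank $n$:
\[
 J_r^{(n+2a)}-J_{r+a}^{(n)}
 =\bigl(\PP_{K,r}(Q^{n+2a},Q)-\PP_{K,r}(Q^{n},Q)\bigr)
 -\bigl(J_{r+a}^{(n)}-J_r^{(n)}\bigr).
\]

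For the first bracket I would use \eqref{eq:true-rank-values}. Since $\PP_{K,r}\in S[Z^{\pm1}]$, there is a Laurent divided difference $D(Z_1,Z_2,Q)\in S[Z_1^{\pm1},Z_2^{\pm1}]$ with $\PP_{K,r}(Z_1)-\PP_{K,r}(Z_2)=(Z_1-Z_2)D$ and $D(Z,Z,Q)=\partial_Z\PP_{K,r}(Z,Q)$. Hence the first bracket equals
\[
 Q^n(Q^{2a}-1)\,D(Q^{n+2a},Q^n,Q)=Q^n(Q^a+1)(Q^a-1)\,D.
\]
For the second bracket I would use \cref{cor:knot-congruences} together with the node-difference formula \eqref{eq:node-difference}. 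This gives $J_{r+a}^{(n)}-J_r^{(n)}=\{a\}\{n+2r+a\}L_{K,n,r,a}$ with $L_{K,n,r,a}\in R$, and $\{a\}=q^{-a}(Q^a-1)$. Both brackets are therefore divisible by $Q^a-1$, which proves \eqref{eq:mixed-rank-color}. It also gives an exact formula for the canceled quotient:
\[
 Q^n(Q^a+1)D(Q^{n+2a},Q^n,Q)-q^{-a}\{n+2r+a\}L_{K,n,r,a}.
\]

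Next I would specialize at $q=\zeta$ with $\zeta^{2a}=1$. Then $Q^a=1$ and $Q^{n+2a}=\omega^n$, so the first term becomes $2\omega^n D(\omega^n,\omega^n,\omega)=2\omega^n\partial_Z\PP_{K,r}(\omega^n,\omega)$. In the second term,
\[
 \zeta^{-a}\bigl(\zeta^{n+2r+a}-\zeta^{-n-2r-a}\bigr)=\zeta^{n+2r}-\zeta^{-n-2r},
\]
using $\zeta^{-2a}=1$. Together these give \eqref{eq:mixed-rank-color-jet}.

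For the final claim, let $d=\ord(\zeta)$. Because $d\mid 2a$, the polynomial $Q^a-1=q^{2a}-1$ is squarefree and contains $\Phi_d(q)$ exactly once. So the numerator is divisible by $\Phi_d^2$ if and only if the quotient in $R$ is divisible by $\Phi_d$. By irreducibility of $\Phi_d$ and Gauss's lemma in $\Z[q^{\pm1}]$, that holds exactly when the quotient vanishes at $\zeta$.

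The only delicate point is the derivative identification in the second step: one has to justify that the Laurent divided difference exists over $S$ and restricts on the diagonal to $\partial_Z$. This is routine for monomials $Z^k$ with $k$ of either sign, extended by linearity.
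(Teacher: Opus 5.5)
Your proposal is correct and follows the paper's proof. Both split the difference into the rank step $J_r^{(n+2a)}-J_r^{(n)}$, which is divisible by $Q^n(Q^{2a}-1)$, and the color step $J_{r+a}^{(n)}-J_r^{(n)}=(Q^a-1)(q^{n+2r}-q^{-n-2r-2a})L_{K,n,r,a}$, and both finish with the simplicity of the roots of $q^{2a}-1$. Your Laurent divided difference $D(Z_1,Z_2,Q)$ makes explicit the paper's remark that the rank divided difference tends to $2\omega^n\partial_Z\PP_{K,r}(\omega^n,\omega)$.
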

\begin{proof}
Split the difference into $J_r^{(n+2a)}-J_r^{(n)}$ and
$J_r^{(n)}-J_{r+a}^{(n)}$. The first is divisible by $Q^{2a}-1$;
the second is divisible by $\{a\}\{n+2r+a\}$.
For the first canceled quotient, the rank divided difference tends
to $2\omega^n\partial_Z\PP_{K,r}(\omega^n,\omega)$.
For the second, use
\[
 x_{r+a}^{(n)}-x_r^{(n)}
 =(Q^a-1)(q^{n+2r}-q^{-n-2r-2a}).
\]
This proves the formula. Every root of $q^{2a}-1$ is simple in
characteristic zero, which proves the last assertion.
\end{proof}

\begin{theorem}\label{thm:rank-reconstruction}
Suppose the $Z$-exponents of $\PP_{K,r}$ lie in $[b,b+D]$.
At any $D+1$ distinct ranks $n_0,\ldots,n_D\ge2$, one has in
$\mathbb Q(Q)[Z^{\pm1}]$
\begin{equation}\label{eq:rank-reconstruction}
 \PP_{K,r}(Z,Q)=Z^b\sum_{j=0}^{D}Q^{-bn_j}J_r^{(n_j)}(K;q)
   \prod_{i\ne j}\frac{Z-Q^{n_i}}{Q^{n_j}-Q^{n_i}}.
\end{equation}
On the space
$Z^b\mathbb Z[Q^{\pm1}][Z]_{\le D}$, reconstruction from values
at distinct ranks requires at least $D+1$ samples. Agreement at
infinitely many ranks determines $\PP_{K,r}$ without a degree bound.
\end{theorem}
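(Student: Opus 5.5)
Write $\PP_{K,r}(Z,Q)=Z^bP(Z)$ with $P\in\mathbb Z[Q^{\pm1}][Z]$ of degree at most $D$. By \eqref{eq:true-rank-values}, the sampled invariants give the values $P(Q^{n_j})=Q^{-bn_j}J_r^{(n_j)}(K;q)$ at the $D+1$ points $Q^{n_0},\ldots,Q^{n_D}$. These points are pairwise distinct in the field $\mathbb Q(Q)$, because $Q^{n_i}-Q^{n_j}\ne0$ for $n_i\ne n_j$.

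Lagrange interpolation over $\mathbb Q(Q)$ then determines the unique polynomial of degree at most $D$ with these values. It is
\[
 P(Z)=\sum_{j=0}^{D}P(Q^{n_j})\prod_{i\ne j}\frac{Z-Q^{n_i}}{Q^{n_j}-Q^{n_i}}.
\]
Multiplying by $Z^b$ gives \eqref{eq:rank-reconstruction} as an identity in $\mathbb Q(Q)[Z^{\pm1}]$. Uniqueness holds because a difference of two such polynomials has degree at most $D$ and $D+1$ roots, so it vanishes.

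For the lower bound, suppose only $D$ or fewer distinct ranks $n_0,\ldots,n_{D-1}$ are sampled. Take
\[
 G(Z)=Z^b\prod_{i<D}(Z-Q^{n_i}).
\]
This is a nonzero element of $Z^b\mathbb Z[Q^{\pm1}][Z]_{\le D}$, since the product is monic of degree $D$ with integral coefficients. It vanishes at every sampled rank. Hence $\PP_{K,r}$ and $\PP_{K,r}+G$ lie in the same space and have identical samples. No recovery rule, linear or not, can distinguish them, so at least $D+1$ samples are needed.

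For the last assertion, suppose two Laurent polynomials in $S[Z^{\pm1}]$ agree at $Z=Q^n$ for infinitely many $n$. Multiply their difference by a high power of $Z$ to obtain a polynomial over the domain $\mathbb Q(Q)$. It has infinitely many distinct roots $Q^n$ and is therefore zero. This is the same argument as in \cref{prop:rank-specialization}.

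No step is a real obstacle. The only point to handle carefully is that the lower bound must be shown within the integral space $Z^b\mathbb Z[Q^{\pm1}][Z]_{\le D}$, and the monic product $G$ above settles this.
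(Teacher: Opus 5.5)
Your proposal is correct and follows the paper's proof. You apply Lagrange interpolation to $Z^{-b}\PP_{K,r}$ at the distinct points $Q^{n_j}$, use the nonzero element $Z^b\prod_j(Z-Q^{n_j})$ of the integral space (product over the sampled ranks) to show that fewer than $D+1$ samples cannot suffice, and conclude from the fact that a nonzero Laurent polynomial over $\mathbb Q(Q)$ has only finitely many zeros.
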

\begin{proof}
Apply Lagrange interpolation to $Z^{-b}\PP_{K,r}$. With at most $D$
samples, the nonzero Laurent polynomial $Z^b\prod_j(Z-Q^{n_j})$
has the prescribed exponent range and vanishes at all samples.
A nonzero Laurent polynomial over $\mathbb Q(Q)$ has only finitely
many distinct zeros, proving the last assertion.
\end{proof}

\begin{corollary}\label{cor:rank-padic}
With $p,q_0,a$ as in \cref{thm:padic-color}, fix $r$. The function
\[
 g(t)=\PP_{K,r}(q_0^{2t},q_0^2)
\]
belongs to $\mathbb Z_p\langle t\rangle$, interpolates the rank
values, and satisfies
$v_p(g(t)-g(s))\ge a+v_p(t-s)$.
\end{corollary}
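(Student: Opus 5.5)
The plan is to write $g$ as a finite Laurent polynomial in $q_0^{2t}$ with $p$-adic integral coefficients, and then treat it one monomial at a time. By \eqref{eq:true-rank-polynomial}, write
\[
 \PP_{K,r}(Z,Q)=\sum_{j=b}^{b+D}\alpha_j(Q)Z^j,\qquad \alpha_j\in\mathbb Z[Q^{\pm1}].
\]
Then
\[
 g(t)=\sum_j\alpha_j(q_0^2)\,q_0^{2jt}.
\]
Since $q_0\in1+p\Zp$ is a unit, each $\alpha_j(q_0^2)$ lies in $\Zp$. Hence it suffices to show that each exponential $q_0^{2jt}=\exp(2jt\log q_0)$ lies in $\Zp\langle t\rangle$.

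For this I would use the estimate already recorded before \cref{thm:padic-color}. The coefficient of $t^i$ in $\exp(2jt\log q_0)$ has valuation at least $ai-v_p(i!)$. Because $p$ is odd, $v_p(i!)\le (i-1)/(p-1)\le (i-1)/2$. So these coefficients lie in $\Zp$ and tend to zero, which places each monomial, and hence $g$, in the restricted algebra. Negative $j$ cause no problem: only $v_p(2j)\ge0$ enters the bound.

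Interpolation follows by evaluating at $t=n\in\N$ with $n\ge2$. There $q_0^{2n}=(q_0^2)^n$, so
\[
 g(n)=\PP_{K,r}(Q^n,Q)\big|_{q=q_0}=J_r^{(n)}(K;q_0)
\]
by \eqref{eq:true-rank-values}.

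For the Lipschitz bound, I would write
\[
 g(t)-g(s)=\sum_j\alpha_j(q_0^2)\,q_0^{2js}\bigl(q_0^{2j(t-s)}-1\bigr).
\]
The factors $q_0^{2js}$ are units. By the relation $v_p(\exp(w)-1)=v_p(w)$ for $w\in p\Zp$, used in \eqref{eq:padic-brace}, each bracket has valuation
\[
 v_p(2j)+a+v_p(t-s)\ge a+v_p(t-s).
\]
For $j=0$ the bracket is zero. The ultrametric inequality then gives the stated bound.

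The main obstacle is just the legitimacy of the exponential expansions: one needs $q_0^{2jt}$ to agree with the usual powers at integers and to define convergent restricted series. Both are handled exactly as in the setup before \cref{thm:padic-color}. Everything else is termwise.
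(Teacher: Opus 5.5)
Your proof is correct and follows the paper's argument: expand $\PP_{K,r}$ as a finite Laurent polynomial in the restricted exponential $q_0^{2t}$ with $\mathbb Z_p$-integral coefficients, and get the Lipschitz bound from the valuation $a+v_p(t-s)$ of the exponential increment. The only cosmetic difference is that the paper factors $g(t)-g(s)$ through $q_0^{2t}-q_0^{2s}$ by Laurent-polynomial divisibility, whereas you estimate each monomial difference $q_0^{2jt}-q_0^{2js}$ separately (getting $v_p(2j)+a+v_p(t-s)$ per term), which gives the same bound.
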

\begin{proof}
A finite Laurent polynomial in restricted exponentials is restricted
analytic. Its difference is divisible by $q_0^{2t}-q_0^{2s}$,
whose valuation is $a+v_p(t-s)$.
\end{proof}

\subsection{Quantum expansions and finite-rank reconstruction}

The full Taylor expansion and the Newton-coefficient generating
functions are
\begin{align}
 \mathcal F_{K,n}(u,\eta)
 &=F_{K,n}(2+u,1+\eta)
   =\sum_{m\ge0}R_{K,n,m}(u)\eta^m,
       \label{eq:full-rank-taylor}\\
 A_{K,n,m}(u)
 &=\sum_{k\ge0}[\eta^m]c_{K,n,k}(1+\eta)u^k.
       \label{eq:rank-newton-jets}
\end{align}
The first series has integral coefficients by
\cref{thm:quantum-neighborhoods}; the second has integral coefficients
because $c_{K,n,k}\in R$. The kernel nodes vary with $\eta$, so the
two coefficient families are related by a triangular transformation
(\cref{prop:rank-triangular}).

The Taylor coefficients of $\mathsf c_{K,k}$ have a degree bound in
the formal rank variable, which comes from the scalar denominator
$\{k\}!$ in \eqref{eq:rank-Laurent-coefficient}.

\begin{theorem}\label{thm:quantum-rank-degree}
For $k\ge1$ and $m\ge0$, define
\[
 a_{K;k,m}(N)=[\eta^m]\mathsf c_{K,k}
                   ((1+\eta)^N,1+\eta).
\]
Then
\begin{equation}\label{eq:quantum-rank-degree}
 a_{K;k,m}\in B,\qquad
 \deg_N a_{K;k,m}\le k+m,\qquad a_{K;k,m}(1)=0.
\end{equation}
For every $n\ge2$,
$a_{K;k,m}(n)=[\eta^m]c_{K,n,k}(1+\eta)$, and
\begin{equation}\label{eq:quantum-rank-interpolation}
 a_{K;k,m}(N)=\sum_{j=0}^{k+m}
       (\Delta_N^j a_{K;k,m})(2)\binom{N-2}{j}.
\end{equation}
\end{theorem}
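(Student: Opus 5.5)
We work directly from the explicit formula \eqref{eq:rank-Laurent-coefficient}, $\mathsf c_{K,k}=(Aq^{-1}-A^{-1}q)G_{K,k}(A,q)/\{k\}!$ with $G_{K,k}\in\mathbb Z[A^{\pm1},q^{\pm1}]$, and substitute $A=(1+\eta)^N$, $q=1+\eta$. The first claims are formal. Since $\mathsf c_{K,k}\in\mathscr I_q$, \cref{prop:rank-coefficient-jets} gives $a_{K;k,m}\in B$. The identity $a_{K;k,m}(n)=[\eta^m]c_{K,n,k}(1+\eta)$ holds because specialization $N\mapsto n$ commutes with the substitution, and $\mathsf c_{K,k}(q^n,q)=c_{K,n,k}$. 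The vanishing at $N=1$ comes from the prefactor: at $N=1$ we have $Aq^{-1}-A^{-1}q=0$ identically in $\eta$. So the whole series $\mathsf c_{K,k}((1+\eta)^N,1+\eta)$, viewed as a series in $\eta$ with polynomial coefficients in $N$, vanishes at $N=1$. This is legitimate because each $\eta$-coefficient is a polynomial in $N$, so evaluation at $N=1$ is allowed even though $1$ is not a rank. Once the degree bound is known, \eqref{eq:quantum-rank-interpolation} is just the Newton forward-difference expansion of a polynomial of degree at most $k+m$ at the base point $2$, as in the proof of \cref{prop:rank-coefficient-jets}.

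The substantive step is the bound $\deg_N a_{K;k,m}\le k+m$. We would count weight in $\eta$ against degree in $N$. Assign $N$ weight $1$, and note that in $(1+\eta)^{jN}=\exp(jN\log(1+\eta))$ the coefficient of $\eta^i$ is a polynomial in $N$ of degree at most $i$. The same holds for any monomial $A^jq^b$, so for every Laurent polynomial $P(A,q)$ in $\mathbb Z[A^{\pm1},q^{\pm1}]$ the coefficient $[\eta^i]P$ has $N$-degree at most $i$.

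We then apply this to the numerator $\mathcal N=(Aq^{-1}-A^{-1}q)G_{K,k}$. The denominator satisfies $\{k\}!=\eta^k\cdot u_k(\eta)$ with $u_k(0)=2^kk!\neq0$, and $u_k$ is independent of $N$. Hence $a_{K;k,m}$ is a finite $\mathbb Q$-linear combination of the coefficients $[\eta^{i}]\mathcal N$ with $i\le k+m$, and each of these has $N$-degree at most $i\le k+m$. The expansion of $u_k^{-1}$ has rational, $N$-free coefficients, which do not raise the degree. This gives the bound.

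We also need divisibility: the negative powers of $\eta$ must cancel, so $[\eta^i]\mathcal N=0$ for $i<k$. This is automatic because $\mathsf c_{K,k}\in\mathscr I_q$ has a Taylor image in $B[[\eta]]$ by \cref{prop:rank-coefficient-jets}, whose proof already shows that the negative-order coefficients vanish identically. We expect no real obstacle. The only care needed is to justify that division by $\{k\}!$ in $\mathbb Q[N][[\eta]]$ (a Laurent series a priori) is compatible with the $B[[\eta]]$-valued Taylor map. This holds because both are computed in $\mathbb Q[N]((\eta))$ and the map is injective there.
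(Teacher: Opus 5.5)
Your proposal is correct and is essentially the paper's proof: membership in $B$ and the specialization formula come from \cref{prop:rank-coefficient-jets}, the degree bound comes from writing $\{k\}!=\eta^k d_k(\eta)$ with $d_k(0)=2^kk!$ and observing that $[\eta^i](1+\eta)^{aN+b}=\binom{aN+b}{i}$ has $N$-degree at most $i$, and the vanishing at $N=1$ comes from the identically vanishing factor $Aq^{-1}-A^{-1}q$. The cancellation of negative $\eta$-powers that you discuss is true but not needed for the bound, since the order-$m$ quotient coefficient involves only numerator coefficients of orders $0$ through $k+m$.
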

\begin{proof}
The membership in $B$ and the specialization formula follow from
\cref{prop:rank-coefficient-jets} and
\eqref{eq:rank-Laurent-coefficient}.
Write the denominator of that equation as
\[
 \{k\}!=\eta^k d_k(\eta),\qquad
 d_k(0)=2^k k!\ne0.
\]
A Laurent monomial $A^a q^b$ in its numerator becomes
$(1+\eta)^{aN+b}$. Its coefficient of $\eta^j$ is
$\binom{aN+b}{j}$, a polynomial in $N$ of degree at most $j$.
Since $d_k(\eta)^{-1}\in\mathbb Q[[\eta]]$ is independent of $N$,
the quotient coefficient of order $m$ is a rational linear
combination of numerator coefficients of orders at most $k+m$.
This proves the degree bound. At $N=1$, the factor
$Aq^{-1}-A^{-1}q$ vanishes identically as a series in $\eta$.
The regular quotient therefore has every Taylor coefficient zero
at $N=1$. Finally, polynomial Newton interpolation in $N$ gives
\eqref{eq:quantum-rank-interpolation}.
\end{proof}

\begin{corollary}\label{cor:finite-rank-color-jets}
Let $L,s\ge1$. The exact Taylor data
\begin{equation}\label{eq:finite-rank-color-jets}
 J_r^{(n)}(K;1+\eta)\pmod{\eta^{s+2L}},
 \qquad 0\le r\le L,\qquad 2\le n\le L+s,
\end{equation}
determine $c_{K,n,k}(1+\eta)\pmod{\eta^s}$ for every
$n\ge2$ and $0\le k\le L$.
\end{corollary}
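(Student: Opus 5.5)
The proof has two stages. First, at each fixed rank $n$ in the range $2\le n\le L+s$, we recover $c_{K,n,k}(1+\eta)\bmod\eta^s$ for $k\le L$ from the color data. Second, we pass to arbitrary $n\ge2$ using the rank-degree bound of \cref{thm:quantum-rank-degree}.

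For the first stage, fix $n$ and use the triangular system \eqref{eq:all-rank-cyclotomic}:
\[
 J_r^{(n)}=\sum_{k=0}^r c_{K,n,k}\,B_k^{(n)}(x_r^{(n)}),\qquad B_k^{(n)}(x_r^{(n)})=\prod_{i<k}\{r-i\}\{n+r+i\}.
\]
Proceed by induction on $k\le L$. The diagonal entry is $B_k^{(n)}(x_k^{(n)})=\prod_{i<k}\{k-i\}\{n+k+i\}$. Each quantum integer $\{j\}$ with $j\ge1$ equals $\eta$ times a unit of $\mathbb Z[[\eta]]\otimes\mathbb Q$, namely $\{j\}=2j\eta(1+O(\eta))$. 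So the diagonal entry is $\eta^{2k}$ times a unit of $\mathbb Q[[\eta]]$, and dividing by it is legitimate over $\mathbb Q[[\eta]]$. Since $c_{K,n,k}$ lies in $\mathbb Z[[\eta]]$, uniqueness suffices and integrality need not be tracked. From \eqref{eq:inversion},
\[
 c_{K,n,k}=\frac{J_k^{(n)}-\sum_{j<k}B_j^{(n)}(x_k^{(n)})\,c_{K,n,j}}{B_k^{(n)}(x_k^{(n)})}.
\]
To know $c_{K,n,k}\bmod\eta^{s}$ it is enough to know the numerator modulo $\eta^{s+2k}$. The term $J_k^{(n)}$ is given modulo $\eta^{s+2L}$, which covers this since $k\le L$. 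For the remaining terms, each $B_j^{(n)}(x_k^{(n)})$ is divisible by $\eta^{2j}$, so we need $c_{K,n,j}$ only modulo $\eta^{s+2k-2j}$.

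This shows that the induction must carry a stronger hypothesis. The precise claim is: the data determine $c_{K,n,j}\bmod\eta^{s+2L-2j}$ for all $j\le L$. Running the inversion with target precision $s+2L-2k$ for $c_{K,n,k}$ requires the numerator modulo $\eta^{s+2L}$, which is exactly the given precision of $J_k^{(n)}$. The $j$-terms need $c_{K,n,j}\bmod\eta^{s+2L-2j}$, which is the induction hypothesis. This bookkeeping is the one point needing care. It yields $c_{K,n,k}\bmod\eta^s$, indeed modulo $\eta^{s+2L-2k}$, for all $2\le n\le L+s$ and $k\le L$.

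For the second stage, the case $k=0$ is trivial since $c_{K,n,0}=1$. For $1\le k\le L$ and $m<s$, \cref{thm:quantum-rank-degree} says that $a_{K;k,m}(N)$ is a polynomial of degree at most $k+m\le L+s-1$. Its values are known at the $L+s$ points $N=1$ (where it vanishes) and $N=2,\dots,L+s$ (from the first stage). These $L+s$ points determine a polynomial of degree $\le L+s-1$ uniquely, for instance via the interpolation \eqref{eq:quantum-rank-interpolation} after computing finite differences. Evaluating at $N=n$ gives $[\eta^m]c_{K,n,k}(1+\eta)=a_{K;k,m}(n)$ for every $n\ge2$, which completes the proof.
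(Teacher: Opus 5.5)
Your proposal is correct and matches the paper's proof step for step: triangular inversion at each sampled rank, with the strengthened hypothesis that $c_{K,n,j}$ is known modulo $\eta^{s+2L-2j}$, followed by reconstruction of $a_{K;k,m}$, of degree at most $k+m\le L+s-1$, from its zero at $N=1$ and its values at $N=2,\ldots,L+s$. There are two harmless slips: $\{j\}/\eta$ is a unit of $\mathbb Q[[\eta]]$ but not of $\mathbb Z[[\eta]]\otimes\mathbb Q$ (already $\{1\}/\eta=(2+\eta)/(1+\eta)$ has an inverse with unbounded $2$-power denominators), and the difference formula \eqref{eq:quantum-rank-interpolation} based at $N=2$ would need the unsampled rank $L+s+1$ when $k+m=L+s-1$, so you should interpolate on the points $1,\ldots,L+s$, as your uniqueness statement already indicates.
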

\begin{proof}
At each sampled rank the color equations form a triangular
system. For $r\ge k$, the kernel value
$B_k^{(n)}(x_r^{(n)})$ has $\eta$-order $2k$. In particular,
\begin{equation}\label{eq:rank-diagonal-leading}
 B_k^{(n)}(x_k^{(n)};1+\eta)
 =4^k k!\prod_{i=0}^{k-1}(n+k+i)\,\eta^{2k}
                 +O(\eta^{2k+1}).
\end{equation}
All factors in the leading integer are nonzero. Divide the
triangular equations in $\mathbb Q[[\eta]]$. Inductively,
$c_{K,n,k}$ is recovered modulo $\eta^{s+2(L-k)}$:
each earlier coefficient $c_{K,n,j}$ is known modulo
$\eta^{s+2(L-j)}$, and multiplication by its order-$2j$ kernel
makes its contribution known modulo $\eta^{s+2L}$.
Division by the diagonal then loses exactly $2k$ powers of
$\eta$. The recovered coefficients agree with the integral
coefficients in \eqref{eq:all-rank-cyclotomic}.

For $1\le k\le L$ and $m<s$, this determines $a_{K;k,m}(n)$
at the $L+s-1$ ranks $2,\ldots,L+s$. By
\cref{thm:quantum-rank-degree}, the polynomial has degree at most
$L+s-1$ and has the known value zero at $N=1$. Its values at the
$L+s$ distinct integers $1,\ldots,L+s$ therefore determine it.
Evaluation at any $n\ge2$ recovers the required coefficient.
The remaining coefficient $c_{K,n,0}$ is identically one.
\end{proof}

\section{Alexander denominators}\label{sec:alexander}

Let $\Delta_K(z)\in\mathbb Z[z^{\pm1}]$ be the symmetric Alexander
polynomial, normalized by $\Delta_K(1)=1$. There is a unique
$\delta_K\in\mathbb Z[u]$ with
\[
 \Delta_K(z)=\delta_K(z+z^{-1}-2),\qquad \delta_K(0)=1.
\]

We bound the denominators of the full coefficients $R_{K,n,m}$ of
\eqref{eq:full-rank-taylor} by the rational graph expansion, and then
pass to the coefficient generating functions $A_{K,n,m}$ of
\eqref{eq:rank-newton-jets}.

\subsection{Root charges and rational loop coefficients}
Give the roots of $\mathfrak{sl}_n$ squared length two. Let
$\omega_1$ be the first fundamental weight and $\rho$ half the sum
of the positive roots. Put $h=\log q$ and
$\mu=r\omega_1+\rho$, the shifted highest weight. We use Euler degree
$d=|E(\Gamma)|-|V(\Gamma)|=b_1(\Gamma)-1$ for a connected
trivalent graph $\Gamma$. Such a graph has $2d$ vertices and
$3d$ edges.

\begin{lemma}[Normalized rational graph expansion]
\label{lem:normalized-graph-input}
The reduced invariant has the formal expansion
\begin{equation}\label{eq:normalized-graph-expansion}
 J_r^{(n)}(K;q)
 =\prod_{\alpha>0}\Delta_K(q^{2(\mu,\alpha)})^{-1}
       \exp\!\left(\sum_{d\ge1}(2h)^d
                         \mathcal Q_{K,d}(\mu,q)\right).
\end{equation}
For each $d$, the term $\mathcal Q_{K,d}$ is a finite rational
linear combination of Lie weight evaluations of trivalent graphs
with $2d$ vertices. An edge has a rational label
$P(t_e)/\Delta_K(t_e)$, where $P$ is a Laurent polynomial.
Weight evaluation assigns a root space or a Cartan subspace to each
edge and contracts the Lie bracket at each vertex. The unknot
contribution is subtracted in $\mathcal Q_{K,d}$.
\end{lemma}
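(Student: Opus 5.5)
This lemma is an input from the literature rather than something to derive from the Newton-ring theory. I would prove it by specializing the Rozansky--Kricker rational expansion of the colored $\mathfrak{g}$-invariant \cite{RozanskyRat,Kricker00} to $\mathfrak g=\mathfrak{sl}_n$ and the highest weight $r\omega_1$. Then I would check that the normalizations match those of $J_r^{(n)}$: reduced, zero-framed, and equal to one on the unknot.

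The first step is to recall the form of the expansion. Kricker's rational Kontsevich integral writes the Kontsevich integral of a zero-framed knot as the wheels part $\exp(-\tfrac12\,\mathrm{tr}\log\Delta_K(e^{\mathrm{ad}}))$ times the exponential of a series of connected trivalent ``hair'' graphs. The edges of these graphs carry rational labels $P(t_e)/\Delta_K(t_e)$, and the beads are placed so that each vertex satisfies holonomy conservation.

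The second step is to apply the $\mathfrak{sl}_n$ weight system in the irreducible representation of highest weight $r\omega_1$, divided by the dimension. The wheels term evaluates to $\prod_{\alpha>0}\Delta_K(q^{2(\mu,\alpha)})^{-1}$. To see this, recall the argument of Rozansky and of the Melvin--Morton--Rozansky proof \cite{BNG96}: the hair variable acts through the Harish-Chandra shifted weight $\mu=r\omega_1+\rho$. Combined with the Duflo isomorphism and the normalization of the root lengths, the eigenvalue of $e^{\mathrm{ad}}$ on the $\alpha$ root space becomes $q^{2(\mu,\alpha)}$. In the same way, each edge label $P(t_e)/\Delta_K(t_e)$ is evaluated by sending the edge to a root space or to the Cartan subalgebra.

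The remaining trivalent graphs then give $\mathcal Q_{K,d}$. The counting is standard: a connected trivalent graph of Euler degree $d$ has $2d$ vertices and $3d$ edges. Grading by $h$ gives the prefactor $(2h)^d$, where the factor $2$ comes from $q^2$ and squared root length two.

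Finally, I would divide by the unknot value. This subtracts the unknot graph contribution in each degree, and the unknot wheels factor is $1$ because $\Delta_U=1$.

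The main obstacle is the bookkeeping of conventions. Three things have to be tracked:
\begin{itemize}
\item the Duflo/$\rho$-shift, which is why $\mu$ includes $\rho$;
\item the framing correction, which should be trivial at zero framing after reduction;
\item the exact exponent $2(\mu,\alpha)$ relative to $h=\log q$.
\end{itemize}
To pin these down, I would first verify the formula in the abelian specialization at $n=2$. There it should reduce to the known Melvin--Morton--Rozansky statement $J\sim\Delta_K(q^{n+2r})^{-1}$. A $q\mapsto q^{-1}$ mirror check then confirms the sign conventions.
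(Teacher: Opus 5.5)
Your proposal is correct and follows essentially the same route as the paper. Both cite Kricker's rationality theorem in Rozansky's wheeled presentation and apply the $\mathfrak{sl}_n$ weight system at the shifted weight $\mu=r\omega_1+\rho$, so that the wheels become $\prod_{\alpha>0}\Delta_K(q^{2(\mu,\alpha)})^{-1}$; both then fix normalizations ($q^2$ versus $q$ giving $2h$, zero framing, and division by the unknot cancelling the quantum dimension and subtracting the unknot graph terms, which have Laurent labels since $\Delta_U=1$), and your $n=2$ Melvin--Morton--Rozansky and mirror checks are only an extra consistency test.
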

\begin{proof}
Kricker's rationality theorem \cite[Theorem~1.0.8]{Kricker00},
in the wheeled presentation of
\cite[equation~(4.12) and Remark~4.3]{RozanskyRat}, separates
the wheel term from the connected positive-Euler-degree diagrams.
The latter have the stated edge labels. Applying the Lie weight
system gives the graph terms in
\cite[equations~(5.22)--(5.27)]{RozanskyRat}.

The quantum parameter there is $q^2$ in our notation, so its
logarithm is $2h$. For the unreduced invariant $\mathcal J_\mu$,
the unknot value is the quantum dimension
\begin{equation}\label{eq:graph-quantum-dimension}
 d_{q^2,\mu}=\mathcal J_\mu(U;q^2)
   =\prod_{\alpha>0}
       \frac{q^{(\mu,\alpha)}-q^{-(\mu,\alpha)}}
            {q^{(\rho,\alpha)}-q^{-(\rho,\alpha)}}.
\end{equation}
Taking the quotient of the expressions for $K$ and $U$ cancels
this factor and the common Lie-algebra normalization. Zero framing
sets the framing factor to one. The wheel quotient is the product
of inverse Alexander polynomials in
\eqref{eq:normalized-graph-expansion}. The logarithmic graph terms
are replaced by their differences with the unknot terms; those
terms have Laurent edge labels because $\Delta_U=1$.
This proves the formula.
\end{proof}

For $\mathfrak{sl}_n$, take the symmetric-power direction $\omega_1$.
An edge labeled by $\alpha=e_i-e_j$ has charge
\[
 \varepsilon(\alpha)=(\omega_1,\alpha)
       =\delta_{i1}-\delta_{j1}\in\{-1,0,1\};
\]
a Cartan edge has charge zero. An edge is active if its charge is
nonzero.

\begin{lemma}\label{lem:active-edge-count}
A nonzero weight assignment to a trivalent graph of Euler degree $d$
has at most $2d$ active edges.
\end{lemma}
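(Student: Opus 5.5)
The plan is a charge-conservation count at each vertex, summed over the $2d$ vertices of the graph. Fix a trivalent graph $\Gamma$ of Euler degree $d$, so $\Gamma$ has $2d$ vertices and $3d$ edges, and fix a weight assignment that contributes nonzero after contracting the Lie bracket at each vertex. A nonzero contraction at a vertex with incident edges labelled $X_1,X_2,X_3$ requires the structure-constant pairing $(X_1,[X_2,X_3])$ to be nonzero. With outward orientations this forces the root weights to sum to zero. So at every vertex either all three labels are Cartan, or one is Cartan and the other two are opposite roots $\alpha,-\alpha$, or all three are roots $\alpha,\beta,\gamma$ with $\alpha+\beta+\gamma=0$.

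Next we pair this with the charge $\varepsilon=(\omega_1,\cdot)$, which is linear, so the charges at each vertex also sum to zero.

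The key local claim is that each vertex has at most two active half-edges. In the first two cases this is immediate, since at most two incident edges carry roots. In the third case, write the roots as $e_i-e_j$ type vectors summing to zero. The index $1$ can occur in at most two of them: once with sign $+$ and once with sign $-$, as $e_1-e_a$ and $e_b-e_1$. A third occurrence would force a repeated $e_1$ with the same sign, and then the sum could not cancel unless the roots were not of the form $e_i-e_j$ with distinct indices. Concretely, $\alpha+\beta+\gamma=0$ with all three in $A_{n-1}$ means the roots are $e_i-e_j$, $e_j-e_k$, $e_k-e_i$ for distinct $i,j,k$. The index $1$ appears in at most one of $i,j,k$, so exactly zero or two of the edges have nonzero charge.

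The global count then follows. Each active edge has two active half-edges, one at each end; a loop edge contributes both at the same vertex, which is consistent with the local bound. Summing over vertices gives
\[
 2\cdot\#\{\text{active edges}\}=\#\{\text{active half-edges}\}\le 2\cdot 2d,
\]
hence at most $2d$ active edges.

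The main care point is the vertex analysis. The Cartan case needs checking that $[H,H']=0$ and $(H,[E_\alpha,E_\beta])$ vanish unless $\beta=-\alpha$. The all-root case relies on the $A_{n-1}$ triangle description.
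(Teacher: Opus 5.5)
Your proof is correct and follows the paper's argument: charge conservation at each vertex allows at most two active half-edges, and double counting over the $2d$ vertices, with loops counted twice, gives at most $2d$ active edges. The paper justifies the local bound more quickly, since three charges in $\{-1,1\}$ always have odd sum and so cannot sum to zero; your $A_{n-1}$ triangle classification is valid but not needed.
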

\begin{proof}
At a nonzero trivalent vertex the signed charges sum to zero, by
weight conservation for the Lie bracket. Three numbers from
$\{-1,1\}$ cannot sum to zero, so at most two incident half-edges
are active. There are $2d$ vertices. Counting both ends of each edge,
including twice for a loop, gives at most $2d$ active edges.
\end{proof}

\begin{lemma}\label{lem:diagonal-faithfulness}
If $G(z,\eta)\in\mathbb Q[[z-1,\eta]]$ satisfies
$G((1+\eta)^{n+2r},\eta)=0$ for infinitely many integers $r\ge0$,
then $G=0$. The substitution $u=z+z^{-1}-2$ is injective on
$\mathbb Q[[u,\eta]]$.
\end{lemma}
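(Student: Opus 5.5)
We argue by expanding $G$ in powers of $\eta$ and showing that each coefficient vanishes, by induction on the $\eta$-order. Write $G(z,\eta)=\sum_{m\ge0}G_m(z)\eta^m$ with $G_m\in\mathbb Q[[z-1]]$. Put $w=z-1$ and $N_r=n+2r$. The substitution $z=(1+\eta)^{N_r}$ gives $w=(1+\eta)^{N_r}-1=N_r\eta+\binom{N_r}{2}\eta^2+\cdots$, which has zero constant term, so the composition is a well-defined element of $\mathbb Q[[\eta]]$. Write $G=\sum_{j,m}g_{j,m}w^j\eta^m$. The coefficient of $\eta^M$ in $G((1+\eta)^{N_r},\eta)$ is a finite sum
\[
 \sum_{j+m\le M} g_{j,m}\,[\eta^{M-m}]\bigl((1+\eta)^{N_r}-1\bigr)^j .
\]
Each bracket $[\eta^{M-m}]((1+\eta)^{N_r}-1)^j$ is a polynomial in $N_r$ of degree at most $M-m$. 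Its top-degree part is $N_r^{M-m}$ exactly when $j=M-m$, and it vanishes for $j>M-m$.

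The key step is to take the total degree $M$ and view the coefficient of $\eta^M$ as a polynomial $E_M(N)\in\mathbb Q[N]$ in the indeterminate $N$. This polynomial vanishes at the infinitely many integers $N=n+2r$, so $E_M=0$ identically. We then extract the coefficients in decreasing order of the degree in $N$. The coefficient of $N^M$ receives contributions only from terms with $m=0$ and $j=M$, and it equals $g_{M,0}$, so $g_{M,0}=0$. Next we look at the coefficient of $N^{M-1}$. Contributions come from $(j,m)=(M-1,1)$, which gives $g_{M-1,1}$, and from terms with $m=0$, $j\le M$, whose coefficients are already known to vanish for $j=M$; the remaining $m=0$ terms have $j\le M-1$ and degree in $N$ equal to $M$. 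This bookkeeping is the main obstacle, and it has to be organized carefully.

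To make the bookkeeping clean, we use a double induction. We prove $g_{j,m}=0$ by induction on $m$, and for fixed $m$ by induction on $j$. Assume every $g_{j',m'}$ with $m'<m$ is already zero. Then $G=\eta^m\widetilde G$, where $\widetilde G=\sum_{j}g_{j,m}w^j+O(\eta)$. Dividing by $\eta^m$ (since $\mathbb Q[[\eta]]$ is a domain), $\widetilde G$ satisfies the same hypothesis, so we may assume $m=0$. We must then show that $G_0=G(z,0)=0$. For this, look at the coefficient of $\eta^M$ and its top term $N^M$: only the terms $g_{j,0}$ with $j=M$ contribute to $N^M$, because terms with $m\ge1$ have degree at most $M-m<M$, and terms with $j<M$, $m=0$ have degree at most $M$ with top coefficient coming from $\binom{N}{1}^j\eta^j$ at order $j<M$. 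To settle this last case, we note that $((1+\eta)^N-1)^j=(N\eta)^j(1+\tfrac{N-1}{2}\eta+\cdots)^j$. Its $\eta^M$ coefficient has degree $M$ in $N$ with leading coefficient $[\eta^{M-j}](1+\tfrac{N}{2}\eta+\tfrac{N^2}{6}\eta^2+\cdots)^j$. This can be nonzero, so the clean route is instead to substitute $\eta=t/N$ heuristically. More rigorously, we use the lowest $j$ rather than the highest. Let $j_0$ be minimal with $g_{j_0,0}\ne0$. The coefficient of $\eta^{j_0}$ in $G((1+\eta)^N,\eta)$ is then a polynomial in $N$ whose only possible degree-$j_0$ term is $g_{j_0,0}N^{j_0}$: terms with $m\ge1$ have degree at most $j_0-m$, and terms with $j>j_0$ contribute nothing at order $\eta^{j_0}$. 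Vanishing at infinitely many $N$ forces $g_{j_0,0}=0$, a contradiction. Hence $G_0=0$, which completes the induction on $m$ and proves $G=0$.

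For the second assertion, $u=z+z^{-1}-2=(z-1)^2/z=w^2(1+w)^{-1}$. The map $\mathbb Q[[u,\eta]]\to\mathbb Q[[w,\eta]]$ sends $u^j$ to $w^{2j}(1+w)^{-j}$, whose lowest-order term is $w^{2j}$. This map is triangular with respect to the $w$-adic order: a nonzero $H(u,\eta)$ with minimal $u$-degree $j$ among the terms of some fixed $\eta$-coefficient maps to a series whose $w^{2j}$ coefficient at that $\eta$-order equals the nonzero $u^j$-coefficient. Hence the substitution is injective.
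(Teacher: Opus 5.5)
Your final argument is correct and is essentially the paper's: dividing out the lowest power of $\eta$ and taking the least $j_0$ with $g_{j_0,0}\neq0$, the coefficient $[\eta^{j_0}]$ of the diagonal substitution is a polynomial in the exponent with nonzero top coefficient $g_{j_0,0}$. That polynomial cannot vanish at infinitely many integers, and the second assertion follows from the same order-doubling observation for $u=w^2/(1+w)$. The paper instead takes the lowest-total-degree homogeneous part, so that $[\eta^d]G((1+\eta)^t,\eta)=\sum_i a_it^i$ holds exactly, which removes the need for your induction on $m$ and for the abandoned ``coefficient of $N^M$'' and ``$\eta=t/N$'' detours; delete those from a final write-up.
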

\begin{proof}
Write $w=z-1$. If $G\ne0$, let
\[
 H_d(w,\eta)=\sum_{i=0}^d a_iw^i\eta^{d-i}
\]
be its nonzero homogeneous part of least total degree. For a fixed
integer $t$, the substitution $w=(1+\eta)^t-1=t\eta+O(\eta^2)$
gives
\[
 [\eta^d]G((1+\eta)^t,\eta)=\sum_{i=0}^d a_it^i.
\]
The polynomial on the right is nonzero and cannot vanish at infinitely
many distinct values $t=n+2r$. This proves the first assertion.

For the second, $u=z+z^{-1}-2=w^2/(1+w)$. A nonzero power series
in $u$ of order $j$ becomes a power series in $w$ of order $2j$.
Applying this observation to each coefficient of $\eta$ proves
injectivity on $\mathbb Q[[u,\eta]]$.
\end{proof}

\begin{theorem}\label{thm:rank-loop-denominators}
For every knot $K$, $n\ge2$ and $m\ge0$,
\begin{equation}\label{eq:rank-loop-denominators}
 R_{K,n,m}(u)\in
 \delta_K(u)^{-(n-1+2m)}\mathbb Z[u],\qquad
 R_{K,n,0}(u)=\delta_K(u)^{-(n-1)}.
\end{equation}
\end{theorem}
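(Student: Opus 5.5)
We evaluate the graph expansion of \cref{lem:normalized-graph-input} at $\mu=r\omega_1+\rho$ and identify the result with the full Taylor series $\mathcal F_{K,n}(u,\eta)$ on the diagonal $z=(1+\eta)^{n+2r}$. For $\mathfrak{sl}_n$ with $\mu=r\omega_1+\rho$, a positive root $\alpha=e_i-e_j$ has $(\mu,\alpha)=r\varepsilon(\alpha)+(\rho,\alpha)$. The $n-1$ roots $e_1-e_j$ have charge one, so their wheel factors are
\[
 \Delta_K\bigl(q^{2r+2(j-1)}\bigr)^{-1},\qquad 2\le j\le n .
\]
The remaining roots have charge zero, and their wheel factors are color-independent power series in $h$ with constant term $\Delta_K(1)^{-1}=1$. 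Set $z=q^{n+2r}$, which is the coordinate with $x_r^{(n)}=z+z^{-1}$. Then each charged wheel factor equals $\Delta_K(zq^{c_j})^{-1}$ for an integer shift $c_j$. Expanding it in $\eta$ around $\Delta_K(z)$ gives a series whose $\eta^m$ coefficient lies in $\Delta_K(z)^{-1-m}\mathbb Z[z^{\pm1}]$, possibly after rational factors that must be tracked. At order $\eta^0$ the $n-1$ charged factors give exactly $\Delta_K(z)^{-(n-1)}$. An active edge $e$ carries the label $P(t_e)/\Delta_K(t_e)$ with $t_e=q^{2(\mu,\alpha)}=z^{\pm1}q^{c}$, so after expansion it likewise contributes at most $1+m'$ factors of $\Delta_K(z)^{-1}$ at relative order $\eta^{m'}$. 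Inactive edges and Cartan edges carry labels in $\mathbb Q[[h]]$ that do not depend on the color.

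Next we count denominators at total order $\eta^m$, and we want at most $n-1+2m$ factors. One subtlety is that each derivative in $\eta$ raises the denominator by one while consuming one order of $\eta$. I would first try to absorb these shifts uniformly, though it may be necessary to exploit the symmetry $z\mapsto z^{-1}$ or the specific weights. The graph term $(2h)^d\mathcal Q_{K,d}$ starts at order $\eta^d$ and has at most $2d$ active edges by \cref{lem:active-edge-count}. This is the paper's intended count. Under $\exp$, a product of terms with degrees $d_i$, $\sum d_i\le m$, gives at most $2\sum d_i\le 2m$ active denominators. The remaining order $m-\sum d_i$ is spent on Taylor shifts, and each shift adds at most one denominator per unit of order. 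The total exponent is therefore at most $n-1+2m$. The naive per-derivative count also gives at most $n-1+2m$, since derivatives spend order one for one denominator while graphs spend order $d$ for $2d$. So the general bound follows: the order-$m$ coefficient of the full expansion, as a function of $(z,\eta)$, lies in $\Delta_K(z)^{-(n-1+2m)}\mathbb Q[z^{\pm1}]$.

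We then transfer this to the formal variables. The graph series becomes an element $G(z,\eta)$ of $\mathbb Q[z^{\pm1},\Delta_K^{-1}][[\eta]]\subset\mathbb Q[[z-1,\eta]]$, because $\Delta_K(1)=1$ makes $\Delta_K$ a unit near $z=1$. By \cref{thm:quantum-neighborhoods}, $\mathcal F_{K,n}(z+z^{-1}-2,\eta)$ agrees with $G$ at $z=(1+\eta)^{n+2r}$ for every $r$, so \cref{lem:diagonal-faithfulness} gives equality in $\mathbb Q[[z-1,\eta]]$. The coefficient is $z\mapsto z^{-1}$ symmetric, being a function of $u$, and $\Delta_K(z)=\delta_K(u)$. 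Hence $\delta_K^{n-1+2m}R_{K,n,m}$ is a symmetric Laurent polynomial, that is, an element of $\mathbb Q[u]$. Injectivity of $u\mapsto w$ from \cref{lem:diagonal-faithfulness} justifies the identification.

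The remaining issue is integrality of the numerator. $R_{K,n,m}\in\mathbb Z[[u]]$ by \cref{thm:quantum-neighborhoods}, and $\delta_K(0)=1$ makes $\delta_K^{n-1+2m}$ a unit in $\mathbb Z[[u]]$. The numerator is therefore in $\mathbb Q[u]\cap\mathbb Z[[u]]=\mathbb Z[u]$. For $m=0$ only the wheel survives, which gives $\delta_K^{-(n-1)}$. I expect the main obstacle to be the precise bookkeeping of the charge-zero wheel factors and of the Taylor shifts from $\rho$. These must not create extra denominators beyond one per order. Settling this requires checking that each shift contributes at most one extra power of $\Delta_K(z)^{-1}$ per power of $\eta$, in both the wheel and the edge labels.
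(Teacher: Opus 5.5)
Your proposal is correct and follows the paper's proof essentially step for step. The steps match: evaluating the graph expansion at $\mu=r\omega_1+\rho$, bounding the order-$m$ denominator exponent by $(n-1)+2d+(m-d)\le n-1+2m$ using the active-edge lemma, identifying the resulting rational coefficients with $R_{K,n,m}$ by diagonal faithfulness, using $z\mapsto z^{-1}$ symmetry to land in $\mathbb Q[u]$, and getting integrality from $\mathbb Q[u]\cap\mathbb Z[[u]]=\mathbb Z[u]$. The bookkeeping you flag at the end is exactly the paper's one-line observation: each order of expansion of $1/\Delta_K(z^{\pm1}q^c)$ costs at most one extra factor of $\Delta_K(z)$, and inactive and unknot labels contribute no poles. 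So nothing further is needed.
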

\begin{proof}
Apply \cref{lem:normalized-graph-input} with $h=\log q$ and retain
$z=q^{n+2r}$ as an independent variable. For
$\mu=r\omega_1+\rho$, the positive-root arguments are
\begin{equation}\label{eq:rank-root-variables}
 q^{2(\mu,e_i-e_j)}=
 \begin{cases}
 zq^{2(j-1)-n},&i=1,\\
 q^{2(j-i)},&i>1.
 \end{cases}
\end{equation}
Negative roots give the inverse arguments. An active edge therefore
has an Alexander factor evaluated at $z^{\pm1}q^a$; an inactive
edge has one evaluated at $q^a$. At $h=0$, the active denominators
become $\Delta_K(z)$ by symmetry, and the inactive denominators
become $\Delta_K(1)=1$. The wheel factor has exactly $n-1$ active
positive roots, namely $e_1-e_j$ with $2\le j\le n$.

Consider a product of graph terms of total Euler degree $d$ in the
exponential. It carries $h^d$ and has at most $2d$ active edge
factors by \cref{lem:active-edge-count}. Each further derivative
in $h$ increases their total Alexander denominator exponent by at
most one: differentiating a factor
$P(z^{\pm1}e^{ah})/\Delta_K(z^{\pm1}e^{ah})$ has this property,
and the product rule preserves the bound. Derivatives of inactive
factors introduce no poles in $z$. Thus its contribution to the
coefficient of $h^m$, including the wheel factor, has exponent at most
\[
 (n-1)+2d+(m-d)=n-1+m+d\le n-1+2m.
\]
The term $d=0$ is the wheel factor alone. The subtracted unknot terms
have Laurent edge labels and satisfy the same bound. Only degrees
$d\le m$ occur, with finitely many graph products and weight
assignments in each degree. The coefficient of $h^m$ consequently
belongs to
$\Delta_K(z)^{-(n-1+2m)}\mathbb Q[z^{\pm1}]$.
Substitution $h=\log(1+\eta)$ combines only orders at most $m$
in the coefficient of $\eta^m$, so it preserves this bound.

Let $Q_m(z)$ be the rational coefficients so obtained. Since
$\Delta_K(1)=1$, their expansions at $z=1$ define a series in
$\mathbb Q[[z-1,\eta]]$. On every color diagonal
$z=(1+\eta)^{n+2r}$, this series equals the Taylor expansion of
$J_r^{(n)}(K;q)$. The Taylor image
\eqref{eq:full-rank-taylor}, with $u=z+z^{-1}-2$, has the same
values. By \cref{lem:diagonal-faithfulness},
\[
 Q_m(z)=R_{K,n,m}(z+z^{-1}-2)
\]
as formal series at $z=1$. The right side is inversion-invariant,
so the rational function $Q_m$ is inversion-invariant as well.
It follows that $\Delta_K^{n-1+2m}Q_m$ is a symmetric Laurent
polynomial. Such a polynomial belongs to
$\mathbb Q[z+z^{-1}]=\mathbb Q[u]$.

By \cref{thm:quantum-neighborhoods}, $R_{K,n,m}\in\mathbb Z[[u]]$.
Multiplication by $\delta_K^{n-1+2m}$ therefore gives a polynomial
in $\mathbb Q[u]$ whose Taylor coefficients are all integers;
it belongs to $\mathbb Z[u]$. At order zero, only the wheel
factor remains. Its $n-1$ active positive roots give
$R_{K,n,0}=\delta_K^{-(n-1)}$.
\end{proof}

\begin{corollary}\label{cor:universal-classical-knot}
Under \eqref{eq:universal-quantum-neighborhood} with $s=1$, the
rank-uniform knot element has image
\begin{equation}\label{eq:universal-classical-knot}
 \mathbf F_K\pmod{\eta}
       =\delta_K(u)^{-(N-1)}\in B[[u]].
\end{equation}
Its coefficient of $u^k$ is an integer-valued polynomial in $N$
of degree at most $k$.
\end{corollary}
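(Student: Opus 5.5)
The plan is to reduce the statement to the fixed-rank case $R_{K,n,0}=\delta_K^{-(n-1)}$ from \cref{thm:rank-loop-denominators}, using the rank specializations and the fact that integer-valued polynomials are determined by their values on the integers $n\ge2$.

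First, apply \eqref{eq:universal-quantum-neighborhood} with $s=1$. Under $A=(1+\eta)^N$, $q=1+\eta$, $X=2+u$, the image of $\mathbf F_K$ modulo $\eta$ is
\[
 \sum_{k\ge0}\mathsf c_{K,k}(A,q)\big|_{\eta=0}\,u^k
 =\sum_{k\ge0}a_{K;k,0}(N)\,u^k\in B[[u]].
\]
This is because each universal node minus $2$ lies in $\eta^2B[[\eta]]$, so modulo $\eta$ the kernel $\prod_{i<k}(X-\cdots)$ reduces to $u^k$. By \cref{thm:quantum-rank-degree} with $m=0$, the coefficient of $u^k$ is $a_{K;k,0}(N)\in B$, of degree at most $k$. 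For $k\ge1$ it also vanishes at $N=1$, and the $k=0$ coefficient is $1$. This already gives the degree assertion.

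Second, identify the series. Specializing $N\mapsto n$ commutes with the Taylor maps (\cref{thm:quantum-neighborhoods}). The specialization of $\sum_k a_{K;k,0}(N)u^k$ at $N=n$ is therefore $A_{K,n,0}(u)$. At $\eta$-order zero the node motion is invisible, so $A_{K,n,0}=R_{K,n,0}$, which equals $\delta_K^{-(n-1)}$ by \cref{thm:rank-loop-denominators}.

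Next, expand the target series. Write $\delta_K(u)=1-u\epsilon(u)$ with $\epsilon\in\mathbb Z[u]$, so that
\[
 \delta_K^{-(N-1)}=\sum_{j\ge0}\binom{N+j-2}{j}u^j\epsilon(u)^j.
\]
Its coefficient of $u^k$ is a finite $\mathbb Z$-combination of the binomials $\binom{N+j-2}{j}$ with $j\le k$. Hence it lies in $B$, has degree at most $k$, and specializes at $N=n$ to the coefficient of $\delta_K^{-(n-1)}$.

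Finally, compare coefficients. For each $k$, two polynomials in $\mathbb Q[N]$ agree at every integer $n\ge2$, so they are equal. This gives \eqref{eq:universal-classical-knot}.

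The only delicate point is the justification that $A_{K,n,0}=R_{K,n,0}$, meaning the order-zero Taylor map sends $B_k^{(n)}$ to $u^k$. This follows directly from the quadratic vanishing $x_i^{(n)}-2\in\eta^2\mathbb Z[[\eta]]$ established in the proof of \cref{thm:quantum-neighborhoods}.
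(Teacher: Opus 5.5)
Your proof is correct and follows essentially the paper's own argument. You expand $\delta_K^{-(N-1)}$ binomially, so its $u^k$-coefficients lie in $B$ with degree at most $k$. You then match these with the classical image of $\mathbf F_K$ at every rank $n\ge2$ via \cref{thm:rank-loop-denominators}, and conclude by equality of polynomials at infinitely many integers. Your extra appeal to \cref{thm:quantum-rank-degree} for the degree bound is harmless but redundant, since the explicit expansion already gives that bound once the two series are shown equal.
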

\begin{proof}
The binomial expansion gives
\[
 \delta_K(u)^{-(N-1)}
 =\sum_{j\ge0}(-1)^j\binom{N+j-2}{j}
                         (\delta_K(u)-1)^j.
\]
Since $\delta_K(0)=1$, the coefficient of $u^k$ uses only
$j\le k$. It lies in $B$ and has degree at most $k$.
The coefficient of $u^k$ in the classical image of $\mathbf F_K$
also belongs to $B$. After $N=n$ the two coefficients agree
for every $n\ge2$ by \cref{thm:rank-loop-denominators}.
Equality at these infinitely many integers proves equality
as polynomials in $N$.
\end{proof}

\subsection{The triangular change of coefficients}
The full expansion includes the motion of the nodes as $q$ varies; we
express this motion by differential operators in $u$. Set
\[
 b_i^{(n)}(\eta)=(1+\eta)^{n+2i}+(1+\eta)^{-n-2i}-2.
\]
For $d\ge2$ and $1\le s\le\lfloor d/2\rfloor$, define
\begin{equation}\label{eq:rank-Pds}
 P_{d,s}^{(n)}(k)=[\eta^dy^s]
                  \prod_{i=0}^{k-1}(1+yb_i^{(n)}(\eta)).
\end{equation}

\begin{lemma}\label{lem:rank-Pds}
The function $P_{d,s}^{(n)}$ is a polynomial in $k$ over $\mathbb Q$
of degree at most $d+s$. It is integer-valued on the nonnegative
integers and vanishes at $k=0,\ldots,s-1$.
\end{lemma}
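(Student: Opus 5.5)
The plan is to expand the product as a sum over $s$-element subsets. Since $b_i^{(n)}(\eta)\in\eta^2\mathbb Z[[\eta]]$, we have
\[
 [y^s]\prod_{i<k}(1+yb_i^{(n)})=e_s\bigl(b_0^{(n)},\ldots,b_{k-1}^{(n)}\bigr)
 =\sum_{0\le i_1<\cdots<i_s<k}b_{i_1}^{(n)}\cdots b_{i_s}^{(n)}.
\]
Taking the coefficient of $\eta^d$ gives
\[
 P_{d,s}^{(n)}(k)=\sum_{i_1<\cdots<i_s<k}\ \sum_{d_1+\cdots+d_s=d}\ \prod_{t=1}^s\beta_{d_t}(i_t),
 \qquad \beta_e(i)=[\eta^e]b_i^{(n)}(\eta).
\]
Two of the claims follow at once. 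Integer-valuedness holds because each $b_i^{(n)}\in\mathbb Z[[\eta]]$, so every summand is an integer. Vanishing at $k<s$ holds because the subset sum is empty there.

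For polynomiality and the degree bound, we first observe that $\beta_e(i)$ is a polynomial in $i$ of degree at most $e$. Indeed, $(1+\eta)^{m}+(1+\eta)^{-m}-2$ has $\eta^e$-coefficient $\binom{m}{e}+\binom{-m}{e}$, which is a polynomial of degree at most $e$ in $m=n+2i$. Moreover $\beta_e=0$ for $e<2$, so only compositions with every $d_t\ge2$ contribute; this is why $s\le\lfloor d/2\rfloor$. Next we use the standard fact that iterated sums $\sum_{i_1<\cdots<i_s<k}\prod_t f_t(i_t)$, with $\deg f_t\le e_t$, are polynomials in $k$ of degree at most $\sum_t(e_t+1)$. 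This follows by induction on $s$: summing a polynomial of degree $D$ over $0\le i<k$ gives a polynomial of degree $D+1$ in $k$. The innermost step sums over $i_1<i_2$ to get a polynomial in $i_2$ of degree $e_1+1$. We multiply by $f_2(i_2)$ and sum again, and so on. Each composition therefore contributes degree at most $\sum_t(d_t+1)=d+s$, and there are finitely many compositions. The result is a polynomial over $\mathbb Q$ of degree at most $d+s$.

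An alternative packaging, if the iterated-sum induction feels clumsy, is the logarithm. One writes $\log\prod_i(1+yb_i)=\sum_{p\ge1}\frac{(-1)^{p-1}}{p}y^p\sum_{i<k}b_i^p$, and each power sum $\sum_{i<k}[\eta^e]b_i^p$ is a polynomial in $k$ of degree at most $e+1$. Then one exponentiates and tracks the weights: a $y^p$ piece carries $\eta$-order at least $2p$, and the degree in $k$ grows by at most $\eta\text{-order}+(\text{number of power-sum factors})\le d+s$. The degree bookkeeping in either route is the only step needing care; the main point is that each summation adds exactly one to the degree while each factor $\beta_{d_t}$ contributes at most $d_t$. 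The polynomial then agrees with the defining expression at all nonnegative integers $k$, which gives integer-valuedness and the vanishing at $k=0,\ldots,s-1$ as stated.
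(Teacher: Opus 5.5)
Your proof is correct and is essentially the paper's argument. The paper appends the last factor to obtain $P_{d,s}^{(n)}(k+1)-P_{d,s}^{(n)}(k)=\sum_{a=2}^{d}\beta_a(k)P_{d-a,s-1}^{(n)}(k)$ and concludes by induction on $s$ with discrete antidifferentiation, which is your iterated-sum induction unrolled from the outermost index; it reads off integer values and the initial zeros from the product definition just as you do.
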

\begin{proof}
The coefficient
$\beta_d(i)=[\eta^d]b_i^{(n)}(\eta)$ is a polynomial in $i$ of
degree at most $d$, and is an integer at every nonnegative integer.
It vanishes for $d=0,1$. Appending the last factor to
\eqref{eq:rank-Pds} gives
\[
 P_{d,s}^{(n)}(k+1)-P_{d,s}^{(n)}(k)
   =\sum_{a=2}^d\beta_a(k)P_{d-a,s-1}^{(n)}(k).
\]
With $P_{0,0}=1$ and the other $P_{d,0}=0$, induction bounds the
right side by degree $d+s-1$. Discrete antidifferentiation gives
the stated degree bound. Integer values and the initial zeros follow
directly from the product definition.
\end{proof}

Put $\vartheta=u\,d/du$, $T_0^{(n)}=1$, $T_1^{(n)}=0$, and
\begin{equation}\label{eq:rank-Td}
 T_d^{(n)}=\sum_{s=1}^{\lfloor d/2\rfloor}
          (-1)^su^{-s}P_{d,s}^{(n)}(\vartheta)\qquad(d\ge2).
\end{equation}
The initial zeros in \cref{lem:rank-Pds} make these operators
well-defined on $\mathbb Z[[u]]$.

\begin{proposition}\label{prop:rank-triangular}
The two coefficient families satisfy
\begin{equation}\label{eq:rank-triangular}
 R_{K,n,m}=\sum_{d=0}^mT_d^{(n)}A_{K,n,m-d}.
\end{equation}
For every $b\ge1$ and $\delta\in\mathbb Z[u]$ with $\delta(0)=1$,
the bounds $R_m\in\delta^{-b-2m}\mathbb Z[u]$ for all $m$ are
equivalent to the same bounds for $A_m$. In either case,
\begin{equation}\label{eq:rank-highest-transport}
 \delta^{b+2m-1}(A_m-R_m)\in\mathbb Z[u]\qquad(m\ge1).
\end{equation}
\end{proposition}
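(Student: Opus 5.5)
The plan is to expand each Newton basis element at $(u,\eta)$ around $(0,0)$ and compare with the coefficient generating functions, then derive the denominator statements from how $\vartheta$ and $u^{-1}$ act on powers of $\delta$.

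\textbf{Triangular identity.} With $b_i=b_i^{(n)}(\eta)$ we have
\[
 B_k^{(n)}(2+u;1+\eta)=\prod_{i<k}(u-b_i)=u^k\prod_{i<k}(1-u^{-1}b_i).
\]
By \eqref{eq:rank-Pds} with $y=-u^{-1}$, this equals
\[
 u^k\sum_{d,s}(-1)^su^{-s}P_{d,s}^{(n)}(k)\eta^d .
\]
The $s=0$ term contributes only at $d=0$. The range $s\le\lfloor d/2\rfloor$ holds because $b_i\in\eta^2\mathbb Z[[\eta]]$. Since $\vartheta u^k=ku^k$, we get $P(\vartheta)u^k=P(k)u^k$, so the $\eta^d$ coefficient is exactly $T_d^{(n)}u^k$. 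Now substitute $c_{K,n,k}(1+\eta)=\sum_{m'}[\eta^{m'}]c_{K,n,k}\,\eta^{m'}$, sum over $k$, and collect $\eta^m$. The finite-dependence bound of \cref{thm:quantum-neighborhoods} justifies this interchange. The result is \eqref{eq:rank-triangular}. The vanishing of $P_{d,s}$ at $k<s$ (\cref{lem:rank-Pds}) makes $u^{-s}P_{d,s}(\vartheta)$ preserve $\mathbb Z[[u]]$.

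\textbf{Denominator transport.} The key estimate is that, for $d\ge2$, the operator $T_d$ maps $\delta^{-b-2(m-d)}\mathbb Z[u]$ into $\delta^{-b-2m+1}\mathbb Z[u]$, with one factor to spare. The argument runs as follows.

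First, $\vartheta$ applied to $g/\delta^e$ gives
\[
 \frac{\vartheta g\cdot\delta-e\,g\,\vartheta\delta}{\delta^{e+1}},
\]
so each application of $\vartheta$ raises the exponent by at most one. A polynomial of degree at most $d+s$ in $\vartheta$ therefore raises it by at most $d+s$.

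Second, $u^{-s}$ introduces no $\delta$-denominators. The output lies in $\mathbb Z[[u]]$ and has the form $\delta^{-E}\times(\text{polynomial})\times u^{-s}$. Since $\delta(0)=1$ makes $u$ and $\delta$ coprime, and $\delta^E$ is a unit in $\mathbb Z[[u]]$, the numerator is divisible by $u^s$ in $\mathbb Z[u]$.

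Combining these, the exponent after applying $T_d$ is at most
\[
 b+2m-2d+d+s\le b+2m-d+\lfloor d/2\rfloor\le b+2m-1 \qquad(d\ge2).
\]

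One point needs care: $P_{d,s}(\vartheta)$ is only rational-coefficient as a polynomial in $\vartheta$, so integrality must be checked separately. I would either write $P_{d,s}$ in the binomial basis, using $\binom{\vartheta}{j}u^k=\binom kj u^k$ and the fact that $u^j\binom{\vartheta}{j}/1=\frac{u^j}{j!}\partial_u^j$ acts by Hasse derivatives $\partial_u^{[j]}$ (integral, raising the exponent by at most $j$), or simply argue that the result has integer Taylor coefficients and rational numerator. The second route is easier. The result lies in $\mathbb Z[[u]]\cap\delta^{-E}\mathbb Q[u]$, and multiplying by $\delta^E$ gives integer coefficients, so the numerator is in $\mathbb Z[u]$.

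\textbf{Equivalence and highest-class congruence.} Since $T_0=1$ and $T_1=0$, induction on $m$ shows that the bounds on $A$ imply the bounds on $R$. The triangular system then inverts, via $A_m=R_m-\sum_{d\ge2}T_dA_{m-d}$, which gives the converse. Both directions also yield \eqref{eq:rank-highest-transport}, because every correction term has exponent at most $b+2m-1$.

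The main obstacle is the exponent count under $\vartheta$ together with integrality. The degree $d+s$ from \cref{lem:rank-Pds} must be paired with the bound $s\le d/2$ to save exactly one factor, and integrality of the numerators must be secured via the $\mathbb Z[[u]]$ intersection argument above.
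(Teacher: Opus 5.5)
Your proposal is correct and follows the paper's proof essentially step for step. You use the same elementary-symmetric expansion of $\prod_{i<k}(u-b_i^{(n)})$ to identify $[\eta^d]$ with $T_d^{(n)}u^k$, the same count $b+2(m-d)+d+s\le b+2m-1$ for $d\ge2$, the same integrality argument via $P_{d,s}^{(n)}(\vartheta)f\in u^s\mathbb{Z}[[u]]$ and $\mathbb{Q}[u]\cap\mathbb{Z}[[u]]=\mathbb{Z}[u]$, and the same triangular inversion $A_m=R_m-\sum_{d\ge2}T_d^{(n)}A_{m-d}$. One small slip, in the alternative route you discard: the identity should read $\binom{\vartheta}{j}=\frac{u^j}{j!}\partial_u^j$, without the extra factor $u^j$ on the left.
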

\begin{proof}
Expanding the kernel by elementary symmetric functions gives
\[
 [\eta^d]\prod_{i<k}(u-b_i^{(n)}(\eta))=T_d^{(n)}u^k.
\]
Every coefficient of $u^j\eta^m$ involves only finitely many Newton
indices by \cref{thm:quantum-neighborhoods}. Multiplying by the
coefficient series and collecting the order $m$ terms proves
\eqref{eq:rank-triangular}.

For one summand of $T_d^{(n)}$, take $d\ge2$ and
$1\le s\le\lfloor d/2\rfloor$. An Euler derivative raises a
$\delta$-denominator exponent by at most one. Since
$\deg P_{d,s}^{(n)}\le d+s$, applying
$P_{d,s}^{(n)}(\vartheta)$ to
$f\in\delta^{-a}\mathbb Z[u]$ gives a rational function with
denominator dividing $\delta^{a+d+s}$ over $\mathbb Q$.
The coefficients of the formal series $f$ are integral because
$\delta(0)=1$. On its monomial $u^k$, the operator multiplies the
coefficient by the integer $P_{d,s}^{(n)}(k)$, which vanishes for
$k<s$. Hence $P_{d,s}^{(n)}(\vartheta)f\in u^s\mathbb Z[[u]]$.
Division by $u^s$ introduces no pole at zero. The resulting rational
function has denominator dividing $\delta^{a+d+s}$ and an integral
Taylor series. Multiplication by that denominator then gives a
polynomial in $\mathbb Q[u]\cap\mathbb Z[[u]]=\mathbb Z[u]$.

Suppose the stated bound holds for $A_{m-d}$. Its contribution with
$d\ge2$ and $s\le\lfloor d/2\rfloor$ has exponent at most
\[
 b+2(m-d)+d+s=b+2m-d+s\le b+2m-1.
\]
Since $T_0=1$ and $T_1=0$, \eqref{eq:rank-triangular} now gives
the bound for $R_m$ and the stronger bound for $R_m-A_m$.
Conversely, solve the same identity as
\[
 A_m=R_m-\sum_{d=2}^mT_d^{(n)}A_{m-d}.
\]
Starting with $A_0=R_0$ and $A_1=R_1$, induction gives the bounds
for $A_m$ and for the difference. This proves both directions and
\eqref{eq:rank-highest-transport}.
\end{proof}

For example, if
\[
 S_{2,n}(k)=n^2k+2nk(k-1)+\frac23k(k-1)(2k-1),
\]
then
\begin{equation}\label{eq:rank-low-operators}
 T_2^{(n)}=-u^{-1}S_{2,n}(\vartheta),\qquad
 T_3^{(n)}=u^{-1}S_{2,n}(\vartheta).
\end{equation}
Indeed, $[\eta^2]b_i^{(n)}=(n+2i)^2$ and
$[\eta^3]b_i^{(n)}=-(n+2i)^2$.

\begin{theorem}\label{thm:rank-Alexander}
For every zero-framed knot $K$, $n\ge2$ and $m\ge0$, there is a
polynomial $N_{K,n,m}\in\mathbb Z[u]$ such that
\begin{equation}\label{eq:rank-Alexander}
 A_{K,n,m}(u)=\frac{N_{K,n,m}(u)}
                   {\delta_K(u)^{n-1+2m}}.
\end{equation}
One has $N_{K,n,0}=1$, $A_{K,n,1}=R_{K,n,1}$, and
\begin{equation}\label{eq:rank-top-pole}
 \delta_K^{n+2m-2}(A_{K,n,m}-R_{K,n,m})\in\mathbb Z[u]
       \qquad(m\ge1).
\end{equation}
For each fixed pair $(n,m)$ the exponent $n-1+2m$ is optimal.
\end{theorem}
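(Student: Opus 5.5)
The plan is to combine \cref{thm:rank-loop-denominators} with the triangular transport of \cref{prop:rank-triangular}, and to defer optimality to the explicit computations.

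First, \cref{thm:rank-loop-denominators} gives $R_{K,n,m}\in\delta_K^{-(n-1+2m)}\mathbb Z[u]$ for all $m\ge0$, with $R_{K,n,0}=\delta_K^{-(n-1)}$. Since $\delta_K(0)=1$, we may apply \cref{prop:rank-triangular} with $b=n-1\ge1$ and $\delta=\delta_K$. The equivalence of bounds stated there then yields $A_{K,n,m}\in\delta_K^{-(n-1+2m)}\mathbb Z[u]$, which defines $N_{K,n,m}\in\mathbb Z[u]$. The transport estimate \eqref{eq:rank-highest-transport} with $b+2m-1=n+2m-2$ is exactly \eqref{eq:rank-top-pole}.

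The low-order identities come directly from \eqref{eq:rank-triangular}. Because $T_0^{(n)}=1$ and $T_1^{(n)}=0$, the cases $m=0$ and $m=1$ read $R_{K,n,0}=A_{K,n,0}$ and $R_{K,n,1}=A_{K,n,1}$. In particular $N_{K,n,0}=\delta_K^{n-1}R_{K,n,0}=1$.

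Optimality is the only nontrivial part, and I would not attempt it abstractly. It requires, for each $(n,m)$, a knot whose numerator $N_{K,n,m}$ is not divisible by $\delta_K$. I would take $K=3_1$ (or $5_2$, $6_1$), for which $\delta_K=1-c_Ku$ is linear. For these knots the explicit Newton coefficients from the double-twist formulas of \cref{sec:examples} give the value of the numerator at $u=1/c_K$, namely \eqref{eq:introduction-sharp-value}: $\frac{(n+2m-2)!}{(n-2)!\,m!}\kappa_K^m\ne0$. Nonvanishing at the root of $\delta_K$ shows the exponent cannot be lowered for $A_{K,n,m}$. By \eqref{eq:rank-top-pole}, $R_{K,n,m}$ and $A_{K,n,m}$ have the same numerator modulo $\delta_K$, so the same holds for $R_{K,n,m}$.

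The main obstacle is this explicit residue computation, which is carried out as \cref{thm:all-rank-sharp}. I would do it by expanding the closed-form coefficient generating functions in $\eta$. At order $m$, the top pole comes only from $m$-fold derivatives that each raise the pole order by two, and this produces the binomial factor together with $\kappa_K^m$.
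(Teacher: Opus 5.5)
Your proposal is correct and is essentially the paper's proof: apply \cref{prop:rank-triangular} with $b=n-1$ to \cref{thm:rank-loop-denominators}, read off $N_{K,n,0}=1$ and $A_{K,n,1}=R_{K,n,1}$ from $T_0^{(n)}=1$ and $T_1^{(n)}=0$, and obtain optimality from the nonvanishing highest-pole value in \cref{thm:all-rank-sharp}. Your heuristic for that residue computation differs somewhat from the paper's argument. The paper shows that $c_K^{-k}[\eta^m]c_{K,n,k}(1+\eta)$ is a polynomial in $k$ of degree $n-2+2m$ and then applies \eqref{eq:polynomial-generating-top}. Since you cite the result rather than reprove it, nothing is missing.
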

\begin{proof}
Apply \cref{prop:rank-triangular} with $b=n-1$ to
\cref{thm:rank-loop-denominators}. Optimality follows from the
nonzero highest pole for the trefoil in \cref{thm:all-rank-sharp}.
\end{proof}
For $m\ge1$ and $E=n-1+2m$, \eqref{eq:rank-top-pole} gives the
polynomial congruence
\[
 \delta_K^E A_{K,n,m}\equiv\delta_K^E R_{K,n,m}
              \pmod{\delta_K\mathbb Z[u]}.
\]
In particular, at every simple zero of $\delta_K$, the coefficients
of the highest permitted pole of $A_{K,n,m}$ and $R_{K,n,m}$ agree.

\subsection{Products, rank dependence, and coefficient recurrences}
For a family $A=\sum_mA_m\eta^m$, let
\[
 \mathfrak T_n(A)=\sum_m\sum_{d=0}^m T_d^{(n)}A_{m-d}\eta^m.
\]
It is an invertible $\eta$-adically triangular integral linear map.
The product corresponding to multiplication in $\C_n$ is
\begin{equation}\label{eq:rank-star-product}
 A\star_nA'=\mathfrak T_n^{-1}
       \bigl(\mathfrak T_n(A)\mathfrak T_n(A')\bigr).
\end{equation}
This follows from multiplicativity of the full Taylor map. If
$A_m\in\delta_1^{-b-2m}\mathbb Z[u]$ and
$A'_m\in\delta_2^{-b-2m}\mathbb Z[u]$, then
\[
 (A\star_nA')_m\in(\delta_1\delta_2)^{-b-2m}\mathbb Z[u].
\]
To prove it, transfer to the full coefficients, multiply the two
series and use a common denominator, then transfer back.
For knot elements this agrees with connected sum and
$\Delta_{K\#L}=\Delta_K\Delta_L$.

The classical rank dependence has the explicit form
\begin{equation}\label{eq:classical-rank-polynomial}
 c_{K,n,k}(1)=\sum_{j=0}^k(-1)^j\binom{n+j-2}{j}
                    [u^k](\delta_K(u)-1)^j.
\end{equation}
For fixed $k$ this is an integer-valued polynomial in $n$ of degree
at most $k$. The whole rank series is
\begin{equation}\label{eq:classical-rank-generating}
 \sum_{n\ge2}t^{n-2}A_{K,n,0}(u)=\frac1{\delta_K(u)-t}.
\end{equation}
Both identities follow by expanding $\delta_K^{-(n-1)}$.

\begin{corollary}\label{cor:jet-recurrence}
Fix $K,n,m$, and write
$\delta_K(u)^{n-1+2m}=\sum_{j=0}^Ld_ju^j$, with $d_0=1$.
The sequence $a_k=[\eta^m]c_{K,n,k}(1+\eta)$ satisfies
\begin{equation}\label{eq:jet-recurrence}
 a_k=-\sum_{j=1}^Ld_ja_{k-j}
\end{equation}
for all sufficiently large $k$; terms with negative indices are
understood as zero. Modulo any positive integer it is eventually
periodic. If $\Delta_K=1$, then for every $M$ there is $k_0$ such that
\begin{equation}\label{eq:Alexander-one-tail}
 c_{K,n,k}(q)\in(q-1)^{M+1}R\qquad(k\ge k_0).
\end{equation}
\end{corollary}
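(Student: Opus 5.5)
The plan is to read the recurrence directly off \cref{thm:rank-Alexander}. By \eqref{eq:rank-Alexander} we have $\delta_K(u)^{n-1+2m}A_{K,n,m}(u)=N_{K,n,m}(u)\in\mathbb Z[u]$, and by \eqref{eq:rank-newton-jets} the coefficient $a_k$ is exactly $[u^k]A_{K,n,m}$. Comparing the coefficients of $u^k$ on both sides, we get $\sum_{j=0}^L d_ja_{k-j}=[u^k]N_{K,n,m}$, with negative indices read as zero. Once $k>\deg N_{K,n,m}$ the right side vanishes. Since $d_0=1$, this is \eqref{eq:jet-recurrence} for all sufficiently large $k$.

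For eventual periodicity modulo an integer $M'\ge1$, reduce \eqref{eq:jet-recurrence} modulo $M'$. The recurrence is monic, because $d_0=1$, and has integer coefficients. Once $k$ is large, the state vector $(a_{k-1},\dots,a_{k-L})\bmod M'$ lies in the finite set $(\mathbb Z/M')^L$ and evolves deterministically. Some state must therefore repeat, and from that point the sequence is periodic. Invertibility of the step map is not needed for eventual periodicity.

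For the case $\Delta_K=1$, we have $\delta_K=1$. Then $A_{K,n,m}=N_{K,n,m}$ is a polynomial for every $m$, so for each $m\le M$ there is $k_m$ with $[\eta^m]c_{K,n,k}(1+\eta)=0$ for all $k>k_m$. Put $k_0=1+\max_{m\le M}k_m$. For $k\ge k_0$, the Laurent polynomial $c_{K,n,k}(q)\in R$ then vanishes at $q=1$ to order at least $M+1$, i.e.\ its Taylor expansion in $\eta$ has vanishing coefficients through $\eta^M$. The only step needing care is to turn this into divisibility in $R$. I would write $c=q^{-a}g(q)$ with $g\in\mathbb Z[q]$, so that $g$ has a zero of order at least $M+1$ at $q=1$ over $\mathbb Q$. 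Since $(q-1)^{M+1}$ is monic, Gauss's lemma (or division by a monic polynomial) shows that $(q-1)^{M+1}$ divides $g$ in $\mathbb Z[q]$, which gives \eqref{eq:Alexander-one-tail}. I do not expect a real obstacle anywhere: the substance is all in \cref{thm:rank-Alexander}, and the remaining steps are bookkeeping.
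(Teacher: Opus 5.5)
Your proposal is correct and follows the paper's proof essentially step for step. You multiply \eqref{eq:rank-Alexander} by its denominator and compare coefficients beyond the numerator degree, use the finite-state argument for eventual periodicity (the case $L=0$ is the trivial one where the sequence is eventually zero), and for $\Delta_K=1$ combine polynomiality of each $A_{K,n,m}$ with monic division by $(q-1)^{M+1}$ in $\mathbb Z[q]$.
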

\begin{proof}
Multiply \eqref{eq:rank-Alexander} by its denominator and compare
coefficients beyond the numerator degree. Over a finite coefficient
ring the finite recurrence has finitely many states, hence is
eventually periodic. If $L=0$, the sequence is eventually zero.
When $\delta_K=1$, each $A_{K,n,m}$ is polynomial. Choose $k_0$
beyond the degrees for $0\le m\le M$. Vanishing of these Taylor
coefficients is equivalent to divisibility by $(q-1)^{M+1}$ in $R$.
\end{proof}

\section{Difference equations and Frobenius comparison}\label{sec:recurrences}

\subsection{The oriented localized solution}
Fix $n\ge2$, put $b=n-1$, and set
\begin{equation}\label{eq:rank-localization}
 A_\Delta=\mathbb Z[z^{\pm1},\Delta(z)^{-1}],\qquad
 \mathscr A_\Delta=A_\Delta[[\eta]],\qquad q=1+\eta,
\end{equation}
where $\Delta=\Delta_K$. The substitution
$\sigma(z)=q^2z$, $\sigma(q)=q$, defines an automorphism of
$\mathscr A_\Delta$: the series $\Delta(q^2z)$ has invertible
constant coefficient $\Delta(z)$, and the same applies to $q^{-2}$.
The Ore operator $E$ acts by $Ef=\sigma(f)$, with $Ez=q^2zE$.

By \cref{thm:rank-loop-denominators}, the knot element gives
\begin{equation}\label{eq:rank-localized-solution}
 \mathcal L_{K,n}(z,q)=\sum_{m\ge0}
 R_{K,n,m}(z+z^{-1}-2)\eta^m\in\mathscr A_\Delta.
\end{equation}
Its specialization at $z=q^{n+2r}$ is the Taylor expansion of
$J_r^{(n)}(K;q)$.

\begin{theorem}\label{thm:rank-annihilators}
For $P=\sum_{j=0}^da_j(z,q)E^j$, with
$a_j\in\mathbb Z[z^{\pm1},q^{\pm1}]$, one has
\begin{equation}\label{eq:rank-annihilators}
 P\mathcal L_{K,n}=0
 \quad\Longleftrightarrow\quad
 \sum_{j=0}^da_j(q^{n+2r},q)J_{r+j}^{(n)}(K;q)=0
 \quad\text{for every }r\ge0.
\end{equation}
On the right, it suffices to require the identity for all
sufficiently large $r$.
\end{theorem}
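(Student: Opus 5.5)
The plan is to pass between $\mathcal L_{K,n}$ and its values on the color diagonals $z=q^{n+2r}$, and to use \cref{lem:diagonal-faithfulness} for the converse.

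\emph{Forward direction.} First check that specialization commutes with the shift. Put $z=(1+\eta)^{n+2r}$. The series $\sigma(\mathcal L_{K,n})=\mathcal L_{K,n}(q^2z,q)$ then becomes $\mathcal L_{K,n}$ evaluated at $z=(1+\eta)^{n+2(r+1)}$, which is the Taylor expansion of $J_{r+1}^{(n)}(K;q)$. Iterating, $E^j\mathcal L_{K,n}$ specializes to the Taylor expansion of $J_{r+j}^{(n)}$. The coefficients $a_j(z,q)$ specialize to the Taylor expansions of $a_j(q^{n+2r},q)$.

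This substitution is legitimate on $\mathscr A_\Delta$. Any element of $A_\Delta$ expands at $z=1$, because $\Delta(1)=1$. Substituting $z=(1+\eta)^{t}$ gives a well-defined homomorphism $\mathscr A_\Delta\to\mathbb Z[[\eta]]$, since $z-1$ has positive $\eta$-order. The denominator $\Delta((1+\eta)^t)$ has constant term $1$, so it is a unit in $\mathbb Z[[\eta]]$. Hence $P\mathcal L_{K,n}=0$ specializes to the Taylor expansion of $\sum_j a_j(q^{n+2r},q)J_{r+j}^{(n)}$ being zero. Since this is a Laurent polynomial and Taylor expansion at $q=1$ is injective on $R$, the identity holds for every $r\ge0$.

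\emph{Converse.} Assume the recurrence holds for all $r\ge r_0$. The element $G=P\mathcal L_{K,n}$ lies in $\mathscr A_\Delta$, and so embeds in $\mathbb Q[[z-1,\eta]]$ via expansion at $z=1$. This embedding is injective: $A_\Delta\subset\mathbb Q(z)$ injects into $\mathbb Q[[z-1]]$, and we apply this coefficientwise in $\eta$. By the computation above, $G((1+\eta)^{n+2r},\eta)=0$ for infinitely many $r$. \Cref{lem:diagonal-faithfulness} then gives $G=0$.

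This argument proves both the equivalence and the remark that sufficiently large $r$ suffice. The forward direction already upgrades the identity to all $r\ge0$.

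The main obstacle I expect is bookkeeping rather than depth. The key point is that specialization intertwines $\sigma$ with the color shift $r\mapsto r+1$, which holds because $q^2\cdot q^{n+2r}=q^{n+2(r+1)}$ and substitution is a ring homomorphism compatible with composition. A second point is the compatibility of the two ways of forming $\mathcal L_{K,n}$ on the diagonal. I would cite the statement after \eqref{eq:rank-localized-solution} for this, which rests on \cref{thm:rank-loop-denominators} and on \eqref{eq:full-rank-taylor} with $u=z+z^{-1}-2$.
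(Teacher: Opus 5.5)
Your proposal is correct and follows the paper's proof. The forward direction uses the fact that evaluation on the diagonal $z=q^{n+2r}$ turns $E^j$ into the color shift $r\mapsto r+j$. The converse embeds $P\mathcal L_{K,n}$ in $\mathbb Q[[z-1,\eta]]$, using $\Delta(1)=1$, and applies \cref{lem:diagonal-faithfulness} on infinitely many diagonals; your added checks on the substitution homomorphism and on upgrading from large $r$ to all $r\ge0$ are details the paper leaves implicit.
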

\begin{proof}
Evaluation at $z=q^{n+2r}$ turns $E^j$ into the color shift
$r\mapsto r+j$, giving one direction. In the other direction,
$P\mathcal L_{K,n}$ has an expansion in
$\mathbb Q[[z-1,\eta]]$ because $\Delta(1)=1$.
Its values on infinitely many color diagonals vanish, so it is zero
by \cref{lem:diagonal-faithfulness}. Since the localized ring embeds
in this formal ring, $P\mathcal L_{K,n}=0$.
\end{proof}
The fixed-rank $q$-holonomic relation can be written in this Ore
algebra by replacing its color variable $q^{2r}$ with $q^{-n}z$.

\subsection{The subprincipal equation and rational residues}
For an annihilator $P$, define
\[
 P_0(z,L)=\sum_ja_j(z,1)L^j,\quad
 A(z)=\partial_LP_0(z,1),\quad
 B(z)=\left.\sum_j\partial_qa_j(z,q)\right|_{q=1},\quad
 D=z\frac d{dz}.
\]

\begin{proposition}\label{prop:rank-subprincipal}
If $P\mathcal L_{K,n}=0$, then
\begin{equation}\label{eq:rank-subprincipal}
 P_0(z,1)=0,\qquad B(z)\Delta(z)=2bA(z)D\Delta(z).
\end{equation}
If $\Delta$ is squarefree over $\mathbb Q$ and $A\ne0$, then
$\Delta\mid A$ in $\mathbb Q[z^{\pm1}]$.
\end{proposition}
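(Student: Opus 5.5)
Expand $P\mathcal L_{K,n}=0$ in powers of $\eta$ and read off orders zero and one. By \cref{thm:rank-loop-denominators}, $\mathcal L_{K,n}=\Delta(z)^{-b}+\eta\,R_1+O(\eta^2)$, with $R_1=R_{K,n,1}(z+z^{-1}-2)$. Since $q=1+\eta$, we have $E^j f(z)=f(q^{2j}z)=f(z)+2j\eta\,Df(z)+O(\eta^2)$. We also write $a_j(z,q)=a_j(z,1)+\eta\,\partial_q a_j(z,1)+O(\eta^2)$.

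At order $\eta^0$ every shift acts trivially. This gives $\sum_j a_j(z,1)\,\Delta^{-b}=0$, so $P_0(z,1)=0$ because $\Delta^{-b}$ is a unit in $A_\Delta$.

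At order $\eta^1$ three kinds of term appear:
\begin{itemize}
\item $\sum_j a_j(z,1)R_1=P_0(z,1)R_1$, which vanishes by the order-zero identity;
\item $\sum_j a_j(z,1)\,2j\,D(\Delta^{-b})=2A(z)\,D(\Delta^{-b})=-2bA\,\Delta^{-b-1}D\Delta$;
\item $\sum_j\partial_q a_j(z,1)\,\Delta^{-b}=B\,\Delta^{-b}$.
\end{itemize}
Setting the sum equal to zero and multiplying by $\Delta^{b+1}$ gives $B\Delta=2bA\,D\Delta$, which is \eqref{eq:rank-subprincipal}. The only care needed is to confirm that $R_1$ drops out entirely, and the order-zero identity ensures this. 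No knowledge of $R_{K,n,1}$ is therefore required.

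For the divisibility claim we work in the UFD $\mathbb Q[z^{\pm1}]$, using $b=n-1\ge1$ and $\Delta(1)=1$. Take an irreducible factor $\pi$ of $\Delta$; squarefreeness makes it a factor of multiplicity one. The identity shows $\pi\mid 2bA\,D\Delta$.

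We claim $\pi\nmid D\Delta$. Write $\Delta=\pi g$ with $\pi\nmid g$. Then $D\Delta=(D\pi)g+\pi\,Dg$, so $\pi\mid D\Delta$ would force $\pi\mid D\pi=z\pi'$. Since $\pi$ is a nonunit of positive degree in characteristic zero, $\pi'\neq0$ has smaller degree, so this is impossible. Here $\pi$ is normalized to be a polynomial with nonzero constant term; $z$ is a unit in the Laurent ring, so $\pi\nmid z$.

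Hence $\pi\mid A$ for every irreducible factor $\pi$ of $\Delta$. Because $\Delta$ is squarefree, the product of these factors is $\Delta$ up to a unit, so $\Delta\mid A$. The hypothesis $A\ne0$ only makes the statement meaningful; if $\Delta$ is a unit the claim is trivial.

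The main care point is keeping the $O(\eta)$ expansion of the shift $E^j$ correct, namely the factor $2j$ coming from $q^{2j}$, so that the constant $2b$ comes out right.
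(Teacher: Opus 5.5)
Your proposal is correct and follows the paper's proof. You expand $P\mathcal L_{K,n}$ to first order in $\eta$ using $\sigma^jf=f+2j\eta Df+O(\eta^2)$, note that the $R_{K,n,1}$ contribution drops out because $P_0(z,1)=0$, and multiply by $\Delta^{b+1}$; for the divisibility you spell out, through irreducible factors, the coprimality of a squarefree $\Delta$ with its Euler derivative $D\Delta$, which the paper states in one line.
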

\begin{proof}
The constant coefficient is $P_0(z,1)\Delta^{-b}$.
Since $\sigma^jf=f+2j\eta Df+O(\eta^2)$, the first coefficient,
after the constant one vanishes, is
$2A D(\Delta^{-b})+B\Delta^{-b}$.
Multiplication by $\Delta^{b+1}$ gives the second identity.
A squarefree Laurent polynomial is relatively prime to its Euler
derivative, and $2b$ is nonzero in characteristic zero.
\end{proof}

Write $f_j(z)=R_{K,n,j}(z+z^{-1}-2)$. For $m\ge1$ set
\[
 S_m=[\eta^{m+1}]P\left(\sum_{j=0}^{m-1}f_j(z)\eta^j\right).
\]

\begin{theorem}\label{thm:rank-residue}
Assume $P_0(z,1)=0$, $A\ne0$ and $B\Delta=2bA D\Delta$.
If $f_0,\ldots,f_{m-1}\in\mathbb Q(z)$ solve the recurrence
through order $\eta^m$, then an extension through order
$\eta^{m+1}$ is equivalent to
\begin{equation}\label{eq:rank-residue-equation}
 (2A D+B)f_m=-S_m,\qquad
 d(\Delta^bf_m)=-\frac{\Delta^bS_m}{2zA}\,dz.
\end{equation}
A rational solution exists precisely when the differential on the
right has zero residue at every finite pole over $\overline{\mathbb Q}$.
Its solutions differ by a constant multiple of $\Delta^{-b}$.
For solutions regular at $z=1$, the normalization
$\mathcal L(q^n,q)=1$ determines the constant at every order.
\end{theorem}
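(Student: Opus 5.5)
Write $f=\sum_{j\ge0}f_j\eta^j$ and $P=\sum_ja_j(z,q)E^j$. The plan is to extract the coefficient of $\eta^{m+1}$ in $Pf$ and isolate the part that depends on $f_m$. Two expansions do the work: $E^jf_m=f_m+2j\eta Df_m+O(\eta^2)$, since $\sigma^j$ sends $z$ to $(1+\eta)^{2j}z$, and $a_j(z,q)=a_j(z,1)+\eta\,\partial_qa_j(z,1)+O(\eta^2)$.

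At order $\eta^m$, the term $f_m\eta^m$ contributes $P_0(z,1)f_m=0$. At order $\eta^{m+1}$ it contributes
\[
 \sum_j\bigl(2j\,a_j(z,1)Df_m+\partial_qa_j(z,1)f_m\bigr)=2ADf_m+Bf_m .
\]
The higher $f_{m'}$ with $m'>m$ do not enter order $\eta^{m+1}$: $f_{m+1}$ meets only $P_0(z,1)=0$ there, and later terms sit at higher order. So the order-$(m+1)$ coefficient of $Pf$ is $(2AD+B)f_m+S_m$, which gives the first equation in \eqref{eq:rank-residue-equation}. Extension through order $\eta^{m+1}$ means exactly that this coefficient vanishes.

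For the integrating factor, the hypothesis $B\Delta=2bA\,D\Delta$ gives $B=2bA\,D\Delta/\Delta$. Then
\[
 (2AD+B)f=2A\,\Delta^{-b}D(\Delta^bf),
\]
so the equation becomes $D(\Delta^bf_m)=-\Delta^bS_m/(2A)$. Dividing by $z$ and using $D=z\,d/dz$ yields the differential form in \eqref{eq:rank-residue-equation}.

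The existence criterion is the standard fact that a rational differential $\omega$ over $\overline{\mathbb Q}$ is $dg$ for some rational $g$ exactly when all its finite residues vanish. Necessity holds because exact differentials have zero residues. For sufficiency, partial fractions integrate every polar term $c(z-\alpha)^{-k}dz$ with $k\ge2$, and the polynomial part integrates as a polynomial. Since $\omega$ is defined over $\mathbb Q$, Galois averaging lets us take $g\in\mathbb Q(z)$. Residues at infinity are irrelevant, because $\infty$ is not a finite point and the sum of all residues still constrains it. Uniqueness follows because the homogeneous equation $D(\Delta^bg)=0$ forces $\Delta^bg$ to be a constant, so solutions differ by $c\Delta^{-b}$.

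For normalization, the trivial-color identity comes from \eqref{eq:rank-localized-solution}: at $z=q^n$, $\mathcal L$ is the Taylor expansion of $J_0^{(n)}=1$. Expanding $f_j(q^n)$ with $q^n=(1+\eta)^n$ around $z=1$, which is legitimate for solutions regular at $z=1$, the $\eta^m$ coefficient of $\mathcal L(q^n,q)$ equals $f_m(1)$ plus terms determined by $f_0,\ldots,f_{m-1}$. Adding $c\Delta^{-b}$ to $f_m$ changes this coefficient by $c\,\Delta(1)^{-b}=c$, since $\Delta(1)=1$. Hence exactly one $c$ makes the coefficient zero for $m\ge1$, and for $m=0$ it gives $f_0(1)=1$.

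The main obstacle is the bookkeeping in the first step. One must check carefully that no other term involving $f_m$, and nothing from $f_{m+1}$, reaches order $\eta^{m+1}$, and that $S_m$ as defined collects precisely the contributions of $f_0,\ldots,f_{m-1}$. The residue criterion itself is standard.
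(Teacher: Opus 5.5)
Your proposal is correct and follows the paper's proof step by step. You isolate the contribution $(2AD+B)f_m$ at order $\eta^{m+1}$, with $f_{m+1}$ killed by $P_0(z,1)=0$, and then use the integrating factor $\Delta^b$, the partial-fraction residue criterion, the solution space $f_m+c\Delta^{-b}$, and the choice of $c$ from $f_m(1)$ together with $\Delta(1)=1$. The only cosmetic difference is the descent of the primitive to $\mathbb Q(z)$: you average over Galois conjugates, whereas the paper normalizes the primitive to vanish at a rational point so that it is Galois-fixed; both work in characteristic zero.
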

\begin{proof}
At order $\eta^{m+1}$ the contribution of $f_{m+1}$ is
$P_0(z,1)f_{m+1}=0$, while that of $f_m$ is
$(2A D+B)f_m$. All earlier terms give $S_m$.
The subprincipal identity identifies the integrating factor as
$\Delta^b$, giving \eqref{eq:rank-residue-equation}.
A rational differential on the line has a rational primitive exactly
when its partial fraction decomposition has no simple-pole terms.
Those coefficients are its residues; the polynomial part and all
higher-order poles integrate rationally. To obtain a primitive over
$\mathbb Q$, choose a rational point away from its poles and set its
value there to zero. This normalized primitive is fixed by every
Galois automorphism. Division by $\Delta^b$ gives the stated solution
space. Finally, at order $m$ the normalization reads
\[
 f_m(1)+[\eta^m]\sum_{j<m}
       f_j((1+\eta)^n)\eta^j=0.
\]
Since $\Delta(1)=1$, this fixes the remaining constant.
\end{proof}

\subsection{Frobenius lifts and scalar comparisons}
For a prime $p$, write
\[
 \widehat A_{\Delta,p}=\varprojlim_v A_\Delta/p^vA_\Delta,
 \qquad \mathscr B=\widehat A_{\Delta,p}[[\eta]].
\]
Besides $\sigma$, consider
\[
 \varphi_p(z)=z^p,\quad\varphi_p(q)=q,
 \qquad \Psi_p(z)=z^p,\quad\Psi_p(q)=q^p.
\]
These maps are continuous for the $(p,\eta)$-adic topology.
Indeed, if
\[
 E_p(z)=\frac{\Delta(z^p)-\Delta(z)^p}{p},
\]
then
\begin{equation}\label{eq:rank-frob-inverse}
 \Delta(z^p)^{-a}
 =\sum_{j\ge0}(-p)^j\binom{a+j-1}{j}
       \frac{E_p(z)^j}{\Delta(z)^{p(a+j)}}
       \qquad(a\ge1)
\end{equation}
converges in $\widehat A_{\Delta,p}$. The first $v$ terms suffice
modulo $p^v$. Thus the Frobenius image of an order-$m$ loop
coefficient can be computed modulo $p^v$ with denominator at most
$\Delta^{p(b+2m+v-1)}$.

Checking on $q$ and $z$ gives the covariance identities
\begin{equation}\label{eq:rank-frob-covariance}
 \sigma\varphi_p=\varphi_p\sigma^p,\qquad
 \sigma\Psi_p=\Psi_p\sigma.
\end{equation}
On inversion-invariant functions, the simultaneous map restricts to
the Adams operation $\psi_p$.
For a step-$p$ recurrence, covariance gives
\[
 \sum_ja_j\sigma^{pj}f=0
 \quad\Longrightarrow\quad
 \sum_j\varphi_p(a_j)\sigma^j(\varphi_pf)=0.
\]
Applying $\Psi_p$ instead preserves an ordinary step-one recurrence.

\begin{proposition}\label{prop:rank-scalar-frobenius}
Let $f=\mathcal L_{K,n}=\sum_mf_m\eta^m$. Its inverse has
coefficients
\[
 [\eta^m]f^{-1}\in\Delta^{b-2m}\mathbb Z[z^{\pm1}]
       \qquad(m\ge0).
\]
Put $C_{K,n,p}=\varphi_p(f)/f$ and $a=\sigma(f)/f$. Then
\begin{align}
 \left.C_{K,n,p}\right|_{\eta=0}&=\frac{\Delta(z)^b}{\Delta(z^p)^b},
       \label{eq:rank-scalar-frobenius}\\
 \frac{\sigma C_{K,n,p}}{C_{K,n,p}}
 &=\frac{\varphi_p(\prod_{i=0}^{p-1}\sigma^ia)}{a},\notag\\
 [\eta^m]C_{K,n,p}
 &\in\frac{\Delta(z)^{b-2m}}{\Delta(z^p)^{b+2m}}
          \mathbb Z[z^{\pm1}].\notag
\end{align}
Here $\Delta(z^p)^{-1}$ is understood in $\mathscr B$ through
\eqref{eq:rank-frob-inverse}.
\end{proposition}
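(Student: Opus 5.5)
We split the proof into three parts: the inverse coefficients, the constant term of $C_{K,n,p}$, and the twisted shift identity, followed by the denominator bound for $C_{K,n,p}$.

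\emph{Inverse coefficients.} By \cref{thm:rank-loop-denominators}, $f_0=\Delta^{-b}$ and $f_m\in\Delta^{-(b+2m)}\mathbb Z[z^{\pm1}]$. Write $f=\Delta^{-b}(1+g)$ with
\[
g=\sum_{m\ge1}g_m\eta^m,\qquad g_m=\Delta^bf_m\in\Delta^{-2m}\mathbb Z[z^{\pm1}].
\]
Then $f^{-1}=\Delta^b\sum_{j\ge0}(-g)^j$. Every product of $j$ factors $g_{m_i}$ with $\sum m_i=m$ lies in $\Delta^{-2m}\mathbb Z[z^{\pm1}]$. Hence $[\eta^m]f^{-1}\in\Delta^{b-2m}\mathbb Z[z^{\pm1}]$. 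The grading "weight $2m$ per $\eta^m$" is multiplicative, so this is routine.

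\emph{Constant term of $C_{K,n,p}$.} Since $\varphi_p$ fixes $q$, hence $\eta$, it acts coefficientwise: $[\eta^m]\varphi_p(f)=f_m(z^p)$. This lies in $\Delta(z^p)^{-(b+2m)}\mathbb Z[z^{\pm1}]$, interpreted in $\mathscr B$ via \eqref{eq:rank-frob-inverse}. The $\eta^0$ term of $\varphi_p(f)f^{-1}$ is $\Delta(z^p)^{-b}\Delta(z)^b$, which is \eqref{eq:rank-scalar-frobenius}.

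\emph{Twisted shift identity.} We use the covariance $\sigma\varphi_p=\varphi_p\sigma^p$ from \eqref{eq:rank-frob-covariance}. This gives
\[
\sigma C=\frac{\varphi_p(\sigma^pf)}{\sigma f}.
\]
Dividing by $C=\varphi_p(f)/f$ yields
\[
\frac{\sigma C}{C}=\frac{\varphi_p(\sigma^pf/f)}{\sigma f/f}.
\]
Telescoping, $\sigma^pf/f=\prod_{i=0}^{p-1}\sigma^i(\sigma f/f)=\prod_{i=0}^{p-1}\sigma^ia$. All these elements are units in $\mathscr B$, because each has an invertible constant term: powers of $\Delta(z)$, $\Delta(z^p)$, or their $\sigma$-shifts, where $\sigma$ preserves the localization.

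\emph{Denominator bound for $C_{K,n,p}$.} We have
\[
[\eta^m]C=\sum_{i+j=m}f_i(z^p)\,[\eta^j]f^{-1}.
\]
The first factor lies in $\Delta(z^p)^{-(b+2i)}\mathbb Z[z^{\pm1}]$ and the second in $\Delta(z)^{b-2j}\mathbb Z[z^{\pm1}]$. The exponents satisfy $b+2i\le b+2m$ and $b-2j\ge b-2m$, so each summand lies in $\Delta(z)^{b-2m}\Delta(z^p)^{-(b+2m)}\mathbb Z[z^{\pm1}]$ after multiplying numerators by the missing powers. This is the stated bound.

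The main subtlety is bookkeeping rather than substance: making sure $\varphi_p$ is a well-defined continuous ring map on $\mathscr B$. This rests on the convergence of \eqref{eq:rank-frob-inverse}, which is already stated. We should also note that $\sigma$ on the $\varphi_p$ side requires $\Delta(q^2z^p)$ and similar shifts to be invertible; this follows since the constant term is a $p$-adic unit multiple of $\Delta(z)^p$.
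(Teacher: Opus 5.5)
Your proposal is correct and follows essentially the same route as the paper. You factor $f=\Delta^{-b}U$ with $U\in1+\eta\mathscr A_\Delta$ to get the inverse estimate. You derive the shift identity from the covariance $\sigma\varphi_p=\varphi_p\sigma^p$ and the telescoping $\sigma^pf/f=\prod_{i=0}^{p-1}\sigma^ia$. You obtain the coefficient bound by summing $\varphi_p(f_j)\,[\eta^{m-j}]f^{-1}$ over $0\le j\le m$. Your remarks that the relevant elements are units in $\mathscr B$ and that $\sigma$ extends to $\mathscr B$ only make explicit what the paper leaves implicit.
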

\begin{proof}
Write $f=\Delta^{-b}U$ with $U=1+\sum_{m\ge1}b_m\eta^m$
and $b_m\in\Delta^{-2m}\mathbb Z[z^{\pm1}]$.
An order-$m$ coefficient of $U^{-1}$ is a sum of products of $b_j$
whose indices add to $m$, so its denominator is at most
$\Delta^{2m}$. This proves the inverse estimate.
Covariance gives
$\sigma\varphi_p(f)=\varphi_p(\prod_{i=0}^{p-1}\sigma^ia)\varphi_p(f)$,
which yields the second formula. The coefficient bound follows by
summing $\varphi_p(f_j)[\eta^{m-j}]f^{-1}$ over $0\le j\le m$.
\end{proof}

\begin{proposition}\label{prop:rank-integral-log}
Let $p$ be odd, write $f=\mathcal L_{K,n}=\sum_{m\ge0}f_m\eta^m$,
and put $U=\Delta^bf$ and $b_1=\Delta^bf_1$. Then
\begin{equation}\label{eq:rank-integral-log}
 \Lambda_{K,n,p}=\frac1p\log\!\left(\frac{\Psi_p(U)}{U^p}\right)
 \in\eta\mathscr B,\qquad
 [\eta]\Lambda_{K,n,p}=b_1(z^p)-b_1(z).
\end{equation}
Moreover $\Psi_p(f)/f^p\in1+p\mathscr B$, with constant coefficient
$\Delta(z)^{pb}/\Delta(z^p)^b$.
\end{proposition}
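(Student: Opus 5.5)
The plan is to establish the statement about $\Psi_p(f)/f^p$ first and deduce the logarithm statement from it. Write $f=\Delta^{-b}U$ with $U=1+\sum_{m\ge1}b_m\eta^m$ and $b_m\in\Delta^{-2m}\mathbb Z[z^{\pm1}]$; this is the normalization used in the proof of \cref{prop:rank-scalar-frobenius}. Then
\[
 \frac{\Psi_p(f)}{f^p}=\frac{\Delta(z)^{pb}}{\Delta(z^p)^{b}}\cdot\frac{\Psi_p(U)}{U^p}.
\]
The first factor is the asserted constant coefficient. By \eqref{eq:rank-frob-inverse} and $\Delta(z^p)\equiv\Delta(z)^p\pmod p$, it lies in $1+p\widehat A_{\Delta,p}$. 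It therefore suffices to show $\Psi_p(U)/U^p\in 1+p\mathscr B$. This is also what the logarithm needs, since for odd $p$ the series $\log(1+pw)$ converges $(p,\eta)$-adically and is divisible by $p$ (because $v_p(p^k/k)\ge1$ for $p$ odd). Moreover, $\Psi_p(U)/U^p\equiv1\pmod\eta$ because $U\equiv1$, so the logarithm will lie in $\eta\mathscr B$ once divided by $p$.

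The key congruence is $\Psi_p(U)\equiv U^p\pmod{p\mathscr B}$. Modulo $p$, the ring $\mathscr B/p=(A_\Delta/p)[[\eta]]$ has characteristic $p$, so $U^p\equiv\sum_m b_m^p\eta^{pm}$, where the Frobenius on coefficients raises the Laurent variable to the $p$th power and fixes integers. On the other side, $\Psi_p$ sends $\eta$ to $(1+\eta)^p-1\equiv\eta^p$ and $z$ to $z^p$. For $b_m=N_m(z)/\Delta(z)^{2m}$ with $N_m\in\mathbb Z[z^{\pm1}]$, we have $N_m(z^p)\equiv N_m(z)^p$ and $\Delta(z^p)\equiv\Delta(z)^p$ modulo $p$. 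Hence $\Psi_p(b_m)\equiv b_m^p$, and therefore $\Psi_p(U)\equiv U^p\pmod p$. Since $U^p$ is a unit in $\mathscr B$ (its constant term is $1$), dividing gives $\Psi_p(U)/U^p\in1+p\mathscr B$.

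For the first-order coefficient, write $\Psi_p(U)=1+b_1(z^p)\bigl((1+\eta)^p-1\bigr)+O(\eta^2)=1+p\,b_1(z^p)\eta+O(\eta^2)$, and $U^{-p}=1-p\,b_1(z)\eta+O(\eta^2)$. The ratio is therefore $1+p\bigl(b_1(z^p)-b_1(z)\bigr)\eta+O(\eta^2)$. Taking $\frac1p\log$ gives $[\eta]\Lambda_{K,n,p}=b_1(z^p)-b_1(z)$, since the higher powers of the logarithm contribute only at order $\eta^2$.

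The main point needing care is making the "Frobenius mod $p$" step rigorous inside the completed localization. The inverse $\Delta(z^p)^{-1}$ is defined only through \eqref{eq:rank-frob-inverse}, and that series is congruent to $\Delta(z)^{-p}$ modulo $p$ because every term with $j\ge1$ carries a factor $p^j$. The other thing to check is that $\Psi_p$ is a ring endomorphism of $\mathscr B$, which is continuity for the $(p,\eta)$-adic topology as noted before \eqref{eq:rank-frob-covariance}; this allows the coefficientwise congruences to be summed over $m$.
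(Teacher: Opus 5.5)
Your proposal is correct and follows the paper's proof: the congruence $\Psi_p(U)\equiv U^p\pmod p$, the observation that the quotient lies in $(1+p\mathscr B)\cap(1+\eta\mathscr B)$, convergence of $p^{-1}\log(1+pY)$, and first-order comparison using $(1+\eta)^p-1=p\eta+O(\eta^2)$. The only cosmetic difference is that you treat $\Psi_p(f)/f^p$ by splitting off the factor $\Delta(z)^{pb}/\Delta(z^p)^b$ via \eqref{eq:rank-frob-inverse}, whereas the paper applies the same Frobenius congruence directly to $f$.
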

\begin{proof}
The total Frobenius congruence gives $\Psi_p(U)\equiv U^p\pmod p$.
Both series have $\eta$-constant coefficient one, so their quotient
lies in $(1+p\mathscr B)\cap(1+\eta\mathscr B)$.
The series $p^{-1}\log(1+pY)$ has integral coefficients tending to
zero $p$-adically. It therefore converges, with zero constant
coefficient in $\eta$. Since $q^p-1=p\eta+O(\eta^2)$,
first-order comparison proves the stated coefficient. The same
Frobenius congruence applied directly to $f$ gives the last assertion.
\end{proof}

\begin{example}\label{ex:trefoil-frobenius}
In the chirality convention of \cref{prop:seed-coefficients},
$\Delta_{3_1}(z)=z+z^{-1}-1$. The first coefficient in
\eqref{eq:four-low-jets}, together with $R_{K,n,1}=A_{K,n,1}$,
gives
\[
 b_1(z)=\Delta_{3_1}(z)^{n-1}f_1(z)
       =n(n-1)\bigl(\Delta_{3_1}(z)^{-2}-1\bigr).
\]
Thus for every odd prime $p$ and every $n\ge2$,
\begin{equation}\label{eq:trefoil-frobenius-first}
 [\eta]\Lambda_{3_1,n,p}
 =n(n-1)\left(
       \Delta_{3_1}(z^p)^{-2}-\Delta_{3_1}(z)^{-2}\right).
\end{equation}
\end{example}

\section{Root-of-unity expansions and recurrence methods}\label{sec:transport}

We use $z$ for the oriented color coordinate, with $X=z+z^{-1}$,
$\xi$ for the exponential of a recurrence index, and $v$ for the
variable of block generating functions. In the one-sided color
completion, $Z=q^{-n}z$ and $Q=q^2$.

\subsection{Newton operators and inverse transforms}
We work with the oriented variable before passing to $X=z+z^{-1}$,
since the color shift is a substitution there but does not descend to
$X$. Put $D=\Z[Q^{\pm1}]$ and
\[
 P_k^+(Z)=\prod_{j=0}^{k-1}(Z-Q^j),\qquad
 \mathcal U^+=\varprojlim_kD[Z^{\pm1}]/(P_k^+).
\]
Over $\Z[q^{\pm1}]$, rescaling the oriented nodes
$q^nQ^r$ by $Z=q^{-n}z$ identifies their completion with this same
construction after the corresponding coefficient extension. 

\begin{proposition}\label{prop:band}
Multiplication $\mathsf Mf=Z f$ and the forward shift
$\mathsf Lf(Z)=f(QZ)$ are continuous operators on $\mathcal U^+$ and
satisfy $\mathsf L\mathsf M=Q\mathsf M\mathsf L$. In Newton coordinates,
\begin{equation}\label{eq:band}
 (\mathsf Mc)_k=Q^kc_k+c_{k-1},\qquad
 (\mathsf Lc)_k=Q^k\bigl(c_k+(Q^{k+1}-1)c_{k+1}\bigr),
 \qquad c_{-1}=0.
\end{equation}
\end{proposition}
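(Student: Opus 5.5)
The plan is to verify the two coordinate formulas on the Newton basis $P_k^+$ at each finite level, and then to deduce continuity and the commutation relation from them.

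First, the multiplication operator. The key identity is
\[
 ZP_k^+=P_{k+1}^++Q^kP_k^+,
\]
which holds because $Z=(Z-Q^k)+Q^k$. For $f=\sum c_kP_k^+$, collecting the coefficient of $P_k^+$ in $Zf$ gives $Q^kc_k+c_{k-1}$. The $k$th output coefficient uses only input indices $\le k$, so $\mathsf M$ preserves each ideal $(P_k^+)$ and is continuous. Since $Z$ is a unit in $D[Z^{\pm1}]$, the operator $\mathsf M$ is well defined on the finite quotients. These quotients are free with basis $P_0^+,\dots,P_{k-1}^+$, which is the same argument as in \cref{prop:coordinates}.

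Second, the shift. We need $P_k^+(QZ)$ in the Newton basis. Factor it as
\[
 P_k^+(QZ)=\prod_{j<k}(QZ-Q^j)=Q^k\prod_{j<k}(Z-Q^{j-1})=Q^k(Z-Q^{-1})\prod_{j=1}^{k-1}(Z-Q^{j-1}).
\]
For $k\ge1$ the last product is $P_{k-1}^+$. Writing $Z-Q^{-1}=(Z-Q^{k-1})+(Q^{k-1}-Q^{-1})$ then gives
\[
 P_k^+(QZ)=Q^kP_k^++Q^{k-1}(Q^k-1)P_{k-1}^+ .
\]
Here $Q^k(Q^{k-1}-Q^{-1})=Q^{k-1}(Q^k-1)$. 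Collecting coefficients, the coefficient of $P_k^+$ in $\mathsf Lf$ is $Q^kc_k+Q^k(Q^{k+1}-1)c_{k+1}$, which matches the stated formula for $\mathsf L$.

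For continuity of $\mathsf L$: the $k$th output coefficient depends only on $c_k$ and $c_{k+1}$. The ideal-theoretic reason is that $P_{k+1}^+(Z)$ vanishes at $Z=Q^{-1}Q^j$ for $j\le k$, so $P_{k+1}^+(QZ)$ is divisible by $P_k^+(Z)$. Hence the shift maps $(P_{k+1}^+)$ into $(P_k^+)$, is well defined level-to-level with an index shift of one, and is therefore continuous. Both operators are ring endomorphisms (multiplication, and substitution), so they pass to the limit.

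Finally, for the relation $\mathsf L\mathsf M=Q\mathsf M\mathsf L$, apply both sides to $f$:
\[
 \mathsf L(Zf)(Z)=QZf(QZ)=Q\,\mathsf M\mathsf Lf.
\]
This identity holds on Laurent polynomials and extends by continuity and density.

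The only genuine point is the index bookkeeping in the shift formula, in particular the $k=0$ edge case where $P_0^+(QZ)=1$. In coordinates the only delicate step is making the telescoping factorization exact; everything else is routine.
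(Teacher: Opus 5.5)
Your proof is correct and follows the paper's argument. It uses the same identities $ZP_k^+=P_{k+1}^++Q^kP_k^+$ and $P_k^+(QZ)=Q^kP_k^++Q^{k-1}(Q^k-1)P_{k-1}^+$, the same observation that $\mathsf M$ preserves the filtration while $\mathsf L$ lowers its index by at most one, and the same check of the relation on Laurent polynomials extended by continuity.

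Three small points of wording need fixing. First, it is $P_{k+1}^+(QZ)=Q^{k+1}(Z-Q^{-1})P_k^+(Z)$, not $P_{k+1}^+(Z)$, that vanishes at the points $Q^{j-1}$. Second, multiplication by $Z$ is a module endomorphism, not a ring endomorphism. Third, the Newton basis of $D[Z^{\pm1}]/(P_k^+)$ comes from the unit constant term of $P_k^+$, which makes $Z$ invertible modulo $P_k^+$; the fact that $Z$ is a unit in $D[Z^{\pm1}]$ is not the reason.
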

\begin{proof}
The polynomial identities
\[
 Z P_k^+=P_{k+1}^++Q^kP_k^+,\qquad
 P_k^+(QZ)=Q^kP_k^+(Z)+Q^{k-1}(Q^k-1)P_{k-1}^+(Z)
\]
(the second for $k\ge1$) give the formulas. Multiplication preserves
the filtration, while the shift lowers its index by at most one, so
both extend continuously to the inverse limit. The relation
$\mathsf L\mathsf M=Q\mathsf M\mathsf L$ holds on polynomials and
hence by continuity.
\end{proof}

For $n\ge1$, let $f_r=\sum_{k\le r}h_kB_k^{(n)}(x_r^{(n)})$ and
write $\{j\}!=\prod_{i=1}^j\{i\}$, with $\{0\}!=1$.

The following inverse transform also yields the preservation of
$q$-holonomicity proved in \cite[Appendix~B]{FZ26}.

\begin{proposition}\label{prop:inverse}
Over $\Q(q)$ the inverse transform is
\begin{equation}\label{eq:inverse}
 h_k=\sum_{j=0}^k(-1)^{k-j}
 \frac{\{n+j-1\}!\{n+2j\}}
 {\{j\}!\{k-j\}!\{n+j+k\}!}\,f_j.
\end{equation}
For fixed $n$, the sequence $(h_k)$ is $q$-holonomic if and only if
$(f_r)$ is $q$-holonomic. In particular, $(c_{K,n,k})_{k\ge0}$ is
$q$-holonomic for every knot and every fixed rank $n\ge2$.
\end{proposition}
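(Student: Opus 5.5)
The plan has two parts: first prove the inversion formula \eqref{eq:inverse} by an orthogonality identity, then deduce the holonomicity statements from closure properties of $q$-holonomic sequences.

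\emph{The inversion formula.} The forward transform is $f_r=\sum_{k\le r}h_kB_k^{(n)}(x_r^{(n)})$. Its matrix is lower triangular with nonzero diagonal entries $B_r^{(n)}(x_r^{(n)})$, so it is invertible over $\Q(q)$. It therefore suffices to show that the proposed matrix
\[
 G_{kj}=(-1)^{k-j}\frac{\{n+j-1\}!\{n+2j\}}{\{j\}!\{k-j\}!\{n+j+k\}!}
\]
is a one-sided inverse. Concretely, we must check that $\sum_{j}G_{kj}B_i^{(n)}(x_j^{(n)})=\delta_{ki}$ for $i\le k$.

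By \eqref{eq:node-difference},
\[
 B_i^{(n)}(x_j^{(n)})=\prod_{t<i}\{j-t\}\{n+j+t\}=\frac{\{j\}!}{\{j-i\}!}\cdot\frac{\{n+j+i-1\}!}{\{n+j-1\}!}.
\]
Substituting this, the factors $\{j\}!$ and $\{n+j-1\}!$ cancel. After the shift $j=i+m$, the identity to prove becomes
\[
 \sum_{m=0}^{k-i}(-1)^{k-i-m}\frac{\{n+2i+2m\}\,\{n+2i+m-1\}!}{\{m\}!\{k-i-m\}!\{n+k+i+m\}!}=\delta_{k,i}.
\]
This is a terminating very-well-poised sum of ${}_6\phi_5$ type, in the form that appears in Bailey-type inversion. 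For $k=i$ it reduces to the single term $m=0$, namely $\{n+2i\}\{n+2i-1\}!/\{n+2i\}!=1$. For $k>i$ it should vanish.

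I would prove the vanishing by Zeilberger-style telescoping. Put $N=k-i$ and $c=n+2i$. Then I would exhibit $t_m-t_{m+1}$, with
\[
 t_m=(-1)^{N-m}\frac{\{c+m-1\}!}{\{m-1\}!\{N-m\}!\{c+N+m-1\}!}\cdot\text{const},
\]
using the identity $\{a+b\}\{a-b\}=\{a\}^2-\{b\}^2$ to split $\{c+2m\}$. Alternatively, I would cite the $q$-Chu--Vandermonde form of the very-well-poised ${}_4\phi_3$ evaluation. This verification is the only real computation in the proof, and it is the step I expect to be the main obstacle: one has to get the telescoping certificate exactly right.

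\emph{Holonomicity.} Both directions are transforms whose kernels are $q$-proper-hypergeometric in $(k,j)$, meaning products of quantum factorials of integer-linear forms in the indices, times signs and powers of $q$. The forward kernel $B_k^{(n)}(x_r^{(n)})$ has this form by the factorial expression above, and the inverse kernel $G_{kj}$ has it by construction. Each transform is a finite definite sum, since the summation ranges are $k\le r$ and $j\le k$. A definite sum of a $q$-holonomic sequence multiplied by a $q$-proper-hypergeometric kernel, taken over a natural finite range, is again $q$-holonomic, by the closure properties in the Wilf--Zeilberger / Sabbah theory: products and diagonal sums preserve $q$-holonomicity. Hence each of $(h_k)$ and $(f_r)$ being $q$-holonomic implies the other is.

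\emph{Application to knots.} For a knot, $f_r=J_r^{(n)}(K;q)$, which is $q$-holonomic in $r$ by Garoufalidis--L\^e (the colored $\mathfrak{sl}_n$ case, or symmetric-power HOMFLY). The uniqueness in \eqref{eq:all-rank-cyclotomic} identifies $h_k$ with $c_{K,n,k}$, so $(c_{K,n,k})_{k\ge0}$ is $q$-holonomic.
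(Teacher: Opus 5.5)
Your proposal is correct, but you prove the inversion formula by a longer route than the paper does. The paper needs no summation identity. By the Cramer/Lagrange description of Newton coefficients in \cref{prop:divided-difference}, $h_k$ is the $k$th divided difference,
$h_k=\sum_{j\le k}f_j\prod_{i\ne j,\,0\le i\le k}(x_j^{(n)}-x_i^{(n)})^{-1}$.
The factorization \eqref{eq:node-difference} evaluates this product as $(-1)^{k-j}\{j\}!\{k-j\}!\{n+j+k\}!/(\{n+j-1\}!\{n+2j\})$, which is \eqref{eq:inverse} at once.

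Your orthogonality relation $\sum_jG_{kj}B_i^{(n)}(x_j^{(n)})=\delta_{ki}$ just says that this divided-difference functional kills polynomials of degree $<k$ and takes the value $1$ on the monic $B_k^{(n)}$. So the step you flag as the main obstacle is automatic.

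Your telescoping certificate nonetheless works as written. Take $T_m=(-1)^{N-m}\{c+m-1\}!/(\{m-1\}!\{N-m\}!\{c+N+m-1\}!)$. Then $T_m-T_{m+1}$ equals $\{N\}$ times your $m$th summand, because $\{m\}\{c+N+m\}+\{c+m\}\{N-m\}=\{N\}\{c+2m\}$. Also $T_0=T_{N+1}=0$. Hence the missing constant is $1/\{N\}$, which is nonzero for $N\ge1$.

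Your route therefore gives an explicit WZ-type proof of the underlying very-well-poised evaluation. The paper's route gets the result in two lines from interpolation theory it has already established.

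The holonomicity step (proper $q$-hypergeometric kernels on finite triangular ranges, closed under products and definite summation) and the knot application match the paper. The one difference is the source: the paper takes fixed-rank holonomicity of $J_r^{(n)}$ from \cite[Appendix~B]{FZ26}, not directly from Garoufalidis--L\^e.
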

\begin{proof}
The coefficient of $f_j$ in a divided difference of order $k$ is
$\prod_{i\ne j,\,0\le i\le k}(x_j^{(n)}-x_i^{(n)})^{-1}$.
The node-difference factorization gives
\[
 \prod_{i\ne j}(x_j^{(n)}-x_i^{(n)})
 =(-1)^{k-j}\{j\}!\{k-j\}!
   \frac{\{n+j+k\}!}{\{n+j-1\}!\{n+2j\}},
\]
which proves \eqref{eq:inverse}. Both the forward kernel and this
inverse kernel are proper $q$-hypergeometric factorial quotients on their
finite triangular summation ranges. Closure of $q$-holonomic functions
under products and definite summation \cite{GL} proves both implications.
For a knot, take $f_r=J_r^{(n)}(K;q)$. Fixed-rank holonomicity and
integral Newton coefficients are supplied by
\cite[Appendix~B and Theorem~1.1]{FZ26}.
\end{proof}

\subsection{The block resolvent and its poles}
At a root of unity a coefficient recurrence becomes periodic, and
grouping one period at a time gives a transfer matrix that is
constant at quantum order zero. Its resolvent then determines the
higher orders recursively and bounds their poles.

Let $D$ be a characteristic-zero domain with fraction field $E$.
Suppose $\mathbf u_a(\varepsilon)\in D[[\varepsilon]]^r$ satisfies
\[
 \mathbf u_{a+1}=M_a(\varepsilon)\mathbf u_a,\qquad
 M_a(\varepsilon)=M_0+\sum_{j\ge1}M_j(a)\varepsilon^j,
\]
where $M_0\in\Mat_r(D)$ is independent of $a$ and
$M_j(a)\in\Mat_r(E[a])$ has degree at most $j$. Set
\[
 d(v)=\det(I-vM_0),\qquad
 G_m(v)=\sum_{a\ge0}[\varepsilon^m]\mathbf u_a(\varepsilon)v^a,
 \qquad \vartheta=v\frac{d}{dv}.
\]

\begin{theorem}\label{thm:resolvent}
Under these hypotheses,
\begin{equation}\label{eq:resolvent}
 G_m=(I-vM_0)^{-1}\left(\mathbf u_{0,m}
             +v\sum_{j=1}^mM_j(\vartheta)G_{m-j}\right),
 \qquad d(v)^{2m+1}G_m(v)\in D[v]^r.
\end{equation}
Over an algebraic closure of $E$, if $s_\lambda$ is the largest Jordan
block size of $M_0$ at a nonzero eigenvalue $\lambda$, the pole order at
$v=\lambda^{-1}$ is at most
\begin{equation}\label{eq:jordan-bound}
 (s_\lambda+1)m+s_\lambda.
\end{equation}
There are no other finite poles.
\end{theorem}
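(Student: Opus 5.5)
Pass to generating functions in $a$ and compare coefficients of $\varepsilon^m$; the resolvent formula then follows directly. I would argue by strong induction on $m$.

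At order $m$ the recurrence reads
\[
 \mathbf u_{a+1,m}=M_0\mathbf u_{a,m}+\sum_{j=1}^m M_j(a)\mathbf u_{a,m-j}.
\]
Multiply by $v^{a+1}$ and sum over $a\ge0$. The left side gives $G_m-\mathbf u_{0,m}$. Since $M_j(a)$ is polynomial in $a$, the identity $\sum_a a^i\mathbf w_a v^a=\vartheta^i\sum_a\mathbf w_av^a$ turns $\sum_a M_j(a)\mathbf u_{a,m-j}v^a$ into $M_j(\vartheta)G_{m-j}$. Solving $(I-vM_0)G_m=\mathbf u_{0,m}+v\sum_jM_j(\vartheta)G_{m-j}$ gives \eqref{eq:resolvent}. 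All of this takes place in $E[[v]]^r$, and $(I-vM_0)$ is invertible there.

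For the denominator, write $(I-vM_0)^{-1}=\operatorname{adj}(I-vM_0)/d(v)$, where the adjugate has entries in $D[v]$.

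\emph{Base case.} For $m=0$, $G_0=\operatorname{adj}\,\mathbf u_{0,0}/d$.

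\emph{Induction step.} Suppose $G_{m-j}=\mathbf p/d^{2(m-j)+1}$ with $\mathbf p$ polynomial. Applying $\vartheta$ raises the $d$-exponent by at most one, so $\vartheta^i$ with $i\le j$ gives denominator $d^{2m-2j+1+j}$. One further factor comes from the resolvent, giving total exponent at most $2m-j+2\le 2m+1$ for $j\ge1$. The base term $\mathbf u_{0,m}$ contributes only $d^{1}$.

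This bounds the denominator over $E$. The main obstacle is that the numerator must lie in $D[v]$ and not merely $E[v]$, because the $M_j$ have coefficients in $E$. I would obtain integrality from the fact that $G_m$ itself has coefficients in $D$: $\mathbf u_a\in D[[\varepsilon]]^r$, so $G_m\in D[[v]]^r$. Then $d^{2m+1}G_m$ is a polynomial over $E$ whose power series coefficients lie in $D$, so its coefficients lie in $D$. Here $d(0)=1$ is what makes $d^{2m+1}G_m$ a genuine power series over $D$ and lets us identify its coefficients with those of the series. This is the same $\mathbb Q[u]\cap\mathbb Z[[u]]$ argument used in \cref{prop:rank-triangular}. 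The characteristic-zero hypothesis enters only implicitly.

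For the Jordan bound \eqref{eq:jordan-bound}, work over $\overline E$ and localize at $v_0=\lambda^{-1}$. The resolvent $(I-vM_0)^{-1}$ has a pole of order exactly $s_\lambda$ there: decompose $M_0$ into Jordan blocks and note that $I-vJ_\lambda$ has inverse with pole order equal to the block size. Let $p_m$ be the pole order of $G_m$, so $p_0\le s_\lambda$. Since $\vartheta=v\,d/dv$ and $v_0\ne0$, applying $\vartheta^i$ with $i\le j$ raises pole order by at most $j$. This gives
\[
 p_m\le s_\lambda+\max_{j\ge1}(p_{m-j}+j).
\]
With the inductive bound $p_{m-j}\le(s_\lambda+1)(m-j)+s_\lambda$, we get $p_{m-j}+j\le(s_\lambda+1)m+s_\lambda-s_\lambda j$. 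Hence $p_m\le s_\lambda+(s_\lambda+1)m$ when $j=1$, and smaller for larger $j$. This yields \eqref{eq:jordan-bound}.

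Finally, the finite poles of $(I-vM_0)^{-1}$ are exactly the zeros of $d$, which are the inverses of the nonzero eigenvalues. Neither $\vartheta$ nor multiplication by $v$ creates new finite poles, and induction on $m$ shows that no other poles appear.
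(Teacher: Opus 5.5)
Your proposal is correct and follows the paper's own proof essentially step for step: you extract $\varepsilon^m$ and sum against $v^{a+1}$ to get the resolvent identity, then use the inductive exponent count $2(m-j)+1+j+1\le 2m+1$ with integrality of the numerator deduced from $G_m\in D[[v]]^r$. The Jordan bound also matches the paper: the local count $s+\bigl((s+1)(m-j)+s\bigr)+j\le(s+1)m+s$ at $v=\lambda^{-1}$, using that the resolvent has pole order exactly $s_\lambda$ there.
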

\begin{proof}
Extract $\varepsilon^m$ in the recurrence, multiply by $v^{a+1}$, and
sum over $a\ge0$. Multiplication by a polynomial in $a$ becomes the
same polynomial in $\vartheta$, giving the first identity. The inverse
matrix has denominator $d(v)$ and $d(0)=1$. If
$d^{2(m-j)+1}G_{m-j}$ is polynomial, application of an operator of degree
at most $j$ gives denominator at most $d^{2(m-j)+1+j}$. Multiplication
by the resolvent increases that exponent by at most one. Since
$2(m-j)+1+j+1\le2m+1$ for $j\ge1$, induction proves polynomiality over
$E$. The left side has power-series coefficients in $D$, so its finite
polynomial coefficients belong to $D$ as well.

Locally at $v=\lambda^{-1}$ the resolvent has pole order $s=s_\lambda$.
An Euler derivative increases pole order by at most one. By induction
a summand with index $j\ge1$ has order at most
\[
 s+\bigl((s+1)(m-j)+s\bigr)+j
 =(s+1)m+2s-sj\le(s+1)m+s.
\]
The initial-data term has order at most $s$. This proves
\eqref{eq:jordan-bound}; the recursion introduces no other poles.
\end{proof}

\begin{proposition}\label{prop:highest}
Under the hypotheses of \cref{thm:resolvent}, let $\lambda\ne0$ be
simple over a splitting field. Choose $M_0v_\lambda=\lambda v_\lambda$ and
$w_\lambda M_0=\lambda w_\lambda$, with $w_\lambda v_\lambda=1$.
Write $M_1(a)=aR+S$ and set
\[
 \alpha=w_\lambda\mathbf u_{0,0},\qquad
 \kappa=\frac{w_\lambda Rv_\lambda}{2\lambda}.
\]
For any constant row vector $e$, one has
\begin{equation}\label{eq:highest}
 \lim_{v\to\lambda^{-1}}(1-\lambda v)^{2m+1}eG_m(v)
 =\alpha(ev_\lambda)\frac{(2m)!}{m!}\kappa^m.
\end{equation}
In particular the pole is exactly $2m+1$ whenever the right side is
nonzero. 
\end{proposition}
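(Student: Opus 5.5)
We project the recursion \eqref{eq:resolvent} onto the $\lambda$-eigenline and track only the most singular part at $v=\lambda^{-1}$, by induction on $m$. Near $v=\lambda^{-1}$ the resolvent has the simple-pole decomposition
\[
 (I-vM_0)^{-1}=\frac{v_\lambda w_\lambda}{1-\lambda v}+\text{(regular)},
\]
because $\lambda$ is simple. For $m=0$ this gives $eG_0\sim\alpha(ev_\lambda)/(1-\lambda v)$. The induction hypothesis is the stronger vector statement
\[
 G_m=\frac{\alpha\,(2m)!\,\kappa^m}{m!}\,
     \frac{v_\lambda}{(1-\lambda v)^{2m+1}}+O\bigl((1-\lambda v)^{-2m}\bigr).
\]
That is, the leading singular coefficient lies along $v_\lambda$. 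The hypothesis does hold at $m=0$.

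For the inductive step, we expand
\[
 G_m=(I-vM_0)^{-1}\Bigl(\mathbf u_{0,m}+v\sum_{j\ge1}M_j(\vartheta)G_{m-j}\Bigr).
\]
By \cref{thm:resolvent}, applied with $s_\lambda=1$, the summand with index $j$ has pole order at most $2m+2-j$ after the resolvent is applied. So only $j=1$ reaches order $2m+1$, and the initial term has order at most $1$. With $M_1(\vartheta)=\vartheta R+S$, the operator $S$ does not raise the pole order. The Euler operator acts on the leading term as
\[
 \vartheta(1-\lambda v)^{-k}=k\lambda v(1-\lambda v)^{-k-1},
\]
and $\lambda v\to1$ at the pole. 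Hence
\[
 vM_1(\vartheta)G_{m-1}\sim\lambda^{-1}(2m-1)\,c_{m-1}\,Rv_\lambda\,(1-\lambda v)^{-2m}+O\bigl((1-\lambda v)^{-2m+1}\bigr),
\]
where $c_{m-1}$ is the scalar coefficient from the induction hypothesis and the outer factor $v$ is evaluated as $\lambda^{-1}$.

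Applying the resolvent, the projector $v_\lambda w_\lambda$ extracts the scalar $w_\lambda Rv_\lambda$ and adds one more pole order. The regular part of the resolvent keeps order $2m$. This gives
\[
 c_m=\frac{(2m-1)\,w_\lambda Rv_\lambda}{\lambda}\,c_{m-1}=2(2m-1)\kappa\,c_{m-1}.
\]
The identity $\frac{(2m)!}{m!}=2(2m-1)\frac{(2m-2)!}{(m-1)!}$ then closes the induction. Multiplying by the row vector $e$ gives \eqref{eq:highest}. The final remark about exactness of the pole is immediate.

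The main obstacle is the bookkeeping of lower-order corrections, which must stay genuinely subleading. First, the $O((1-\lambda v)^{-2m+2})$ remainder of $G_{m-1}$ produces at most order $2m-1$ after $\vartheta$, hence at most $2m$ after the resolvent. Second, terms with $j\ge2$ also reach at most order $2m$, because $\deg M_j\le j$ adds at most $j$ derivative orders to $G_{m-j}$, which has pole order $2(m-j)+1$. Third, the factor $v$ and the factor $\lambda v$ produced by $\vartheta$ must be evaluated at $v=\lambda^{-1}$ only to leading order; expanding them further only generates lower-order terms. Once each of these is bounded by pole order $2m$, the limit is just the leading coefficient computed above.
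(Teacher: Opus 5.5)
Your proof is correct. It reaches the paper's recursion by a dual route.

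\textbf{The paper's route.} The paper works on the coefficient side. It writes the $\lambda$-part of $[\varepsilon^m]\mathbf u_a$ as $\lambda^a$ times a vector polynomial of degree at most $2m$. The top-degree equation $(\lambda-M_0)p_{m,2m}=0$ forces the leading vector to be $b_mv_\lambda$. Projecting the degree-$(2m-1)$ equation by $w_\lambda$ gives $2m\lambda b_m=(w_\lambda Rv_\lambda)b_{m-1}$, so $b_m=\alpha\kappa^m/m!$. Only at the end does it convert to the pole, using that $\sum_a a^{2m}(\lambda v)^a$ has leading coefficient $(2m)!$ at $(1-\lambda v)^{-2m-1}$.

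\textbf{Your route.} You stay in the $v$-plane. You use the resolvent identity of \cref{thm:resolvent} and the local decomposition $(I-vM_0)^{-1}=v_\lambda w_\lambda/(1-\lambda v)+(\text{regular})$. The projector $v_\lambda w_\lambda$ does the job of the paper's projection by $w_\lambda$. Your coefficients satisfy $c_m=(2m)!\,b_m$, so your recursion $c_m=2(2m-1)\kappa c_{m-1}$ is the paper's recursion rescaled.

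\textbf{What each buys.} Your version needs neither the partial-fraction passage to exponential-polynomial sequences nor the linear independence of distinct exponentials used to isolate the $\lambda$-part. It also needs no final generating-function conversion. The factor $(2m)!/m!$ accumulates directly from $\vartheta$ raising the pole order, and the subleading estimates are the same pole-order bookkeeping as in the proof of \cref{thm:resolvent}. The paper's version avoids any local analysis of the resolvent. It also shows directly that the highest pole reflects the growth $a^{2m}\lambda^a$ of the block coefficients themselves, which is the form reused later in \cref{thm:all-rank-sharp,thm:trefoil-root-jets}.
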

\begin{proof}
Partial fractions in \cref{thm:resolvent} express the coefficients,
apart from a finite initial segment, as sums of polynomial multiples of
$\mu^a$, where $\mu$ ranges over the nonzero eigenvalues. The
$\lambda$-part has degree at most $2m$. Comparing its top degree in the
recurrence shows that its leading vector is $b_mv_\lambda$.
At degree $2m-1$, projection by $w_\lambda$ cancels the unknown next
vector coefficient. The term $aR$ acting on the order-$(m-1)$ leading
coefficient is the only remaining contribution at this degree: terms
with $j\ge2$ have degree at most $j+2(m-j)\le2m-2$ in the
$\lambda$-part. Thus
\[
 2m\lambda b_m=(w_\lambda Rv_\lambda)b_{m-1},\qquad b_0=\alpha.
\]
It follows that $b_m=\alpha\kappa^m/m!$. Distinct exponential sequences
are linearly independent over polynomial coefficients, so other
spectral parts do not affect this comparison. Finally
$\sum_{a\ge0}a^{2m}(\lambda v)^a$ has highest pole coefficient $(2m)!$.
\end{proof}

\begin{corollary}
\label{cor:laurent-block}
Suppose $\mathbf c_{k+1}(Q)=T(Q^k,Q)\mathbf c_k(Q)$, with
$T(x,Q)\in\Mat_r(\Z[x^{\pm1},Q^{\pm1}])$ and
$\mathbf c_k(Q)\in\Z[Q^{\pm1}]^r$. Let $\omega$ have order $\ell$.
For every residue $b$, the sequence
$\mathbf u_a=\mathbf c_{\ell a+b}(\omega(1+\varepsilon))$
satisfies \cref{thm:resolvent} over $D=\Z[\omega]$.
The polynomial $d(v)$ is independent of $b$.
\end{corollary}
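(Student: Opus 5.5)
The plan is to group the one-step recurrence into blocks of length $\ell$ and check the hypotheses of \cref{thm:resolvent} directly. Put $Q=\omega(1+\varepsilon)$ and $\mathbf u_a(\varepsilon)=\mathbf c_{\ell a+b}(\omega(1+\varepsilon))$. Iterating the recurrence $\ell$ times gives
\[
 \mathbf u_{a+1}=M_a(\varepsilon)\mathbf u_a,\qquad
 M_a(\varepsilon)=\prod_{i=\ell-1}^{0}T\bigl(Q^{\ell a+b+i},Q\bigr)\Big|_{Q=\omega(1+\varepsilon)},
\]
with the product ordered so that later indices act on the left. Each factor is a Laurent polynomial in $Q^{\ell a+b+i}$ and $Q$. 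Since $\omega^{-1}\in\Z[\omega]$ and $1+\varepsilon$ is a unit in $\Z[[\varepsilon]]$, the entries lie in $\Z[\omega][[\varepsilon]]$. In the same way $\mathbf u_a\in\Z[\omega][[\varepsilon]]^r$, because $\mathbf c_k\in\Z[Q^{\pm1}]^r$.

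The key step is the degree bound in $a$. For a Laurent monomial $x^eQ^f$ evaluated at $x=Q^{\ell a+b+i}$, we obtain $\omega^{e(\ell a+b+i)+f}(1+\varepsilon)^{e(\ell a+b+i)+f}$. Since $\omega^\ell=1$, the root-of-unity factor $\omega^{e(b+i)+f}$ does not depend on $a$. The factor $(1+\varepsilon)^{e\ell a+c}$ has $\varepsilon^j$-coefficient $\binom{e\ell a+c}{j}$, a polynomial in $a$ of degree at most $j$ with coefficients in $\Q$. So each factor has the form $T_0^{(i)}+\sum_{j\ge1}T^{(i)}_j(a)\varepsilon^j$, with $\deg_a T^{(i)}_j\le j$ and $T_0^{(i)}=T(\omega^{b+i},\omega)$ independent of $a$. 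The degree condition survives products, because the $\varepsilon^j$-coefficient of a product is a sum over $j_1+\dots+j_\ell=j$ of products of coefficients, each of degree at most $\sum j_t=j$. It follows that
\[
 M_0=T(\omega^{b+\ell-1},\omega)\cdots T(\omega^{b},\omega)\in\Mat_r(\Z[\omega])
\]
and $M_j(a)\in\Mat_r(E[a])$ with $\deg M_j\le j$, where $E=\Q(\omega)$. This verifies the hypotheses of \cref{thm:resolvent} over $D=\Z[\omega]$.

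For independence of $d(v)=\det(I-vM_0)$ from $b$, write $P_i=T(\omega^i,\omega)$, which depends only on $i\bmod\ell$. Then $M_0^{(b)}=P_{b+\ell-1}\cdots P_b$. Passing from $b$ to $b+1$ replaces $XY$ by $YX$, where $Y=P_b$ and $X=P_{b+\ell}\cdots P_{b+1}$, using $P_{b+\ell}=P_b$. The matrices $XY$ and $YX$ have the same characteristic polynomial, even when $Y$ is not invertible. Hence $\det(I-vXY)=\det(I-vYX)$, and induction on $b$ shows that $d(v)$ is independent of $b$.

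The only delicate point is bookkeeping: the $\omega$-powers must be seen to be $a$-independent, which uses $\omega^\ell=1$, and all coefficients must stay in $\Z[\omega]$ except for the $\Q$-rational binomial polynomials in $a$. This is why $M_j(a)$ is only claimed to lie in $\Mat_r(E[a])$, exactly as \cref{thm:resolvent} allows. No step presents a real obstacle.
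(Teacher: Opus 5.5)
Your proof is correct and follows essentially the same route as the paper. The block transfer is the ordered product of the $\ell$ matrices $T(Q^{\ell a+b+i},Q)$. Each monomial becomes an $a$-independent root of unity times $(1+\varepsilon)^{ua+w}$, whose order-$j$ coefficient has degree at most $j$ in $a$. Independence of $d(v)$ from $b$ comes from cyclic permutation together with $\det(I-vXY)=\det(I-vYX)$.

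There is one small indexing slip: you want $X=P_{b+\ell-1}\cdots P_{b+1}$, so that $M_0^{(b)}=XY$ and $M_0^{(b+1)}=P_{b+\ell}X=YX$.
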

\begin{proof}
The block transfer is the ordered product of $\ell$ matrices
$T(Q^{\ell a+b+j},Q)$. Every monomial in an entry of this product
becomes a root of unity times $(1+\varepsilon)^{ua+w}$, with $u,w$
integers. Its order-$j$ coefficient is a polynomial in $a$ of degree at
most $j$. The constant block matrices for different $b$ are cyclic
permutations of the same factors. The identity
$\det(I-vAB)=\det(I-vBA)$ shows that their polynomials $d(v)$ agree.
\end{proof}
To apply \cref{cor:laurent-block} at a given root, one needs a
transfer matrix regular there. A Laurent transfer in $q^k$ has
coefficient period dividing $\ord(\zeta)$ at $q=\zeta$, and a
transfer in $q^{2k}$ has period dividing $\ord(\zeta^2)$; the Newton
kernel always has period $\ord(\zeta^2)$.

\subsection{Complete Newton periods and full Taylor expansions}
Let $\zeta$ be a root of unity, set $\ell=\ord(\zeta^2)$ and
$\sigma=\zeta^{n\ell}\in\{1,-1\}$, and put
\begin{equation}\label{eq:rank-period-polynomial}
 W_{n,\zeta}(X)=\Theta_\ell(X)-2\sigma.
\end{equation}

\begin{proposition}\label{prop:rank-full-period}
For $a\ge0$ and $0\le b<\ell$,
\begin{equation}\label{eq:rank-full-period}
 B_{\ell a+b}^{(n)}(X;\zeta)
 =W_{n,\zeta}(X)^aB_b^{(n)}(X;\zeta).
\end{equation}
\end{proposition}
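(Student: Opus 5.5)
The plan is to compare the two sides factor by factor, working in the oriented variable $X=z+z^{-1}$. By definition,
\[
B_{\ell a+b}^{(n)}(X;\zeta)=\prod_{i<\ell a+b}\bigl(X-x_i^{(n)}(\zeta)\bigr),
\]
and the specialized node $x_i^{(n)}(\zeta)=\zeta^{n+2i}+\zeta^{-n-2i}$ depends only on $i \bmod \ell$, because $\zeta^{2\ell}=1$. Split the index range into $a$ complete periods $[j\ell,(j+1)\ell)$, $0\le j<a$, followed by the tail $[a\ell,a\ell+b)$. Each complete period contributes the same product $\prod_{i=0}^{\ell-1}(X-x_i^{(n)}(\zeta))$. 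The tail equals $B_b^{(n)}(X;\zeta)$ by periodicity. The statement therefore reduces to the single polynomial identity
\[
\prod_{i=0}^{\ell-1}\bigl(X-\zeta^{n+2i}-\zeta^{-n-2i}\bigr)=\Theta_\ell(X)-2\sigma .
\]

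To prove this identity, substitute $X=z+z^{-1}$ and use that each factor factors as
\[
X-(w+w^{-1})=z^{-1}(z-w)(z-w^{-1}).
\]
Put $w_i=\zeta^{n+2i}=\zeta^n\omega^i$, where $\omega=\zeta^2$ is a primitive $\ell$th root of unity. As $i$ runs over $0,\dots,\ell-1$, the $w_i$ are exactly the $\ell$ roots of $Y^\ell=\zeta^{n\ell}=\sigma$. Hence
\[
\prod_i(z-w_i)=z^\ell-\sigma .
\]
Similarly, the inverses $w_i^{-1}$ are the roots of $Y^\ell=\sigma^{-1}=\sigma$, since $\sigma=\pm1$, so
\[
\prod_i(z-w_i^{-1})=z^\ell-\sigma .
\]
Multiplying, the product over $i$ becomes
\[
z^{-\ell}(z^\ell-\sigma)^2=z^\ell+z^{-\ell}-2\sigma,
\]
using $\sigma^2=1$. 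By the Chebyshev identity $\Theta_\ell(z+z^{-1})=z^\ell+z^{-\ell}$, this equals $\Theta_\ell(X)-2\sigma$.

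Both sides of the identity are polynomials in $X$ over $\mathbb Z[\zeta]$, and $z+z^{-1}$ is transcendental over that ring. The equality in $z$ therefore gives the equality in $\mathbb Z[\zeta][X]$.

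No step here is a real obstacle. The only point needing care is the sign $\sigma$: one must check that it is the same for the $w_i$ and for their inverses. This holds exactly because $\sigma\in\{1,-1\}$, which follows from $\zeta^{2\ell}=1$. One should also note that when $\zeta^2$ has order $\ell$ the multiset $\{\zeta^n\omega^i\}$ has no repetitions, so the first factorization is exact. Finally, the case $a=0$ is trivial.
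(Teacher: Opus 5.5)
Your proof is correct and has the same overall structure as the paper's. The specialized nodes are periodic modulo $\ell$, which reduces everything to the one-period identity $\prod_{i<\ell}\bigl(X-x_i^{(n)}(\zeta)\bigr)=\Theta_\ell(X)-2\sigma$.

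The only difference is how that identity is proved.
\begin{itemize}
\item The paper takes an independent unit $v$ and proves $\prod_{r<\ell}(X-v\omega^r-v^{-1}\omega^{-r})=\Theta_\ell(X)-v^\ell-v^{-\ell}$ over $\mathbb{Q}(v)$. There both sides are monic of degree $\ell$ with the same distinct roots. It then specializes $v=\zeta^n$, so node collisions and multiplicities are handled by specialization.
\item You work directly at $v=\zeta^n$ and factor in the oriented variable $z$. The roots of $z^\ell-\sigma$ are distinct, so collisions in $X$ (for example $w_i^{-1}=w_j$ in a fixed orbit) appear automatically as the square in $z^{-\ell}(z^\ell-\sigma)^2$.
\end{itemize}
Your route needs no generic parameter and no root-counting over a function field, only injectivity of $X\mapsto z+z^{-1}$. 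The paper's route records the identity uniformly in the offset $v$ before specializing.
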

\begin{proof}
For an independent unit $v$ and a primitive $\ell$th root $\omega$,
\[
 \prod_{r=0}^{\ell-1}(X-v\omega^r-v^{-1}\omega^{-r})
       =\Theta_\ell(X)-v^\ell-v^{-\ell}.
\]
Over the rational function field in $v$, both sides are monic of
degree $\ell$ and have the same roots. Specialize to
$v=\zeta^n$ and $\omega=\zeta^2$. The two final terms are both
$\sigma$. Splitting the indices into complete periods gives the
result, including all root multiplicities.
\end{proof}

At quantum order $h$, at most
$h$ complete periods of a Newton product receive a positive-order
derivative. Every remaining period contributes a factor $W_{n,\zeta}$.
Consequently
\begin{equation}\label{eq:period-kernel-adic-bound}
 [\epsilon^h]B_{\ell a+b}^{(n)}(X;\zeta(1+\epsilon))
 \in W_{n,\zeta}^{\max(a-h,0)}\mathbb Z[\zeta][X].
\end{equation}
For any integral Newton element, each full Taylor coefficient at
$\zeta$ therefore converges in the $W_{n,\zeta}$-adic completion.

Write $W=W_{n,\zeta}$ and
$B_b^\zeta=B_b^{(n)}(X;\zeta)$. For each integer $a\ge0$, define
rational kernel corrections $K_{n,b,h}(a;X)\in\mathbb Q(\zeta)(X)$ by
\begin{equation}\label{eq:rank-kernel-correction}
 B_{\ell a+b}^{(n)}(X;\zeta(1+\epsilon))
   =W^aB_b^\zeta\sum_{h\ge0}K_{n,b,h}(a;X)\epsilon^h.
\end{equation}

\begin{lemma}\label{lem:rank-kernel-degree}
One has $K_{n,b,0}=1$. For $h\ge1$, $K_{n,b,h}$ is polynomial
in $a$ of degree at most $2h-1$, with coefficients having poles only
on $W=0$.
\end{lemma}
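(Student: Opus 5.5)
The plan is to factor the deformed Newton product period by period and expand each factor in $\epsilon$. Write $q=\zeta(1+\epsilon)$ and split the indices $0\le i<\ell a+b$ into the $a$ complete periods $\{\ell c,\ldots,\ell c+\ell-1\}$, $0\le c<a$, and the final partial block $\{\ell a,\ldots,\ell a+b-1\}$. For each complete period put
\[
 \Pi_c(\epsilon)=\prod_{j=0}^{\ell-1}\bigl(X-x_{\ell c+j}^{(n)}(\zeta(1+\epsilon))\bigr).
\]
Since $\zeta^{2\ell}=1$, the nodes in $\Pi_c$ are $q^{n+2\ell c+2j}+q^{-n-2\ell c-2j}$, and $q^{\pm(n+2\ell c+2j)}=\zeta^{\pm(n+2j)}(1+\epsilon)^{\pm(n+2\ell c+2j)}$. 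Hence each Taylor coefficient $[\epsilon^t]\Pi_c$ is a polynomial in $c$ of degree at most $t$, with coefficients in $\mathbb Q(\zeta)[X]$, because the coefficient of $\epsilon^t$ in $(1+\epsilon)^{\pm(\mathrm{const}+2\ell c)}$ is a binomial polynomial of degree $t$ in $c$. By \cref{prop:rank-full-period}, $\Pi_c(0)=W$. The partial block similarly equals $B_b^\zeta\,(1+\sum_{t\ge1}\gamma_t(a;X)\epsilon^t)$, where $\gamma_t$ is polynomial in $a$ of degree at most $t$ and has poles only at zeros of $B_b^\zeta$.

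Write $\Pi_c=W(1+\pi_c)$ with $\pi_c=\sum_{t\ge1}W^{-1}[\epsilon^t]\Pi_c\,\epsilon^t$, which has poles only on $W=0$. Then $K_{n,b,h}(a)$ is the $\epsilon^h$ coefficient of
\[
 \prod_{c<a}(1+\pi_c)\cdot\Bigl(1+\sum_t\gamma_t\epsilon^t\Bigr).
\]
At order zero this is $1$, which gives $K_{n,b,0}=1$. For $h\ge1$, expand the product as a sum over collections of distinct periods $c_1<\cdots<c_s$ with positive orders $t_1,\ldots,t_s$, together with a partial-block order $t_0\ge0$, where $t_0+\sum t_i=h$. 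The summand is $\gamma_{t_0}(a)\prod_i p_{t_i}(c_i)$. The sum over $c_1<\cdots<c_s<a$ of a product of polynomials of degrees $t_i$ is, by iterated discrete antidifferentiation, a polynomial in $a$ of degree at most $\sum t_i+s$. Multiplying by $\gamma_{t_0}$ adds degree at most $t_0$, so the total degree is at most $h+s$.

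The main obstacle is the bound on $s$. Since every $t_i\ge1$, only $s\le h$ holds, which gives degree $2h$ rather than the claimed $2h-1$. To remove the extra degree I would use the vanishing of first-order derivatives. Each node correction at first order is the logarithmic derivative $\epsilon\,\theta(z^{\pm})$ with $z=q^{n+2r}$, which equals $(n+2r)\epsilon(z-z^{-1})$. Summed over one complete period, the first-order part is $[\epsilon]\Pi_c=-\sum_j(n+2\ell c+2j)(\zeta^{n+2j}-\zeta^{-n-2j})\prod_{i\ne j}(\cdots)$. I would show that its $c$-linear part vanishes, i.e.\ the coefficient of $c$ is $-2\ell\,\partial_\epsilon$ of $\Theta_\ell$ evaluated along a scaled node, namely $\frac{d}{d\epsilon}\bigl(\zeta^{n\ell}(1+\epsilon)^{2\ell^2 c}+\cdots\bigr)$. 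Differentiating $\Theta_\ell(X)-v^\ell-v^{-\ell}$ with $v=\zeta^n(1+\epsilon)^{n+2\ell c}$ gives $-\ell(v^\ell-v^{-\ell})\cdot(\ldots)$, and $v^\ell-v^{-\ell}=\sigma-\sigma=0$ at $\epsilon=0$. Thus $p_1$ has degree $0$ in $c$. With this, a term in which all $t_i=1$ and $s=h$ has degree at most $h+s-1$ only if I also gain when $s=h$. I expect the count to come out as follows: each $t_i=1$ factor has degree $0$, so a summand has degree at most $\sum_{t_i\ge2}t_i+s+t_0\le 2h-1$ whenever $h\ge1$. This holds because equality would force every $t_i=1$, in which case the degree is $s+t_0\le h\le 2h-1$. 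Checking this first-order cancellation carefully is where I would spend the effort. The poles come only from $W^{-1}$ and from $B_b^\zeta$, and the latter cancel with the prefactor $B_b^\zeta$ in the final polynomial statement.
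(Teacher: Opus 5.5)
Your proof is correct. It differs from the paper's mainly in bookkeeping and in how the key first-order cancellation is verified.

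The paper takes the logarithm of the quotient in \eqref{eq:rank-kernel-correction}. Each factor contributes a log coefficient of degree at most $h$ in its index, so one summation over the $a$ periods gives $\deg_a\Lambda_h\le h+1$. The only case where this exceeds $2h-1$ is $h=1$. There the $a^2$ coefficient of $\Lambda_1$ is $-\ell\sum_r\bar y_r/(X-\bar x_r)$, which vanishes by pairing $r$ with $-n-r$. Exponentiating then gives degree at most $\sum_i(2h_i-1)=2h-s$.

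You instead expand $\prod_{c<a}(1+\pi_c)$ directly over chosen periods $c_1<\cdots<c_s$, paying one degree per iterated antidifferentiation. You then dispose of the single extremal configuration ($t_0=0$, $s=h$, all $t_i=1$) by showing that $p_1$ is constant in $c$. Your cancellation is the paper's identity before taking the logarithmic derivative: the $c$-linear part of $[\epsilon]\Pi_c$ is a scaling derivative of the complete period product $\Theta_\ell(X)-v^\ell-v^{-\ell}$. That derivative vanishes because $v^\ell-v^{-\ell}=\sigma-\sigma^{-1}=0$, so your version explains the cancellation by $\sigma=\pm1$ rather than by the involution.

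The logarithm needs only one sum per order and makes the counting automatic through the exponential. Your version avoids logarithms at the price of iterated sums and a case split. Both arguments in fact give the sharper bound $h+\lfloor h/2\rfloor$.

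Two points need tightening.

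First, inside a period the deformed nodes are $v\,q^{2j}+v^{-1}q^{-2j}$, not $v\zeta^{2j}+v^{-1}\zeta^{-2j}$. So $\Pi_c$ itself is not $\Theta_\ell(X)-v^\ell-v^{-\ell}$ with $v=\zeta^n(1+\epsilon)^{n+2\ell c}$. Write instead $\Pi_c(\epsilon)=G(2\ell c\log(1+\epsilon),\epsilon)$, where $G(\lambda,\mu)$ is the period product with nodes $e^{\lambda}\zeta^{n+2j}(1+\mu)^{n+2j}+e^{-\lambda}\zeta^{-n-2j}(1+\mu)^{-n-2j}$. Since $G(\lambda,0)=\Theta_\ell(X)-2\sigma\cosh(\ell\lambda)$, the coefficient of $c$ in $[\epsilon]\Pi_c$ is $2\ell\,\partial_\lambda G(0,0)=0$.

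Second, the poles of $\gamma_t$ do not cancel against a prefactor, because $K_{n,b,h}$ is defined after division by $B_b^\zeta$. They are harmless for a different reason: since $b<\ell$, \cref{prop:rank-full-period} gives $B_b^\zeta\mid W$, so those poles already lie on $W=0$.
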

\begin{proof}
Write the logarithm of the quotient in
\eqref{eq:rank-kernel-correction} as
$\sum_{h\ge1}\Lambda_h(a;X)\epsilon^h$. Within a fixed residue
class of the index modulo $\ell$, the coefficient of order $h$
in the logarithm of one factor is polynomial in that index of
degree at most $h$. Summing over $a$ complete periods raises the
degree by at most one; the final prefix has degree at most $h$.
Thus $\deg_a\Lambda_h\le h+1$.

At first order, set
\[
 \alpha_r=\zeta^{n+2r},\qquad
 \bar x_r=\alpha_r+\alpha_r^{-1},\qquad
 \bar y_r=\alpha_r-\alpha_r^{-1}\quad(0\le r<\ell).
\]
Summing the derivatives of the factors gives
\begin{align*}
 \Lambda_1(a;X)
 &=-\sum_{r=0}^{\ell-1}
   \frac{\bigl(a(n+2r)+\ell a(a-1)\bigr)\bar y_r}{X-\bar x_r}\\*
 &\quad-\sum_{r=0}^{b-1}
   \frac{(n+2\ell a+2r)\bar y_r}{X-\bar x_r}.
\end{align*}
Its coefficient of $a^2$ is
\[
 -\ell\sum_{r=0}^{\ell-1}\frac{\bar y_r}{X-\bar x_r}=0.
\]
Indeed, the involution $r\mapsto-n-r$ sends $\alpha_r$ to
$\alpha_r^{-1}$, so it preserves $\bar x_r$ and negates
$\bar y_r$. At a fixed residue, $\bar y_r=0$. It follows that
$\deg_a\Lambda_1\le1$.

For $h\ge2$, the earlier estimate gives
$\deg_a\Lambda_h\le h+1\le2h-1$. In the exponential, a product
of $s$ positive-order logarithmic coefficients of total order $h$
has degree at most $2h-s\le2h-1$. This proves the degree bound for
$K_{n,b,h}$. Its possible denominators are products of the node
factors $X-\bar x_r$, all of which divide $W$. The constant term
of the quotient is one, giving $K_{n,b,0}=1$.
\end{proof}

\begin{lemma}\label{lem:monic-period-integrality}
Let $D$ be a domain with fraction field $E$, let $W\in D[X]$ be
monic of positive degree, and let $d(v)\in D[v]$ satisfy $d(0)=1$.
Suppose that, inside $\widehat{E[X]}^{\,(W)}[W^{-1}]$,
\[
 f\in E[X,W^{-1},d(W)^{-1}]
       \cap\widehat{D[X]}^{\,(W)}.
\]
If $d(W)^e f\in E[X,W^{-1}]$ for an integer $e\ge0$, then
$d(W)^e f\in D[X]$.
\end{lemma}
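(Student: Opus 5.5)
The plan is to work entirely with $W$-adic digit expansions. Since $W$ is monic of degree $\ell=\deg W>0$, I would use the following fact for any domain $D$. Every $P\in D[X]$ has a unique finite expansion $P=\sum_{i\ge0}r_iW^i$ with $r_i\in D[X]_{<\ell}$, the polynomials of degree less than $\ell$. This follows from repeated monic division by $W$: both quotient and remainder stay in $D[X]$, and degree comparison gives uniqueness. Consequently, as a $D[X]$-module,
\[
 \widehat{D[X]}^{\,(W)}\simeq D[X]_{<\ell}[[W]],
\]
and the same description holds with $E$ in place of $D$. The natural map $\widehat{D[X]}^{\,(W)}\to\widehat{E[X]}^{\,(W)}$ is then the coefficientwise inclusion $D[X]_{<\ell}\subset E[X]_{<\ell}$, so it is injective.

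Inverting $W$, every element of $\widehat{E[X]}^{\,(W)}[W^{-1}]$ has a unique expansion $\sum_{i\ge -N}r_iW^i$ with $r_i\in E[X]_{<\ell}$. Uniqueness holds because $W$ is a nonzerodivisor on the power series module. Under this description, the elements of $E[X,W^{-1}]$ are exactly those expansions with only finitely many nonzero digits.

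With this in place, put $g=d(W)^ef$. The first step is to note that $d(W)^e\in D[X]\subset\widehat{D[X]}^{\,(W)}$. Since $f$ lies in the subring $\widehat{D[X]}^{\,(W)}$ of $\widehat{E[X]}^{\,(W)}[W^{-1}]$, the product $g$ also lies in $\widehat{D[X]}^{\,(W)}$. Its expansion therefore has no negative powers of $W$, and every digit lies in $D[X]_{<\ell}$. The second step uses the hypothesis $g\in E[X,W^{-1}]$, which gives a second expansion with only finitely many nonzero digits over $E$. By uniqueness of expansions in the common ring $\widehat{E[X]}^{\,(W)}[W^{-1}]$, the two expansions coincide. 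Hence $g=\sum_{i=0}^{M}r_iW^i$ with $r_i\in D[X]$, and so $g\in D[X]$.

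The main point needing care is the compatibility of the ambient rings. The multiplication $d(W)^e\cdot f$ must be computed in the same ring in which the expansion of $g$ as an element of $E[X,W^{-1}]$ is read off. This requires that $E[X,W^{-1}]$ and $\widehat{D[X]}^{\,(W)}$ both embed compatibly into $\widehat{E[X]}^{\,(W)}[W^{-1}]$ as subrings, which I would check from the digit description above. The injectivity of $E[X]\to\widehat{E[X]}^{\,(W)}$ follows because $\bigcap_h W^hE[X]=0$ by degree. The factor $d(W)^{-1}$ in the hypothesis on $f$ is needed only to give $f$ a meaning in the ambient ring; the condition $d(0)=1$ is what makes $d(W)$ invertible in the $W$-adic completion, so that this membership is consistent.
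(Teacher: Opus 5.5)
Your proposal is correct and follows essentially the same route as the paper: write everything in the unique $W$-adic digit expansion obtained from monic division. Membership in $E[X,W^{-1}]$ makes the expansion of $d(W)^e f$ finite, while membership in $\widehat{D[X]}^{\,(W)}$ rules out negative indices and puts every digit in $D[X]$. Your extra checks, that the ambient rings embed compatibly and that $W$ is a nonzerodivisor on the completion, spell out details the paper's proof leaves implicit.
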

\begin{proof}
Division by the monic polynomial $W$ gives unique expansions
$\sum_a r_a(X)W^a$ with $\deg r_a<\deg W$.
In $E[X,W^{-1}]$ this sum is finite and may have negative indices.
An element that also belongs to the $W$-adic completion has no
negative coordinates, so it lies in $E[X]$.
Apply this to $d(W)^e f$. Its coordinates lie in $D[X]$, since
$f$ belongs to the $D$-coefficient completion. Only finitely many
are nonzero, and their sum belongs to $D[X]$.
\end{proof}

\begin{theorem}\label{thm:rank-full-series}
Let $F=\sum_{k\ge0}c_k(q)B_k^{(n)}\in\C_n$. Suppose
$d(v)\in\mathbb Z[\zeta][v]$, $d(0)=1$, and $\beta\ge1$ satisfy
\begin{equation}\label{eq:rank-block-hypothesis}
 d(v)^{\beta+2m}
 \sum_{a\ge0}[\epsilon^m]c_{\ell a+b}(\zeta(1+\epsilon))v^a
       \in\mathbb Z[\zeta][v]
\end{equation}
for every $b<\ell$ and $m\ge0$. Then
\begin{equation}\label{eq:rank-full-denominator}
 [\epsilon^m]F(X;\zeta(1+\epsilon))
  =\frac{U_{n,\zeta,m}(X)}
         {d(W_{n,\zeta}(X))^{\beta+2m}},
 \qquad U_{n,\zeta,m}\in\mathbb Z[\zeta][X].
\end{equation}
Every positive-order kernel correction has $d(W)$-denominator
exponent at most $\beta+2m-1$.
\end{theorem}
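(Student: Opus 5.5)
The proof expands $[\epsilon^m]F(X;\zeta(1+\epsilon))$ by residue class and period, and turns sums over the block index into Euler operators applied to the block generating functions. Write the coefficients as $c_{\ell a+b}(\zeta(1+\epsilon))=\sum_{i}\gamma_{b,i}(a)\epsilon^i$ and put $H_{b,i}(v)=\sum_a\gamma_{b,i}(a)v^a$. By \eqref{eq:rank-kernel-correction},
\[
 [\epsilon^m]F=\sum_{b<\ell}B_b^\zeta\sum_{i+h=m}\sum_{a\ge0}\gamma_{b,i}(a)W^aK_{n,b,h}(a;X).
\]
By \cref{lem:rank-kernel-degree}, $K_{n,b,h}(a;X)=\sum_{t\le 2h-1}\kappa_{b,h,t}(X)a^t$ for $h\ge1$. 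The coefficients $\kappa_{b,h,t}$ have poles only on $W=0$, and $K_{n,b,0}=1$. Hence the inner sum equals $\sum_t\kappa_{b,h,t}(X)\,(\vartheta^tH_{b,i})(W)$ with $\vartheta=v\,d/dv$, after substituting $v=W$. This rearrangement is legitimate in the $W$-adic completion, because \eqref{eq:period-kernel-adic-bound} gives convergence and allows the sum to be regrouped by powers of $a$ within each fixed $\epsilon$-order.

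Next comes the denominator count. By hypothesis $H_{b,i}\in d(v)^{-(\beta+2i)}\mathbb Z[\zeta][v]$. Each Euler derivative raises the $d$-exponent by at most one, so $\vartheta^tH_{b,i}$ has $d$-exponent at most $\beta+2i+t$. The $h=0$ term gives $\beta+2m$. For $h\ge1$ we have $t\le 2h-1$, so the exponent is at most $\beta+2(m-h)+2h-1=\beta+2m-1$. This is the asserted bound on the kernel corrections, and it places every term in $E[X,W^{-1},d(W)^{-1}]$ with $E=\mathbb Q(\zeta)$. Since $d(0)=1$, the extra $W$-poles from the $\kappa$'s do not interact with $d(W)$.

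Finally, to obtain integrality and to eliminate the possible negative powers of $W$, I apply \cref{lem:monic-period-integrality} with $D=\mathbb Z[\zeta]$, $e=\beta+2m$, and $f=[\epsilon^m]F$. Two inputs are needed.

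First, $f$ lies in $\widehat{D[X]}^{(W)}$. This holds because each term $c_k B_k^{(n)}$ has integral $\epsilon^m$-coefficient divisible by $W^{\max(a-m,0)}$, again by \eqref{eq:period-kernel-adic-bound}.

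Second, $d(W)^e f\in E[X,W^{-1}]$. This follows from the representation above: the numerators are polynomial in $X$ and $W$, and the $\kappa$'s contribute only powers of $W$ in the denominator. Here one has to check that the $\kappa_{b,h,t}$ are genuinely in $E[X,W^{-1}]$. Their denominators are products of node factors $X-\bar x_r$, each dividing $W$, so multiplying by a power of $W$ clears them.

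The lemma then gives $d(W)^e f\in D[X]$, which is \eqref{eq:rank-full-denominator}.

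I expect the main obstacle to be the justification of the regrouping step. Interchanging the $a$-summation with the expansion in powers of $a$ inside $\widehat{E[X]}^{(W)}[W^{-1}]$ is not immediate, because individual $\kappa$'s carry negative powers of $W$. I would handle this by truncating at a finite block $a\le A$, comparing the two sides modulo $W^{A-m-c}$ for a fixed $c$ bounding the $W$-pole order of the $\kappa$'s, and letting $A\to\infty$.
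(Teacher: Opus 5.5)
Your proposal is correct and follows the paper's proof essentially step for step: the same decomposition of $[\epsilon^m]F$ by kernel order $h$, with $K_{n,b,h}(\vartheta_v;X)$ acting on $H_{b,m-h}$ at $v=W$, the same exponent count $\beta+2(m-h)+2h-1=\beta+2m-1$, and the same appeal to \cref{lem:monic-period-integrality} to remove the $W$-poles and get integrality. There are two minor differences. The paper applies that lemma to each kernel-order piece separately, so each positive-order correction times $d(W)^{\beta+2m-1}$ is itself an integral polynomial, a form reused for the figure-eight in \cref{thm:trefoil-root-jets}; you apply it once to the total. You also spell out the justification for regrouping the $a$-sum, which the paper takes for granted.
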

\begin{proof}
Write $H_{b,j}(v)$ for the block series in
\eqref{eq:rank-block-hypothesis}, and set $\vartheta_v=v\,d/dv$.
Multiplication of its $a$th coefficient by a polynomial in $a$
is represented by the same polynomial in $\vartheta_v$. Consequently
the contribution with kernel order $h$ to the full coefficient of
order $m$ is
\[
 S_{m,h}(X)=\sum_{b=0}^{\ell-1}B_b^\zeta(X)
 \left.K_{n,b,h}(\vartheta_v;X)H_{b,m-h}(v)\right|_{v=W(X)},
 \qquad 0\le h\le m.
\]
For $h\ge1$, \cref{lem:rank-kernel-degree} bounds the differential
order by $2h-1$. Each derivative increases the $d(v)$-denominator
exponent by at most one. Thus $S_{m,h}$ has
$d(W)$-denominator exponent at most
\[
 \beta+2(m-h)+2h-1=\beta+2m-1.
\]
For $h=0$, the exponent is at most $\beta+2m$. Any additional
possible poles come from $W=0$.

For each $S_{m,h}$, \eqref{eq:period-kernel-adic-bound} shows
that its defining sum converges in the $W$-adic completion of
$\mathbb Z[\zeta][X]$. After multiplication by the stated power of
$d(W)$, the rational expression lies in
$\mathbb Q(\zeta)[X,W^{-1}]$.
\Cref{lem:monic-period-integrality} first removes its possible
negative $W$-coordinates and then shows that its polynomial
coefficients are integral. Thus
\[
 d(W)^{\beta+2m}S_{m,0}\in\mathbb Z[\zeta][X],\qquad
 d(W)^{\beta+2m-1}S_{m,h}\in\mathbb Z[\zeta][X]
       \quad(1\le h\le m).
\]
Summing over $h$ proves \eqref{eq:rank-full-denominator} and the
stronger bound for every positive-order kernel correction.
\end{proof}

At $q=1$ the scalar series $\sum_kc_{K,n,k}(1)v^k$ is
$\delta_K(v)^{-(n-1)}$. Writing it as a matrix coefficient of
$(I-vM_0)^{-1}$ shows that $\delta_K(v)^{n-1}\mid\det(I-vM_0)$ in
$\mathbb Q[v]$ for any regular transfer matrix realizing these
coefficients.

\subsection{Cyclotomic recentering}\label{sec:recentering}
Fix $\Delta\in\mathbb Z[z^{\pm1}]$ with $\Delta(z)=\Delta(z^{-1})$
and $\Delta(1)=1$. For a root of unity $\zeta$, set
\[
 \ell_\zeta=\ord(\zeta^2),\qquad
 \sigma_\zeta=\zeta^{n\ell_\zeta},\qquad
 D_\zeta(z)=\Delta(\sigma_\zeta z^{\ell_\zeta}).
\]
By \eqref{eq:period-kernel-adic-bound}, every element of $\mathcal C_n$
has a full Taylor expansion at $q=\zeta$ in the specialized Newton
completion. Let $\mathcal C_{n,\Delta}^{\mathrm{rat}}$ be the subring
whose Taylor coefficients, after $X=z+z^{-1}$, have rational
representatives
\[
 f_\zeta(z,t)\in
 \mathbb Z[\zeta,z^{\pm1},D_\zeta(z)^{-1}][[t]],
 \qquad t=q-\zeta,
\]
for every $\zeta$. The representation is unique: the rational ring
embeds in the period completion, and $D_\zeta$ is a unit there
because it has value $\Delta(1)=1$ at every color node.

\begin{theorem}\label{thm:rank-recentering}
Let $F\in\mathcal C_{n,\Delta}^{\mathrm{rat}}$. Suppose that roots
$\zeta,\zeta'$ lie in a finite $p$-adic cyclotomic field with ring
of integers $\mathcal O$, and $v_p(\zeta'-\zeta)>0$.
In the common completed ring
\[
 \mathscr R=\widehat{\mathcal O[z^{\pm1},
                 D_\zeta(z)^{-1},D_{\zeta'}(z)^{-1}]}_p
\]
one has
\begin{equation}\label{eq:rank-recentering}
 f_\zeta(z,t+\zeta'-\zeta)=f_{\zeta'}(z,t).
\end{equation}
The map $F\mapsto(f_\zeta)_\zeta$ is injective and multiplicative.
\end{theorem}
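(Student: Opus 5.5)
The plan is to compare both sides through the Newton coefficients $c_k(q)\in R$ of $F$. There $q$ is a Laurent variable, and recentering is literally the Taylor expansion of one Laurent polynomial at two nearby points. First I record what the expansion $f_\zeta$ is. By \eqref{eq:period-kernel-adic-bound}, for each order $h$ the series
\[
 \sum_k [t^h]\bigl(c_k(q)B_k^{(n)}(X;q)\bigr)\big|_{q=\zeta+t}
\]
converges in the $W_{n,\zeta}$-adic completion of $\mathbb Z[\zeta][X]$. The hypothesis $F\in\mathcal C^{\mathrm{rat}}_{n,\Delta}$ says that this limit equals the image of $[t^h]f_\zeta$. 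The image is taken under the embedding of $\mathbb Z[\zeta,z^{\pm1},D_\zeta^{-1}]$ into the period completion, after symmetrization in $z$.

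Second, I pass to the $p$-adic setting. I choose $\mathcal O$ and note that $D_\zeta$ and $D_{\zeta'}$ are congruent modulo the maximal ideal. The reason is that $\zeta\equiv\zeta'$ forces $\sigma_\zeta z^{\ell_\zeta}$ and $\sigma_{\zeta'}z^{\ell_{\zeta'}}$ to agree modulo $\mathfrak m$ up to Frobenius-type exponent changes. I expect this to need care: $\ell_\zeta$ and $\ell_{\zeta'}$ differ by a power of $p$. I would use $\Delta(w^{p})\equiv\Delta(w)^{p}$ modulo $p$, exactly as in \eqref{eq:rank-frob-inverse}, to see that each of $D_\zeta$, $D_{\zeta'}$ is invertible in $\mathscr R$ whenever the other is inverted. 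Granting this, $f_\zeta(z,t+\tau)$ with $\tau=\zeta'-\zeta\in\mathfrak m$ is defined in $\mathscr R[[t]]$. Indeed, the coefficient of $t^h$ is $\sum_{j\ge h}\binom{j}{h}\tau^{j-h}[t^j]f_\zeta$, which converges $p$-adically since the $[t^j]f_\zeta$ are integral. This sum uses only finitely many terms modulo any fixed power of $p$, which is the finite-precision claim.

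Third, the identity. I truncate $F$ to a Newton partial sum $F_{<K}$, a polynomial in $(X,q)$. For it, \eqref{eq:rank-recentering} is the Taylor formula for a polynomial in $q$ over $\mathcal O[X]$. To pass to the limit, I must show that the tail $F-F_{<K}$ contributes, in every fixed $t$-order after recentering, something tending to zero in $\mathscr R$ as $K\to\infty$. This is the main obstacle: the two expansions converge in different completions, one $W_{n,\zeta}$-adic and one $W_{n,\zeta'}$-adic. I would try to show that $W_{n,\zeta}$ and $W_{n,\zeta'}$ both lie in the topologically nilpotent part modulo $p$, using $\Theta_{\ell p^e}\equiv\Theta_\ell^{p^e}$ modulo $p$. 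Then both completions map continuously to a common $(p,W)$-adic completion into which $\mathscr R$ also injects, since $D_\zeta$ is congruent to $1$ modulo $W$. In that common ring both sides are limits of the same polynomial identities, hence equal. Injectivity of $\mathscr R$ into that ring then gives \eqref{eq:rank-recentering}.

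Finally, multiplicativity holds because each $f_\zeta$ is obtained from the multiplicative full Taylor map at $\zeta$, and the rational representative is unique. For injectivity, the case $\zeta=1$ already suffices: $f_1$ is the image of $F$ in $\mathbb Z[[u,\eta]]$, up to the change $u=z+z^{-1}-2$. That map is injective by \cref{thm:analytic-bidisc} together with \cref{lem:diagonal-faithfulness}.
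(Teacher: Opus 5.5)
Your proposal is correct and follows essentially the paper's route. You verify recentering on the polynomial Newton partial sums, where it is just the Taylor formula in $q$, inside a single complete ring in which both the $\zeta$- and $\zeta'$-expansions converge and $t\mapsto t+\zeta'-\zeta$ is continuous; you pull the identity back to $\mathscr R[[t]]$ by injectivity; and you get injectivity of $F\mapsto(f_\zeta)_\zeta$ from the component at $\zeta=1$.

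The only difference is the choice of common ring. The paper substitutes $z=(\zeta')^n(1+w)$ and works in the local ring $\mathcal O[[w,t]]$, where $D_\zeta$ and $D_{\zeta'}$ are visibly $\equiv1$ modulo $(\varpi,w)$ and injectivity of $\mathscr R[[t]]$ is just injectivity of a polynomial substitution. In your semi-local $(p,W)$-adic completion, the congruence $D_\zeta\equiv1\pmod W$ only produces the map, and injectivity needs a separate argument, for instance Krull's intersection theorem modulo each $p^v$ or projection onto the paper's local factor at $z=(\zeta')^n$.
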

\begin{proof}
Put $c=\zeta'-\zeta$ and write
$f_\zeta(z,t)=\sum_{j\ge0}f_{\zeta,j}(z)t^j$. The coefficient
of $t^h$ after recentering is
\[
 \sum_{j\ge h}\binom jh c^{j-h}f_{\zeta,j}(z).
\]
It converges in $\mathscr R$ because the coefficients are integral
there and $v_p(c)>0$.

Choose a uniformizer $\varpi$ of $\mathcal O$ and substitute
$z=(\zeta')^n(1+w)$. The two inverted denominators have unit
constant coefficient modulo $\varpi$. Indeed, $\zeta$ and
$\zeta'$ have the same residue, and
\[
 \sigma_\zeta((\zeta')^n)^{\ell_\zeta}
       \equiv\zeta^{2n\ell_\zeta}=1\pmod\varpi.
\]
The corresponding identity for $\zeta'$ is exact. Polynomial
substitution is injective modulo every power of $\varpi$;
localization by these units preserves injectivity. Taking the
separated limit gives
$\mathscr R[[t]]\hookrightarrow\mathcal O[[w,t]]$.

In this target the Newton series is
\begin{equation}\label{eq:recentering-common-series}
 \sum_{k\ge0}c_k(\zeta'+t)
 B_k^{(n)}\bigl((\zeta')^n(1+w)
             +(\zeta')^{-n}(1+w)^{-1};\zeta'+t\bigr).
\end{equation}
Let $d_0$ be the multiplicative order of the residue of
$(\zeta')^2$ and put $\mathfrak m=(\varpi,w,t)$. Every node
factor whose index is divisible by $d_0$ belongs to $\mathfrak m$.
Hence
\[
 B_k^{(n)}\bigl((\zeta')^n(1+w)
             +(\zeta')^{-n}(1+w)^{-1};\zeta'+t\bigr)
       \in\mathfrak m^{\lfloor k/d_0\rfloor}.
\]
The same estimate holds with the quantum center $\zeta$, since
the two centers have the same residue. It proves convergence of
both Newton series. The substitution $t\mapsto t+c$ preserves
$\mathfrak m$ and is continuous. The recentering identity for
polynomial partial sums therefore passes to the limit and gives
\eqref{eq:rank-recentering} in the target. Injectivity of the
substitution proves it in $\mathscr R[[t]]$.

Taylor expansion is multiplicative. If every component vanishes,
the component at $\zeta=1$ has zero joint Taylor series at
$(q,z)=(1,1)$. The injective substitution
$u=z+z^{-1}-2$, followed by \cref{thm:quantum-neighborhoods},
then gives $F=0$.
\end{proof}

For multiplicative coordinates put $\gamma=\zeta'/\zeta$ and
$f_\zeta=\sum_{j\ge0}F_{\zeta,j}(q/\zeta-1)^j$.
If $e\ge1$ satisfies $e\,v_p(\gamma-1)\ge1$, then
\begin{equation}\label{eq:rank-finite-recentering}
 F_{\zeta',h}\equiv\gamma^h
       \sum_{j=h}^{h+ev-1}\binom jh(\gamma-1)^{j-h}F_{\zeta,j}
       \pmod{p^v},\qquad h\ge0,\quad v\ge1.
\end{equation}
This follows from $q/\zeta-1=(\gamma-1)+\gamma(q/\zeta'-1)$, since
every omitted term has valuation at least $v$. If $\gamma$ is a primitive
$p^{a+1}$th root, one may take $e=p^a(p-1)$.

\subsection{Finite reconstruction of normal knot jets}\label{sec:finite-normal-jets}
Fix $n\ge2$, $1\le s\le\lfloor n/2\rfloor$, and a root of unity
$\zeta$. At $q=\zeta$, some color nodes coincide with the earlier node
$a_{n-2s}$, and we show that each normal coefficient there is a
finite linear combination of Newton coefficients, with a sharp
cutoff. Write $\ell=\ord(\zeta^2)$ and $u=X-a_{n-2s}(q)$.
By \eqref{eq:earlier-kernel}, the boundary kernel is
\begin{equation}\label{eq:finite-normal-kernel}
 \mathcal B_k(u,q):=B_k^{(n)}(a_{n-2s}(q)+u;q)
       =\prod_{i=0}^{k-1}\bigl(u-\{s+i\}\{n-s+i\}\bigr).
\end{equation}
Let the distinct collision residues, in increasing order, be
\begin{equation}\label{eq:normal-collision-residues}
 \{\rho_0<\cdots<\rho_{\nu-1}\}
  =\{(-s)\bmod\ell,(s-n)\bmod\ell\}\subset\{0,\ldots,\ell-1\}.
\end{equation}
Thus $\nu$ is one or two. Put
\begin{equation}\label{eq:normal-finite-cutoff}
 e_\partial=2/\nu,\qquad
 N_j=\ell\left\lfloor\frac j\nu\right\rfloor+\rho_{j\bmod\nu},
 \qquad N_{j,m}=N_{j+\lfloor m/e_\partial\rfloor}.
\end{equation}
The integer $e_\partial$ is two precisely when $\ell\mid n-2s$.

\begin{theorem}\label{thm:finite-normal-jets}
Let $F=\sum_{k\ge0}c_k(q)B_k^{(n)}\in\mathcal C_n$, and write
$t_j(F)=t_{n,s,j}(F)\in\mathcal H$ for its normal coefficients.
For every $j,m\ge0$,
\begin{align}
 t_j(F)(\zeta)
   &=\sum_{k=0}^{N_j}c_k(\zeta)[u^j]\mathcal B_k(u,\zeta),
       \label{eq:finite-normal-value}\\
 [\epsilon^m]t_j(F)(\zeta(1+\epsilon))
   &=[u^j\epsilon^m]\sum_{k=0}^{N_{j,m}}
       c_k(\zeta(1+\epsilon))
       \mathcal B_k(u,\zeta(1+\epsilon)).
       \label{eq:finite-mixed-normal}
\end{align}
Both expressions belong to $\mathbb Z[\zeta]$. The upper index
$N_j$ in \eqref{eq:finite-normal-value} is sharp for arbitrary
integral Newton elements.
\end{theorem}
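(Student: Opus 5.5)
Start from the explicit normal-coefficient formula of \cref{thm:all-boundaries}: $t_j(F)=\sum_k c_k[u^j]\mathcal B_k(u,q)$, converging in $\HH$. Evaluation at $\zeta$ is a continuous homomorphism $\HH\to\mathbb Z[\zeta]$, because every Habiro factorial $(q;q)_M$ with $M\ge\ord(\zeta)$ vanishes at $\zeta$. Hence $t_j(F)(\zeta)$ is computed from any sufficiently long partial sum. So it suffices to show that $[u^j]\mathcal B_k(u,\zeta)=0$ for every $k>N_j$. Then the series for $t_j(F)(\zeta)$ stabilizes, its value is the finite sum \eqref{eq:finite-normal-value}, and that sum visibly lies in $\mathbb Z[\zeta]$.

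For the vanishing, look at the factors $u-\{s+i\}\{n-s+i\}$ with $i<k$. At $q=\zeta$ the constant term vanishes exactly when $\ell\mid s+i$ or $\ell\mid n-s+i$, that is, when $i\bmod\ell$ is one of the residues $\rho_0,\dots,\rho_{\nu-1}$. In that case the factor is exactly $u$. Counting such indices $i\le k-1$ in increasing order, the $(j+1)$st one is $N_j=\ell\lfloor j/\nu\rfloor+\rho_{j\bmod\nu}$. So for $k>N_j$ the specialized kernel is divisible by $u^{j+1}$, and its $u^j$ coefficient vanishes.

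For sharpness, take $F=B_{N_j}^{(n)}$, which is integral. Its $u^j$ coefficient at $\zeta$ is the product of the nonvanishing constant terms $-\{s+i\}\{n-s+i\}(\zeta)$ over the non-collision indices $i<N_j$. This product is nonzero. The truncated sum with upper index $N_j-1$ gives zero for this $F$, so no smaller cutoff works.

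For the mixed statement, write $q=\zeta(1+\epsilon)$. At a collision index $i$ the constant term $g_i(q)=\{s+i\}\{n-s+i\}$ vanishes at $\zeta$ to order $e_\partial$ in $\epsilon$. Indeed, when $\nu=1$ both quantum integers vanish at $\zeta$, each simply; this is the case $\ell\mid n-2s$, where the two residues coincide. When $\nu=2$ exactly one of them vanishes. I should check that each quantum integer vanishes to order one in $\epsilon$ at $\zeta$, which holds since $q^{2j}-1$ has simple roots. In the expansion of $\mathcal B_k$, a monomial picking $u$ from a set $S$ of factors has $u$-degree $|S|$. Among the $c$ collision factors not in $S$, each contributes order at least $e_\partial$ in $\epsilon$. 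For $[u^j\epsilon^m]$ we need $|S|=j$, so we need $e_\partial(c-j)\le m$, which is the same as $c\le j+\lfloor m/e_\partial\rfloor$. If $k>N_{j+\lfloor m/e_\partial\rfloor}$, then there are at least $j+\lfloor m/e_\partial\rfloor+1$ collision indices below $k$, so every such term has $\epsilon$-order greater than $m$. The coefficient $c_k(\zeta(1+\epsilon))$ is a power series in $\epsilon$ and cannot lower this order.

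The main obstacle is justifying that $[\epsilon^m]$ commutes with the Habiro-convergent sum. I would handle it with the weak topology: the map $\HH\to\mathbb Z[\zeta][[\epsilon]]/(\epsilon^{m+1})$ given by $h\mapsto h(\zeta(1+\epsilon))$ is continuous, since $v_{\Phi_d}((q;q)_M)\to\infty$ for $d=\ord\zeta$ and $\Phi_d(\zeta(1+\epsilon))$ has $\epsilon$-order one. Once this is in place, truncation is legitimate, and integrality in $\mathbb Z[\zeta]$ follows from $c_k\in R$.
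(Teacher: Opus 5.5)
Your proposal is correct and takes essentially the same route as the paper. Both arguments count the collision indices to get the $u$-order of $\mathcal B_k(u,\zeta)$, use the $\epsilon$-order $e_\partial$ of each vanishing constant term to bound $[u^j]\mathcal B_k(u,\zeta(1+\epsilon))$, and take $B_{N_j}^{(n)}$ as the sharpness example. You also check explicitly that $h\mapsto h(\zeta(1+\epsilon)) \bmod \epsilon^{m+1}$ is continuous on $\mathcal H$, so it commutes with the Habiro-convergent series for $t_j(F)$; the paper leaves this step implicit.
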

\begin{proof}
Let
\begin{equation}\label{eq:collision-count}
 M_{n,s,\ell}(k)=\#\{0\le i<k:
                 i\equiv-s\text{ or }i\equiv s-n\pmod\ell\}.
\end{equation}
A constant factor $\{s+i\}_\zeta\{n-s+i\}_\zeta$ vanishes
exactly at these indices. Every such index contributes one factor
$u$ to \eqref{eq:finite-normal-kernel}; all other constant terms
are nonzero. Hence
\begin{equation}\label{eq:collision-order}
 \ord_u\mathcal B_k(u,\zeta)=M_{n,s,\ell}(k).
\end{equation}
The index $N_j$ is the location of the $(j+1)$st vanishing factor.
For $k>N_j$ the order in \eqref{eq:collision-order} is at least
$j+1$, proving \eqref{eq:finite-normal-value}. At $k=N_j$ that
order is exactly $j$ and the coefficient of $u^j$ is a nonzero
product. The Newton element $B_{N_j}^{(n)}$ proves sharpness.

At a collision index the scalar
$\{s+i\}\{n-s+i\}$ has exact $\epsilon$-adic order
$e_\partial$ after $q=\zeta(1+\epsilon)$. Indeed, each quantum
integer with a positive index has a simple zero at a root where
it vanishes. Exactly one vanishes when the two residue classes
are distinct, and both vanish when they coincide. In a coefficient
of $u^j$, at most $j$ of these scalar factors are omitted. Thus
\[
 [u^j]\mathcal B_k(u,\zeta(1+\epsilon))
       \in\epsilon^{e_\partial\max(M_{n,s,\ell}(k)-j,0)}
                   \mathbb Z[\zeta][[\epsilon]].
\]
Multiplication by the Laurent coefficient $c_k$ cannot lower this
order. For $k>N_{j,m}$ it exceeds $m$, proving
\eqref{eq:finite-mixed-normal}. The finite sums use integral
Laurent polynomials and their Hasse coefficients, which also
proves integrality.
\end{proof}

In rank two at its boundary $X=2$, the formulas become
\begin{equation}\label{eq:rank-two-normal-cutoff}
 N_j=(j+1)\ell-1,\qquad
 N_{j,m}=\bigl(j+\lfloor m/2\rfloor+1\bigr)\ell-1.
\end{equation}
For $n=3$, $s=1$, and $\ell=5$, the collision indices are
$3,4,8,9,\ldots$, so $N_0=3$, $N_1=4$, and $N_2=8$.
For the same rank at $q=i$ the residues are $0,1$, giving $N_j=j$.

Combined with the inverse Newton transform, the cutoff gives
reconstruction from finitely many colored invariants known to finite
quantum precision.

\begin{corollary}\label{cor:finite-normal-color-data}
Fix $j,m$, let $N=N_{j,m}$, and set
\begin{equation}\label{eq:normal-color-precision-loss}
 d_N=\max_{0\le r\le N}
    \sum_{\substack{0\le i\le N\\i\ne r}}
       \bigl(\mathbf 1_{\ell\mid r-i}
             +\mathbf 1_{\ell\mid n+r+i}\bigr)\le2N.
\end{equation}
The finite Taylor data
\begin{equation}\label{eq:normal-color-data}
 J_r^{(n)}(K;\zeta(1+\epsilon))
        \pmod{\epsilon^{m+d_N+1}},\qquad 0\le r\le N,
\end{equation}
determine
$[\epsilon^m]\tau_{K,j}^{(n,s)}(\zeta(1+\epsilon))$.
In particular, the Laurent polynomials
$J_0^{(n)}(K;q),\ldots,J_{N_j}^{(n)}(K;q)$ determine
$\tau_{K,j}^{(n,s)}(\zeta)$.
\end{corollary}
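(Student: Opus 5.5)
The plan is to combine \cref{thm:finite-normal-jets} with the inverse Newton transform of \cref{prop:inverse}, read modulo a power of $\epsilon$. By \eqref{eq:finite-mixed-normal}, the coefficient $[\epsilon^m]\tau_{K,j}^{(n,s)}(\zeta(1+\epsilon))$ is determined by the jets $c_{K,n,k}(\zeta(1+\epsilon))\bmod\epsilon^{m+1}$ for $0\le k\le N$. The kernels $\mathcal B_k(u,\zeta(1+\epsilon))$ are explicit Laurent polynomials, and $c_k$ and $\mathcal B_k$ both have nonnegative $\epsilon$-order. So it suffices to recover each $c_k\bmod\epsilon^{m+1}$, $k\le N$, from the color data \eqref{eq:normal-color-data}.

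To recover these jets I would apply \eqref{eq:inverse} with $f_r=J_r^{(n)}(K;q)$:
\[
 c_k=\sum_{j\le k}f_j\prod_{i\ne j,\,i\le k}(x_j^{(n)}-x_i^{(n)})^{-1}.
\]
Everything is computed in $\mathbb Q(\zeta)((\epsilon))$ after $q=\zeta(1+\epsilon)$. By \eqref{eq:node-difference}, each factor $x_j-x_i=\{j-i\}\{n+i+j\}$ has $\epsilon$-order $\mathbf 1_{\ell\mid j-i}+\mathbf 1_{\ell\mid n+i+j}$. This uses that a quantum integer with nonzero index has a simple zero at a root of unity where it vanishes, and that the indices $j-i\ne0$ and $n+i+j>0$ are nonzero. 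Hence the Lagrange weight of $f_j$ has $\epsilon$-order at least $-\sum_{i\ne j,\,i\le k}(\cdots)$. Enlarging the index range from $i\le k$ to $i\le N$ only adds nonnegative terms, so this order is at least $-d_N$. Consequently, knowing each $f_j$ modulo $\epsilon^{m+d_N+1}$ determines $c_k$ modulo $\epsilon^{m+1}$. The true $c_k$ is integral, so the recovered truncation is its jet.

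The bound $d_N\le2N$ follows because each of the $N$ indices $i\ne r$ contributes at most $2$. For the last assertion, the exact Laurent polynomials $J_0,\dots,J_{N_j}$ give the data at every precision, and $N_{j,0}=N_j$. One can also use \eqref{eq:finite-normal-value} directly, since exact $c_k(q)$ for $k\le N_j$ come from the triangular system over $\mathbb Q(q)$ and can then be specialized at $q=\zeta$.

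The only delicate step is the order bookkeeping: checking that the denominator loss is uniform in $k\le N$ and $j\le k$, which the max in $d_N$ is designed for. Beyond that, one must note that nonnegative-order quantities multiplied by $\epsilon^{-d}$ lose at most $d$ orders, so the computation stays well defined in Laurent series.
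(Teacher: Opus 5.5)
Your proposal is correct and follows essentially the same route as the paper's proof. You invert the triangular color system through the divided-difference weights and bound each weight's $\epsilon$-order loss by $d_N$ using the node-difference factorization. You then recover $c_{K,n,k}\bmod\epsilon^{m+1}$ for $k\le N$ and apply \eqref{eq:finite-mixed-normal}. Your handling of the final assertion, either via $m=0$ and $N_{j,0}=N_j$ or by working with the exact Laurent polynomials before specializing at $\zeta$, also matches the paper.
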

\begin{proof}
Before specialization, the divided-difference formula is
\[
 c_{K,n,k}(q)=\sum_{r=0}^k
       \frac{J_r^{(n)}(K;q)}
       {\prod_{\substack{0\le i\le k\\i\ne r}}
                         (x_r^{(n)}-x_i^{(n)})}.
\]
It is equivalent to \eqref{eq:inverse}. Each factor in the
denominator has $\epsilon$-order
$\mathbf 1_{\ell\mid r-i}+\mathbf 1_{\ell\mid n+r+i}$ by the
node-difference factorization. For $k\le N$, the total order is
at most $d_N$. Division in $\mathbb Q(\zeta)((\epsilon))$
therefore recovers each $c_{K,n,k}$ modulo $\epsilon^{m+1}$
from \eqref{eq:normal-color-data}. Formula
\eqref{eq:finite-mixed-normal} then gives the required coefficient.
For $m=0$ the same argument may be performed on the exact Laurent
polynomials before evaluating at $\zeta$.
\end{proof}

If the specialized knot element is rational, its normal sequence
satisfies a linear recurrence.

\begin{proposition}\label{prop:rational-normal-recurrence}
Suppose that at the chosen boundary
\[
 F_{K,n}(a_{n-2s}(\zeta)+u;\zeta)=\frac{P(u)}{D(u)},\qquad
 P,D\in\mathbb Z[\zeta][u],\quad D(0)=1.
\]
Write $D(u)=1+\sum_{a=1}^h d_au^a$ and
$\tau_j=\tau_{K,j}^{(n,s)}(\zeta)$. Then
\begin{equation}\label{eq:rational-normal-recurrence}
 \tau_j=[u^j]P(u)-\sum_{a=1}^{\min(j,h)}d_a\tau_{j-a}
       \qquad(j\ge0).
\end{equation}
If $P$ and $D$ are coprime over $\mathbb Q(\zeta)$, the minimal
order of an eventual homogeneous constant-coefficient recurrence
for $(\tau_j)$ over this field is $\deg D$.
\end{proposition}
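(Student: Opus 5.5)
The plan is to identify the normal series $\sum_j\tau_ju^j$ with the formal expansion of $P/D$ in $\mathbb Q(\zeta)[[u]]$, and then read off both assertions from that rational identity. The first step is to make precise what the hypothesis means. By \cref{thm:finite-normal-jets}, $t_j(F)(\zeta)$ is a finite sum. The specialized normal expansion $\sum_j\tau_ju^j$ is therefore a well-defined element of $\mathbb Z[\zeta][[u]]$. The hypothesis says that it equals the power series of $P/D$, and this expansion exists since $D(0)=1$.

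Multiplying by $D$ gives $D(u)\sum_j\tau_ju^j=P(u)$ in $\mathbb Z[\zeta][[u]]$. Comparing coefficients of $u^j$ gives $\tau_j+\sum_{a=1}^{\min(j,h)}d_a\tau_{j-a}=[u^j]P$, which is \eqref{eq:rational-normal-recurrence}. For $j>\deg P$ the right side vanishes, so $(\tau_j)$ satisfies an eventual homogeneous recurrence of order $h=\deg D$.

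For minimality, suppose that for $j\ge j_0$ the sequence satisfies $\tau_j+\sum_{a=1}^{g}e_a\tau_{j-a}=0$ with $e_a\in\mathbb Q(\zeta)$. Put $E(u)=1+\sum_a e_au^a$; a recurrence whose leading coefficient $e_0$ is nonzero can be normalized to this form. Then $E(u)\sum\tau_ju^j$ is a polynomial $P_1$, so $P/D=P_1/E$ and $PE=P_1D$. Coprimality of $P$ and $D$ then forces $D\mid E$ in $\mathbb Q(\zeta)[u]$. Since $D(0)=1\neq0$, the factor $D$ has degree $\deg D$ regardless of the normalization, so $g\ge\deg E\ge\deg D$.

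The main subtlety is the notion of the order of a recurrence. The representation above assumes the recurrence is written with $e_0\ne0$, with order counted as the span of the nonzero coefficients. If instead the trailing coefficient $e_g$ might vanish, I would use the reversed polynomial $E^*(u)=\sum_a e_au^a$ with $e_0=1$. Its degree is at most the order, which is fine for a lower bound.

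Another edge case needs care. When $P=0$, coprimality means $D$ is a unit, so $D=1$ and the order is $0$; this is consistent. If $D$ has $u$-factors, that contradicts $D(0)=1$, so nothing extra arises there. I expect no genuine obstacle beyond checking that the specialized normal series is really the power-series expansion of $P/D$. That is the content of the hypothesis together with \cref{thm:finite-normal-jets}.
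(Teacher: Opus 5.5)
Your proposal is correct and follows essentially the same route as the paper: you multiply the normal series by $D$ and compare coefficients to get the recurrence. For minimality, you turn an arbitrary eventual homogeneous recurrence into a polynomial $E$ with $E(0)=1$ such that $E\sum_j\tau_ju^j$ is a polynomial, and then coprimality of $P$ and $D$ forces $D\mid E$, so $\deg D\le\deg E$. Your extra remarks on normalizing the recurrence and on the case $P=0$ are harmless refinements of the paper's brief argument.
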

\begin{proof}
Multiply the normal series by $D$ and compare coefficients. Beyond
$\deg P$, the resulting recurrence is homogeneous. Conversely, a
homogeneous recurrence of order $r$ implies that multiplying the
generating series by a polynomial of degree at most $r$ and constant
coefficient one gives a polynomial. Coprimality then forces $D$
to divide that recurrence polynomial, so $\deg D\le r$.
\end{proof}

For the oriented color coordinate put $z_0=\zeta^{n-2s}$ and
$z=z_0(1+t)$. The normal parameter becomes
\begin{equation}\label{eq:oriented-boundary-coordinate}
 u_s(t)=z+z^{-1}-z_0-z_0^{-1}
       =\frac{t(z_0-z_0^{-1}+z_0t)}{1+t}.
\end{equation}
Its order is $e_\partial$, equal to one for $z_0^2\ne1$ and two
for $z_0^2=1$. Substitution in the normal series gives
\begin{equation}\label{eq:oriented-normal-substitution}
 F_{K,n}(z_0(1+t)+z_0^{-1}(1+t)^{-1};\zeta)
       =\sum_{j\ge0}\tau_{K,j}^{(n,s)}(\zeta)u_s(t)^j.
\end{equation}
If $a$ is the leading coefficient of $u_s(t)$, the coefficient of
$t^{e_\partial j}$ is $a^j\tau_{K,j}^{(n,s)}(\zeta)$ plus a
combination of earlier normal coefficients. Thus the oriented series
recovers the normal coefficients triangularly over
$\mathbb Q(\zeta)$. At $z_0^2=1$, one has
$u_s(t)=z_0t^2/(1+t)$, and this recovery is integral.

\subsection{Laurent lattices and period matrices}\label{sec:laurent-lattices}
A finite-order recurrence with nonzero first and last coefficients
has a companion realization over a rational function field.
Write $\xi$ for the exponential of the index being shifted and
$Q$ for the shift multiplier, so that $\sigma(\xi)=Q\xi$.
A Laurent forward realization has matrix entries in
$\mathbb Z[Q^{\pm1},\xi^{\pm1}]$. A lattice preserved by both
forward and inverse transport has a matrix in
$\GL_r(\mathbb Z[Q^{\pm1},\xi^{\pm1}])$.
In \cref{sec:explicit-recurrences,sec:twist-spectral} the shifted
index is the Newton index.

A rational change of frame $H$ transforms a matrix $C$ into
$B=\sigma(H)CH^{-1}$. Its determinant gives a necessary condition
for the existence of an invertible Laurent lattice.

\begin{proposition}\label{prop:orbit}
Let $Q$ be transcendental over a field $k$, put
$R_0=k(Q)[\xi^{\pm1}]$, and let $C\in\GL_r(k(Q,\xi))$.
On each $\sigma$-orbit of closed points of $\Spec R_0$, the sum
of the orders of $\det C$ is invariant under rational change of
frame. If $C$ is rationally equivalent to a matrix in $\GL_r(R_0)$,
all these orbit sums are zero.
\end{proposition}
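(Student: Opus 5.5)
The plan is to use the determinant transformation law directly. If $B=\sigma(H)CH^{-1}$ with $H\in\GL_r(k(Q,\xi))$, then
\[
 \det B=\frac{\sigma(\det H)}{\det H}\,\det C.
\]
So it suffices to show that, for any nonzero $h\in k(Q,\xi)$, the quotient $\sigma(h)/h$ has zero orbit sum on every $\sigma$-orbit of closed points of $\Spec R_0$. Here the order of an element of $k(Q,\xi)^\times=\operatorname{Frac}(R_0)^\times$ at a closed point $\mathfrak p$ is its valuation in the discrete valuation ring $(R_0)_{\mathfrak p}$. This makes sense because $R_0=k(Q)[\xi^{\pm1}]$ is a principal ideal domain of dimension one.

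First I will check that $\sigma$ is an automorphism of $R_0$, fixing $k(Q)$ and sending $\xi\mapsto Q\xi$. It therefore permutes the closed points, and it transports valuations:
\[
 v_{\sigma(\mathfrak p)}(\sigma f)=v_{\mathfrak p}(f).
\]
Hence
\[
 v_{\mathfrak p}\bigl(\sigma(h)/h\bigr)=v_{\sigma^{-1}\mathfrak p}(h)-v_{\mathfrak p}(h).
\]
I then sum over an orbit. The potential obstacle is that an orbit may be infinite. That is harmless, because $h$ has only finitely many zeros and poles. Only finitely many terms are nonzero, and reindexing $\mathfrak p\mapsto\sigma^{-1}\mathfrak p$ is a bijection of the orbit, so the sum telescopes to zero. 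The same reindexing handles finite orbits.

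This proves invariance of the orbit sums of $\det C$: they agree with those of $\det B$. I also need the orbit sums to be finite sums, which holds because $\det C$ is a nonzero rational function with finitely many zeros and poles.

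For the last assertion, suppose $B\in\GL_r(R_0)$. Then $\det B\in R_0^\times=\{c\xi^a: c\in k(Q)^\times,\ a\in\Z\}$. The prime ideals of $R_0$ do not contain $\xi$ (it is a unit), so $\det B$ has order zero at every closed point. All orbit sums of $\det B$ vanish, and by invariance so do those of $\det C$.

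I do not expect a real obstacle. The only points needing care are the finiteness of the orbit sums, so that telescoping is legitimate on infinite orbits, and the fact that units of $R_0$ are monomials up to scalars. The transcendence of $Q$ is not needed for this argument. It only matters elsewhere, to ensure orbits are typically infinite, and I will note this in passing.
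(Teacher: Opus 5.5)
Your proof is correct and follows the paper's argument. The paper takes determinants to get $\operatorname{div}(\det B)=\operatorname{div}(\det C)+\operatorname{div}(\sigma\det H)-\operatorname{div}(\det H)$, cancels the last two terms along each orbit using finite support, and observes that a Laurent unit has zero divisor; your valuation-transport and reindexing step spells out that cancellation in more detail.
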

\begin{proof}
Taking determinants gives
\[
 \operatorname{div}(\det B)=\operatorname{div}(\det C)
       +\operatorname{div}(\sigma\det H)-\operatorname{div}(\det H).
\]
Each divisor has finite support, so summing along an orbit cancels
the last two terms. A Laurent unit has zero divisor on
$\Spec R_0$.
\end{proof}
The rank-two coefficient recurrences of $5_2$ and $6_1$ have a
nonzero determinant orbit sum. Their forward Laurent realizations
and their period matrices are given in
\cref{prop:twist-C-recurrences,prop:twist-periods}.

\begin{proposition}\label{prop:period}
Let $\omega$ have order $\ell$ in a field $k$, let
$B(\xi)\in\Mat_r(k[\xi^{\pm1}])$, and put
\[
 M_\ell(\xi)=B(\omega^{\ell-1}\xi)\cdots B(\xi).
\]
Then
\begin{equation}\label{eq:period-charpoly}
 \det(LI-M_\ell(\xi))\in k[\xi^{\pm\ell},L].
\end{equation}
If $B\in\GL_r(k[\xi^{\pm1}])$, one also has
\[
 M_\ell(\omega\xi)=B(\xi)M_\ell(\xi)B(\xi)^{-1}.
\]
\end{proposition}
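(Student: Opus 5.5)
The conjugation identity comes first, and the characteristic-polynomial statement follows from it together with a separate argument when $B$ is not invertible.

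\emph{The conjugation identity.} Assume $B\in\GL_r(k[\xi^{\pm1}])$. Since $\omega^\ell=1$, we have $B(\omega^\ell\xi)=B(\xi)$. Therefore
\[
 M_\ell(\omega\xi)=B(\omega^\ell\xi)B(\omega^{\ell-1}\xi)\cdots B(\omega\xi)
 =B(\xi)\,\bigl(B(\omega^{\ell-1}\xi)\cdots B(\omega\xi)\bigr).
\]
The bracketed product equals $M_\ell(\xi)B(\xi)^{-1}$. This gives $M_\ell(\omega\xi)=B(\xi)M_\ell(\xi)B(\xi)^{-1}$.

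\emph{Invariance of the characteristic polynomial in general.} For arbitrary $B\in\Mat_r(k[\xi^{\pm1}])$, do not invert $B$. Write $M_\ell(\xi)=B(\xi)\,N(\xi)$, where $N(\xi)=B(\omega^{\ell-1}\xi)\cdots B(\omega\xi)$ is the same product with the last factor removed. The computation above shows that $M_\ell(\omega\xi)=N(\xi)\,B(\xi)$, with the factors cyclically rotated.

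Now apply the identity $\det(LI-XY)=\det(LI-YX)$ for square matrices over a commutative ring. It holds over any commutative ring, for instance by the universal polynomial identity, or by Sylvester's determinant identity applied to block matrices. With $X=B(\xi)$ and $Y=N(\xi)$ it gives
\[
 \chi(L,\omega\xi)=\chi(L,\xi),\qquad \chi(L,\xi):=\det(LI-M_\ell(\xi)).
\]

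\emph{Reduction to powers of $\xi^\ell$.} Expand $\chi$ as a finite sum $\sum_{a,j}\chi_{a,j}\xi^aL^j$ with $\chi_{a,j}\in k$. Comparing coefficients in $\chi(L,\omega\xi)=\chi(L,\xi)$ gives $\chi_{a,j}(\omega^a-1)=0$. Because $\omega$ has exact order $\ell$, the factor $\omega^a-1$ is nonzero unless $\ell\mid a$. Hence only exponents divisible by $\ell$ survive, which is \eqref{eq:period-charpoly}.

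\emph{Main obstacle.} The only point requiring care is that $B$ need not be invertible for the characteristic-polynomial claim. The cyclic-rotation trick handles this without any invertibility hypothesis, so no genuine difficulty is expected.
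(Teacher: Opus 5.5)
Your proof is correct and is essentially the paper's argument: rotating the factors cyclically using $\omega^\ell=1$, applying $\det(LI-XY)=\det(LI-YX)$ to get invariance of the characteristic polynomial under $\xi\mapsto\omega\xi$, and reading the same rotation as a conjugacy when $B$ is invertible. One small slip: in your second paragraph the factor order is reversed. As your first paragraph correctly shows, $M_\ell(\xi)=N(\xi)B(\xi)$ and $M_\ell(\omega\xi)=B(\xi)N(\xi)$, but since the determinant identity is symmetric in $X,Y$ this does not affect the conclusion.
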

\begin{proof}
The shifted product is a cyclic permutation of the factors of
$M_\ell$. The identity $\det(LI-AB)=\det(LI-BA)$ shows that its
characteristic polynomial is unchanged. Hence every coefficient
is invariant under $\xi\mapsto\omega\xi$ and has only powers
of $\xi^\ell$. When $B$ is invertible, the same cyclic permutation
is the displayed conjugacy.
\end{proof}
The polynomial in \eqref{eq:period-charpoly} depends on $\ell$: the
constant matrix
$B=\left(\begin{smallmatrix}2&1\\1&1\end{smallmatrix}\right)$
has characteristic polynomial $L^2-3L+1$, whereas $B^2$ has
$L^2-7L+1$. For the knots in \cref{sec:twist-spectral}, the period
polynomial at $q=i$ equals the classical $C$-polynomial with $\xi$
replaced by $\xi^2$, and the associated line modules decide whether
the period matrix is conjugate over the Laurent ring to the companion
matrix of this polynomial.

\subsection{Comparison of spectral modules}\label{sec:spectral-comparison}
Let $A_0=k[\xi^{\pm1}]$ and let $p(Y)\in A_0[Y]$ be monic of degree
$r$. Assume that $S=A_0[Y]/(p)$ is the coordinate ring of a smooth
connected affine curve, so that $S$ is a Dedekind domain. A matrix
$M\in\Mat_r(A_0)$ with characteristic polynomial $p$ makes $A_0^r$
an $S$-module $\mathscr P_M$ by letting $Y$ act as $M$.
Over $\Frac(A_0)$, irreducibility of $p$ makes this module
one-dimensional over $\Frac(A_0)[Y]/(p)$. Thus $\mathscr P_M$ is
torsion-free of rank one over $S$, and hence invertible.

Two such matrices are conjugate over $A_0$ exactly when their
$S$-modules are isomorphic; this is the invertible-module case of the
Latimer--MacDuffee correspondence \cite{LM33,KS24}.
Write $\Comp(p)$ for multiplication by $Y$ on $S$ in the basis
$1,Y,\ldots,Y^{r-1}$; its spectral module is free.

For a finite locally free map $\pi$ of degree $d$, the norm of a
line module is
\begin{equation}\label{eq:norm-definition}
 \Nm_\pi(\mathscr L)=\det(\pi_*\mathscr L)
                      \otimes\det(\pi_*\mathcal O)^{-1}.
\end{equation}
It is multiplicative and satisfies
$\Nm_\pi(\pi^*\mathscr N)=\mathscr N^{\otimes d}$
\cite[Lemmas~31.18.2 and~31.18.6]{Stacks}.

\begin{theorem}\label{thm:stabilization}
Let $M_1,M_2\in\Mat_r(A_0)$ have characteristic polynomial $p$,
and put
$\mathscr D=\mathscr P_{M_1}\otimes_S\mathscr P_{M_2}^{-1}$.
For every positive integer $m$,
\begin{equation}\label{eq:spectral-stabilization}
 M_1^{\oplus m}\sim_{\GL_{rm}(A_0)}M_2^{\oplus m}
       \quad\Longleftrightarrow\quad
 \mathscr D^{\otimes m}\simeq S.
\end{equation}
If $\mathscr D$ has exact order $d$ and $d$ is invertible in $k$,
a connected finite \'etale cover of $\Spec S$ of degree $d$
trivializes it. Every finite locally free trivializing cover has
degree divisible by $d$.
\end{theorem}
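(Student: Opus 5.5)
The equivalence \eqref{eq:spectral-stabilization} rests on the Latimer--MacDuffee correspondence, applied to direct sums. First I will extend the module dictionary to $rm\times rm$ matrices. A matrix $N\in\Mat_{rm}(A_0)$ with $p(N)=0$ makes $A_0^{rm}$ an $S$-module. This module is finitely generated, and it is torsion-free because it is free over $A_0$ and $S$ is finite over $A_0$. Conjugacy over $A_0$ is then exactly isomorphism of these $S$-modules, since an $S$-linear $A_0$-isomorphism is a matrix intertwining the two actions. The module of $M_i^{\oplus m}$ is $\mathscr P_{M_i}^{\oplus m}$.

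Over the Dedekind domain $S$, Steinitz's theorem classifies finitely generated torsion-free modules by rank and the class of the determinant. Hence $\mathscr P_{M_1}^{\oplus m}\simeq\mathscr P_{M_2}^{\oplus m}$ if and only if $\mathscr P_{M_1}^{\otimes m}\simeq\mathscr P_{M_2}^{\otimes m}$ in $\Pic(S)$. This holds if and only if $\mathscr D^{\otimes m}\simeq S$, which proves \eqref{eq:spectral-stabilization}. The only point needing care is that a torsion-free module of rank $m$ is isomorphic to $S^{m-1}\oplus\det$. I will cite this form of Steinitz's theorem directly rather than reprove it.

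For the cover, I will choose an isomorphism $\phi:\mathscr D^{\otimes d}\xrightarrow{\sim}S$ and form the cyclic algebra $T=\bigoplus_{i=0}^{d-1}\mathscr D^{\otimes i}$. Multiplication on $T$ is induced by $\phi$ in degrees $\ge d$. Since $d$ is invertible, $T$ is finite étale of degree $d$ over $S$: locally $\mathscr D$ is trivial and $T\simeq S[x]/(x^d-\text{unit})$, which is étale. The pullback of $\mathscr D$ to $T$ is trivialized by the tautological section $1\in\mathscr D\otimes T$ coming from degree one. Connectedness of $\Spec T$ uses exact order $d$. Any idempotent decomposition of $T$ would give a nontrivial grading-compatible splitting, which in turn would give a subalgebra $\bigoplus_{i\in e\Z/d\Z}\mathscr D^{\otimes i}$ for some proper divisor. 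I expect this to be the main obstacle. The cleanest route I would try is to show that the $\mu_d$-action on $T$ permutes connected components transitively. The stabilizer of a component, of index $e$, then yields a trivialization of $\mathscr D^{\otimes e}$ over $S$ after descent, contradicting exact order $d$. Here $\mu_d$ is étale over $k$ because $d$ is invertible, possibly after passing to $\bar k$; if necessary I would work over $k$ using the $\Z/d$-grading directly.

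For divisibility, let $\pi:\Spec T'\to\Spec S$ be finite locally free of degree $e$ with $\pi^*\mathscr D$ trivial. The norm properties \eqref{eq:norm-definition} give $\mathscr D^{\otimes e}=\Nm_\pi(\pi^*\mathscr D)\simeq\Nm_\pi(\mathcal O)=S$. Since $\mathscr D$ has exact order $d$, it follows that $d\mid e$.
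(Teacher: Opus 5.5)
\textbf{Verdict: genuine gap.} Most of your proposal follows the paper: Latimer--MacDuffee applied to the direct sums, Steinitz (the paper proves $I\oplus J\simeq S\oplus IJ$ directly, while you cite it), the cyclic algebra $T$, and the norm argument for divisibility. What is missing is a proof that $\Spec T$ is connected. The statement requires this, and you leave it open.

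Neither route you sketch settles it.
\begin{itemize}
\item Your first suggestion fails as stated: an idempotent decomposition of $T$ does not produce a grading-compatible splitting. A homogeneous idempotent of degree $i$ satisfies $2i\equiv i$, so it lies in $T_0=S$, which is connected. Every nontrivial idempotent of $T$ is therefore inhomogeneous, and there is no graded subalgebra to extract from it.
\item The $\mu_d$ route only works when $\mu_d$ is a constant group, so that it acts on the finite set of components. If $k$ lacks primitive $d$th roots of unity, you propose passing to $\bar k$. But $S\otimes_k\bar k$ need not be connected, and the order of $\mathscr D$ can drop after base change, so the contradiction with exact order $d$ is not available there.
\item The descent step that is supposed to give a trivialization of $\mathscr D^{\otimes e}$ over $S$ is not spelled out.
\end{itemize}

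The fix uses only the divisibility argument from your last paragraph, applied component by component. Write $T=\prod_i T_i$ according to the connected components of $\Spec T$. Each $T_i$ is a nonzero direct summand of the locally free $S$-module $T$. It is therefore finite locally free of constant rank $e_i\ge1$ over the connected base, and $\sum_i e_i=d$. The trivialization of $\pi^*\mathscr D$ restricts to each component. Your norm argument then gives $\mathscr D^{\otimes e_i}\simeq S$, hence $d\mid e_i$ for every $i$. This forces a single component. This is exactly how the paper concludes, and it works over any field in which $d$ is invertible, with no roots of unity needed.
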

\begin{proof}
The conjugacy in \eqref{eq:spectral-stabilization} is equivalent
to
$\mathscr P_{M_1}^{\oplus m}\simeq\mathscr P_{M_2}^{\oplus m}$
as $S$-modules. The Steinitz decomposition over a Dedekind domain is
\[
 \mathscr P^{\oplus m}\simeq S^{m-1}\oplus\mathscr P^{\otimes m}.
\]
For invertible ideals $I,J$, choose integral
representatives in their classes with $I+J=S$, using approximation
to make the second avoid the primes dividing the first. The addition
map $I\oplus J\to S$ is surjective, with kernel
$I\cap J=IJ$, and splits. This gives
$I\oplus J\simeq S\oplus IJ$; iteration gives the decomposition.
Taking determinants and applying this decomposition show that the
two direct sums are isomorphic precisely when
$\mathscr P_{M_1}^{\otimes m}\simeq\mathscr P_{M_2}^{\otimes m}$.
This is the condition on $\mathscr D$.

Choose a trivialization of $\mathscr D^{\otimes d}$. It defines
an algebra
\[
 T=\bigoplus_{j=0}^{d-1}\mathscr D^{-j}.
\]
Locally $T=S[x]/(x^d-u)$ for a unit $u$. Both $x$ and
$dx^{d-1}$ are units, so $\Spec T\to\Spec S$ is finite \'etale
of degree $d$. Its tautological root trivializes $\mathscr D$.

If a finite locally free cover of degree $e$ trivializes
$\mathscr D$, taking the norm in \eqref{eq:norm-definition}
gives $\mathscr D^{\otimes e}\simeq S$, whence $d\mid e$.
Each nonempty connected component of $\Spec T$ is finite \'etale
and surjective over the connected base, and still trivializes
$\mathscr D$. Its positive degree is therefore divisible by $d$.
Since the total degree is $d$, $\Spec T$ has one component.
\end{proof}

When $M_2=\Comp(p)$, the class of $\mathscr D$ is
$[\mathscr P_{M_1}]$. For the period matrices of $5_2$ and $6_1$,
\cref{thm:elliptic-spectral} shows that this class has exact order
three.

\section{Explicit knot elements in every rank}\label{sec:examples}

\subsection{Four coefficient formulas}
Write
\[
 \qbin ak=\frac{\{a\}!}{\{k\}!\{a-k\}!},\qquad
 D_{n,k}(q)=\qbin{n+k-2}{k}
          =\prod_{j=1}^{n-2}\frac{\{k+j\}}{\{j\}}.
\]
The bracket is the symmetric Gaussian polynomial, and the product is
one when $n=2$. We use the chirality convention of
\cite[Section~9.2]{FZ26}, in which the displayed trefoil is left-handed.
The fixed-rank coefficients extracted there from the double-twist
formulas of \cite[Theorem~1.2 and Section~3]{CLZ21} are as follows.

\begin{proposition}[{\cite[Section~9.2]{FZ26}}]\label{prop:seed-coefficients}
For every $n\ge2$ and $k\ge0$,
\begin{align}
 c_{4_1,n,k}&=D_{n,k},\label{eq:four-seeds}\\
 c_{3_1,n,k}&=(-1)^kq^{k(2n+k-1)}D_{n,k},\notag\\
 c_{5_2,n,k}&=(-1)^kq^{4nk+3k(k-1)}D_{n,k}
       \sum_{j=0}^kq^{-2nj-3kj+j(j+2)}\qbin kj,\notag\\
 c_{6_1,n,k}&=q^{-2nk-2k(k-1)}D_{n,k}
       \sum_{j=0}^kq^{2nj+3kj-j(j+2)}\qbin kj.\notag
\end{align}
\end{proposition}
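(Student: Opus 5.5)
The statement is cited from \cite[Section~9.2]{FZ26}, so the plan is to rederive it from the double-twist formulas of \cite{CLZ21} by Newton inversion, organized so that each step uses only results already stated here. By the uniqueness in \cref{prop:knot-element} and the triangular inversion of \cref{prop:divided-difference}, it is enough to show one thing for each knot. The displayed coefficients, inserted into \eqref{eq:all-rank-cyclotomic}, must reproduce $J_r^{(n)}(K;q)$ for every $r\ge0$.

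The first step is a product rewrite of the kernel. From \eqref{eq:node-difference} we have
\[
 B_k^{(n)}(x_r^{(n)})=\prod_{i<k}\{r-i\}\{n+r+i\}=\{k\}!\qbin rk\cdot\frac{\{n+r+k-1\}!}{\{n+r-1\}!}.
\]
Multiplying by $D_{n,k}$ and simplifying, each summand becomes $\{k\}!$-free symmetric data. Concretely, it becomes a product of Gaussian binomials in $r$ and $n$ times the factor $\prod_{i<k}\{n+r+i\}/\prod_{j\le n-2}\{j\}$. This is exactly the shape of the cyclotomic (Habiro-type) expansion in \cite[Theorem~1.2]{CLZ21} for symmetric colors.

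The second step treats each family in turn. For twist knots $K_p$, the formula of \cite{CLZ21} writes $J_r^{(n)}$ as $\sum_k(\text{kernel})\cdot C_{p,k}(q)$, where $C_{p,k}$ is a twisting factor. For one full twist that factor is $\pm q^{\text{quadratic}}$, which covers $4_1$ and $3_1$. For two twists it is an inner $q$-binomial sum, which covers $6_1$ and $5_2$. I would match the $k$th kernel of \cite{CLZ21} with $B_k^{(n)}(x_r^{(n)})$, up to the normalizing factor $D_{n,k}$, using the rewrite above. I would then read off $c_{K,n,k}$ as $D_{n,k}$ times that twisting factor.

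The phases $q^{k(2n+k-1)}$, $q^{4nk+3k(k-1)}$ and the inner exponents $\mp(2n+3k)j\pm j(j+2)$ should come from converting the framing and normalization conventions of \cite{CLZ21} to the reduced zero-framed ones. This includes a possible $q\mapsto q^{-1}$ for the chirality choice, which by \eqref{eq:knot-operations} just inverts $q$. As a check, the $4_1$ and $6_1$ formulas should be mirror-symmetric in the appropriate sense. In addition, $c_{K,n,k}(1)$ must match $[u^k]\delta_K^{-(n-1)}$ with $\delta_K=1-c_Ku$ via \eqref{eq:classical-rank-polynomial}. At $q=1$, $D_{n,k}=\binom{n+k-2}{k}$ and the inner sum is $2^k$, giving $(-1)^k2^k$ for $5_2$ and $2^k$ for $6_1$, which is consistent.

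The main obstacle is this bookkeeping of framing and normalization exponents in the third step, not the structure of the argument. To pin the exponents, I would fix them by comparing the cases $r=1$ and $r=2$ against independently known values. Examples are $J_1^{(n)}(4_1)=1+\{n-1\}\{n+1\}$ from \eqref{eq:tail-rank-comparison}, and the HOMFLY–PT polynomials of the four knots at $A=q^n$. Since a monomial phase $q^{\alpha k^2+\beta k+\gamma nk}$ is determined by finitely many values, low-color checks suffice once the structural match in the second step is established.
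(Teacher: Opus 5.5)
Your overall route is the one the paper relies on. The paper gives no argument of its own: it imports the formulas from \cite[Section~9.2]{FZ26}, where they are read off from the double-twist expansion of \cite{CLZ21}, with the factor $D_{n,k}$ coming from the kernel at $A=q^n$, as your first step anticipates.

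The gap is in the step you single out as the main obstacle. The colors $r=1,2$ determine only $c_{K,n,1}$ and $c_{K,n,2}$. Your claim that a phase $q^{\alpha k^2+\beta k+\gamma nk}$ is then pinned down presupposes that you already know, for every $k$, that $c_{K,n,k}$ is the twisting factor times a monomial of exactly that shape. That is the content of the proposition, and a finite fit cannot supply it. Framing, one of the two sources you name, cannot produce such a monomial at all. A framing change multiplies $J_r^{(n)}(K;q)$ by a factor depending on $r$, and this is not a coefficientwise rescaling of the Newton expansion. Applied to the unknot, a nonconstant framing factor creates nonzero coefficients with $k\ge1$, whereas a $k$-dependent monomial leaves those $c_k=0$. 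So a framing mismatch would defeat your scheme, and if the source is already reduced and zero-framed there is nothing to fit. Finally, the $j$-dependent inner exponents of $5_2$ and $6_1$ lie outside your ansatz altogether; they come from the twist-region sum itself.

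What is needed is an exact conversion instead of a fit. Take the formula of \cite{CLZ21} in its reduced zero-framed normalization, set $A=q^n$, and verify that its $k$th summand is $D_{n,k}B_k^{(n)}(x_r^{(n)})$ times an $r$-independent factor. Uniqueness of the triangular inversion in \cref{prop:divided-difference} then gives the coefficients. The remaining changes of convention are exact identities:
\begin{itemize}
\item $(1-q^{-2(r-i)})(1-A^2q^{2(r+i)})=-Aq^{2i}\{r-i\}\{r+i;A\}$ for Pochhammer-type kernels;
\item $\gaussbin{k}{j}{q^2}=q^{j(k-j)}\qbin{k}{j}$;
\item the reindexing $j\mapsto k-j$;
\item $q\mapsto q^{-1}$ for chirality.
\end{itemize}
The paper performs this conversion at $n=2$ in the proof of \cref{prop:twist-C-recurrences}. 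There the displayed coefficients become $(-1)^kQ^{k(k+3)/2}H_k(Q)$ and $H_k(Q^{-1})$, with $H_k(Q)=\sum_jQ^{j(j+1)}\gaussbin{k}{j}{Q}$, which shows where the inner exponents really come from.

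Two smaller points. First, the value $J_1^{(n)}(4_1;q)=1+\{n-1\}\{n+1\}$ quoted after \eqref{eq:tail-rank-comparison} is deduced in the paper from this very proposition. It is therefore not an independent check; the HOMFLY--PT value is. Second, your mirror check is valid only for $4_1$: $6_1$ is chiral, and indeed $c_{6_1,2,1}=1+q^{-4}$ is not invariant under $q\mapsto q^{-1}$.
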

In the double-twist formula, the factor $D_{n,k}$ comes from the
kernel after $A=q^n$. Set
\begin{equation}\label{eq:four-Alexander-polynomials}
 (c_{3_1},c_{4_1},c_{5_2},c_{6_1})=(-1,1,-2,2),\qquad
 \delta_K(u)=1-c_Ku.
\end{equation}
The displayed Alexander polynomials follow by setting $q=1$ in
\eqref{eq:four-seeds} and applying
\cref{thm:rank-loop-denominators}.

Their classical fibers and first quantum coefficients are
\begin{equation}\label{eq:four-low-jets}
\begin{array}{c|c|c}
 K& A_{K,n,0}(u)&A_{K,n,1}(u)\\ \hline
 3_1&(1+u)^{-(n-1)}&
       -n(n-1)u(2+u)/(1+u)^{n+1}\\[1mm]
 4_1&(1-u)^{-(n-1)}&0\\[1mm]
 5_2&(1+2u)^{-(n-1)}&
       -n(n-1)u(6+5u)/(1+2u)^{n+1}\\[1mm]
 6_1&(1-2u)^{-(n-1)}&
       -n(n-1)u(2-u)/(1-2u)^{n+1}.
\end{array}
\end{equation}
To compute them, use $D_{n,k}(1)=\binom{n+k-2}{k}$ and
$D_{n,k}'(1)=0$, then differentiate the monomials in
\eqref{eq:four-seeds}. The inner Gaussian sums at $q=1$ are binomial
sums; their first moments reduce to sums of $j$ and $j^2$.
Summation over $k$ gives \eqref{eq:four-low-jets}.

\subsection{The figure-eight generating function and exact jet poles}
Put $b=n-1$ and let
\[
 \mathcal W_n=\{n-2,n-4,\ldots,2-n\}
\]
be the multiset of $b$ integer weights. Let $h_j(\mathcal W_n)$ be
its complete homogeneous symmetric polynomial of degree $j$.

\begin{theorem}\label{thm:figure-eight-product}
The Newton-coefficient generating function of the figure-eight knot is
\begin{equation}\label{eq:figure-eight-product}
 \sum_{k\ge0}c_{4_1,n,k}(q)v^k
       =\prod_{a\in\mathcal W_n}(1-q^av)^{-1}
       \quad\text{in }R[[v]].
\end{equation}
For $n\ge3$ and $j\ge0$, the coefficient $A_{4_1,n,2j}$ has a
pole of exact order $b+2j$ at $u=1$, and
\begin{equation}\label{eq:figure-eight-top-even}
 \left.(1-u)^{b+2j}A_{4_1,n,2j}(u)\right|_{u=1}
       =h_{2j}(\mathcal W_n)>0.
\end{equation}
One has $A_{4_1,n,1}=0$. For $j\ge1$, the coefficient
$A_{4_1,n,2j+1}$ has a pole of exact order $b+2j$, with
\begin{equation}\label{eq:figure-eight-top-odd}
 \left.(1-u)^{b+2j}A_{4_1,n,2j+1}(u)\right|_{u=1}
       =-j\,h_{2j}(\mathcal W_n).
\end{equation}
All numerators in these representations are integral polynomials.
\end{theorem}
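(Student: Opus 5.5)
The product formula comes first. By \cref{prop:seed-coefficients}, $c_{4_1,n,k}=D_{n,k}=\qbin{n+k-2}{k}$, the symmetric Gaussian binomial. Write $\qbin{b-1+k}{k}$ with $b=n-1$. This equals $q^{-k(b-1)}$ times the ordinary Gaussian polynomial $\gaussbin{b-1+k}{k}{q^2}$. We then apply the classical generating function
\[
 \sum_{k\ge0}\gaussbin{b-1+k}{k}{Q}w^k=\prod_{i=0}^{b-1}(1-Q^iw)^{-1}
\]
with $Q=q^2$ and $w=q^{-(b-1)}v$. The factors become $(1-q^{2i-(b-1)}v)^{-1}$ for $0\le i\le b-1$. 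The exponents $2i-(n-2)$ run exactly over $\mathcal W_n$, which proves \eqref{eq:figure-eight-product} in $R[[v]]$.

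For the jets, substitute $q=1+\eta$ and $v=u$, and take logarithms:
\[
 \log\prod_{a\in\mathcal W_n}(1-q^au)^{-1}=\sum_{a}\sum_{t\ge1}\frac{q^{at}u^t}{t}.
\]
Expand $q^{at}=e^{at\log(1+\eta)}$. It is cleaner to organise the computation as follows. Since $\mathcal W_n$ is symmetric under $a\mapsto-a$, the generating function is even in the variable $h=\log q$. Write $\prod_a(1-e^{ah}u)^{-1}$ and expand in $h$. The coefficient of $h^{m}$ is
\[
 \sum_{\mathbf k}\prod_a\frac{(a k_a)^{m_a}}{m_a!}\,u^{k_a}
\]
summed appropriately. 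The most singular part at $u=1$ comes from applying the operator $\vartheta=u\,d/du$: each $h$ contributes $a\vartheta_a$ acting on the factor $(1-u)^{-1}$. Differentiating $(1-u)^{-1}$ raises the pole order by one, with leading coefficient $(1-u)^{-2}$.

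Thus the coefficient of $h^{p}$ has pole order at most $b+p$. Its leading coefficient is
\[
 \sum_{|\mathbf m|=p}\prod_a\frac{a^{m_a}}{m_a!}\,m_a!=h_p(\mathcal W_n),
\]
because $\vartheta^{m}(1-u)^{-1}$ has leading term $m!(1-u)^{-1-m}$. Odd $p$ vanish by symmetry, and $h_{2j}(\mathcal W_n)>0$ because the $h_{2j}$ of a symmetric real multiset is a positive sum when $n\ge3$.

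We then convert from $h$ to $\eta$ using $h=\eta-\eta^2/2+\cdots$, so $h^{2j}=\eta^{2j}-j\eta^{2j+1}+O(\eta^{2j+2})$. The coefficient of $\eta^{2j}$ receives contributions from $h^{p}$ with $p\le 2j$. Only $p=2j$ reaches pole order $b+2j$, which gives \eqref{eq:figure-eight-top-even}. The coefficient of $\eta^{2j+1}$ receives contributions from $p=2j$ with weight $-j$ and from $p=2j+1$, which is zero. The terms with $p\le 2j-1$ have lower pole order. This gives pole order at most $b+2j$ and leading value $-jh_{2j}$, which is nonzero for $j\ge1$; this is \eqref{eq:figure-eight-top-odd}. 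For $j=0$ the same argument gives $A_{4_1,n,1}=0$, since the $h^1$ term vanishes and $h^0$ contributes nothing to $\eta^1$.

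Integrality of the numerators follows from \cref{thm:rank-Alexander}: $A_{K,n,m}$ has an integral numerator over $\delta^{n-1+2m}$ with $\delta=1-u$. Since the poles here are of lower order, we multiply through and use $\mathbb Q[u]\cap\mathbb Z[[u]]=\mathbb Z[u]$, as in the proof of \cref{prop:rank-triangular}.

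The main obstacle is making the leading-pole bookkeeping rigorous. One must show that each Euler derivative raises the pole order by exactly one with the stated leading coefficient, that products of lower-order terms cannot reach the top order, and that the passage from $h$ to $\eta$ introduces no further top-order contributions. I would handle this by working with the expansion in $(1-u)^{-1}$, tracking only the leading Laurent coefficient at $u=1$.
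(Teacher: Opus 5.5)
Your proposal is correct and follows essentially the paper's route. The steps match: the Gaussian generating function with the rescaling $w=q^{-(n-2)}v$, evenness in $h=\log q$, the top polar coefficient $h_{2j}(\mathcal W_n)$, and the conversion via $[\eta^{2j}]h^{2j}=1$, $[\eta^{2j+1}]h^{2j}=-j$. Your expansion $(1-e^{ah}u)^{-1}=\sum_m\frac{(ah)^m}{m!}\vartheta^m(1-u)^{-1}$ is just an equivalent way of retaining $1-ue^{ah}\approx(1-u)-ah$.

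Two small points. First, "a positive sum" does not justify $h_{2j}(\mathcal W_n)>0$, since individual monomials of $h_{2j}$ can be negative; use the pairing identity $\sum_j h_{2j}(\mathcal W_n)x^{2j}=\prod_{a\in\mathcal W_n,\,a>0}(1-a^2x^2)^{-1}$. Second, the appeal to \cref{thm:rank-Alexander} is unnecessary: your own pole bound, together with $A_{4_1,n,m}\in\mathbb Z[[u]]$ and $\mathbb Q[u]\cap\mathbb Z[[u]]=\mathbb Z[u]$, already gives integral numerators.
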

\begin{proof}
The finite $q$-binomial identity
\[
 \prod_{i=0}^{b-1}(1-Q^iv)^{-1}
       =\sum_{k\ge0}\gaussbin{b+k-1}{k}{Q}v^k
\]
and $D_{n,k}=q^{-k(n-2)}
 \gaussbin{n+k-2}{k}{q^2}$ prove
\eqref{eq:figure-eight-product} after rescaling $v$.
Set $q=e^h$ and expand the right side with $v=u$.
It is even in $h$, because the weights occur in opposite pairs.
Its coefficient of $h^{2j}$ has denominator at most
$(1-u)^{b+2j}$: each derivative increases the pole order by at most
one. To obtain the highest polar coefficient, write $t=1-u$ and
retain $1-ue^{ah}=t-ah+\text{terms of higher total degree}$ in
each factor. The coefficient is
$[x^{2j}]\prod_{a\in\mathcal W_n}(1-ax)^{-1}
 =h_{2j}(\mathcal W_n)$.

The weights pair as $a,-a$, with a possible zero. Thus
\[
 \sum_{j\ge0}h_{2j}(\mathcal W_n)x^{2j}
       =\prod_{a\in\mathcal W_n,\ a>0}(1-a^2x^2)^{-1}.
\]
For $n\ge3$ there is a positive weight, so every displayed
coefficient is positive. Finally $h=\log(1+\eta)$ has
$[\eta^{2j}]h^{2j}=1$ and
$[\eta^{2j+1}]h^{2j}=-j$. All earlier even powers of $h$ have
strictly lower pole order. This proves both formulas and exactness.
The coefficients of \eqref{eq:figure-eight-product} are integral
Taylor series in $u,\eta$; multiplying by the indicated denominator
makes them rational polynomials with integral coefficients.
\end{proof}

In particular,
\begin{equation}\label{eq:figure-eight-low-allrank}
 A_{4_1,n,2}(u)=\binom n3\frac{u}{(1-u)^{n+1}},\qquad
 A_{4_1,n,3}=-A_{4_1,n,2}.
\end{equation}
For $n=3$, all the even highest polar coefficients in
\eqref{eq:figure-eight-top-even} equal one. For $n=4$ they are
$4^j$, and for $n=5$ they are $(9^{j+1}-1)/8$.
The full coefficients $R_{4_1,n,m}$ are obtained by the moving-node
operators of \cref{prop:rank-triangular}. At second order,
\eqref{eq:rank-low-operators} gives the all-rank formula
\begin{equation}\label{eq:figure-eight-full-second}
 R_{4_1,n,2}(u)
 =-\frac{n(n-1)\bigl(6n+(14-n)u+3(n-2)u^2\bigr)}
          {6(1-u)^{n+2}}.
\end{equation}
Indeed, $R_2=A_2+T_2^{(n)}A_0$, and on any series $f$ one has
\[
 T_2^{(n)}f=-n^2f'-2(n+1)uf''-\frac43u^2f'''.
\]
Applying this identity to $A_0=(1-u)^{-(n-1)}$ proves
\eqref{eq:figure-eight-full-second}. Its pole at $u=1$ has exact
order $n+2$ for every $n\ge2$. In rank two, $A_{4_1,2,2}=0$
whereas $R_{4_1,2,2}=-4(1+u)/(1-u)^4$.

\subsection{Sharp Alexander bounds for the other three knots}
For $K=3_1,5_2,6_1$ put $\kappa_K=1,\,7/4,\,-3/4$, respectively.

For a polynomial $P(a)$ of degree at most $D$ and $\lambda\ne0$,
expansion in the binomial basis gives
\begin{equation}\label{eq:polynomial-generating-top}
 \lim_{v\to\lambda^{-1}}(1-\lambda v)^{D+1}
       \sum_{a\ge0}P(a)(\lambda v)^a
       =D!\,[a^D]P(a).
\end{equation}
Indeed,
$\sum_{a\ge0}\binom aj(\lambda v)^a
=(\lambda v)^j/(1-\lambda v)^{j+1}$,
and the coefficient of $\binom aD$ in $P$ is $D![a^D]P$.
A nonzero right side therefore gives the exact pole order $D+1$.

\begin{theorem}\label{thm:all-rank-sharp}
For every $n\ge2$ and $m\ge0$, let
$N_{K,n,m}=(1-c_Ku)^{n-1+2m}A_{K,n,m}$. Then
\begin{equation}\label{eq:all-rank-highest-pole}
 N_{K,n,m}(1/c_K)
  =\frac{(n+2m-2)!}{(n-2)!\,m!}\,\kappa_K^m\ne0.
\end{equation}
Thus the denominator exponent in \cref{thm:rank-Alexander} is exact
for each of these knots, at every rank and every quantum order.
\end{theorem}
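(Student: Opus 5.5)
The plan is to compute the highest polar coefficient directly from \cref{prop:seed-coefficients}, by showing that the $k$th coefficient of $A_{K,n,m}$ is exactly $c_K^k$ times a polynomial in $k$ of degree $n-2+2m$ and reading off its leading term. Then \eqref{eq:polynomial-generating-top}, applied with $\lambda=c_K$, $v=u$ and $D=n-2+2m$, gives
\[
 N_{K,n,m}(1/c_K)=\lim_{u\to1/c_K}(1-c_Ku)^{n-1+2m}A_{K,n,m}(u)=D!\,[k^D]P_{K,n,m}(k),
\]
where $[\eta^m]c_{K,n,k}(1+\eta)=c_K^kP_{K,n,m}(k)$. So it suffices to show $[k^{n-2+2m}]P_{K,n,m}=\kappa_K^m/((n-2)!\,m!)$.

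First I would substitute $q=e^h$ and work with $[h^m]$. The passage to $\eta$ uses $h=\log(1+\eta)$. This gives $[\eta^m]=[h^m]+\sum_{m'<m}(\text{rational constants})[h^{m'}]$, and by the degree count below the lower orders have $k$-degree at most $n-2+2m'<D$. So only $[h^m]$ matters for the top coefficient.

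Next I would analyze each factor of $c_{K,n,k}(e^h)$ as a function of $k$.
\begin{itemize}
\item $D_{n,k}(e^h)=\prod_{j=1}^{n-2}\sinh((k+j)h)/\sinh(jh)$. Its $h$-expansion is even, $D_{n,k}(1)=\binom{n+k-2}{k}$, and its order-$h^{i}$ coefficient is a polynomial in $k$ of degree $\le n-2+i$. Since this is below the $2i$ gained from a quadratic exponent, the factor contributes only $k^{n-2}/(n-2)!$ at top degree.
\item Each symmetric Gaussian binomial $\qbin kj$ at $q=e^h$ has $h$-coefficients of total degree at most one per power of $h$ in $(k,j)$. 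Hence these coefficients are also subleading.
\item The dominant contribution is therefore $\exp(hE(k,j))$, with $E$ the quadratic exponent. Its top-degree part is $E_2(k,j)=k^2$ for $3_1$, $3k^2-3kj+j^2$ for $5_2$, and $-2k^2+3kj-j^2$ for $6_1$.
\end{itemize}
For the inner sums I use the identity that $\sum_j\binom kj\,\mathrm{poly}(k,j)=2^k\,\mathrm{poly}(k)$ exactly. This gives exact polynomiality (so the expansion holds for all $k$, not just eventually) and the base $c_K=\pm2$ once the sign $(-1)^k$ is included.

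The main obstacle is the top coefficient of $2^{-k}\sum_j\binom kj E_2(k,j)^m$. I would treat it as the moment $\mathbb E[E_2(k,J)^m]$ with $J\sim\operatorname{Bin}(k,1/2)$. Writing $J=k/2+Y$, the central moments of $Y$ are polynomials in $k$ of degree at most half their order. So every term involving $Y$ loses degree, and the leading term is $E_2(k,k/2)^m$, which equals $(\tfrac74k^2)^m$ for $5_2$ and $(-\tfrac34k^2)^m$ for $6_1$. For $3_1$ there is no inner sum and $E_2=k^2$ directly. Combining with $[h^m]e^{hE}=E^m/m!+(\text{lower degree})$, the leading coefficient is $\kappa_K^m/((n-2)!\,m!)$.

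Multiplying by $D!=(n+2m-2)!$ yields \eqref{eq:all-rank-highest-pole}, which is nonzero. Exactness of the exponent for both $A$ and $R$ then follows from \cref{thm:rank-Alexander} and its numerator congruence.
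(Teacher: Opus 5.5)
Your proposal is correct in outline. It matches the paper for $D_{n,k}$, for the trefoil, for the passage through $h=\log(1+\eta)$, and for the final use of \eqref{eq:polynomial-generating-top}. For the inner sums of $5_2$ and $6_1$ you take a different route.

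\textbf{One intermediate claim is wrong as stated.} Write $\qbin kj(e^h)=\binom kj\,G(k,j,h)$. Then the coefficient of $h^a$ in $G$ does \emph{not} have total degree at most $a$ in $(k,j)$. Unlike $D_{n,k}$, this product has a variable number of factors. Concretely, $\log G=\sum_{r\ge1}\alpha_rh^{2r}\bigl(S_{2r}(k)-S_{2r}(j)-S_{2r}(k-j)\bigr)$, where $S_{2r}$ denotes the power sums. Its $h^{2r}$-coefficient therefore has degree $2r+1$, not $2r$. Already $[h^2]G=\tfrac16 j(k-j)(k+1)$ is cubic. After exponentiation, the correct bound for the degree of $[h^a]G$ is $\lfloor 3a/2\rfloor$. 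This is still strictly below the $2a$ coming from $e^{hE}$, so your conclusion that the $q$-deformation of the binomial coefficient is subleading survives, and with this correction the argument goes through.

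\textbf{How the paper handles the inner sums.} It never separates $G$. Instead it expands $\qbin kj$ by the subset formula into monomials $q^{2\sum_{i\in S}i-j(k+1)}$. This gives $c_{K,n,k}=c_K^kD_{n,k}\,\mathbb E(q^{E})$, where $E$ is quadratic in $(k,L,W)$ for the Bernoulli sums $L=\sum X_i$ and $W=\sum iX_i$. Then $[\eta^m]q^E=\binom Em$ directly. A single weighted-degree count (weights $1,1,2$ for $k,L,W$) bounds the degree by $2m$. The leading coefficient then comes from $L/k\to1/2$ and $W/k^2\to1/4$.

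\textbf{Comparison.} In the paper's route, the $q$-binomial's contribution $2W-(k+1)L$ is $o(k^2)$ and is absorbed into the same limit, so no separate estimate of the deformation is needed. Your route works with the classical distribution $J\sim\mathrm{Bin}(k,1/2)$. There the exact identity $2^{-k}\sum_j\binom kj\,\mathrm{poly}(k,j)=\mathrm{poly}(k)$ and the half-order degree of central moments are standard. It also makes explicit that the $q$-binomial plays no role at top degree. The cost is the extra estimate on $G$, which must be stated with the $\lfloor 3a/2\rfloor$ bound.
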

\begin{proof}
Factoring the linear terms of the hyperbolic sines gives
\[
 D_{n,k}(e^h)=\binom{n+k-2}{n-2}
 \prod_{j=1}^{n-2}
 \frac{\sinh((k+j)h)/((k+j)h)}{\sinh(jh)/(jh)}.
\]
The coefficient of $h^a$ is polynomial in $k$ of degree at most
$n-2+a$. Substituting $h=\log(1+\eta)$ preserves this bound.
At order zero the polynomial is $\binom{n+k-2}{n-2}$, with leading
coefficient $1/(n-2)!$.

For the trefoil, the remaining monomial is
$(-1)^kq^{k^2+(2n-1)k}$. After removing the sign, its order-$m$
Taylor coefficient is
$\binom{k^2+(2n-1)k}{m}$, of degree $2m$ and leading coefficient
$1/m!$. In the product with $D_{n,k}$, a term using its order-$a$
coefficient has degree at most
\[
 (n-2+a)+2(m-a)=n-2+2m-a.
\]
Only $a=0$ contributes to the highest degree. Thus
$(-1)^k[\eta^m]c_{3_1,n,k}(1+\eta)$ has degree $n-2+2m$ and
leading coefficient $1/((n-2)!m!)$.

For the two double sums, use the subset formula
\[
 \qbin kj=\sum_{\substack{S\subset\{1,\ldots,k\}\\|S|=j}}
                 q^{2\sum_{i\in S}i-j(k+1)}.
\]
Let $X_1,\ldots,X_k$ be independent Bernoulli variables with
parameter $1/2$, and put $L=\sum_iX_i$ and $W=\sum_i iX_i$.
Including the outer monomials in \eqref{eq:four-seeds}, their
exponents become
\begin{align*}
 E_5&=3k^2+(4n-3)k+L^2+(1-2n-4k)L+2W,\\
 E_6&=-2k^2+(2-2n)k-L^2+(2n+2k-3)L+2W.
\end{align*}
Consequently
\[
 c_{5_2,n,k}=(-2)^kD_{n,k}\,\mathbb E(q^{E_5}),\qquad
 c_{6_1,n,k}=2^kD_{n,k}\,\mathbb E(q^{E_6}).
\]

For fixed nonnegative integers $a,b$, the moment
$\mathbb E(L^aW^b)$ is a polynomial in $k$ of degree at most
$a+2b$. To see this, expand the product and partition its $a+b$ indices
according to equality. A term with $j$ distinct indices has
expectation $2^{-j}$. Its index weight has total degree $b$.
Inclusion--exclusion reduces the sum over distinct indices to
products of power sums, of degree at most $b+j\le a+2b$.
Assigning weights one to $k,L$ and two to $W$, every term of
$E_5$ and $E_6$ has weight at most two. It follows that
$\mathbb E\binom{E_i}{m}$ is polynomial in $k$ of degree at most
$2m$.

To compute its leading coefficient, note that
\begin{align*}
 \mathbb E L&=\frac{k}{2},&
 \operatorname{Var}(L)&=\frac{k}{4},\\
 \mathbb E W&=\frac{k(k+1)}4,&
 \operatorname{Var}(W)&=\frac{k(k+1)(2k+1)}{24}.
\end{align*}
Thus $L/k\to1/2$ and $W/k^2\to1/4$ in mean square as $k\to\infty$.
Both normalized variables are uniformly bounded, so all their fixed
joint moments converge. Substitution in the displayed exponents gives
\[
 E_5/k^2\longrightarrow 7/4,\qquad
 E_6/k^2\longrightarrow-3/4
\]
in every fixed moment. Hence
\[
 \lim_{k\to\infty}k^{-2m}\mathbb E\binom{E_i}{m}
          =\frac{\kappa_K^m}{m!}.
\]
This is the nonzero leading coefficient of the polynomial just
constructed. Multiplying by the Taylor series of $D_{n,k}$ and
using the degree estimate above shows that
$c_K^{-k}[\eta^m]c_{K,n,k}(1+\eta)$ has degree $n-2+2m$ and
leading coefficient $\kappa_K^m/((n-2)!m!)$.

Apply \eqref{eq:polynomial-generating-top} with
$D=n-2+2m$ and $\lambda=c_K$. The leading coefficient just
computed gives \eqref{eq:all-rank-highest-pole}.
\end{proof}

\subsection{Laurent coefficient recurrences}
\label{sec:explicit-recurrences}
The trefoil and figure-eight coefficient sequences have invertible
Laurent lattices in every rank. The rank-two sequences for $5_2$ and
$6_1$ have monic Laurent recurrences whose determinants have nonzero
orbit sums.

Let $\mathsf S$ shift the Newton index and let
$\mathsf M c_k=q^kc_k$, so $\mathsf S\mathsf M=q\mathsf M\mathsf S$.
Define
\[
 p_n(T)=\prod_{a\in\mathcal W_n}(T-q^a),\qquad
 \mathsf D_n=-q^{-2n}\mathsf M^{-2}\mathsf S.
\]

\begin{proposition}\label{prop:two-Laurent-lattices}
The figure-eight and trefoil coefficients satisfy respectively
\begin{equation}\label{eq:two-Laurent-recurrences}
 p_n(\mathsf S)c_{4_1,n}=0,\qquad
 p_n(\mathsf D_n)c_{3_1,n}=0.
\end{equation}
Both relations give companion matrices in
$\GL_{n-1}(\mathbb Z[q^{\pm1},M^{\pm1}])$.
For the figure-eight the matrix is independent of $M$.
\end{proposition}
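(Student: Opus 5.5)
The plan is to derive both recurrences from the generating function \eqref{eq:figure-eight-product} of \cref{thm:figure-eight-product}, and then to check the companion-matrix claims directly.

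\textbf{Figure-eight.} Write $C(v)=\sum_k c_{4_1,n,k}v^k$. The product formula gives $p_n^*(v)C(v)=1$, where
\[
 p_n^*(v)=\prod_{a\in\mathcal W_n}(1-q^av)=v^{n-1}p_n(v^{-1})
\]
is the reciprocal polynomial of degree $b=n-1$. Comparing coefficients of $v^{k+b}$ for $k\ge0$ gives
\[
 \sum_{i}[T^i]p_n(T)\,c_{k+i}=0,
\]
which is exactly $p_n(\mathsf S)c_{4_1,n}=0$ for all $k\ge0$.

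The polynomial $p_n$ is monic with coefficients in $R$. Since the weights occur in opposite pairs, its constant term is $\pm q^{\sum a}=\pm1$. The companion matrix therefore has coefficients in $R$, does not involve $M$, and has unit determinant, so it lies in $\GL_{n-1}$.

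\textbf{Trefoil.} By \cref{prop:seed-coefficients}, $c_{3_1,n,k}=g_k\,c_{4_1,n,k}$ with $g_k=(-1)^kq^{k^2+(2n-1)k}$. I would compute how the twist by $g$ conjugates $\mathsf S$. Since
\[
 g_{k+1}/g_k=-q^{2k+2n},
\]
we get $g\,\mathsf S\,g^{-1}=-q^{2n}\mathsf M^2\mathsf S$ on the level of sequences. I need to match this against $\mathsf D_n=-q^{-2n}\mathsf M^{-2}\mathsf S$, so the sign conventions have to be checked. I expect the correct direction is to apply $p_n(g^{-1}\mathsf S g)$ to $c_{3_1}=g\,c_{4_1}$, and the exponent then needs care. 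The check is $(g^{-1}\mathsf S g)c_k=g_k^{-1}g_{k+1}c_{k+1}$. Here I will have to pin down whether $\mathsf M$ acts by $q^k$ before or after the shift. The paper's $\mathsf D_n$ suggests that the twist should satisfy $g_{k+1}/g_k=-q^{-2n-2k}$ up to the conventions of the mirror. I would therefore recheck the exponent sign in $c_{3_1}$ against this, possibly using $q\mapsto q^{-1}$ symmetry of $p_n$ (which is palindromic up to sign).

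Once the correct intertwining $g\,h(\mathsf S)\,g^{-1}=h(\mathsf D_n)$ is established, applying it to the figure-eight recurrence yields $p_n(\mathsf D_n)c_{3_1,n}=0$. Each $\mathsf D_n^i$ expands as a Laurent monomial in $q$ and $\mathsf M$ times $\mathsf S^i$, because $\mathsf S\mathsf M=q\mathsf M\mathsf S$. The resulting recurrence has leading coefficient a Laurent unit $\pm q^{*}M^{-2(n-1)}$ and constant coefficient $\pm1$. Dividing by the leading coefficient then gives a companion matrix with entries in $\mathbb Z[q^{\pm1},M^{\pm1}]$ and unit determinant.

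\textbf{Main obstacle.} The main obstacle is the bookkeeping of the noncommutative ordering and signs in the trefoil twist, that is, verifying that conjugation by $g$ is exactly $\mathsf D_n$ and not its mirror.
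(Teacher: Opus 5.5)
Your figure-eight argument is correct and is the same as the paper's: the product formula gives $p_n(\mathsf S)c_{4_1,n}=0$, and $p_n$ is monic with constant term $(-1)^{n-1}$ because the weights cancel in pairs. The trefoil step, however, is not established, and the one computation you give for it is inverted. With $(\mathsf S c)_k=c_{k+1}$ and $g$ acting by multiplication by $g_k$,
\[
 (g\,\mathsf S\,g^{-1}c)_k=g_k\,(g^{-1}c)_{k+1}=\frac{g_k}{g_{k+1}}\,c_{k+1}=-q^{-2n-2k}c_{k+1},
\]
so $g\,\mathsf S\,g^{-1}=-q^{-2n}\mathsf M^{-2}\mathsf S=\mathsf D_n$ exactly. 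The operator $-q^{2n}\mathsf M^2\mathsf S$ that you wrote down is $g^{-1}\mathsf S g$, not $g\,\mathsf S\,g^{-1}$. There is no sign discrepancy between $\mathsf D_n$ and the exponent in $c_{3_1,n,k}$, and nothing there needs rechecking.

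Because of this inversion, your proposed fixes point the wrong way. Applying $p_n(g^{-1}\mathsf S g)$ to $c_{3_1,n}=g\,c_{4_1,n}$ gives $g^{-1}p_n(\mathsf S)g^2c_{4_1,n}$, which is not zero in general; already for $n=2$ one has $(\mathsf S-1)g^2\ne0$. Moreover, $D_{n,k}$ is invariant under $q\mapsto q^{-1}$, so $g^{-1}c_{4_1,n}=c_{\overline{3_1},n}$. Thus $p_n(-q^{2n}\mathsf M^2\mathsf S)$ annihilates the coefficients of the mirror trefoil. Changing the exponent in $c_{3_1,n,k}$ to fit your operator would prove the relation for the wrong knot. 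The missing line is simply
\[
 p_n(\mathsf D_n)c_{3_1,n}=g\,p_n(\mathsf S)\,g^{-1}\,g\,c_{4_1,n}=g\,p_n(\mathsf S)c_{4_1,n}=0,
\]
which is exactly the paper's argument. Once it is in place, your remaining observations give the Laurent companion matrix with unit determinant. Namely, by $\mathsf S\mathsf M=q\mathsf M\mathsf S$ the top coefficient of $p_n(\mathsf D_n)$ is a Laurent unit times $\mathsf M^{-2(n-1)}$, and the constant coefficient is $(-1)^{n-1}$.
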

\begin{proof}
The rational generating function \eqref{eq:figure-eight-product}
gives the constant-coefficient recurrence $p_n(\mathsf S)D_{n}=0$.
Its leading coefficient is one and its constant coefficient is
$(-1)^{n-1}$, since $\sum_{a\in\mathcal W_n}a=0$.
For the trefoil write
$c_{3_1,n,k}=g_kD_{n,k}$, where
$g_k=(-1)^kq^{k^2+(2n-1)k}$. Then
$g_k/g_{k+1}=-q^{-2n}q^{-2k}$, so conjugating the figure-eight
operator by the sequence $g_k$ replaces $\mathsf S$ with
$\mathsf D_n$. The leading coefficient of
$p_n(\mathsf D_n)$ is a Laurent monomial and its constant
coefficient is $(-1)^{n-1}$. After normalization, both end
coefficients are Laurent units. The companion matrices and their
inverses therefore have Laurent entries.
\end{proof}

The states of these lattices are the vectors
$(c_k,c_{k+1},\ldots,c_{k+n-2})^{\mathsf T}$, and transport advances
the Newton index $k$.

\begin{proposition}\label{prop:example-period-Jordan}
At $q=\zeta$, put $\ell=\ord(\zeta^2)$,
$\sigma=\zeta^{n\ell}$, and $a_0=\lfloor(n-2)/\ell\rfloor$.
The period-$\ell$ matrix of the figure-eight coefficient recurrence
has only the eigenvalue $\sigma$; its largest Jordan block has
size $a_0+1$. The trefoil period matrix has only the eigenvalue
$-\sigma$ and the same Jordan block sizes.
\end{proposition}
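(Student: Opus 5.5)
The idea is to work directly with the block transfer of the figure-eight recurrence at $q=\zeta$ and read off its spectrum from the generating function~\eqref{eq:figure-eight-product}. By \cref{prop:two-Laurent-lattices}, the figure-eight relation is the constant-coefficient recurrence $p_n(\mathsf S)c=0$ with companion matrix $C=\Comp(p_n)$, where
\[
 p_n(T)=\prod_{a\in\mathcal W_n}(T-q^a).
\]
Since $C$ does not depend on $M$, the period-$\ell$ matrix is simply $C^\ell$ evaluated at $q=\zeta$.

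A companion matrix is cyclic, so it has exactly one Jordan block for each distinct root of $p_n(T;\zeta)$. That block has size equal to the root's multiplicity, namely the number of weights $a\in\mathcal W_n$ with $\zeta^a$ equal to that root. The eigenvalues of $C^\ell$ are the numbers $\zeta^{a\ell}$. Every weight has the form $a=n-2-2j$, so
\[
 \zeta^{a\ell}=\zeta^{n\ell}\,(\zeta^{2\ell})^{-1-j}=\sigma .
\]
Hence $C^\ell$ has the single eigenvalue $\sigma$.

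For the Jordan structure I will use a standard fact about powers of a nonsingular Jordan block $J_s(\lambda)$ in characteristic zero: $J_s(\lambda)^\ell$ is again a single Jordan block of size $s$. Its nilpotent part is $\ell\lambda^{\ell-1}N+\cdots$, and this still has nilpotency index exactly $s$. Consequently the Jordan blocks of $C^\ell$ are exactly the multiplicities of the distinct roots $\zeta^a$. The roots $\zeta^a$ run over $\zeta^{n-2-2j}$ for $0\le j\le n-2$, and they depend only on $j$ modulo $\ell=\ord(\zeta^2)$. A residue class modulo $\ell$ contains at most $\lceil (n-1)/\ell\rceil=a_0+1$ elements of $\{0,\dots,n-2\}$, and the class of $0$ contains exactly that many. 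So the largest block has size $a_0+1$, and the multiset of block sizes is the list of residue-class counts.

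For the trefoil I will conjugate by $g_k$, as in the proof of \cref{prop:two-Laurent-lattices}. The trefoil sequence is $c_{3_1,n,k}=g_kD_{n,k}$, so its state vector equals $\Gamma_k$ applied to the figure-eight state vector, where $\Gamma_k=\diag(g_k,\dots,g_{k+n-2})$. The transfer matrix is therefore $\Gamma_{k+1}C\Gamma_k^{-1}$. Its $\ell$-step product from index $k$ is $\Gamma_{k+\ell}C^\ell\Gamma_k^{-1}$, which equals $(\Gamma_{k+\ell}\Gamma_k^{-1})\,(\Gamma_k C^\ell\Gamma_k^{-1})$.

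The main obstacle is to evaluate $g_{k+i+\ell}/g_{k+i}$ at $\zeta$ and to show that it is a constant independent of $i$. I compute
\[
 \frac{g_{k+\ell}}{g_k}=(-1)^\ell q^{2k\ell+\ell^2+(2n-1)\ell}.
\]
At $q=\zeta$, the factor $\zeta^{2k\ell}$ equals $1$, and $\zeta^{2n\ell}=\sigma^2=1$. What remains is $(-1)^\ell\zeta^{\ell^2-\ell}$, and I must show this equals $-1$ in both parity cases. For odd $\ell$, the exponent $\ell^2-\ell=2\cdot\frac{\ell(\ell-1)}2$ is even, so $\zeta^{\ell^2-\ell}=(\zeta^2)^{\ell(\ell-1)/2}=1$ and the product is $-1$. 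For even $\ell$, the exponent $\ell(\ell-1)$ is an odd multiple of $\ell$; this makes $\zeta^{\ell(\ell-1)}=\zeta^{\ell}$ up to even multiples of $\ell$, and $\zeta^\ell=\pm1$. Here $\zeta^\ell=-1$ should follow from $\ell=\ord(\zeta^2)$ being even, which forces $\ord\zeta=2\ell$. I will check this carefully, since the sign hinges on it.

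Granting the constant $-1$, the trefoil period matrix is $-\Gamma_kC^\ell\Gamma_k^{-1}$. It is conjugate to $-C^\ell$, so it has the single eigenvalue $-\sigma$ and the same Jordan blocks.
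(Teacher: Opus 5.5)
Your proposal is correct and follows the paper's proof essentially step for step: cyclicity of the figure-eight companion matrix, preservation of Jordan block sizes under $\ell$th powers in characteristic zero, and conjugation by $\diag(g_k,\ldots,g_{k+n-2})$ using the block ratio $g_{k+\ell}/g_k=-1$. The sign check you deferred does go through. An even $\ell=\ord(\zeta^2)$ forces $\ord\zeta=2\ell$ and hence $\zeta^\ell=-1$, and this is exactly the case distinction the paper uses.
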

\begin{proof}
The figure-eight companion matrix is cyclic with characteristic
polynomial $p_n$. Its eigenvalues after specialization are
$\zeta^a$, $a\in\mathcal W_n$, with one Jordan block for each
distinct eigenvalue. The largest multiplicity is $a_0+1$.
Each eigenvalue has $\ell$th power $\sigma$; taking this power
preserves Jordan block sizes in characteristic zero.
For a state consisting of $n-1$ consecutive coefficients, let
$G_k=\diag(g_k,\ldots,g_{k+n-2})$. At the root of unity,
\[
 \frac{g_{k+\ell}}{g_k}
 =(-1)^\ell(\zeta^\ell)^{2k+\ell+2n-1}=-1.
\]
Here either $\zeta$ has odd order $\ell$ and $\zeta^\ell=1$, or it
has order $2\ell$ and $\zeta^\ell=-1$. The trefoil period matrix is
therefore $-G_kC^\ell G_k^{-1}$, where $C$ is the figure-eight
companion matrix. This proves the result.
\end{proof}

For the other two knots, the rank-two coefficients have second-order
Laurent recurrences. Set $Q=q^2$, $\xi=Q^k$, and
$c_{K,k}=c_{K,2,k}$. The following operators are the $p=2$ and
$p=-2$ cases of the $C$-polynomials of Garoufalidis--Sun
\cite[Theorem~1 and Appendix~A]{GS06}.

\begin{proposition}\label{prop:twist-C-recurrences}
For $K=5_2,6_1$, one has
\begin{equation}\label{eq:twist-C-recurrence}
 c_{K,k+2}+a_{1,K}(Q^k,Q)c_{K,k+1}
              +a_{0,K}(Q^k,Q)c_{K,k}=0,
\end{equation}
where
\begin{align}
 a_{0,5_2}(\xi,Q)&=Q^6\xi^2-Q^7\xi^3,&
 a_{1,5_2}(\xi,Q)&=(Q^3+Q^4)\xi-Q^5\xi^2+Q^7\xi^3,
       \label{eq:twist-C5}\\
 a_{0,6_1}(\xi,Q)&=-Q^{-2}\xi^{-1}+Q^{-1},&
 a_{1,6_1}(\xi,Q)&=-Q^{-4}\xi^{-2}+Q^{-2}\xi^{-1}-Q^{-1}-1.
       \label{eq:twist-C6}
\end{align}
The initial coefficients are
$c_{5_2,0}=c_{6_1,0}=1$,
$c_{5_2,1}=-Q^4-Q^2$, and $c_{6_1,1}=1+Q^{-2}$.
Their companion matrices are
\begin{equation}\label{eq:twist-transfer}
 B_K(\xi,Q)=
   \begin{pmatrix}0&1\\-a_{0,K}(\xi,Q)&-a_{1,K}(\xi,Q)\end{pmatrix}.
\end{equation}
Neither matrix is rationally equivalent to an element of
$\GL_2(\mathbb C(Q)[\xi^{\pm1}])$ for transcendental $Q$.
\end{proposition}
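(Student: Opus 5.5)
The proposition makes three claims: the recurrences \eqref{eq:twist-C-recurrence}, the initial values, and the non-equivalence to a Laurent-invertible matrix. I would prove them separately, starting from the explicit rank-two formulas of \cref{prop:seed-coefficients}. At $n=2$ one has $D_{2,k}=1$, so
\[
 c_{5_2,k}=(-1)^kq^{8k+3k(k-1)}\sum_{j=0}^k q^{-4j-3kj+j(j+2)}\qbin kj,\qquad
 c_{6_1,k}=q^{-4k-2k(k-1)}\sum_{j=0}^k q^{4j+3kj-j(j+2)}\qbin kj.
\]

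\emph{Initial values.} These are direct substitutions. At $k=0$ both sums are $1$. At $k=1$, $\qbin 10=\qbin 11=1$. For $5_2$ this gives $c_{5_2,1}=-q^8(1+q^{-4})=-Q^4-Q^2$. For $6_1$ it gives $c_{6_1,1}=q^{-4}(1+q^{4})=1+Q^{-2}$.

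\emph{The recurrences.} I would use creative telescoping on each single sum over $j$. The summand $t(k,j)$, including the outer factor, is proper $q$-hypergeometric in $(k,j)$, with all exponents even after using the symmetric Gaussian. So $q$-Zeilberger produces an operator in $\mathsf S$ together with a rational certificate $R(k,j)$ such that
\[
 t(k+2,j)+a_1t(k+1,j)+a_0t(k,j)=G(k,j+1)-G(k,j),\qquad G=Rt.
\]
The plan is to exhibit $R$ explicitly. Dividing the certificate identity by $t(k,j)$ turns it into an identity of rational functions in $q^k,q^j$, which can be checked by clearing denominators. The telescoped sum then vanishes because of the natural boundary values of $\qbin kj$ at $j=-1$ and $j=k+3$. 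As a cross-check, and as an alternative route, I would match these operators with the $p=\pm2$ cases of the $C$-polynomials in \cite[Theorem~1 and Appendix~A]{GS06}. For that one has to confirm that both sources use the same sequence: the unique rank-two cyclotomic coefficients, in the chirality of \cite[Section~9.2]{FZ26}, with $Q=q^2$. A second-order recurrence with monic leading term, together with the two initial values already checked, then pins down the sequence completely. I expect this step, fixing the certificate or reconciling the normalizations (chirality, $q$ versus $Q$, the index shift), to be the main obstacle.

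\emph{Non-equivalence.} Here I apply \cref{prop:orbit} with $k=\mathbb C$ and $R_0=\mathbb C(Q)[\xi^{\pm1}]$. The determinant of $B_K$ is $a_{0,K}$. For $5_2$, $a_{0,5_2}=Q^6\xi^2(1-Q\xi)$; for $6_1$, $a_{0,6_1}=Q^{-2}\xi^{-1}(Q\xi-1)$. In both cases the divisor on $\operatorname{Spec}R_0$ is the single closed point $\xi=Q^{-1}$ with order one, because $\xi$ is a unit there. The $\sigma$-orbit of this point is $\{\xi=Q^{j}: j\in\mathbb Z\}$. Since $Q$ is transcendental, these points are pairwise distinct, and only $j=-1$ lies in the support of the divisor. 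So the orbit sum is $1\ne0$, and \cref{prop:orbit} rules out any rational change of frame to a matrix in $\GL_2(R_0)$.
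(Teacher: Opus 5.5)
Your initial values and your orbit-sum argument are correct and agree with the paper. $\det B_K=a_{0,K}$ vanishes simply at $\xi=Q^{-1}$ and nowhere else on $\operatorname{Spec}\mathbb C(Q)[\xi^{\pm1}]$, so the $\sigma$-orbit of $\xi=1$ has sum $1$ and \cref{prop:orbit} applies.

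The gap is the recurrence \eqref{eq:twist-C-recurrence} itself, which is the real content of the proposition; your proposal does not prove it. You never produce the certificate $R(k,j)$, so nothing has actually been checked. Existence theory for proper $q$-hypergeometric summands only guarantees \emph{some} telescoper, not this particular monic order-two operator. The fallback via \cite{GS06} is also left open. Your uniqueness remark does not substitute for it: a monic second-order recurrence with two initial values determines a sequence only once you know that $c_{K,k}$ satisfies that recurrence, which is exactly the claim. You would still have to show that the Garoufalidis--Sun cyclotomic function is literally $c_{K,2,k}$ in this chirality and normalization.

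The missing idea, which makes the computation short, is to reduce both sums to one family. Use $\qbin kj=q^{-j(k-j)}\gaussbin{k}{j}{Q}$ and then $j\mapsto k-j$. Both sums collapse to $H_k(Q)=\sum_jQ^{j(j+1)}\gaussbin{k}{j}{Q}$:
\[
 c_{5_2,k}=(-1)^kQ^{k(k+3)/2}H_k(Q),\qquad c_{6_1,k}=H_k(Q^{-1}).
\]
Introduce the auxiliary sums $T_k,V_k$, with exponents $j(j+2)$ and $j(j+3)$. The two Gaussian Pascal rules give
\[
 H_{k+1}=H_k+Q^{k+2}T_k,\qquad T_{k+1}=T_k+Q^{k+3}V_k,\qquad H_{k+1}=T_k+Q^2V_k.
\]
Eliminating $T_k$ and $V_k$ gives
\[
 H_{k+2}=(Q^{2k+4}-Q^{k+2}+Q+1)H_{k+1}+(Q^{k+2}-Q)H_k.
\]
Conjugating by $(-1)^kQ^{k(k+3)/2}$ yields \eqref{eq:twist-C5}, and substituting $Q\mapsto Q^{-1}$ yields \eqref{eq:twist-C6}.

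Each Pascal relation is a termwise telescoping identity in $j$. This elimination is therefore exactly the explicit certificate your creative-telescoping plan was missing. It also removes any need to reconcile conventions with \cite{GS06}.
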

\begin{proof}
Define
\[
 H_k(Q)=\sum_{j=0}^k Q^{j(j+1)}\gaussbin{k}{j}{Q},\quad
 T_k(Q)=\sum_{j=0}^k Q^{j(j+2)}\gaussbin{k}{j}{Q},\quad
 V_k(Q)=\sum_{j=0}^k Q^{j(j+3)}\gaussbin{k}{j}{Q}.
\]
The two Gaussian Pascal identities give
\[
 H_{k+1}=H_k+Q^{k+2}T_k,\qquad
 T_{k+1}=T_k+Q^{k+3}V_k,\qquad
 H_{k+1}=T_k+Q^2V_k.
\]
Eliminating $T_k,V_k$ yields
\begin{equation}\label{eq:twist-H-recurrence}
 H_{k+2}=(Q^{2k+4}-Q^{k+2}+Q+1)H_{k+1}
                         +(Q^{k+2}-Q)H_k.
\end{equation}
In \eqref{eq:four-seeds} with $n=2$, replace the symmetric
Gaussian coefficient by $q^{-j(k-j)}\gaussbin{k}{j}{Q}$ and
then replace $j$ by $k-j$. This gives
\[
 c_{5_2,k}=(-1)^kQ^{k(k+3)/2}H_k(Q),\qquad
 c_{6_1,k}=H_k(Q^{-1}).
\]
For the second identity also use
$\gaussbin{k}{j}{Q^{-1}}=Q^{-j(k-j)}\gaussbin{k}{j}{Q}$.
The first substitution conjugates \eqref{eq:twist-H-recurrence}
by $(-1)^kQ^{k(k+3)/2}$ and gives \eqref{eq:twist-C5}.
Replacing $Q$ by $Q^{-1}$ gives \eqref{eq:twist-C6}.
The initial values follow from $H_0=1$ and $H_1=1+Q^2$.

The determinants of \eqref{eq:twist-transfer} are
\[
 Q^6\xi^2(1-Q\xi),\qquad
 Q^{-1}\xi^{-1}(\xi-Q^{-1}).
\]
Each has total divisor multiplicity one on the $Q$-orbit of
$\xi=1$. \Cref{prop:orbit} gives the final assertion.
\end{proof}

\subsection{Exact specializations at every root of unity}\label{sec:explicit-root-values}
Fix $\zeta$ and write
\[
 \ell=\ord(\zeta^2),\quad \epsilon_0=\zeta^\ell\in\{1,-1\},
 \quad n-2=a_0\ell+r_0,\quad 0\le r_0<\ell,
 \quad \sigma=\zeta^{n\ell}.
\]
Put $W=W_{n,\zeta}$, as in \eqref{eq:rank-period-polynomial}.

\begin{theorem}\label{thm:four-all-roots}
For the four knots in \eqref{eq:four-seeds}, every $a\ge0$ and
$0\le b<\ell$ satisfy
\begin{equation}\label{eq:four-root-blocks}
 c_{K,n,\ell a+b}(\zeta)
  =c_{K,n,b}(\zeta)(c_K\sigma)^a\binom{a_0+a}{a_0}.
\end{equation}
Consequently, in the specialized Newton completion,
\begin{equation}\label{eq:four-root-functions}
 F_{K,n}(X;\zeta)
   =\frac{\displaystyle\sum_{b=0}^{\ell-1}
                 c_{K,n,b}(\zeta)B_b^{(n)}(X;\zeta)}
          {(1-c_K\sigma W(X))^{a_0+1}}.
\end{equation}
After $X=z+z^{-1}$ its displayed denominator is
\begin{equation}\label{eq:four-root-Alexander}
 \Delta_K(\sigma z^\ell)^{1+\lfloor(n-2)/\ell\rfloor}.
\end{equation}
\end{theorem}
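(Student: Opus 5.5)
The plan is to first prove the block identity \eqref{eq:four-root-blocks} coefficient by coefficient from the explicit formulas of \cref{prop:seed-coefficients}. Then \eqref{eq:four-root-functions} follows by summing with \cref{prop:rank-full-period}.

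We begin with the common factor $D_{n,k}$. At $q=\zeta$ it is a ratio of quantum integers, which is delicate because both numerator and denominator may vanish. The better route is to write $D_{n,k}=q^{-k(n-2)}\gaussbin{n+k-2}{k}{q^2}$, as in the proof of \cref{thm:figure-eight-product}, and apply the $q$-Lucas theorem for Gaussian binomials at the primitive $\ell$th root $\omega=\zeta^2$. With $k=\ell a+b$ and $n-2=a_0\ell+r_0$, this gives
\[
\gaussbin{n+k-2}{k}{\omega}=\binom{a_0+a+\lfloor(r_0+b)/\ell\rfloor}{a}\gaussbin{(r_0+b)\bmod\ell}{b}{\omega}.
\]
If $r_0+b\ge\ell$, the residual Gaussian factor vanishes, because $(r_0+b)-\ell<b$. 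In that case both sides of \eqref{eq:four-root-blocks} are zero, since the right side vanishes at $a=0$ as well. Otherwise the carry is zero, and we obtain $\binom{a_0+a}{a}$ times the $a=0$ value. The prefactor $\zeta^{-k(n-2)}$ contributes $\zeta^{-\ell a(n-2)}=(\zeta^{\ell})^{-a(n-2)}$. Comparing with $\sigma^a=\zeta^{n\ell a}$, we get $D_{n,\ell a+b}(\zeta)=\sigma^a\binom{a_0+a}{a_0}D_{n,b}(\zeta)$, because $\zeta^{2\ell}=1$ makes $\zeta^{-\ell(n-2)}=\zeta^{n\ell}$. This settles $4_1$ with $c_K=1$.

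For $3_1$, we need the phase $g_k=(-1)^kq^{k(2n+k-1)}$ to satisfy $g_{\ell a+b}=(-1)^a g_b$ at $\zeta$. This is the computation $g_{k+\ell}/g_k=-1$ already carried out in the proof of \cref{prop:example-period-Jordan}, iterated.

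For $5_2$ and $6_1$ we additionally need the inner sums $S_k=\sum_j q^{\pm(\ldots)}\qbin kj$ to satisfy $S_{\ell a+b}(\zeta)=2^aS_b(\zeta)$, up to the phase bookkeeping. We will use the rewritten forms from \cref{prop:twist-C-recurrences}, for general $n$: express the symmetric Gaussian $\qbin kj$ as $q^{-j(k-j)}\gaussbin kj{Q}$ and apply $q$-Lucas again. This factors the sum over $j=\ell j_1+j_0$ into a classical sum $\sum_{j_1}\binom{a}{j_1}(\text{phase})^{j_1}$ times the $b$-block sum over $j_0\le b$. The classical sum equals $2^a$ precisely when the phase attached to $j_1$ is $1$. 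Together with the outer phases and $(-1)^k$, this yields the factor $(c_K\sigma)^a$ with $c_K=\mp2$, after the common $D_{n,k}$ factor supplies $\sigma^a\binom{a_0+a}{a_0}$.

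The main obstacle is this phase cancellation: showing that every cross term in the exponent that is linear in $j_1$ reduces to $1$ at $\zeta$, that the remaining $a$-dependent phases combine to $1$ or $-1$ exactly as claimed, and that this holds uniformly whether $\ell$ is odd ($\zeta^\ell=\pm1$ with $\ord\zeta\in\{\ell,2\ell\}$) or even. We will handle it by separating the cases $\epsilon_0=\zeta^\ell=\pm1$ and reducing each exponent modulo $2\ell$ or $4\ell$.

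With the block identity in hand, \cref{prop:rank-full-period} gives
\[
F_{K,n}(X;\zeta)=\sum_b c_{K,n,b}(\zeta)B_b^{(n)}(X;\zeta)\sum_{a\ge0}\binom{a_0+a}{a_0}(c_K\sigma W)^a,
\]
which converges $W$-adically by \eqref{eq:period-kernel-adic-bound}. The inner series is $(1-c_K\sigma W)^{-(a_0+1)}$. Finally, with $X=z+z^{-1}$ we have $W=z^\ell+z^{-\ell}-2\sigma$, so $\sigma W=(\sigma z^\ell)+(\sigma z^\ell)^{-1}-2$. Hence $1-c_K\sigma W=\delta_K(\sigma W)=\Delta_K(\sigma z^\ell)$, which proves \eqref{eq:four-root-Alexander}.
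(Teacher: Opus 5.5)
Your route is essentially the paper's: block ratios read off from \cref{prop:seed-coefficients}, $q$-Lucas for the inner Gaussian sums, and the trefoil phase ratio from \cref{prop:example-period-Jordan}. The complete periods are then summed via \cref{prop:rank-full-period}, and $1-c_K\sigma W=\Delta_K(\sigma z^\ell)$ at the end. The one real difference is the $D_{n,k}$ step. The paper counts the vanishing factors of the fixed-length product $\prod_{j=1}^{n-2}\{k+j\}/\{j\}$ and pairs $\{\ell(a+j)\}$ with $\{\ell j\}$ to get the ratios $(a+j)/j$. You instead apply $q$-Lucas to $\gaussbin{n+k-2}{k}{q^2}$ at $\omega=\zeta^2$. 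Your computation is correct, including the carry case $b+r_0\ge\ell$ and the phase $\zeta^{-\ell a(n-2)}=\sigma^a$, and it has the advantage of using one tool throughout.

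The gap is the step you yourself call the main obstacle. For $5_2$ and $6_1$ you only announce that the phases cancel, and these two knots rest entirely on that verification. It closes quickly, and the inner sums need no case split on $\epsilon_0$. After substituting $\qbin{k}{j}=q^{-j(k-j)}\gaussbin{k}{j}{q^2}$, the inner exponents become $2(j^2+j-nj-2kj)$ for $5_2$ and $2(nj+kj-j)$ for $6_1$. So both inner sums are polynomials in $Q=q^2$, and at $Q=\omega$ their exponents depend only on $j$ and $k$ modulo $\ell$. Write $k=\ell a+b$ and $j=\ell d+c$. By $q$-Lucas each term is $\binom{a}{d}$ times the corresponding $(b,c)$ term, and terms with $c>b$ vanish. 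Hence the inner sum at $k=\ell a+b$ is exactly $2^a$ times its value at $k=b$, with no cross terms in $d$ to cancel. (The paper reaches the same conclusion by computing the residual $\epsilon_0$-exponents and noting that they are even.)

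The only remaining phases come from the outer monomials. For $6_1$, $q^{-2nk-2k(k-1)}$ is a power of $Q$ whose exponent changes by a multiple of $\ell$, so its block ratio is $1$. For $5_2$, $(-1)^kq^{4nk+3k(k-1)}$ has block ratio $(-1)^{\ell a}\epsilon_0^{4na+3a(\ell a+2b-1)}=(-1)^a$. This holds in both parity cases of $\ell$, using $\epsilon_0=-1$ when $\ell$ is even, exactly as for the trefoil. Combined with $\sigma^a\binom{a_0+a}{a_0}$ from $D_{n,k}$, this gives $(-2\sigma)^a$ and $(2\sigma)^a$. The rest of your argument then goes through as written.
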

\begin{proof}
The fixed-length product for $D_{n,k}$ gives
\[
 D_{n,\ell a+b}(\zeta)
   =\epsilon_0^{(n-2)a}D_{n,b}(\zeta)\binom{a_0+a}{a_0}.
\]
If $b+r_0\ge\ell$, the numerator has one more vanishing factor
than the denominator, and both sides are zero. Otherwise the
vanishing numerator factors have indices $\ell(a+j)$,
$1\le j\le a_0$. Paired with the denominator factors of indices
$\ell j$, they contribute the ratios $(a+j)/j$ after their
root-of-unity phases are removed. The product of these ratios is
$\binom{a_0+a}{a_0}$; all factors together contribute the phase
$\epsilon_0^{(n-2)a}=\sigma^a$.
For the trefoil, the additional signed monomial has block ratio
$(-1)^a$, by the phase calculation in
\cref{prop:example-period-Jordan}. This proves
\eqref{eq:four-root-blocks} for the first two knots.

For the double sums, write $k=\ell a+b$ and $j=\ell d+c$,
where $0\le c<\ell$. Put $\omega=\zeta^2$. The ordinary
$q$-Lucas identity \cite[Theorem~2.2]{Sagan92} is
\begin{equation}\label{eq:ordinary-q-Lucas}
 \gaussbin{\ell a+b}{\ell d+c}{\omega}
       =\binom ad\gaussbin{b}{c}{\omega}.
\end{equation}
To pass to the symmetric Gaussian polynomial, use
$\qbin kj=q^{-j(k-j)}\gaussbin{k}{j}{q^2}$. The ratio of the
balancing phases for $(k,j)$ and $(b,c)$ is
\[
 \zeta^{-j(k-j)+c(b-c)}
 =\epsilon_0^{-\ell d(a-d)-db-ca+2dc}
 =\epsilon_0^{-\ell d(a-d)-db-ca}.
\]
Thus
\begin{equation}\label{eq:symmetric-q-Lucas}
 \genfrac{[}{]}{0pt}{}{\ell a+b}{\ell d+c}_{\zeta}
 =\binom ad\genfrac{[}{]}{0pt}{}{b}{c}_{\zeta}
       \epsilon_0^{-\ell d(a-d)-db-ca}.
\end{equation}
Terms with $c>b$ vanish. It remains to combine the balancing
phase with the knot monomials. Relative to the prefix term with
indices $b,c$, multiplication by the inner monomial in
\eqref{eq:four-seeds} changes the total exponent of $\epsilon_0$
to, respectively,
\begin{gather*}
 2\bigl(-2ac-2\ell ad-2bd+cd+\ell d^2-nd+d\bigr),\\
 2\bigl(ac+\ell ad+bd-cd+nd-d\bigr).
\end{gather*}
Both exponents are even, so the residual phases are one. Summing
$\binom ad$ over $0\le d\le a$ gives $2^a$ times the prefix sum.
The outer signed monomials have block ratios $(-1)^a$ for $5_2$
and $1$ for $6_1$. Together with the factor $\sigma^a$ from
$D_{n,k}$, these give $(-2\sigma)^a$ and $(2\sigma)^a$.
This proves \eqref{eq:four-root-blocks} for all four knots.

Now apply \cref{prop:rank-full-period} and sum the blocks using
\[
 \sum_{a\ge0}\binom{a_0+a}{a_0}v^a=(1-v)^{-a_0-1}.
\]
The resulting identity is \eqref{eq:four-root-functions} in the
$W$-adic completion. Since $\delta_K(u)=1-c_Ku$ for these knots
and $\sigma^2=1$,
\[
 1-c_K\sigma W(X)
 =1-c_K\bigl(\sigma\Theta_\ell(X)-2\bigr)
 =\Delta_K(\sigma z^\ell).
\]
This proves \eqref{eq:four-root-Alexander}.
\end{proof}
In rank two, this Alexander denominator also appears in Willetts's
unified ADO--colored Jones invariant \cite{Willetts22};
\cref{thm:four-all-roots} gives in addition the numerator and the
dependence of the exponent on the rank.

\subsection{All quantum orders for the trefoil and figure-eight}\label{sec:all-root-quantum}
For the trefoil and figure-eight, the fixed-length product for
$D_{n,k}$ bounds every quantum coefficient at a root of unity. The
quadratic monomial in the trefoil formula accounts for its larger
pole orders.
For a knot in \eqref{eq:four-seeds}, define
\[
 G_{K,n,b,m}(v)=\sum_{a\ge0}[\epsilon^m]
       c_{K,n,\ell a+b}(\zeta(1+\epsilon))v^a.
\]

\begin{theorem}\label{thm:trefoil-root-jets}
For the trefoil put $\lambda=-\sigma$. Then
\begin{equation}\label{eq:trefoil-root-jet-bound}
 (1-\lambda v)^{a_0+2m+1}G_{3_1,n,b,m}(v)
       \in\mathbb Z[\zeta][v].
\end{equation}
If $b+r_0<\ell$, the pole has exact order $a_0+2m+1$ and
\begin{equation}\label{eq:trefoil-root-highest}
 \lim_{v\to\lambda^{-1}}(1-\lambda v)^{a_0+2m+1}
       G_{3_1,n,b,m}(v)
 =c_{3_1,n,b}(\zeta)
       \frac{(a_0+2m)!\,\ell^{2m}}{a_0!\,m!}\ne0.
\end{equation}
For the figure-eight,
\begin{equation}\label{eq:figure-root-jet-bound}
 (1-\sigma v)^{a_0+m+1}G_{4_1,n,b,m}(v)
       \in\mathbb Z[\zeta][v].
\end{equation}
The full Taylor coefficients of the trefoil have denominator dividing
$(1+\sigma W)^{a_0+2m+1}$. For the figure-eight, at every $m\ge1$
the full coefficient has denominator dividing
$(1-\sigma W)^{a_0+2m}$.
\end{theorem}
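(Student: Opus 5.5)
The plan is to compute the block sequences $[\epsilon^m]c_{K,n,\ell a+b}(\zeta(1+\epsilon))$ explicitly as quasi-polynomials in $a$, and then pass from coefficient blocks to full coefficients with \cref{thm:rank-full-series}.

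\textbf{The factor $D_{n,k}$.} Write $k=\ell a+b$. With $q=\zeta(1+\epsilon)$, split $D_{n,k}(q)=\prod_{j=1}^{n-2}\{k+j\}/\{j\}$ into two groups of factors. The first group consists of the $a_0$ paired factors $\{\ell(a+j')\}/\{\ell j'\}$, which vanish at $\zeta$. Each such quotient equals
\[
 \epsilon_0^{a}\,\frac{\sinh(\ell(a+j')h)}{\sinh(\ell j'h)},\qquad q=\zeta e^h .
\]
This is the ratio $(a+j')/j'$ times an even power series in $h$. Its order-$h^{2i}$ coefficient is a polynomial in $a$ of degree $1+2i$.

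The second group consists of the remaining nonvanishing factors $\{k+j\}/\{j\}$. These depend on $a$ only through $q^{\pm\ell a}=\epsilon_0^a(1+\epsilon)^{\pm\ell a}$. Their order-$m$ coefficients are therefore polynomials in $a$ of degree at most $m$, times the phase $\sigma^a$.

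Comparing $[\epsilon^m]$ of the whole product: the order-zero part is $\binom{a_0+a}{a_0}$, of degree $a_0$. Each order of $\epsilon$ raises the $a$-degree by at most one, in both groups, because odd orders in $\epsilon$ come only through $h=\log(1+\epsilon)$ mixing. Hence $[\epsilon^m]D_{n,\ell a+b}$ equals $\sigma^a$ times a polynomial in $a$ of degree at most $a_0+m$. If $b+r_0\ge\ell$, there is one extra vanishing factor in the numerator, and the same count still bounds the degree.

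\textbf{Pole orders.} By \eqref{eq:polynomial-generating-top}, a polynomial of degree $a_0+m$ produces a pole of order at most $a_0+m+1$. This gives \eqref{eq:figure-root-jet-bound}, since $c_{4_1,n,k}=D_{n,k}$. Integrality of the numerator follows as in \cref{thm:resolvent}: a rational function with integral power series over $\mathbb Z[\zeta]$ and denominator $(1-\sigma v)^e$ has integral numerator.

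For the trefoil, write $c_{3_1,n,k}=g_kD_{n,k}$ with $g_k=(-1)^kq^{k^2+(2n-1)k}$. We have $g_{\ell a+b}=g_b\,(-1)^a(1+\epsilon)^{E(a)}$, where $E$ is quadratic in $a$ with leading term $\ell^2a^2$; the phase was computed in \cref{prop:example-period-Jordan}. Its order-$m$ coefficient $\binom{E(a)}{m}$ (after expanding $(1+\epsilon)^E$) has degree $2m$ and leading coefficient $\ell^{2m}/m!$.

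Multiplying, the order-$m$ block term has degree at most $a_0+2m$. The term of top degree uses only the order-zero part $c_{3_1,n,b}(\zeta)\binom{a_0+a}{a_0}$ of $D$, whose leading coefficient is $1/a_0!$. So the top coefficient is $c_{3_1,n,b}(\zeta)\ell^{2m}/(a_0!\,m!)$, and \eqref{eq:polynomial-generating-top} with $\lambda=-\sigma$ gives \eqref{eq:trefoil-root-highest}.

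For nonvanishing, use $c_{3_1,n,b}(\zeta)\ne0$ when $b+r_0<\ell$. This holds because $D_{n,b}(\zeta)$ has no vanishing numerator factor beyond the paired ones; this is the $a=0$ case of \eqref{eq:four-root-blocks}.

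\textbf{Full coefficients.} Apply \cref{thm:rank-full-series}.
\begin{itemize}
\item For the trefoil, take $d(v)=1+\sigma v$ and $\beta=a_0+1$. Then the block hypothesis with exponent $\beta+2m$ is exactly \eqref{eq:trefoil-root-jet-bound}.
\item For the figure-eight, take $\beta=a_0+1$ and $d(v)=1-\sigma v$. The hypothesis holds, since $a_0+m+1\le a_0+1+2m$, and it gives the bound $(1-\sigma W)^{a_0+1+2m}$.
\end{itemize}

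To sharpen the figure-eight bound to $a_0+2m$ for $m\ge1$, look at the $h=0$ term in the proof of \cref{thm:rank-full-series}. That term has exponent at most $a_0+m+1\le a_0+2m$. Each term with $h\ge1$ has exponent at most $a_0+(m-h)+1+(2h-1)=a_0+m+h\le a_0+2m$. Summing and applying \cref{lem:monic-period-integrality} gives the claim.

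\textbf{Main obstacle.} The step I expect to be delicate is the degree count for $D_{n,\ell a+b}$ at odd orders and in the boundary case $b+r_0\ge\ell$. There I would write everything in terms of $h$ with $q=\zeta e^h$ and use evenness, as in \cref{thm:figure-eight-product}.
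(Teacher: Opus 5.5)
Your proposal is correct and follows the same route as the paper. The shared steps are: a degree bound $a_0+j$ for the phase-stripped jets of $D_{n,\ell a+b}$; the quadratic trefoil exponent contributing degree $2m$ with leading coefficient $\ell^{2m}/m!$, so that only the order-zero jet $c_{3_1,n,b}(\zeta)\binom{a_0+a}{a_0}$ reaches the top degree; then \eqref{eq:polynomial-generating-top}; and finally \cref{thm:rank-full-series} with $\beta=a_0+1$, together with the term-by-term count $a_0+m+h\le a_0+2m$ for the figure-eight.

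The only local difference is in the bound for $D_{n,\ell a+b}$. The paper obtains it more directly: it factors $\epsilon^{a_0}$ out of the denominator, leaving a unit independent of $a$, and notes that every numerator exponent is linear in $a$, so order $t$ gives degree at most $t$. This handles odd orders and the case $b+r_0\ge\ell$ at once, so the splitting into $\sinh$ ratios and the appeal to evenness in $h$, which you flag as the main obstacle, are not needed.
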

\begin{proof}
In the fixed-length product
$D_{n,k}=\prod_{j=1}^{n-2}\{k+j\}/\{j\}$, the denominator
has exactly $a_0$ zero factors at $q=\zeta$. Remove their factor
$\epsilon^{a_0}$. The remaining denominator is a unit of
$\mathbb Q(\zeta)[[\epsilon]]$, independent of $a$. After its
root-of-unity phase is removed, the order-$t$ coefficient of the
numerator is polynomial in $a$ of degree at most $t$, because each
exponent is linear in $a$.
Thus the order-$j$ coefficient of $D_{n,\ell a+b}$, after the phase
$\sigma^a$ is removed, has degree at most $a_0+j$.

The trefoil monomial has exponent
$(\ell a+b)^2+(2n-1)(\ell a+b)$. Its order-$(m-j)$ coefficient
has degree at most $2(m-j)$, with leading coefficient
$\ell^{2(m-j)}/(m-j)!$. The total degree after removal of
$\lambda^a$ is at most $a_0+2m$. When $b+r_0<\ell$,
\eqref{eq:four-root-blocks} supplies the degree-$a_0$ constant jet,
and only $j=0$ contributes to the maximum degree. Its leading
coefficient is $c_{3_1,n,b}(\zeta)\ell^{2m}/(a_0!m!)$.
Formula \eqref{eq:polynomial-generating-top}, with
$D=a_0+2m$, gives both \eqref{eq:trefoil-root-jet-bound} and
\eqref{eq:trefoil-root-highest}. For the figure-eight there is no
quadratic monomial, so the degree is at most $a_0+m$.
The original block coefficients are integral, and multiplication by
the displayed denominator, which has constant coefficient one,
makes the rational numerator integral as well.

Apply \cref{thm:rank-full-series} with $\beta=a_0+1$ to obtain
the full trefoil estimate. For the figure-eight, an order-$h\ge1$
kernel correction to the order-$(m-h)$ coefficient has exponent at
most $a_0+(m-h)+1+(2h-1)\le a_0+2m$.
The $h=0$ term has exponent $a_0+m+1\le a_0+2m$ when $m\ge1$.
The same removal of apparent poles and integral numerator argument
as in \cref{thm:rank-full-series} proves the final assertion.
\end{proof}

In particular, the trefoil and figure-eight knot elements belong to
$\C_{n,\Delta_K}^{\mathrm{rat}}$ for every $n$, so their local
expansions satisfy \cref{thm:rank-recentering} and
\eqref{eq:rank-finite-recentering}.

\subsection{Normal coefficients at fourth and third roots of unity}
\label{sec:explicit-normal}
At an earlier boundary, the denominator in
\eqref{eq:four-root-functions} has constant coefficient one,
since $W_{n,\zeta}(a_{n-2s}(\zeta))=0$. Thus
\cref{prop:rational-normal-recurrence} computes every normal
coefficient from the finite numerator.

\begin{theorem}\label{thm:four-at-i}
For $K=3_1,4_1,5_2,6_1$, let $c=c_K$. At $q=i$ one has
\begin{align}
 F_{K,2d+1}(X;i)&=(1+cX^2)^{-d}\qquad(d\ge1),
       \label{eq:four-odd-at-i}\\
 F_{K,2d}(X;i)&=
       \frac{1+2c+(-1)^{d-1}cX}
            {(1-c(X^2-4))^d}\qquad(d\ge1).
       \label{eq:four-even-at-i}
\end{align}
For odd rank, every earlier boundary is $X=0$ and
\begin{equation}\label{eq:four-odd-normal-at-i}
 \tau_{K,2j}^{(2d+1,s)}(i)=(-c)^j\binom{d+j-1}{j},\qquad
 \tau_{K,2j+1}^{(2d+1,s)}(i)=0.
\end{equation}
For even rank, put $\epsilon_s=(-1)^{d-s}$ and
$\alpha_d=(-1)^{d-1}$. At $X=2\epsilon_s+u$,
\begin{equation}\label{eq:four-even-normal-at-i}
 \sum_{j\ge0}\tau_{K,j}^{(2d,s)}(i)u^j
 =\frac{1+2c+2c\alpha_d\epsilon_s+c\alpha_du}
        {(1-c(4\epsilon_su+u^2))^d}.
\end{equation}
At every earlier boundary the minimal order of an eventual
homogeneous constant-coefficient recurrence for the normal
sequence over $\mathbb Q(i)$ is $2\lfloor n/2\rfloor$.
\end{theorem}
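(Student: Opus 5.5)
The plan is to specialize \cref{thm:four-all-roots} at $\zeta=i$, read off the normal series from the resulting rational function, and then use \cref{prop:rational-normal-recurrence} for the recurrence orders.

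\emph{Specialization.} At $\zeta=i$ one has $\zeta^2=-1$, so $\ell=2$, and $\sigma=i^{2n}=(-1)^n$. Also $\Theta_2(X)=X^2-2$, hence $W=X^2-2-2\sigma$. The exponent is $a_0+1=\lfloor n/2\rfloor$.
\begin{itemize}
\item For $n=2d+1$: $\sigma=-1$ and $W=X^2$, so the denominator is $(1+cX^2)^d$.
\item For $n=2d$: $\sigma=1$ and $W=X^2-4$, so the denominator is $(1-c(X^2-4))^d$.
\end{itemize}
The numerator is $c_{K,n,0}(i)+c_{K,n,1}(i)(X-x_0^{(n)}(i))$. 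Here $c_{K,n,0}=1$ and $x_0^{(n)}(i)=i^n+i^{-n}$, which is $0$ for odd $n$ and $2(-1)^d$ for $n=2d$. The value $c_{K,n,1}(i)$ comes from \eqref{eq:four-seeds} with $k=1$, using $D_{n,1}=\{n-1\}/\{1\}$.
\begin{itemize}
\item For odd $n$, $D_{n,1}(i)=\{2d\}/\{1\}$ at $i$, which is $0$. The numerator is therefore $1$.
\item For $n=2d$, $D_{n,1}(i)=\{2d-1\}_i/\{1\}_i=(-1)^{d-1}$. The monomial factors and the inner $q$-binomial sums at $k=1$ evaluate directly.
\end{itemize}
The claim to verify is $c_{K,2d,1}(i)=c(-1)^{d-1}$ for each of the four knots. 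Given that, the numerator is $1+c(-1)^{d-1}(X-2(-1)^d)=1+2c+(-1)^{d-1}cX$. The four-case check of the $k=1$ phases is the step I expect to be the most error-prone, though each case is a short computation.

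\emph{Normal series.} By \eqref{eq:earlier-node}, the earlier boundaries are $a_{n-2s}(i)=i^{n-2s}+i^{-(n-2s)}$.
\begin{itemize}
\item For odd $n$ this is $0$, so $u=X$. Expanding $(1+cu^2)^{-d}$ by the binomial series gives \eqref{eq:four-odd-normal-at-i}.
\item For $n=2d$ this is $2(-1)^{d-s}=2\epsilon_s$. Substituting $X=2\epsilon_s+u$ gives $X^2-4=4\epsilon_s u+u^2$ and the numerator $1+2c+2c\alpha_d\epsilon_s+c\alpha_d u$, which is \eqref{eq:four-even-normal-at-i}.
\end{itemize}
The identification of $\tau$ with these expansions holds because the Habiro-valued normal coefficients at $\zeta$ equal the Taylor coefficients of the specialized function. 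This is justified by \cref{thm:finite-normal-jets}, whose finite formulas agree with expanding the $W$-adic identity, together with the remark that $W(a_{n-2s}(i))=0$ makes the denominator a unit in $u$.

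\emph{Recurrence order.} By \cref{prop:rational-normal-recurrence}, the minimal order equals $\deg D$ once numerator and denominator are coprime over $\mathbb Q(i)$; in both cases $\deg D=2d=2\lfloor n/2\rfloor$.
\begin{itemize}
\item For odd rank the numerator is $1$, so coprimality is automatic.
\item For even rank the numerator is linear in $u$ with root $u_0=-(1+2c+2c\alpha_d\epsilon_s)/(c\alpha_d)$. Coprimality means checking that $1-c(4\epsilon_s u_0+u_0^2)\ne0$ for $c\in\{\pm1,\pm2\}$ and both signs of $\alpha_d\epsilon_s$. That is eight rational evaluations, each of which should be nonzero.
\end{itemize}
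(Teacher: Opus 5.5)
Your proposal is correct and follows the paper's proof essentially step for step. You specialize \cref{thm:four-all-roots} at $\zeta=i$ with $\ell=2$, use $c_{K,n,1}(i)=0$ for odd $n$ and $c_{K,2d,1}(i)=c\,\alpha_d$, translate to the boundary $0$ or $2\epsilon_s$, and apply \cref{prop:rational-normal-recurrence}. Your eight coprimality evaluations, which you do not carry out, all reduce to one check: in the original coordinate the numerator's root is $X_0=-(1+2c)/(c\alpha_d)$, and the paper verifies $1-c(X_0^2-4)=-(3c+1)/c\neq0$ for $c\in\{\pm1,\pm2\}$.
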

\begin{proof}
Here $\ell=2$ and $a_0+1=\lfloor n/2\rfloor$.
For $n=2d+1$ one has $\sigma=-1$, $W=X^2$, and
$c_{K,n,1}(i)=0$. For $n=2d$ one has $\sigma=1$, $W=X^2-4$,
$c_{K,n,1}(i)=\alpha_dc$, and
$B_1^{(n)}(X;i)=X+2\alpha_d$.
These coefficient evaluations follow directly from
\eqref{eq:four-seeds}. Substitution in
\eqref{eq:four-root-functions} proves
\eqref{eq:four-odd-at-i} and \eqref{eq:four-even-at-i}.
The odd-rank boundaries specialize to zero; the even-rank boundary
$a_{2d-2s}(i)$ is $2\epsilon_s$. The binomial expansion and this
translation give \eqref{eq:four-odd-normal-at-i} and
\eqref{eq:four-even-normal-at-i}.

For odd rank the numerator is one, so the reduced denominator
has degree $2d$. For even rank the numerator has one root,
$X=-(1+2c)/(\alpha_dc)$. At this point
\[
 1-c(X^2-4)=-\frac{3c+1}{c}\ne0
                \qquad(c=-1,1,-2,2).
\]
The numerator is therefore coprime to the denominator, of degree
$2d$. Translation to a boundary preserves coprimality. Apply
\cref{prop:rational-normal-recurrence} to obtain the stated
minimal orders.
\end{proof}

In rank two, abbreviate $\tau_j=\tau_{K,j}^{(2,1)}(i)$. Then
\begin{equation}\label{eq:rank-two-normal-at-i}
 \sum_{j\ge0}\tau_ju^j=\frac{1+4c+cu}{1-4cu-cu^2},\qquad
 \tau_0=1+4c,\qquad \tau_1=5c+16c^2,
\end{equation}
and
\begin{equation}\label{eq:rank-two-normal-recurrence}
 \tau_j=4c\tau_{j-1}+c\tau_{j-2}\qquad(j\ge2).
\end{equation}
The initial terms are
\begin{center}
\begin{tabular}{crrrrr}
\toprule
$K$&$\tau_0$&$\tau_1$&$\tau_2$&$\tau_3$&$\tau_4$\\
\midrule
$3_1$&$-3$&$11$&$-41$&$153$&$-571$\\
$4_1$&$5$&$21$&$89$&$377$&$1597$\\
$5_2$&$-7$&$54$&$-418$&$3236$&$-25052$\\
$6_1$&$9$&$74$&$610$&$5028$&$41444$\\
\bottomrule
\end{tabular}
\end{center}
For each column, \eqref{eq:finite-normal-value} gives the same
coefficient using only the indices $k\le2j+1$.

In rank three the boundary is $X=0$ and
$F_{K,3}(u;i)=(1+cu^2)^{-1}$. Consequently
\begin{center}
\begin{tabular}{ccrrrr}
\toprule
$K$&$F_{K,3}(u;i)$&$\tau_0$&$\tau_1$&$\tau_2$&$\tau_4$\\
\midrule
$3_1$&$(1-u^2)^{-1}$&$1$&$0$&$1$&$1$\\
$4_1$&$(1+u^2)^{-1}$&$1$&$0$&$-1$&$1$\\
$5_2$&$(1-2u^2)^{-1}$&$1$&$0$&$2$&$4$\\
$6_1$&$(1+2u^2)^{-1}$&$1$&$0$&$-2$&$4$\\
\bottomrule
\end{tabular}
\end{center}
The boundary values and first normal coefficients agree, whereas
normal order two distinguishes the four specialized knot elements.
Here $N_j=j$, so \eqref{eq:finite-normal-value} computes each
column from the first $j+1$ Newton coefficients.

For a primitive cube root $\omega$, with
$\omega^2+\omega+1=0$, take $n=2$ and $X=2+u$. The complete
period is
\[
 W_{2,\omega}(2+u)=9u+6u^2+u^3.
\]
\Cref{thm:four-all-roots} gives
\begin{equation}\label{eq:four-normal-at-omega}
 F_{K,2}(2+u;\omega)=
       \frac{P_K(u)}{1-c_K(9u+6u^2+u^3)},
\end{equation}
where
\begin{center}
\begin{tabular}{cl}
\toprule
$K$&$P_K(u)$\\
\midrule
$3_1$&$1+6\omega+5\omega u+\omega u^2$\\
$4_1$&$13+7u+u^2$\\
$5_2$&$-5+18\omega+(-5+12\omega)u+(-1+2\omega)u^2$\\
$6_1$&$19-12\omega+(12-7\omega)u+(2-\omega)u^2$\\
\bottomrule
\end{tabular}
\end{center}
Indeed, the prefix kernels are $1$, $u+3$, and $(u+3)^2$.
Direct substitution in \eqref{eq:four-seeds} gives respectively
\[
 (c_{K,2,1}(\omega),c_{K,2,2}(\omega))
 =(-\omega,\omega),\ (1,1),\
   (1,-1+2\omega),\ (-\omega,2-\omega),
\]
which proves the table by expanding the three prefix terms.
Writing $\tau_j=\tau_{K,j}^{(2,1)}(\omega)$, one obtains
\begin{equation}\label{eq:four-normal-omega-recurrence}
 \tau_j=c_K(9\tau_{j-1}+6\tau_{j-2}+\tau_{j-3})
                       \qquad(j\ge3).
\end{equation}
The initial coefficients are
\begin{center}
\begin{tabular}{clll}
\toprule
$K$&$\tau_0$&$\tau_1$&$\tau_2$\\
\midrule
$3_1$&$1+6\omega$&$-9-49\omega$&$75+406\omega$\\
$4_1$&$13$&$124$&$1195$\\
$5_2$&$-5+18\omega$&$85-312\omega$&$-1471+5402\omega$\\
$6_1$&$19-12\omega$&$354-223\omega$&$6602-4159\omega$\\
\bottomrule
\end{tabular}
\end{center}
For example, the next figure-eight coefficient is $11512$.
The finite formula uses $k\le3j+2$ for the $j$th normal
coefficient at this root.

The oriented expansions follow from
\eqref{eq:oriented-normal-substitution}. In rank two,
$z_0=1$ and $u=t^2/(1+t)$; for $5_2$ at $q=i$,
\[
 F_{5_2,2}((1+t)+(1+t)^{-1};i)
       =-7+54t^2-54t^3-364t^4+O(t^5).
\]
In rank three, $z_0=i$ and $u=it(2+t)/(1+t)$, giving
\[
 F_{K,3}(i(1+t)-i(1+t)^{-1};i)
       =1+4c_Kt^2-4c_Kt^3+(5c_K+16c_K^2)t^4+O(t^5).
\]

\subsection{Spectral line modules for \texorpdfstring{$5_2$ and $6_1$}{5\_2 and 6\_1}}
\label{sec:twist-spectral}
Keep the coefficient-index variable $\xi$ independent in the
monic recurrences \eqref{eq:twist-transfer}. At $q=i$ one has
$Q=-1$, and the one-step matrices are
\begin{equation}\label{eq:twist-at-i}
 B_{5_2}(\xi)=
   \begin{pmatrix}0&1\\-\xi^2(1+\xi)&\xi^2(\xi-1)\end{pmatrix},
 \qquad
 B_{6_1}(\xi)=
   \begin{pmatrix}0&1\\1+\xi^{-1}&(1-\xi)\xi^{-2}\end{pmatrix}.
\end{equation}
The shift sends $\xi$ to $-\xi$, and
$M_K(\xi)=B_K(-\xi)B_K(\xi)$ is the two-step matrix.

\begin{proposition}\label{prop:twist-periods}
The period matrices are
\begin{align}
 M_{5_2}(\xi)&=
 \begin{pmatrix}
 -\xi^2(\xi+1)&\xi^2(\xi-1)\\
 \xi^4(\xi+1)^2&\xi^2(-\xi^4+\xi^2+\xi-1)
 \end{pmatrix},\label{eq:twist-M5}\\
 M_{6_1}(\xi)&=
 \begin{pmatrix}
 (\xi+1)/\xi&(1-\xi)/\xi^2\\
 (\xi+1)^2/\xi^3&(\xi^4-\xi^3-\xi^2+1)/\xi^4
 \end{pmatrix}.\label{eq:twist-M6}
\end{align}
Their characteristic polynomials are
\begin{align}
 p_{5_2}(L,\xi)&=L^2+\xi^2(\xi^4-\xi^2+2)L
                           +\xi^4(1-\xi^2),\label{eq:twist-p5}\\
 p_{6_1}(L,\xi)&=L^2-(2-\xi^{-2}+\xi^{-4})L
                           +1-\xi^{-2}.\label{eq:twist-p6}
\end{align}
If
$C_K(L,\xi;1)=L^2+a_{1,K}(\xi,1)L+a_{0,K}(\xi,1)$,
then
\begin{equation}\label{eq:twist-period-classical}
 p_K(L,\xi)=C_K(L,\xi^2;1),\qquad K=5_2,6_1.
\end{equation}
\end{proposition}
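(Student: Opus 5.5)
The statement is a direct computation, so the plan is to specialize \eqref{eq:twist-C5} and \eqref{eq:twist-C6} at $Q=-1$, multiply the two one-step matrices, and then read off and compare characteristic polynomials.

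\textbf{Specialization.} Put $Q=-1$ in \eqref{eq:twist-C5}. Then $a_{0,5_2}=\xi^2+\xi^3$ and $a_{1,5_2}=-\xi-\xi^2+\ldots$ up to the signs fixed by $Q^3+Q^4=0$, $Q^5=-1$, $Q^7=-1$. Substituting into \eqref{eq:twist-transfer} recovers \eqref{eq:twist-at-i}. Likewise $Q=-1$ in \eqref{eq:twist-C6} gives $a_{0,6_1}=-\xi^{-1}-1$ and $a_{1,6_1}=-\xi^{-2}-\xi^{-1}$. The shift $\xi=Q^k\mapsto Q^{k+1}$ becomes $\xi\mapsto-\xi$, so two steps of the recurrence are $B_K(-\xi)B_K(\xi)$.

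\textbf{Product.} For a companion matrix with bottom row $(\alpha(\xi),\beta(\xi))$, the product is
\[
 \begin{pmatrix}0&1\\ \alpha(-\xi)&\beta(-\xi)\end{pmatrix}
 \begin{pmatrix}0&1\\ \alpha(\xi)&\beta(\xi)\end{pmatrix}
 =\begin{pmatrix}\alpha(\xi)&\beta(\xi)\\ \beta(-\xi)\alpha(\xi)&\alpha(-\xi)+\beta(-\xi)\beta(\xi)\end{pmatrix}.
\]
Inserting $\alpha,\beta$ for each knot gives \eqref{eq:twist-M5} and \eqref{eq:twist-M6}. For example, for $6_1$ the $(2,2)$ entry is $(1-\xi^{-1})+(1+\xi)(1-\xi)\xi^{-4}$, which equals $(\xi^4-\xi^3-\xi^2+1)/\xi^4$.

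\textbf{Characteristic polynomials.} The determinant is multiplicative, so it is $\alpha(\xi)\alpha(-\xi)$. For $5_2$ this is $\xi^4(1+\xi)(1-\xi)=\xi^4(1-\xi^2)$, and for $6_1$ it is $(1+\xi^{-1})(1-\xi^{-1})=1-\xi^{-2}$. The trace is $\alpha(\xi)+\alpha(-\xi)+\beta(\xi)\beta(-\xi)$. For $5_2$ it is $-2\xi^2-\xi^4(\xi^2-1)$, which gives the linear coefficient $\xi^2(\xi^4-\xi^2+2)$. For $6_1$ it is $2+(1-\xi^2)\xi^{-4}$, which gives $-(2-\xi^{-2}+\xi^{-4})L$. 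This proves \eqref{eq:twist-p5} and \eqref{eq:twist-p6}.

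\textbf{Comparison with the classical polynomial.} Set $Q=1$ in \eqref{eq:twist-C5} and \eqref{eq:twist-C6}. This gives $C_{5_2}=L^2+(2\xi-\xi^2+\xi^3)L+\xi^2-\xi^3$ and $C_{6_1}=L^2+(-\xi^{-2}+\xi^{-1}-2)L+1-\xi^{-1}$. Replacing $\xi$ by $\xi^2$ in each reproduces \eqref{eq:twist-p5} and \eqref{eq:twist-p6}, which is \eqref{eq:twist-period-classical}.

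The expected obstacle is only sign bookkeeping at $Q=-1$, for instance $Q^6=1$, $Q^7=-1$ and $Q^{-1}=-1$. I would guard against sign errors by checking at one numerical value such as $\xi=1$. As a structural cross-check, \cref{prop:period} guarantees that the characteristic polynomials involve only $\xi^{\pm2}$.
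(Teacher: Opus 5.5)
Your argument is the paper's own: form $M_K(\xi)=B_K(-\xi)B_K(\xi)$, read off trace and determinant, then set $Q=1$ in the $C$-recurrence coefficients and substitute $\xi^2$. All your final entries and polynomials are correct, but the intermediate specializations are misstated: at $Q=-1$ one has $a_{1,5_2}=\xi^2-\xi^3$ and $a_{1,6_1}=\xi^{-1}-\xi^{-2}$ (your $-\xi^{-2}-\xi^{-1}$ would not reproduce \eqref{eq:twist-at-i}), and these corrected values are the ones your product computation actually uses.
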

\begin{proof}
Multiply the two matrices in \eqref{eq:twist-at-i}. Their traces
and determinants give \eqref{eq:twist-p5}--\eqref{eq:twist-p6}.
Setting $Q=1$ in \eqref{eq:twist-C5}--\eqref{eq:twist-C6}
and substituting $\xi^2$ gives the same polynomials.
\end{proof}

Put $A_0=\mathbb C[\xi^{\pm1}]$ and $S_K=A_0[L]/(p_K)$.
We use the consecutive-coefficient lattice of the monic recurrence
\eqref{eq:twist-C-recurrence}: its period module is
$\mathscr P_K=A_0^2$, with $L$ acting by $M_K$.

\begin{theorem}\label{thm:elliptic-spectral}
Let $E$ be the smooth projective elliptic curve
\begin{equation}\label{eq:twist-elliptic-curve}
 y^2=x^3+x^2-x
\end{equation}
with origin $O$ at infinity, and put $E^\circ=E\setminus E[2]$.
Both $\Spec S_{5_2}$ and $\Spec S_{6_1}$ are isomorphic to
$E^\circ$. Under the coordinate maps below,
\begin{equation}\label{eq:twist-spectral-points}
 \mathscr P_{5_2}\simeq\mathcal O_{E^\circ}(P),\quad P=(-1,-1),
 \qquad
 \mathscr P_{6_1}\simeq\mathcal O_{E^\circ}(R),\quad R=(1,1).
\end{equation}
Their classes have exact order three in $\Pic(E^\circ)$ and
satisfy $[\mathscr P_{6_1}]=-[\mathscr P_{5_2}]$ under these
identifications.
\end{theorem}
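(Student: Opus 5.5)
The plan is to make everything explicit: choose coordinates on each spectral curve, write $\mathscr P_K$ as an explicit ideal class in $S_K$, and then read off its divisor on $E$.

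\emph{Step 1: the curves.} For $6_1$ put $t=\xi^{-1}$. The relation $p_{6_1}=0$ becomes
\[
 L^2-(t^4-t^2+2)L+1-t^2=0 .
\]
Its discriminant is $t^4(t^4-2t^2+5)$. With $s=(2L-t^4+t^2-2)/t^2$, which is legitimate because $t$ is a unit in $S_K$, the curve becomes the quartic $s^2=t^4-2t^2+5$ with $t\ne0,\infty$. A quartic with a rational point at infinity maps to Weierstrass form by the standard substitution. I expect that an affine change, possibly after replacing $t^2$ by a suitable coordinate, lands exactly on $y^2=x^3+x^2-x$. The same completing-the-square step applied to \eqref{eq:twist-p5}, with discriminant $\xi^4(\xi^8-2\xi^6+5\xi^4-\ldots)$ after factoring, gives a quartic of the same shape. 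Both affine curves are the smooth projective quartic minus the points over $\xi=0,\infty$. On the smooth model these are the two points at infinity and the two points over $t=0$; I will check they are exactly $O$ and the three $2$-torsion points $(0,0)$ and $(x,0)$ with $x^2+x-1=0$. Smoothness of $S_K$ follows from the discriminant having no repeated roots on the torus, which is a direct calculation. Since the removed set is then precisely $E[2]$, the map $E(\mathbb C)\to\Pic(E^\circ)$, $Q\mapsto[\mathcal O(Q)-\mathcal O(O)]$, induces $\Pic(E^\circ)\simeq E(\mathbb C)/E[2]$.

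\emph{Step 2: the modules as ideals.} For $M=M_K$ with entries $m_{ij}$, the vector $(m_{12},\,L-m_{11})^{\mathsf T}$ spans the kernel of $L-M$ over the function field. I will use the standard Latimer--MacDuffee identification, which sends $\mathscr P_M$ to the class of the ideal $I=(m_{12},\,L-m_{11})\subset S$ up to inversion; I will fix the sign convention by testing it on $\Comp(p)$, where $m_{12}=1$. For $6_1$, $m_{12}=(1-\xi)\xi^{-2}$ has, on $\Spec S$, zeros only over $\xi=1$. There $p_{6_1}(L,1)=L^2-2L=L(L-2)$, and $I$ cuts out the single point where $L=m_{11}(1)=2$. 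So $I$ is the maximal ideal of one point, and the aim is to show that this point is $R=(1,1)$ under the coordinates of Step 1. For $5_2$, $m_{12}=\xi^2(\xi-1)$ gives the analogous point over $\xi=1$, where $L=m_{11}(1)=-2$; the aim is to identify it with $P=(-1,-1)$. Local principality away from that point, and the check that $I$ is not the unit ideal there, come from the same calculation.

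\emph{Step 3: group law and conclusion.} On $y^2=x^3+x^2-x$, I will compute directly with chords and tangents that $2P=R$ and $3P=(0,0)\in E[2]$. Because $P\notin E[2]$ while $3P\in E[2]$, the class of $P$ has exact order three in $E(\mathbb C)/E[2]$. Then $R=2P\equiv-P$, so $[\mathscr P_{6_1}]=-[\mathscr P_{5_2}]$, and this class also has order three. Statement \eqref{eq:introduction-threefold} then follows from \cref{thm:stabilization} with $M_2=\Comp(p_K)$.

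\emph{Main obstacle.} The delicate step is Step 1 together with the point identification in Step 2. One must find the explicit birational map from each quartic to $E$, check that the four removed points go exactly onto $E[2]$ (this is what makes the Picard group $E/E[2]$), and verify that the point cut out by $I$ goes to $P$, respectively $R$, rather than to a translate by $2$-torsion or to its negative. I would first locate the rational points over $\xi=\pm1$ and $t=0$ on each quartic, send one point at infinity to $O$, and then solve for the Weierstrass change of variables by matching these points.
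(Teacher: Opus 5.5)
Your route is essentially the paper's: explicit coordinates on the spectral curve, the point over $\xi=1$ where $L=m_{11}(1)$, and chord--tangent arithmetic. Step~3 is fine. Step~2, however, has a real gap.

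\textbf{The direction of the Latimer--MacDuffee identification.} Testing on $\Comp(p)$ cannot fix the direction, because the companion module is free and the trivial class is its own inverse. (For the paper's $\Comp(p)$, $m_{12}$ is minus the constant coefficient of $p_K$, and your recipe gives the principal ideal $(L)$.) The direction genuinely matters here:
\begin{itemize}
\item Your $I=(m_{12},L-m_{11})$ is exactly the maximal ideal $\mathfrak m_Q$ of $Q=(1,m_{11}(1))$, so $I\simeq\mathcal O_{E^\circ}(-Q)$.
\item The class of $Q$ has order three in $E(\mathbb C)/E[2]$, so $\mathcal O(-Q)\not\simeq\mathcal O(Q)$.
\item Taking $\mathscr P_K\simeq I$ would therefore give $\mathscr P_{5_2}\simeq\mathcal O(-P)$ and $\mathscr P_{6_1}\simeq\mathcal O(-R)$, contradicting the displayed isomorphisms.
\end{itemize}
Only the exact order three and the relation $[\mathscr P_{6_1}]=-[\mathscr P_{5_2}]$ are insensitive to this choice.

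The correct statement is $\mathscr P_K\simeq I^{-1}$. One clean way to see it uses the left eigenvector $w=(L-m_{22},\,m_{12})$, which satisfies $wM_K=Lw$ in $S_K$. Then $u\mapsto wu$ is an injective $S_K$-linear map $\mathscr P_K\to S_K$ with image $(m_{12},L-m_{22})=\mathfrak m_{Q'}$. Here $Q'=(1,m_{22}(1))=(1,0)$ is the other point over $\xi=1$. Since $\operatorname{div}(\xi-1)=Q+Q'$, this gives $\mathscr P_K\simeq\mathcal O(Q)$. Your right eigenvector $(m_{12},L-m_{11})^{\mathsf T}$ is the one attached to $M_K^{\mathsf T}$, which is why it produces the inverse class.

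The paper avoids conventions altogether. The section $f\mapsto f(M_K)e_2$ has cokernel $A_0e_1/(m_{12})$, on which $L$ acts by $m_{11}$. That cokernel is the skyscraper at $Q$, so $\mathscr P_K\simeq\mathcal O(Q)$ directly.

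\textbf{Step~1.} This step is sound, but you state it only as an expectation, and the change of variables you need is birational, not affine. Both spectral curves are the quartic $s^2=\tau^4-2\tau^2+5$:
\begin{itemize}
\item for $5_2$, take $\tau=\xi$ and $s=\bigl(2L+\xi^2(\xi^4-\xi^2+2)\bigr)/\xi^4$;
\item for $6_1$, take $\tau=\xi^{-1}$ and your $s$.
\end{itemize}
The factorization $(s-\tau^2+1)(s+\tau^2-1)=4$ leads to $x=(\tau^2-1-s)/2$ and $y=-\tau x$, which satisfy $y^2=x^3+x^2-x$. Under this map:
\begin{itemize}
\item the two points over $\tau=0$ go to the two points on $y=0$ with $x^2+x-1=0$;
\item the two points over $\tau=\infty$ go to $(0,0)$ and $O$.
\end{itemize}
So the removed set is exactly $E[2]$. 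These maps send $(1,-2)$ and $(1,2)$ to $-R$ and $-P$, respectively. Composing with translation by $(0,0)$, namely $(x,y)\mapsto(-1/x,\,y/x^2)$, gives exactly the paper's coordinate maps and sends these points to $P$ and $R$, as required.
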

\begin{proof}
For $S_{5_2}$ define
\begin{equation}\label{eq:twist-elliptic-map5}
 x=L\xi^{-4}+\xi^2-1+\xi^{-2},\qquad y=\xi x.
\end{equation}
The relation $p_{5_2}=0$ gives
\[
 x^{-1}=L\xi^{-4}+\xi^{-2},\qquad
 \xi^2=x+1-x^{-1}.
\]
Thus $x,y$ are units and satisfy \eqref{eq:twist-elliptic-curve}.
The inverse map is
\[
 \xi=y/x,\qquad L=\xi^2(x-1)/x^2.
\]
For $S_{6_1}$ define instead
\begin{equation}\label{eq:twist-elliptic-map6}
 x=\xi^2(L-1),\qquad y=\xi(L-1).
\end{equation}
The relation $p_{6_1}=0$ gives
$x^{-1}=x+1-\xi^{-2}$, so $x,y$ are again units and
$\xi^{-2}=x+1-x^{-1}$. The inverse map is
\[
 \xi=x/y,\qquad L=x(x+1).
\]
In both cases these formulas are inverse ring maps to
$\mathbb C[x,y,(xy)^{-1}]/(y^2-x^3-x^2+x)$.
The omitted points are $O$ and the three affine points with
$y=0$, exactly $E[2]$. The cubic has discriminant $5$, so
$E$ is smooth and each $S_K$ is a Dedekind domain.

The characteristic polynomials are irreducible over
$\mathbb C(\xi)$, since these coordinate rings are domains.
Hence each $\mathscr P_K$ is an invertible $S_K$-module as in
\cref{sec:spectral-comparison}. Consider the section
$e_2=(0,1)^{\mathsf T}$ and the map
\[
 S_K\longrightarrow\mathscr P_K,\qquad
 f\longmapsto f(M_K)e_2.
\]
Its matrix in the $A_0$-basis $1,L$ is $[e_2,M_Ke_2]$,
with determinant $-(M_K)_{12}$.
In \eqref{eq:twist-M5} and \eqref{eq:twist-M6}, this determinant
is a Laurent unit times $\xi-1$. The cokernel therefore has
length one over $A_0$, supported at $\xi=1$. Modulo the image of
$e_2$, multiplication by $L$ is multiplication by $(M_K)_{11}$.
At $\xi=1$ this is $-2$ for $5_2$ and $2$ for $6_1$.
Thus the section has a single simple zero at
$(\xi,L)=(1,-2)$ or $(1,2)$, respectively.
The maps \eqref{eq:twist-elliptic-map5} and
\eqref{eq:twist-elliptic-map6} send these points to $P$ and $R$.
The divisor of the section gives \eqref{eq:twist-spectral-points}.

The chord-and-tangent law on \eqref{eq:twist-elliptic-curve}
\cite[Chapter~III, Sections~2--3]{Silverman09} gives
\begin{equation}\label{eq:twist-elliptic-group-law}
 2P=R,\qquad 3P=(0,0),\qquad 3R=O.
\end{equation}
The tangent slope at $P$ is zero, so $2P=(1,1)$.
The line through $P$ and $R$ has slope one and its third
intersection is $(0,0)$, a point of order two. It follows that
$P$ has order six and $R$ has order three.

The divisor localization sequence for $E^\circ\subset E$
\cite[Chapter~II, Section~6]{Hartshorne77} gives
\[
 \Pic(E^\circ)=\Pic(E)/\langle[T]:T\in E[2]\rangle
                \simeq E(\mathbb C)/E[2].
\]
Here $P\mapsto\mathcal O_E(P-O)$ identifies $E(\mathbb C)$
with $\Pic^0(E)$ \cite[Chapter~III, Proposition~3.4]{Silverman09}.
The point $O$ splits the degree map, and the subgroup of removed
classes is
$\mathbb Z[O]\oplus E[2]$.
The images of $P$ and $R$ have exact order three in this quotient.
Finally $R=2P\equiv-P\pmod{E[2]}$, proving the relation between
the two spectral classes.
\end{proof}

The third powers have explicit principalization functions. Put
\begin{equation}\label{eq:twist-principalization-functions}
 f_P=x+2-y/x,\qquad f_R=y-2x+1.
\end{equation}
Using \eqref{eq:twist-elliptic-curve}, one obtains
\[
 f_P(x,y)f_P(x,-y)=\frac{(x+1)^3}{x},\qquad
 f_R(x,y)f_R(x,-y)=-(x-1)^3.
\]
At $P$ and $R$, respectively, the conjugate factor is nonzero,
so these zeros have order three. The poles of $f_P$ have orders
one at $(0,0)$ and two at $O$; $f_R$ has a pole of order three
at $O$. Thus
\begin{equation}\label{eq:twist-principalization-divisors}
 \operatorname{div}(f_P)=3P-(0,0)-2O,\qquad
 \operatorname{div}(f_R)=3R-3O.
\end{equation}
If $\mathfrak p_P,\mathfrak p_R$ are the corresponding point
ideals on $E^\circ$, then
$\mathfrak p_P^3=(f_P)$ and $\mathfrak p_R^3=(f_R)$.

\begin{corollary}\label{cor:threefold-conjugacy}
For $K=5_2,6_1$ and every positive integer $m$,
\begin{equation}\label{eq:twist-threefold-conjugacy}
 M_K^{\oplus m}\sim_{\GL_{2m}(A_0)}\Comp(p_K)^{\oplus m}
                         \quad\Longleftrightarrow\quad 3\mid m.
\end{equation}
A connected finite \'etale cover of $\Spec S_K$ of degree three
trivializes $\mathscr P_K$, and the degree of every finite locally
free trivializing cover is divisible by three.
\end{corollary}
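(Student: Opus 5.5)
The corollary follows by combining \cref{thm:stabilization} with \cref{thm:elliptic-spectral}. First, $\Comp(p_K)$ has characteristic polynomial $p_K$, and its spectral module is free. Next, $S_K$ is the coordinate ring of the smooth connected affine curve $E^\circ$, so the hypotheses of \cref{sec:spectral-comparison} hold. Taking $M_1=M_K$ and $M_2=\Comp(p_K)$ in \cref{thm:stabilization}, we get $\mathscr D\simeq\mathscr P_K\otimes_{S_K}S_K^{-1}\simeq\mathscr P_K$. Equivalence \eqref{eq:spectral-stabilization} then reads
\[
 M_K^{\oplus m}\sim_{\GL_{2m}(A_0)}\Comp(p_K)^{\oplus m}
 \iff \mathscr P_K^{\otimes m}\simeq S_K .
\]

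By \cref{thm:elliptic-spectral}, $[\mathscr P_K]\in\Pic(E^\circ)$ has exact order three. Therefore $\mathscr P_K^{\otimes m}$ is trivial exactly when $3\mid m$, which proves \eqref{eq:twist-threefold-conjugacy}. The explicit principalizations $\mathfrak p_P^3=(f_P)$ and $\mathfrak p_R^3=(f_R)$ give a direct check of the direction $3\mid m$. The direction $3\nmid m$ rests on the exactness of the order, which the quotient $E(\mathbb C)/E[2]$ computation established.

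For the covering statements, take $d=3$ in \cref{thm:stabilization}. Since $k=\mathbb C$, the integer $3$ is invertible in $k$. The theorem then provides a connected finite \'etale cover $\Spec T\to\Spec S_K$ of degree three, with $T=S_K\oplus\mathscr P_K^{-1}\oplus\mathscr P_K^{-2}$, that trivializes $\mathscr P_K$. The norm argument in the same theorem shows that every finite locally free trivializing cover has degree divisible by three.

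The argument has no real obstacle beyond checking that the hypotheses match. The one point that needs care is the identification $\mathscr D\simeq\mathscr P_{M_K}$, which uses that the companion module is free. It also uses the conventions that $L$ plays the role of $Y$ and that $\mathscr P_K$ is $A_0^2$ with $L$ acting by $M_K$, as fixed before \cref{thm:elliptic-spectral}. Connectedness of $\Spec S_K$ follows from its identification with $E^\circ$, which is an open subset of an irreducible curve.
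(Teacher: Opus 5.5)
Your proposal is correct and is exactly the paper's argument. The companion module is free, so $\mathscr D\simeq\mathscr P_K$, and \cref{thm:stabilization}, applied with the exact order three from \cref{thm:elliptic-spectral} and with $3$ invertible in $\mathbb C$, gives both the conjugacy criterion and the covering statements.
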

\begin{proof}
The companion module is $S_K$. Apply
\cref{thm:stabilization} to the exact order computed in
\cref{thm:elliptic-spectral}.
\end{proof}

Over $A_0[(\xi-1)^{-1}]$ the matrix $[e_2,M_Ke_2]$ is invertible,
since its determinant is a Laurent unit times $\xi-1$, and it
conjugates $M_K$ to $\Comp(p_K)$. Thus the obstruction is supported
at $\xi=1$.

\section*{Acknowledgements}
We would like to thank Zhou, Chen and some other anonymous experts for their valuable
support and helpful contributions to this project. The author was
supported by the National Natural Science Foundation of China (NSFC) under
Grant No.~12341105.

\section*{Declaration of AI Use}
Claude Opus 5.5 was used solely for language editing and polishing of this manuscript. The author independently directed and carried out all aspects of the research, reviewed the final manuscript, and takes full responsibility for its content.

\end{document}